\documentclass[reqno,10pt]{article}

\usepackage{amsmath}
\usepackage{amsfonts,amsthm,amssymb}
\usepackage{indentfirst}
\usepackage{graphicx}
\usepackage{graphics}
\usepackage{pict2e}
\usepackage{epic}
\numberwithin{equation}{section}
\usepackage[margin=1.7cm]{geometry}
\usepackage{epstopdf} 
\usepackage[colorlinks,linkcolor=blue]{hyperref}
\usepackage[capitalise,noabbrev]{cleveref}
\usepackage{authblk}

\usepackage{todonotes}

\makeatletter
\def\thanks#1{\protected@xdef\@thanks{\@thanks
		\protect\footnotetext{#1}}}
\makeatother

\crefformat{equation}{(#2#1#3)}
\crefmultiformat{equation}{(#2#1#3)}{ and~(#2#1#3)}{, (#2#1#3)}{ and~(#2#1#3)}
\crefrangeformat{equation}{(#3#1#4) to~(#5#2#6)}

\usepackage[normalem]{ulem}

\allowdisplaybreaks

\theoremstyle{plain}
\newtheorem{Thm}{Theorem}[section]
\newtheorem*{Thm*}{Theorem}
\newtheorem{Lem}[Thm]{Lemma}
\newtheorem{Cor}[Thm]{Corollary}
\newtheorem{Prop}[Thm]{Proposition}

\theoremstyle{definition}

\newtheorem{Rem}[Thm]{Remark}
\newtheorem{?}[Thm]{Problem}

\newcommand{\p}{\partial}

\newcommand{\R}{\mathbb{R}}
\newcommand{\T}{\theta}
\newcommand{\e}{\varepsilon}
\newcommand{\E}{\mathcal{E}}

\newcommand{\Do}{\mathbf{D}_0}

\newcommand{\Torus}{\mathbb{T}}

\def \r {\rho}

\newcommand{\dv}{\text{div}}

\newcommand{\m}{\textbf{m}}
\newcommand{\abs}[1]{\left\lvert#1\right\rvert}

\newcommand{\norm}[1]{\left\lVert#1\right\rVert}

\usepackage{color}

\usepackage{todonotes}
\title{Low Mach number limit around the planar diffusion wave for 3D Navier-Stokes-Fourier equations}
\author[a,b]{Qiangchang Ju}
\author[c]{Rui Li}
\author[a,d]{Fanrui Meng \thanks{E-mail:
		ju\_qiangchang@iapcm.ac.cn (Q. Ju);
		ruili001@cuhk.edu.hk (R. Li);
		mengfanruimath@163.com (F. Meng).}}

\affil[a]{Institute of Applied Physics and Computational Mathematics, Beijing 100088, China.}
\affil[b]{National Key Laboratory of Computational Physics, Beijing 100088, China.}
\affil[c]{Department of Mathematics, The Chinese University of Hong Kong, Shatin, Hong Kong, China.}
\affil[d]{Graduate School of China Academy of Engineering Physics, Beijing 100193, China.}

\date{}

\begin{document}

	\maketitle
	\date{\vspace{-5ex}}
	
	\begin{abstract}
		
		We investigate the low Mach number limit of the three-dimensional full compressible Navier–Stokes-Fourier (NSF) equations on \(\mathbb{R}\times\mathbb{T}^2\) for two different classes of initial data, corresponding respectively to a well-prepared regime and an ill-prepared regime. The density and temperature are allowed to approach different asymptotic states at infinity. For the well-prepared regime, the solutions of compressible NSF equations converge to a planar diffusion wave solution globally in time as the Mach number tends to zero, where the difference between the states at the far fields is  small independently of the Mach number and the non-zero modes of the initial perturbations are exponentially small.
		Moreover, the optimal time-decay rate can be obtained.
		It can be viewed as the first global-in-time result on the low Mach number limit of three-dimensional NSF equations with large temperature variations.
		The corresponding local-in-time result is proved
		%by performing separate energy estimates for the zero and non-zero modes. 
		%the corresponding local-in-time result is obtained by decomposing 
		%and an auxiliary convergence lemma proposed by Métivier-Schochet in \cite{Métivier2001}.
		while the difference between the states at the far fields can be arbitrarily large for the ill-prepared data.
		
		\noindent	\textbf{Mathematics Subject Classification:} 35Q35, 35B40, 35B65.
		
		\noindent	\textbf{Key Words:} Compressible Navier-Stokes-Fourier equations;  Planar diffusion wave; Low Mach number limit.
		
		% The low Mach limit around the diffusion wave for 3D non-isentropic compressible Navier–Stokes equations with large temperature variations is verified rigorously in Eulerian coordinates. 
		% Both well-prepared and ill-prepared initial data are considered. 
		% For the well-prepared initial data, the solutions of compressible Navier–Stokes equations converge to a planar entropy wave solution globally in time as the Mach number tends to zero, where the difference between the states at the far fields is suitably small and independent of the Mach number.
		% Moreover, the optimal time-decay rate can be obtained.
		% It can be viewed as the first global-in-time result on the low Mach number limit of full Navier-Stokes equations with large temperature variations.
		% For the ill-prepared initial data, the result is obtained by performing separate energy estimates for the zero and non-zero modes, and an auxiliary convergence lemma proposed by Métivier-Schochet in \cite{Métivier2001}.
		% It is remarked that the difference between the states at the far fields is allowed to be arbitrarily large.
	\end{abstract}
	\setcounter{tocdepth}{2}
	\tableofcontents
	
	\section{Introduction}
	We consider the three-dimensional(3D) full Navier-Stokes equations in Eulerian coordinates as follows
	\begin{equation}
		\left\{
		\begin{aligned}
			&\rho_t + \dv (\rho \mathbf{u}) = 0, \\
			&(\rho \mathbf{u})_t + \dv (\rho \mathbf{u} \otimes \mathbf{u}) + \nabla P = \dv \mathbb{S}, \\
			&\E_t + \dv (\E \mathbf{u} + p \mathbf{u} ) = \kappa \Delta \T + \dv (\mathbf{u} \mathbb{S}), \\
			&(\rho , \mathbf{u}, \T)(x,0) = (\rho_0, \mathbf{u}_0, \T_0),
		\end{aligned}
		\right.
	\end{equation}
	where $(x, t) \to \mathbb{R}^3 \times \mathbb{R}_+$. $\rho$, $u$ and $\T$ denote the density, velocity and absolute temperature of the fluid, respectively. P denotes the pressure and $\E$ denotes the total energy given by $\E := \rho (e + \frac{1}{2} |\mathbf{u}|^2)$ with $e$ being the internal energy. The pressure function and internal function are defined by
	\begin{align*}
		P =  R \rho\T, ~ e = c_v \T,
	\end{align*}
	where the parameters $R>0$ and $c_v >0$ are the gas constant and the heat capacity at constant volume, respectively.
	$\mathbb{S}$ is the viscous stress tensor given by
	\begin{align*}
		\mathbb{S} = 2\mu \mathbb{D}(\mathbf{u}) + \lambda \dv \mathbf{u} \mathbb{I},
	\end{align*}
	where $\mathbb{D}(\mathbf{u}) :=\frac{\nabla \mathbf{u} + (\nabla \mathbf{u})^t}{2} \in \mathbb{R}^{3 \times 3}$ represents the deformation tensor, $\mathbb{I}$ is the $3 \times 3$ identity matrix, and $(\nabla \mathbf{u})^t$ is the transpose of the matrix $\nabla \mathbf{u}$. The viscosity coefficients $\lambda$ and $\mu$ of the flow satisfy $\mu > 0$ and $2\mu + 3\lambda > 0$. For simplicity, we assume that $\mu$, $\lambda$ and $\kappa$ are constants.
	In this paper, since we focus on the low Mach number limit for smooth solutions of non-isentropic compressible Navier–Stokes equations, we set $R = 1$ and $c_v = 1$ for simplicity of presentation.
	
	In order to study the low Mach number approximation, the strategy is to introduce the Mach number $\e \in (0,1]$, which is a dimensionless number defined as the ratio of the reference flow velocity to the reference sound speed in the fluid. As in \cite{Alazard2005LowMN}, we consider the following change of variables
	\begin{align*}
		\rho (x,t) \to \rho^\e (x, \e t), \quad \mathbf{u} (x,t) \to \e \mathbf{u}^\e (x, \e t), \quad \T \to \T^\e (x, \e t),
	\end{align*}
	and
	\begin{align*}
		\mu \to \e \mu, \quad \lambda \to \e \lambda, \quad \kappa \to \e \kappa.
	\end{align*}
	Then we rewrite the 3D compressible Navier-Stokes equations in a non-dimensional form:
	\begin{equation} \label{nondimensional NS Eqs}
		\left\{
		\begin{aligned}
			&\partial_t \rho^{\e} + \text{div}(\rho^{\e} \mathbf{u}^{\e})=0, \\
			& \partial_t (\rho^{\e} \mathbf{u}^{\e}) + \dv (\rho^{\e} \mathbf{u}^{\e} \otimes \mathbf{u}^{\e}) + \frac{\nabla P^{\e}}{\e^2} = \dv\mathbb{S}(\mathbf{u}^\e),  \\
			&\p_t (\rho^{\e} (\T^{\e} + \frac{1}{2} | \e \mathbf{u}^{\e}|^2)) + \dv [\rho^{\e} (\T^{\e} + \frac{1}{2} | \e \mathbf{u}^{\e}|^2) \mathbf{u}^{\e} + \rho^{\e} \mathbf{u}^{\e} \T^{\e}] = \kappa \Delta \T^{\e} + \e^2 \dv (\mathbf{u}^{\e} \mathbb{S}(\mathbf{u}^\e)), \\
			&(\rho^\e , \mathbf{u}^\e, \T^\e)(x,0) = (\rho^\e_0, \mathbf{u}^\e_0, \T^\e_0),
		\end{aligned}
		\right.
	\end{equation}
	We consider that the pressure is a small perturbation of $\underline{P}$:
	\begin{align*}
		P = \underline{P} + O(\e),
	\end{align*}
	which $\underline{P} > 0$ is a given constant state and can be normalized to 1. According to $P^\e = \rho^\e \T^\e$, it is straightforward to obtain the equation of $P^\e$:
	\begin{align}\label{P eq}
		\p_t P^\e  + \dv (\mathbf{u}^\e P^\e) + P^\e \dv \mathbf{u}^\e = \dv (\kappa \nabla \T^\e) + \e^2 [2\mu |\mathbb{D}(\mathbf{u}^\e)|^2 + \lambda |\dv \mathbf{u}^\e|^2].
	\end{align}
	As the Mach number tends to zero, (\ref{P eq}), $(\ref{nondimensional NS Eqs})_2$ and $(\ref{nondimensional NS Eqs})_3$ formally become
	\begin{equation}\label{the limit of NS Eqs}
		\left\{
		\begin{aligned}
			&\dv (2 \mathbf{u} - \kappa \nabla \T) =0, \\
			&\rho (\mathbf{u}_t + \mathbf{u} \cdot \nabla \mathbf{u}) + \nabla \pi = \mu \Delta \mathbf{u} + (\lambda + \mu) \nabla \dv \mathbf{u}, \\
			&\rho (\T_t + \mathbf{u} \cdot \nabla \T) + \dv \mathbf{u} = \kappa \Delta\T,
		\end{aligned}
		\right.
	\end{equation}
	where $\pi$ is the formal limit of  $\frac{P^\e - 1}{\e^2}$, $\rho = \frac{1}{\T}$ and 
	\begin{align}\label{the equs of rho}
		\rho_t + \dv (\rho \mathbf{u})= 0.
	\end{align}
	
	The low Mach number limit is an interesting topic in fluid dynamics and applied mathematics. We will briefly review some related results on the Euler and Navier–Stokes equations. For the isentropic Euler equations, Klainerman and Majda developed foundational uniform estimates and incompressible limit results for well-prepared data in periodic domains in \cite{Klainerman1981}, and in the whole space in \cite{Klainerman1982}. 
	The case of ill-prepared initial data was studied in the whole space by Ukai\cite{Ukai1986}, in the periodic domain by Schochet\cite{Schochet1994}, and in the bounded domain by Secchi\cite{Secchi2000}.
	%Ukai\cite{Ukai1986} studied the whole-space problem with ill-prepared data by using the fast decay of acoustic waves. 
	For the non-isentropic cases, Métivier and Schochet\cite{Métivier2001} established the incompressible limit in the whole space by exploiting the local decay of acoustic energy. Moreover, Alazard\cite{Alazard2005} subsequently obtained uniform regularity estimates for the non-isentropic Euler equations in domains with smooth compact boundaries, including bounded and exterior domains, and justified convergence in the exterior-domain case for ill-prepared data. There are also many other interesting works, one can see for example \cite{Iguchi1997, Schochet1986} and the references therein.
	
	For the isentropic Navier-Stokes equations, Lions and Masmoudi\cite{Lions1998} established incompressible limit results for finite-energy weak solutions in the whole space, periodic domains, and bounded domains with suitable boundary conditions, including slip conditions. Desjardins and Grenier\cite{Desjardins1999} obtained local strong convergence in the whole space using acoustic dispersive estimates. In the strong solution framework, Danchin\cite{Danchin2002} developed the low Mach number limit in critical spaces in the whole space. In bounded domains with no-slip boundary conditions, Desjardins, Grenier, Lions and Masmoudi\cite{DJLM1999} showed that acoustic boundary layers yield strong velocity convergence for ill-prepared initial data under an appropriate geometric condition. For general Navier slip conditions in three-dimensional bounded domains, Masmoudi, Rousset and Sun\cite{Masmoudi2002} established uniform conormal regularity and justified the incompressible limit for ill-prepared initial data. In exterior domains with slip boundary conditions, Donatelli, Feireisl and Novotný\cite{Donatelli2010} justified the incompressible limit using local acoustic energy decay. For other interesting works, see \cite{Feireisl2011, Ou2014, Ou2022, Xiong2018} and the references therein.
	
	For the non-isentropic Navier-Stokes equations, the heat conductivity and the temperature variations play essential roles.  When the heat conductivity vanishes, Kim and Lee\cite{Kim2005} established the incompressible limit for well-prepared data in the whole space \(\R^3\), while Jiang and Ou\cite{Jiang2011} treated a three-dimensional bounded domain with no-slip boundary conditions. For positive heat conductivity, Feireisl and Novotný\cite{Feireisl2007} justified the low Mach number limit of weak solutions to the Navier-Stokes–Fourier system in the periodic domain \(\mathbb{T}^3\) for ill-prepared initial data. In three-dimensional bounded domains, Dou, Jiang and Ou\cite{Dou2015} established the low Mach number limit of local strong solutions to the full Navier-Stokes equations for well-prepared data. For global strong solutions, Ren and Ou studied the same problem in \cite{Ren20161, Ren20162}, considering slip and non-slip boundary conditions respectively. It is noted that the results in references \cite{Dou2015, Ren20161, Ren20162} all rely on the assumption of small temperature variations. The large temperature variations bring more difficulties, since the density is no longer close to a constant state and spatial differentiation generates variable-coefficient terms that are difficult to control. For large temperature variations, Alazard\cite{Alazard2005LowMN} established uniform regularity in the whole space and periodic domains, and proved strong local convergence in the whole space under additional far-field assumptions. Ju-Ou\cite{Ju2022} verified the low Mach number limit of Navier-Stokes equations in a 3D bounded domain with the vorticity-slip condition for well-prepared initial data, while Sun\cite{Sun2022} considered the same problem in a smooth domain with general initial data and the Navier-slip boundary condition. It is worth noting that results concerning global solutions in the regime of large temperature variations remain scarce. The major result is due to Huang, Wang and Wang\cite{HWW}, in which they justified the low Mach number limit for 1D Navier–Stokes flows, whose density and temperature have different asymptotic states at infinity. They proved the convergence globally in time as Mach number goes to zero for well-prepared data. 
	Recently, Li and Yin\cite{Li2026} extended this result to the one-dimensional Navier–Stokes–Korteweg system.
	To the best of our knowledge, no results on global solutions with large temperature variations are currently available in higher dimensions.

	The aim of the current paper is to verify the low Mach number limit of compressible non-isentropic 3D Navier-Stokes equations in $\Omega = \R\times\Torus^2$, where a special condition is mentioned:
	\begin{align}\label{entropy wave condition}
		(\rho^\e, \T^\e) (x,t) \to (\rho_{\pm}, \T_{\pm}) \quad \text{as} \quad x_1 \to \pm \infty \quad \text{with} \quad \rho_- \T_- = \rho_+ \T_+,
	\end{align}
	and \(\mathbf{u} \to 0\) as \(x_1 \to \pm \infty\). We start from the construction of the special solution of (\ref{the limit of NS Eqs}): 
	\[(\hat{\rho}, \hat{u}_1, 0, 0, \hat{\T}) = (\hat{\rho}(x_1), \hat{u}_1 (x_1), 0, 0, \hat{\T}(x_1)).\]
	According to \((\ref{the limit of NS Eqs})_1\), we choose
	\begin{align}\label{hat_u1}
		\hat{u}_1 =\frac{\kappa}{2} \partial_{x_1} \hat{\T} = -\frac{\kappa}{2} \frac{\partial_{x_1} \hat{\rho}}{|\hat{\rho}|^2}.
	\end{align}
	Substituting (\ref{hat_u1}) into (\ref{the equs of rho}) leads to
	\begin{align}\label{diffusion wave of rho}
		\hat{\rho}_t - \partial_{x_1} \big(\frac{\kappa \partial_{x_1} \hat{\rho}}{2 \hat{\rho}}\big) =0.
	\end{align}
	From \cite{Hsiao1993,Liu1997}, it is known that the nonlinear diffusion equation (\ref{diffusion wave of rho}) admits a unique self-similar solution \(\hat{\rho}(\frac{x_1}{\sqrt{1+t}})\), satisfying
	\begin{align*}
		\partial_{x_1} \hat{\rho} = \frac{O(1) \delta}{\sqrt{1+t}} e^{-\frac{d_0 x_1^2}{1+t}},
	\end{align*}
	where \(\delta\) denotes the strength of the diffusion wave. Indeed, setting
	\begin{align}\label{special solution of the limit system}
		\hat{\rho} = \hat{\rho}, \quad\quad \hat{\mathbf{u}} =(-\frac{\kappa}{2} \frac{\partial_{x_1} \hat{\rho}}{|\hat{\rho}|^2}, 0, 0), \quad\quad \hat{\T} = \frac{1}{\hat{\rho}},
	\end{align}
	we can easily find that \((\hat{\rho}, \hat{u}, \hat{\T})\) is a special solution of (\ref{the limit of NS Eqs}), that is,
	\begin{equation}
		\left\{
		\begin{aligned}
			&\partial_{x_1} (2\hat{u}_1 - \kappa \partial_{x_1} \hat{\T}_1) = 0, \quad \hat{\rho} = \frac{1}{\hat{\T}}, \\
			&\hat{\rho} (\partial_t \hat{u}_1 + \hat{u}_1 \partial_{x_1} \hat{u}_1) + \partial_{x_1} \hat{\pi} = (2\mu+\lambda) \partial_{x_1}^2 \hat{u}_1, \\
			&\hat{\rho} (\partial_t \hat{\T} + \hat{u}_1 \partial_{x_1} \hat{\T}) + \partial_{x_1} \hat{u}_1 = \kappa \partial_{x_1}^2 \hat{\T},
		\end{aligned}
		\right.
	\end{equation}
	where \(\hat{\pi} = (2\mu+\lambda) \partial_{x_1} \hat{u}_1 - \hat{\rho} \hat{u}_1^2 + \frac{\kappa}{2} \frac{\partial_t \hat{\rho}}{\hat{\rho}}\).
	It should be noted that (\ref{special solution of the limit system}) is not a solution to the original equations (\ref{nondimensional NS Eqs}). Define
	\begin{align}\label{bar}
		(\bar{\rho}, \bar{\mathbf{u}}, \bar{\T})= (\hat{\rho}, \hat{\mathbf{u}}, \hat{\T} - \frac{1}{2}|\varepsilon \bar{u}_1|^2) \quad \text{and} \quad \bar{p} = \bar{\rho} \bar{\T}.
	\end{align}
	A direct calculation gives that
	\begin{equation}
		\left\{
		\begin{aligned}
			&\partial_t \bar{\rho} + \partial_{x_1} (\bar{\rho} \bar{u}_1) =0, \\
			&\varepsilon^2 \partial_t (\bar{\rho} \bar{u}_1) + \partial_{x_1}(\bar{\rho} |\varepsilon \bar{u}_1|^2 + \bar{p}) = (2\mu +\lambda) \varepsilon^2 \partial_{x_1}^2\bar{u}_1 + \partial_{x_1} Q_1, \\
			&\partial_t (\bar{\rho} (\bar{\T} + \frac{1}{2}|\varepsilon \bar{u}_1|^2)) + \partial_{x_1} (\bar{\rho} \bar{u}_1 (\bar{\T} + \frac{1}{2}|\varepsilon \bar{u}_1|^2) + \bar{p} \bar{u}_1) = \kappa \partial_{x_1}^2 \bar{\T} + (2\mu+\lambda) \varepsilon^2 \partial_{x_1} (\bar{u}_1 \partial_{x_1} \bar{u}_1) + \partial_{x_1} Q_2,
		\end{aligned}
		\right.
	\end{equation}
	where
	\begin{align*}
		&Q_1 = -\varepsilon^2 \frac{\kappa}{2} \frac{\partial_t \bar{\rho}}{\bar{\rho}} + \frac{1}{2} \bar{\rho} |\varepsilon \bar{u}_1|^2 - (2\mu+\lambda) \varepsilon^2 \partial_{x_1} \bar{u}_1 = O(1) \delta \varepsilon^2 (1+t)^{-1} e^{-\frac{dx_1^2}{1+t}}, \\
		&Q_2 = -\frac{1}{2} \varepsilon^2 \bar{\rho} \bar{u}_1^3 + \frac{1}{2} \kappa  \partial_{x_1} (|\varepsilon \bar{u}_1|^2) - (2\mu+\lambda) \e^2 (\bar{u}_1 \partial_{x_1} \bar{u}_1) = O(1) \delta \varepsilon^2 (1+t)^{-\frac{3}{2}} e^{-\frac{dx_1^2}{1+t}}.
	\end{align*}
	It can be observed that 
	\begin{align}
		\|(\bar{\rho} - \hat{\rho}, \bar{\mathbf{u}} - \hat{\mathbf{u}}, \bar{\T} - \hat{\T}) \|_{L^\infty(\Omega)} \le C \varepsilon^2 (1+t)^{-1},
	\end{align}
	which suggests that \((\bar{\rho}, \bar{u}, \bar{\T})\) provides a very good approximation to the diffusive wave solution \((\hat{\rho}, \hat{u}, \hat{\T})\) when \(\varepsilon\) is small.
	
	We will study the low Mach number limit near the planar diffusion waves. The paper is organized as follows. In Section \ref{section2}, we construct a new ansatz and state our main theorems. In Section \ref{section3}
	we consider the low Mach number limit for the well-prepared data and in Section \ref{section4} for the ill-prepared data.

	\subsection*{Notation}
	\begin{itemize}
		\item To state our main theorems, we need to introduce the zero and non-zero modes in Fourier space. We set
		\begin{align*}
			\mathbf{D}_0 f^\e ( x_1, t) := \mathring{f}^\e ( x_1, t) = \int_{\mathbb{T}^2} f^\e d x_2 d x_3, \qquad \mathbf{D}_{\neq} f^\e ( x, t) := f^\e_{\neq} ( x, t) := f^\e( x, t)-\mathring{f}^\e( x_1, t),
		\end{align*}
		for any function \(f^\e\) integrable on \(\mathbb T^2\).
		
		When we introduce the scaling \eqref{transformation}, the zero mode is 
		\begin{align*}
			\mathbf{D}_0 f(y_1, \tau) := \mathring{f}(y_1, \tau)=\varepsilon^2 \int_{(\frac{\Torus}{\varepsilon})^2} f (y, \tau) dy_2dy_3,
		\end{align*}
		and the non-zero mode is
		\begin{align*}
			\mathbf{D}_{\neq} f(y, \tau) := f_{\neq} (y, \tau) = f(\tau, y) - \mathring{f}(y_1, \tau),
		\end{align*} 
		for any function $f(y, \tau)=f^\varepsilon(x, t)$ integrable on \(\frac{\mathbb{T}^2}{\e^2}\). 
		
		\item The anti-derivative of \(f(t,x)\) is defined by 
		\begin{align*}
			F^\e(x_1,t):=\int_{-\infty}^{x_1} \mathring{f}^\e(s,t)ds=\int_{-\infty}^{x_1}\int_{\Torus^2}f^\e(x,t)dx_1dx_2dx_3.
		\end{align*}
		Under the scaling \eqref{transformation}, the anti-derivative of $f(y,\tau)$ is
		\begin{align*}
			& F(y_1,\tau):=\int_{-\infty}^{y_1}\mathring{f}(s,\tau) ds =\varepsilon^2\int_{-\infty}^{y_1}\int_{(\frac{\Torus}{\varepsilon})^2} f(s,y_2,y_3,\tau)dsdy_2dy_3.
		\end{align*}
		
		\item Commutators are defined by \([\mathcal{L}, f]g = \mathcal{L}(fg)- f(\mathcal{L} g)\), \([f,\mathcal{L}]g = -[\mathcal{L}, f]g\), where \(\mathcal{L}\) is a differential operator and \(f, g\) are functions.
		
		\item The Frobenius product of the matrix \(A=(A_{ij})\) and \(B=(B_{ij})\) is defined by \(A :B := \sum_{i,j} A_{ij}B_{ij}\).
		
		\item \(c\), \(c_i\), \(C\), \(C_i\) and \(\tilde{C}_i\), \(i \in \mathbb{N}\) are always used to denote positive constants independent of \(\e\).

	\end{itemize}
	We  describe the relationship between the norms of the original and rescaled functions.
	\begin{Rem}
		According to the definition of the zero mode and the non-zero mode, we have
		\begin{align}\label{relation1}
			\norm{f_0}_{L_y^2({\R\times(\frac{\Torus}{\varepsilon})^2)}}^2=\norm{\mathring f_0}_{L_y^2({\R{\times(\frac{\Torus}{\varepsilon})^2})}}^2+\norm{f_{0\neq}}_{L_y^2({\R\times(\frac{\Torus}{\varepsilon})^2)}}^2={\varepsilon^{-2}}\norm{\mathring f_0}_{L_y^2({\R)}}^2+\norm{f_{0\neq}}_{L_y^2({\R\times(\frac{\Torus}{\varepsilon})^2)}}^2,
		\end{align}
		for any scaled $f(y,\tau)$.
		Moreover, the relationship between the norms of $f^\varepsilon(x,t)$ and $f(y,\tau)$ can be described as follows:
		\begin{align}\label{relation2}
			\norm{\nabla_x^k f^{\varepsilon}}_{L_x^2(\R\times\Torus^2)}^2 = \varepsilon^{3-2k}\norm{\nabla_y^k f}_{L_y^2(\R\times(\frac{\Torus}{\varepsilon})^2)}^2,\qquad  \norm{\p_{x_1}^k \mathring{f}^{\varepsilon}}_{L_x^2(\R)}^2=\varepsilon^{1-2k}\norm{\p_{y_1}^k \mathring{f}}_{L_y^2(\R)}^2.
		\end{align}
	\end{Rem}
	
	\begin{Rem}
		The relationship between \(F^{\varepsilon}(x_1,t)\) and \(F(y_1,\tau)\) are given by
		\begin{align*}
			& F(y_1,\tau):=\int_{-\infty}^{y_1}\varepsilon^2\int_{(\frac{\Torus}{\varepsilon})^2} f(s,y_2,y_3,\tau)dsdy_2dy_3\\
			&\qquad\quad\;\;\;=\int_{-\infty}^{y_1}\int_{(\frac{\Torus}{\varepsilon})^2} f^{\varepsilon}(\varepsilon s,\varepsilon y_2,\varepsilon y_3,\varepsilon^2 \tau) ds d(\varepsilon y_2)d(\varepsilon y_3)\\
			% &\qquad\quad\;\;\;=\int_{-\infty}^{y_1}\int_{\Torus^2} f^{\varepsilon}(\varepsilon s,x_2,x_3,t) ds dx_2 dx_3=\varepsilon^{-1}\int_{-\infty}^{y_1}\int_{\Torus^2} f^{\varepsilon}(\varepsilon s,x_2,x_3,t) d(\varepsilon s) dx_2 dx_3\\
			&\qquad\quad\;\;\;=\varepsilon^{-1}\int_{-\infty}^{\varepsilon y_1}\int_{\Torus^2} f^{\varepsilon}(s',x_2,x_3,t) ds' dx_2 dx_3 =\varepsilon^{-1} F^{\varepsilon}(x_1,t).
		\end{align*}
		As for the relationship between the norm, we have
		\begin{align*}
			&\norm{F(y_1,\tau)}_{L_y^2(\R)}^2=\varepsilon^{-3}\norm{F^{\varepsilon}(x_1,t)}_{L_x^2(\R)}^2 ,\qquad\p_{y_1} F(y_1,\tau)=\varepsilon^{-1} \p_{y_1} F^{\varepsilon}(x_1,t)=\p_{x_1}F^{\varepsilon}(x_1,t).
			% &\norm{\mathring{f}(y_1,\tau)}_{L_y^2(\R)}^2=\norm{\p_{y_1} F(y_1,\tau)}_{L_y^2(\R)}^2=\varepsilon^{-1}\norm{\p_{x_1}F^{\varepsilon}(x_1,t)}_{L^2_x(\R)}^2=\varepsilon^{-1}\norm{\mathring{f}^{\varepsilon}(x_1,t)}_{L_x^2(\R)}^2.
		\end{align*}
	\end{Rem}

	\section{Construction of ansatz and main theorems} \label{section2}
	
	\subsection{Well-prepared initial data}
	Introduce the following scaled variables
	\begin{align}\label{transformation}
		y=\frac x \varepsilon,\qquad \tau=\frac {t} {\varepsilon^2},
	\end{align}
	and set 
	\begin{align*}
		&\mathbf{m}^\e = \rho^\e \mathbf{u}^\e, \quad \E^\e = \rho^{\varepsilon}(\T^{\varepsilon}+\frac{|\varepsilon \mathbf{u}^{\varepsilon}|^2}{2})\\
		&\nabla_y:=(\p_{y_1},\p_{y_2},\p_{y_3}),\quad \dv_{y}:=\nabla_y \cdot,\quad \Delta_y:=\p_{y_1}^2+\p_{y_2}^2+\p_{y_3}^2.
	\end{align*}
	Here we omit the superscript \(\e\) of the variables after the transformation (\ref{transformation}).
	Then the new unknown function $(\rho,\e \mathbf{m},\E)(y,\tau)$ and \((\bar{\rho}, \e \bar{\mathbf{m}}, \bar{\E})(y,\tau)\) satisfy, respectively,
	\begin{equation}\label{scaled NS Eqs}
		\left\{
		\begin{aligned}
			&\partial_{\tau} \rho + \dv_{y} (\varepsilon \mathbf{m}) = 0, \\
			&\partial_{\tau}(\varepsilon \mathbf{m}) + \dv_{y} (\varepsilon \mathbf{m} \otimes \varepsilon \mathbf{u}) + \nabla_y p = \dv_{y} \mathbb{S}(\e \mathbf{u}), \\
			&\partial_{\tau} \E + \dv_{y} [\e \mathbf{u} \E + \e  \rho \mathbf{u} \T] = \kappa \Delta_y \T + \dv_{y}(\e \mathbf{u} \mathbb{S}(\e \mathbf{u})),
		\end{aligned}
		\right.
	\end{equation}
	and
	\begin{equation}
		\left\{
		\begin{aligned}
			&\partial_{\tau} \bar{\rho} + \partial_{y_1}(\e \bar{m}_1) = 0, \\
			&\partial_{\tau} (\e \bar{m}_1) + \partial_{y_1}(\e \bar{m}_1 \e \bar{u}_1) + \partial_{y_1} \bar{p} = (2\mu+\lambda) \partial_{y_1}^2 (\e \bar{u}_1) + \partial_{y_1} \bar{Q}_1, \\
			&\partial_{\tau} \bar{\E} + \partial_{y_1} (\e \bar{u}_1\bar{\E} + \e \bar{\rho} \bar{u}_1 \bar{\T}) = \kappa \partial_{y_1}^2 \bar{\T} + (2\mu+\lambda) \partial_{y_1}(\e \bar{u}_1 \partial_{y_1}(\e \bar{u}_1)) + \partial_{y_1} \bar{Q}_2,
		\end{aligned}
		\right.
	\end{equation}
	where
	\begin{align*}
		&\bar{Q}_1 = -\frac{\kappa}{2} \frac{\p_\tau \bar{\rho}}{\bar{\rho}} + \frac{1}{2}\bar{\rho} |\e \bar{u}_1|^2 - (2\mu+\lambda)\p_{y_1} (\e \bar{u}_1) = O(1) \delta \e^2 (1+\e^2 \tau)^{-1} e^{-\frac{d \e^2 y_1^2}{1+\e^2 \tau}} ,\\
		&\bar{Q}_2 = -\frac{1}{2} \bar{\rho} (\e \bar{u}_1)^3 + \frac{\kappa}{2} \p_{y_1}(|\e \bar{u}_1|^2) - (2\mu+\lambda) \e \bar{u}_1 \p_{y_1}(\e \bar{u}_1)= O(1) \delta \e^3 (1+\e^2 \tau)^{-\frac{3}{2}} e^{-\frac{d \e^2 y_1^2}{1+\e^2 \tau}}.
	\end{align*}

	\subsubsection{The extra initial mass and the diffusion waves}
	For convenience, we denote the conserved quantities by
	\begin{align*}%\label{conserved-law}
		& U^\varepsilon=(\rho^\varepsilon,\varepsilon(m_1^\varepsilon,m_2^\varepsilon,m_3^\varepsilon),\E^\varepsilon)^{t},\;\bar{U}^\varepsilon=(\bar{\rho}^\varepsilon,\varepsilon(\bar{m}^\varepsilon_1, \bar{m}^\varepsilon_2,\bar{m}^\varepsilon_3),\bar{\E}^\varepsilon)^{t},\; U^{\varepsilon\#}=(\rho^\varepsilon,\varepsilon m_1^\varepsilon,\E^\varepsilon)^{t}, \;\bar{U}^{\varepsilon\#}=(\bar{\rho}^\varepsilon,\varepsilon\bar{m}^\varepsilon_1, \bar{\E}^\varepsilon)^{t},\\
		&  U=(\rho,\varepsilon(m_1,m_2,m_3),\E)^{t},\quad\;\bar{U}=(\bar{\rho},\varepsilon(\bar{m}_1, \bar{m}_2,\bar{m}_3),\bar{\E})^{t},\quad\; U^{\#}=(\rho,\varepsilon m_1,\E)^{t}, \quad\;\bar{U}^{\#}=(\bar{\rho},\varepsilon\bar{m}_1, \bar{\E})^{t},  
	\end{align*}
	where $\E^{\varepsilon}=\rho^{\varepsilon}(e^{\varepsilon}+\frac{|\varepsilon \mathbf{u}^{\varepsilon}|^2}{2})$ and $\E=\rho(e+\frac{|\varepsilon \mathbf{u}|^2}{2})$.  
	%    We define 
	% \begin{align*}
		% 0\neq A=\int_{\R\times\Torus^2} U^{\varepsilon}-\bar{U}^{\varepsilon} dx \le \norm{U^{\varepsilon}-\bar{U}^{\varepsilon}}_{L^1(\R\times\Torus^2)}.
		% %:=\delta\varepsilon^{\ast}.
		% \end{align*}
	% And we also have
	% \begin{align*}
		% \varepsilon^{-2} \int_{\R} (\mathring{U}-\bar{U}^{\varepsilon}) dy_1 = \varepsilon^{-3}A, \quad \Rightarrow \quad \int_{\R} (\mathring{U}-\bar{U}^{\varepsilon}) dy_1 = \varepsilon^{-1} A.
		% \end{align*}
	We will construct diffusion waves to carry the extra initial mass, which is inspired by \cite{Liu1,SX,HXY}. Note that the extra initial mass is distributed along the $x_1$-direction. The Jacobi matrices for the flux of 1-d Navier-Stokes equations in Eulerian coordinates \cref{scaled NS Eqs} at $(\rho_+,\varepsilon m_{1+},\E_+)$ and $(\rho_-,\varepsilon m_{1-},\E_-)$ are given, respectively, as 
	\begin{align*}
		A_\pm=\left(\begin{array}{ccc}
			0&1&0\\
			0&0&1\\
			0&2\frac{\E_\pm}{\rho_\pm}&0
		\end{array}\right).
	\end{align*}
	Here we have used \(m_{1\pm}=0\), since
	the planar diffusion wave under consideration has zero far-field velocity.
	A direct computation then shows that $\lambda_{1}^-=-\sqrt{2\frac{\E_-}{\rho_-}}$ is the first eigenvalue of $A_-$ corresponding with $r_1^{-}=(1,\lambda_1^-,(\lambda_1^{-})^2)^t$ and $\lambda_{3}^+=\sqrt{2\frac{\E_+}{\rho_+}}$ is the third eigenvalue of $A_+$ corresponding with $r_3^{+}=(1,\lambda_3^+,(\lambda_3^{+})^2)^t$. Since the following three vectors, $r_1^-$, $(\rho_--\rho_+,\varepsilon(m_{1-}-m_{1+}),\E_--\E_+)$ and $r_3^+$ are linearly independent, we have
	\begin{align}\label{eq-initial mass}
		\int_{\R}\mathring{U}^{\#}(y_1, 0)-\bar{U}^{\#}(y_1, 0)dy_1=\bar{\Theta}_1r_1^-+\bar{\Theta}_2(\rho_--\rho_+,\varepsilon(m_{1-}-m_{1+}),\E_--\E_+)^t+\bar{\Theta}_3r_3^+,
	\end{align}
	where the coefficients \(\bar{\Theta}_i, (i = 1,2,3)\) are determined by the initial data \((\bar{\rho}, \e \bar{m}_1, \bar{\E})(y_1, 0)\).
	We will couple two diffusion waves with $(\bar{\rho},\e\bar m_1,\bar{\mathcal E})(y_1,\tau)$, which are defined by
	\begin{align*}
		\Theta_1(y_1,\tau)=\frac{1}{\sqrt{4\pi(\varepsilon^{-2}+\tau)}}e^{-\frac{(y_1-\lambda_1^-(\varepsilon^{-2}+\tau))^2}{4(\varepsilon^{-2}+\tau)}},\qquad\Theta_3(y_1,\tau)=\frac{1}{\sqrt{4\pi(\varepsilon^{-2}+\tau)}}e^{-\frac{(y_1-\lambda_3^+(\varepsilon^{-2}+\tau))^2}{4(\varepsilon^{-2}+\tau)}}.
	\end{align*}
	It is direct to verify that they satisfy
	\begin{align*}
		\left\{\begin{aligned}
			&\p_\tau\Theta_{1}+\lambda_1^{-}\p_{y_1}\Theta_{1}=\p_{y_1}^2\Theta_{1},\\
			& \int_{-\infty}^{+\infty}\Theta_1(y_1,\tau)dy_1=1,
		\end{aligned}\right.\qquad\left\{\begin{aligned}
			&\p_{\tau}\Theta_{3}+\lambda_3^{+}\p_{y_1}\Theta_{3}=\p_{y_1}^2\Theta_{3},\\
			&\int_{-\infty}^{+\infty}\Theta_3(y_1,\tau)dy_1=1.
		\end{aligned}\right.
	\end{align*}
	Let
	\begin{align*}
		&\bar{\Theta}_{i+2}:=\varepsilon\int_{\R}\mathring{m}_i(y_1,0)dy_1, \\
		&\Theta_{i+2} := \frac{1}{\sqrt{4 \pi(\varepsilon^{-2}+\tau)}} e^{-\frac{y_1^{2}}{4(\varepsilon^{-2}+\tau)}}, \qquad i=2,3,
	\end{align*}
	Then we define the new ansatz $(\tilde{\rho},\varepsilon\tilde{\mathbf{m}},\tilde{\E})(y_1,\tau)$ by
	\begin{align}\label{tilde}
		\begin{aligned}
			&\tilde{\rho}(y_1,\tau)={\bar{\rho}}\left(y_1-\bar{\Theta}_2, \tau\right)+\bar{\Theta}_{1} \Theta_{1}+\bar{\Theta}_{3} \Theta_{3}, \\
			&\varepsilon\tilde{m}_{1}(y_1,\tau)=\varepsilon{\bar{m}}_{1}\left(y_1-\bar{\Theta}_2, \tau\right)+\lambda_{1}^{-} \bar{\Theta}_{1} \Theta_{1}+\lambda_{3}^{+} \bar{\Theta}_{3} \Theta_{3}, \\
			&\varepsilon\tilde{m}_{i}(y_1,\tau)=\bar{\Theta}_{i+2} \Theta_{i+2}, \quad i=2,3, \\
			&\tilde{\E}(y_1,\tau)=\bar{\E}\left(y_1-\bar{\Theta}_2, \tau\right)+\left((\lambda_1^-)^2\bar{\Theta}_{1} \Theta_{1}+(\lambda_3^+)^2\bar{\Theta}_{3} \Theta_{3}\right).
		\end{aligned}
	\end{align}
	% It follows that all the new variables introduced in (\ref{tilde}) are well-defined.
	Without loss of generality, we assume $\bar{\Theta}_2=0$ \cite{HXY}.
	By a slight abuse of notation, we identify
	\((a,b,c)^t\in\mathbb{R}^3\) with
	\((a,b,0,0,c)^t\in\mathbb{R}^5\) in the following identity.
	Let \(\mathbf e_j\) denote the \(j\)-th standard basis vector
	of \(\mathbb{R}^5\).
	Then the initial perturbation relative to the new ansatz
	\eqref{tilde} satisfies the zero-mass condition
	\begin{align*}
		\int_{\mathbb{R}}
		\left[\mathring{U}(y_1,0)-\tilde{U}(y_1,0)\right]\,dy_1 =
		\int_{\mathbb{R}}
		\left[\mathring{U}^{\#}(y_1,0)-\bar{U}^{\#}(y_1,0)\right]\,dy_1
		+\sum_{i=2}^{3}\bar{\Theta}_{i+2}\mathbf e_{i+1} +
		\int_{\mathbb{R}}
		\left[\bar{U}(y_1,0)-\tilde{U}(y_1,0)\right]\,dy_1
		= 0.
	\end{align*}
	% Then the initial extra mass for new ansatz \cref{tilde} is
	%\todo{A minor issue: Dimensions are not consistent?} 
	% \begin{align*}%\label{eq-initial mass-1}
		% \begin{aligned}
			% &\int_{\R}\left[\mathring{U}(y_1,0)-\tilde{U}(y_1,0)\right]dy_1=\int_{\R}\left[\mathring{U}^{\#}(y_1,0)-\bar{U}^{\#}(y_1,0)\right]dy_1+\sum_{i=2}^3\bar{\Theta}_{i+2}+\int_{\R}\left(\bar{U}-\tilde{U}\right)(y_1,0)dy_1=0,
			% \end{aligned}
		% \end{align*}
	where \(\tilde{U} = (\tilde{\rho}, \e\tilde{m}_1, \e\tilde{m}_2, \e\tilde{m}_3, \tilde{\E})\).
	It is easy to see that
	\begin{align*}
		\sum_{i=1}^5  \abs{\bar{\Theta}_i} \le C\varepsilon^{-1} \norm{(U^{\e}-\bar{U}^{\e})(y, 0)}_{L^1(\Omega)}.
		%\le C\delta\varepsilon^{-1}\varepsilon^{\ast}.
	\end{align*}
	% And we  notice that
	% \begin{align*}
		% & \p_{y_1}^{k+1} \bar{\rho} = \p_{y_1}^k (\e \bar{u}_1) = O(1)\delta\varepsilon^{k+1} (1+\varepsilon^2 \tau)^{-\frac{k+1}{2}}e^{-\frac{\varepsilon^2 y_1^2}{1+\varepsilon^2 \tau}} =O(1)\delta(\varepsilon^{-2}+\tau)^{-\frac{k+1}{2}} e^{-\frac{y_1^2}{\varepsilon^{-2}+\tau}},\\
		% & \p_{y_1}^k(\bar{\Theta}_1\Theta_1)= O(1) A \varepsilon^{-1} (\varepsilon^{-2}+\tau)^{-\frac{k+1}{2}} e^{-\frac{(y_1-\lambda_1^{-} (\varepsilon^{-N}+\tau) )^2}{\varepsilon^{-N}+\tau}}, \\
		% %\delta\varepsilon^{\ast}\varepsilon^{-1} (\varepsilon^{-N}+\tau)^{-\frac{k+1}{2}} e^{-\frac{(y_1-\lambda_1^{-} (\varepsilon^{-N}+\tau) )^2}{\varepsilon^{-N}+\tau}},
		% & \p_{y_1}^k (\varepsilon \tilde{m}_i)=O(1) A \varepsilon^{-1} (\varepsilon^{-2}+\tau)^{-\frac{k+1}{2}} e^{-\frac{y_1^2}{\varepsilon^{-2}+\tau}}.
		% %O(1)\delta \varepsilon^{\ast} \varepsilon^{-1} (\varepsilon^{-N}+\tau)^{-\frac{k+1}{2}} e^{-\frac{y_1^2}{\varepsilon^{-N}+\tau}}.
		% \end{align*}
	% So, we choose $A=\varepsilon^2$ and $N=2$, then 
	% %So, we choose $\varepsilon^{\ast}=\varepsilon$ and $N=2$, then 
	% \begin{align*}
		% \norm{\p_{y_1}^{k+1} \bar{\rho}}_{L^2_y}\approx\norm{ \p_{y_1}^k(\e \bar{u}_1)}_{L^2_y}\approx\norm{ \p_{y_1}^k(\bar{\Theta}_1\Theta_1)}_{L^2_y}\approx\norm{\p_{y_1}^k (\varepsilon \tilde{m}_i)}_{L^2_y},
		% \end{align*}
	% which means entropy waves are similar with diffusion waves.
	
	Moreover, for \(i=1,2,3,\) non-conserved quantities, the velocity \(\tilde{u}\), the temperature \(\tilde{\T}\) and the pressure \(\tilde{p}\), can be defined as follows:
	\begin{align}
		\tilde{u}_i := \frac{\tilde{m}_i}{\tilde{\rho}}, \quad \tilde{\T} :=\frac{\tilde{\E}}{\tilde{\rho}} - \frac{|\e \tilde{\mathbf{m}}|^2}{2\tilde{\rho}^2}, \quad \tilde{p} = \tilde{\E} -\frac{|\e \tilde{\mathbf{m}}|^2}{2\tilde{\rho}}.
	\end{align}
	A direct calculation gives that
	\begin{equation}\label{tilde U equs}
		\left\{
		\begin{aligned}
			&\p_\tau \tilde{\rho} + \dv_{y} (\e \tilde{\mathbf{m}}) = \p_{y_1} \tilde{\Gamma}_1, \\
			&\p_\tau (\e \tilde{m}_1) + \dv_{y} \big(\frac{\e \tilde{m}_1 \e \tilde{\mathbf{m}}}{\tilde{\rho}}\big) + \p_{y_1} \tilde{p} = \mu \Delta_y (\e \tilde{u}_1) + (\mu + \lambda) \p_{y_1} (\e \dv_{y} \tilde{\mathbf{u}}) + \dv_{y} \tilde{\mathbf{\Gamma}}_{21}, \\
			&\p_\tau (\e \tilde{m}_2) + \dv_{y} \big(\frac{\e \tilde{m}_2 \e \tilde{\mathbf{m}}}{\tilde{\rho}}\big) + \p_{y_2} \tilde{p} = \mu \Delta_y (\e \tilde{u}_2) + (\mu + \lambda) \p_{y_2} (\e \dv_{y} \tilde{\mathbf{u}}) + \dv_{y} \tilde{\mathbf{\Gamma}}_{22}, \\
			&\p_\tau (\e \tilde{m}_3) + \dv_{y} \big(\frac{\e \tilde{m}_3 \e \tilde{\mathbf{m}}}{\tilde{\rho}}\big) + \p_{y_3} \tilde{p} = \mu \Delta_y (\e \tilde{u}_3) + (\mu + \lambda) \p_{y_3} (\e \dv_{y} \tilde{\mathbf{u}}) + \dv_{y} \tilde{\mathbf{\Gamma}}_{23}, \\
			&\p_\tau \tilde{\E} + \dv_{y} \big(\e \tilde{\mathbf{u}} (\tilde{\E} + \tilde{p})\big) = \dv_{y} \big(\e \tilde{\mathbf{u}} \tilde{\mathbb{S}}(\e \tilde{\mathbf{u}}) + \kappa \nabla_y \tilde{\T}\big) + \dv_{y} \tilde{\mathbf{\Gamma}}_3,
		\end{aligned}
		\right.
	\end{equation}
	where
	\begin{align*}
		\tilde{\Gamma}_1~ =& \bar{\Theta}_1 \p_{y_1} \Theta_1 + \bar{\Theta}_3 \p_{y_1} \Theta_3,\\
		\tilde{\mathbf{\Gamma}}_{21} =& \lambda_{1}^- \bar{\Theta}_1 \p_{y_1} \Theta_1 \mathbb{I}_1 + \lambda_{3}^+ \bar{\Theta}_3 \p_{y_1} \Theta_3 \mathbb{I}_1 - (\lambda+\mu) \dv_{y} \big(\frac{\e \tilde{\mathbf{m}}}{\tilde{\rho}} - \frac{\e \bar{\mathbf{m}}}{\bar{\rho}}\big) \mathbb{I}_1 + \bar{Q}_1 \mathbb{I}_1 \\
		&+ \big(\frac{\e \tilde{m}_1 \e \tilde{\mathbf{m}}}{\tilde{\rho}} - \frac{|\e \tilde{\mathbf{m}}|^2}{2\tilde{\rho}} - \frac{\e \bar{m}_1 \e \bar{\mathbf{m}}}{\bar{\rho}} + \frac{|\e \bar{\mathbf{m}}|^2}{2\bar{\rho}}\big)\mathbb{I}_1 - \mu \p_{y_1} \big(\frac{\e \tilde{m}_1}{\tilde{\rho}} - \frac{\e \bar{m}_1}{\bar{\rho}}\big) \mathbb{I}_1, \\
		\tilde{\mathbf{\Gamma}}_{2i} =& \bar{\Theta}_{i+2} \p_{y_1} \Theta_{i+2} \mathbb{I}_1 
		+ \big(\frac{\e \tilde{m}_i \e \tilde{m}_1}{\tilde{\rho}}  - \frac{\e \bar{m}_i \e \bar{m}_1}{\bar{\rho}} \big)\mathbb{I}_1 - \mu \p_{y_1} \big(\frac{\e \tilde{m}_i}{\tilde{\rho}} - \frac{\e \bar{m}_i}{\bar{\rho}}\big) \mathbb{I}_1, \quad i=2,3,\\
		\tilde{\mathbf{\Gamma}}_3~ =& (\lambda_{1}^-)^2 \bar{\Theta}_1 \p_{y_1} \Theta_1 \mathbb{I}_1 + (\lambda_{3}^+)^2 \bar{\Theta}_3 \p_{y_1} \Theta_3 \mathbb{I}_1 - [\e \tilde{\mathbf{u}} \mathbb{S}(\e \tilde{\mathbf{u}}) - \e \bar{\mathbf{u}} \mathbb{S}(\e \bar{\mathbf{u}})] + \bar{Q}_2 \mathbb{I}_1 \\
		&- \big(\frac{\e \tilde{\mathbf{m}}|\e \tilde{\mathbf{m}}|^2 }{2\tilde{\rho}^2} - \frac{\e \bar{\mathbf{m}} |\e \bar{\mathbf{m}}|^2}{2\bar{\rho}^2}\big) - \kappa \nabla_y \big(\frac{\tilde{\E}}{\tilde{\rho}} - \frac{1}{2} \big|\frac{\e \tilde{\mathbf{m}}}{\tilde{\rho}}\big|^2 - \frac{\bar{\E}}{\bar{\rho}} + \frac{1}{2} \big|\frac{\e \bar{\mathbf{m}}}{\bar{\rho}}\big|^2\big) \\
		&+ 2\big(\frac{\e \tilde{m}_1 \tilde{\E}}{\tilde{\rho}} - \frac{\e \bar{m}_1 \bar{\E}}{\bar{\rho}} - \lambda_{1}^- \frac{\E_-}{\rho_-} \bar{\Theta}_1 \Theta_{1} - \lambda_{3}^+ \frac{\E_+}{\rho_+} \bar{\Theta}_3 \Theta_{3} \big) \mathbb{I}_1,
	\end{align*}
	with \(\mathbb{I}_1 = (1,0,0)^t\). It is easy to check that
	\begin{equation}\label{2026-9-20}
		\begin{aligned}
			&|\tilde{\Gamma}_1| \le C \big(\sum_{i=1}^5 \abs{\bar{\Theta}_i} \big)\frac{1}{\e^{-2} + \tau} \big( e^{-\frac{d(y_1-\lambda_1^-(\varepsilon^{-2}+\tau))^2}{\varepsilon^{-2}+\tau}} + e^{-\frac{d(y_1-\lambda_3^+(\varepsilon^{-2}+\tau))^2}{\varepsilon^{-2}+\tau}}\big),   \\
			&|\tilde{\mathbf{\Gamma}}_{2i}| \le C \big(\delta + \sum_{i=1}^5 \abs{\bar{\Theta}_i} \big)\frac{1}{\e^{-2} + \tau} \big(e^{-\frac{d y_1^2}{\varepsilon^{-2}+\tau}}+ e^{-\frac{d(y_1-\lambda_1^-(\varepsilon^{-2}+\tau))^2}{\varepsilon^{-2}+\tau}} + e^{-\frac{d(y_1-\lambda_3^+(\varepsilon^{-2}+\tau))^2}{\varepsilon^{-2}+\tau}}\big),  \quad i=2,3,\\
			&|\tilde{\Gamma}_3| \le C \big(\sum_{i=1}^5 \abs{\bar{\Theta}_i} \big)\frac{1}{\e^{-2} + \tau} \big( e^{-\frac{d(y_1-\lambda_1^-(\varepsilon^{-2}+\tau))^2}{\varepsilon^{-2}+\tau}} + e^{-\frac{d(y_1-\lambda_3^+(\varepsilon^{-2}+\tau))^2}{\varepsilon^{-2}+\tau}}\big)+ \frac{C \big(\delta + \sum_{i=1}^5 \abs{\bar{\Theta}_i} \big)}{(\e^{-2} + \tau)^\frac{3}{2}} e^{-\frac{d y_1^2}{\varepsilon^{-2}+\tau}},
		\end{aligned}
	\end{equation}
	where \(d>0\) is a constant independent of any small parameter in this paper.
	
	For convenience, we set
	\begin{equation}
		\begin{aligned}
			&D_{-\alpha}^1(y_1, \tau) = \frac{1}{(\e^{-2} + \tau)^{\alpha}} e^{-\frac{d y_1^2}{\varepsilon^{-2}+\tau}}, \quad  D_{-\alpha}^2 (y_1, \tau) = \frac{1}{(\e^{-2} + \tau)^{\alpha}} \big( e^{-\frac{d(y_1-\lambda_1^-(\varepsilon^{-2}+\tau))^2}{\varepsilon^{-2}+\tau}} + e^{-\frac{d(y_1-\lambda_3^+(\varepsilon^{-2}+\tau))^2}{\varepsilon^{-2}+\tau}}\big),  \\
			&D_{-\alpha}(y_1, \tau) = D_{-\alpha}^1(y_1, \tau) + D_{-\alpha}^2(y_1, \tau),\quad \tilde D_{-\frac12}^1(y_1, \tau) = \frac{1}{(\e^{-2} + \tau)^{\frac12}} e^{-\frac{\tilde{d}_0 y_1^2}{\varepsilon^{-2}+\tau}} \quad\text{for}\quad \tilde{d}_0 < d_0.
			%       \\
			% &\omega_{-\alpha} (y_1, \tau) = (\e^{-2} + \tau)^{\alpha} e^{-\frac{c y_1^2}{\varepsilon^{-2}+\tau}}, \\
			% &\tilde{\omega}_{-\frac{1}{2}}(y_1, \tau) = (\e^{-2} + \tau)^{-\frac{1}{2}} e^{-\frac{\tilde{c}_0  y_1^2}{\varepsilon^{-2}+\tau}}, \quad \text{for} \quad \tilde{c}_0 < c_0. 
		\end{aligned}
	\end{equation}
	% Moreover, we introduce following new notations:
	% \begin{align*}
		% \hat{N}_2 (U) =\frac{\e \mathbf{m} \otimes \e \mathbf{m}}{\rho} - \frac{|\e \mathbf{m}|^2}{2\rho} \mathbb{I}, \qquad \hat{N}_3 (U) = \frac{\e \mathbf{m} \E}{\rho} + p \frac{\e \mathbf{m}}{\rho} - \frac{\e \mathbf{m}}{\rho} \mathbb{S}(\e \mathbf{u}).
		% \end{align*}
	% Then, (\ref{scaled NS Eqs}) can be rewritten as
	% \begin{equation}\label{scaled NS Eqs2}
		% 	\left\{
		% 	\begin{aligned}
			% 	&\p_\tau \rho +\dv_{y} (\e \mathbf{m}) = 0, \\
			% 	&\p_\tau \E + \dv_{y} \hat{N}_3 (U) = \kappa \Delta_y \big(\frac{\E}{\rho} - \frac{|\e \mathbf{u}|^2}{2}\big).
			% 	\end{aligned}
		% 	\right.
		% \end{equation}
	
	We define the perturbation as
	\begin{align*}
		&(\phi, \psi, \zeta, \varphi, w) (y, \tau) = (\rho - \tilde{\rho}, \e \mathbf{u} - \e \tilde{\mathbf{u}}, \T - \tilde{\T}, \e \mathbf{m} - \e \tilde{\mathbf{m}}, \E - \tilde{\E}), \\
		&(\phi_0, \psi_0, \zeta_0, \varphi_0, w_0) = (\phi, \psi, \zeta, \varphi, w) (y, 0),
	\end{align*}
	and the anti-derivatives of the perturbation for conserved quantities as
	\begin{align*}
		&(\Phi, \Psi, W) = \int_{-\infty}^{y_1} (\mathring{\rho}- \tilde{\rho}, \e \mathring{\mathbf{m}} -\e \tilde{\mathbf{m}}, \mathring{\E}- \tilde{\E})(s,\tau) ds,
	\end{align*}
	with the initial data
	\begin{align*}
		(\Phi_0, \Psi_0, W_0) = (\Phi, \Psi, W) (y_1, 0).
	\end{align*}
	\subsubsection{Main results for well-prepared initial data}    
	Then we state the main results for well-prepared initial data as follows.
	\begin{Thm}[Uniform estimates for well-prepared initial data]\label{Uniform estimates for well-prepared initial data}
		Let  \((\bar{\rho}, \bar{\mathbf{u}}, \bar{\T})\) be the diffusion wave defined in (\ref{bar}) with  wave strength \(\delta = |\rho_+ - \rho_-| + |\T_+ - \T_-|\). There exist positive constants $\eta_0$, $\delta_0$, and
		$\varepsilon_0$ such that, for any fixed
		$\eta\le\eta_0$ and $\delta\le\delta_0$,
		independent of $\varepsilon$, and for every
		$\varepsilon\le\varepsilon_0$, if the initial data satisfy
		% Let \(\Omega = \mathbb{R} \times \mathbb{T}^2\). Assume that \((\rho_-, \mathbf{u}_-, \T_-)\) and \((\rho_+, \mathbf{u}_+, \T_+)\) are the constant states satisfying (\ref{entropy wave condition}), and \((\bar{\rho}, \bar{\mathbf{u}}, \bar{\T})\) is the diffusion wave defined in (\ref{bar}) with the wave strength \(\delta = |\rho_+ - \rho_-| + |\T_+ - \T_-|\).
		% 	    There exist positive constants \(\delta_0\) and \(\e_0\), such that for \(\delta \le \delta_0\), \(\e \le \e_0\), if the initial data satisfies that
		\begin{equation}\label{initial data}
			\left\{
			\begin{aligned}
				&\|(\Phi_0^\varepsilon, \Psi_0^\varepsilon, W_0^\varepsilon) \|_{H^3(\mathbb{R})}^2 + \|(\phi_0^\varepsilon, \psi_0^\varepsilon, \zeta_0^\varepsilon)\|_{H^3(\mathbb{R} \times \mathbb{T}^2)}^2 + \|(\rho^{\varepsilon}-\bar\rho,\varepsilon(\m^{\varepsilon}-\bar\m),\E^{\varepsilon}-\bar\E)|_{t=0}\|_{L^1(\mathbb{R} \times \mathbb{T}^2)}^2 \le \eta \varepsilon^3, \\
				&\|(\phi_{\neq 0}^\varepsilon, \psi_{\neq 0}^\varepsilon, \zeta_{\neq 0}^\varepsilon) \|_{H^1(\mathbb{R} \times \mathbb{T}^2)}^2 \le Ce^{-c_0\varepsilon^{-2}},
			\end{aligned}
			\right.
		\end{equation}
		for some sufficiently large constant $c_0$ independent of $\e$, then the Cauchy problem (\ref{nondimensional NS Eqs}) admits a unique global smooth solution \((\rho^\e, \mathbf u^\e,\T^\e)\) satisfying
		\begin{equation}\label{decay rate}
			\left\{
			\begin{aligned}
				&\|(\rho^\varepsilon - \bar{\rho}, \varepsilon \mathbf{u}^\varepsilon - \varepsilon \bar{\mathbf{u}}, \T^\varepsilon - \bar{\T}) \|_{L_x^2(\R\times\Torus^2)}^2 \le C \varepsilon^3 (\sqrt{\eta}+\delta) (1+t)^{-\frac{1}{2}}, \\
				&\|\nabla_x(\rho^\varepsilon - \bar{\rho}, \varepsilon \mathbf{u}^\varepsilon - \varepsilon \bar{\mathbf{u}}, \T^\varepsilon - \bar{\T}) \|_{L_x^2(\R\times\Torus^2)}^2 \le C \varepsilon^3 (\sqrt{\eta}+\delta) (1+t)^{-\frac{3}{2}}, \\
				&|\nabla_x^2(\rho^\varepsilon - \bar{\rho}, \varepsilon \mathbf{u}^\varepsilon - \varepsilon \bar{\mathbf{u}}, \T^\varepsilon - \bar{\T}) \|_{L_x^2(\R\times\Torus^2)}^2 \le C \varepsilon (\sqrt{\eta}+\delta) (1+t)^{-\frac{3}{2}},
			\end{aligned}
			\right.
		\end{equation}
		where \(C\) is  a positive constant independent of \(\e\). Moreover, the non-zero modes of the  solution satisfy
		% \(\delta\) and \(\e\), \(\eta > 0\) is a sufficiently small constant and \(\bar\delta = \e + \delta\).
		% Moreover, the non-zero modes of the smooth solution have a decay rate in time:
		\begin{equation}\label{non zero decay}
			\left\{
			\begin{aligned}
				& \norm{(\rho_{\neq}^{\varepsilon}, \e \mathbf{u}_{\neq}^{\varepsilon}, \T^{\varepsilon}_{\neq})}_{H_x^1(\mathbb{R} \times \mathbb{T}^2)}^2 
				\le C \e^{-2}e^{-\frac{c}{2}  t - c_0 \e^{-2}}, \qquad \qquad\qquad \qquad\qquad \quad~\text{if}~~ t \ge \left(\frac{2C_1 }{c \e}\right)^2 - 1, \\
				&\norm{(\rho^{\varepsilon}_{\neq}, \e \mathbf{u}^{\varepsilon}_{\neq}, \T^{\varepsilon}_{\neq})}_{H_x^1(\mathbb{R} \times \mathbb{T}^2)}^2 \le C \e^{-2N-2} (1 + t)^{-\frac{N}{2}} e^{-c_0 \e^{-2}},~ \forall N \in \mathbb{Z}^+, \qquad \text{if}~~ t < \left(\frac{2C_1 }{c \e}\right)^2 - 1,
			\end{aligned}
			\right.
		\end{equation}
		where \(C_1\)  and \(c\) are positive constants independent of  \(\e\).
	\end{Thm}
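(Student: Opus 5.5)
We work entirely in the scaled variables \eqref{transformation}. In these variables the system \eqref{scaled NS Eqs} has $O(1)$ coefficients, and the ansatz $(\tilde\rho,\e\tilde{\mathbf m},\tilde\E)$ from \eqref{tilde} satisfies \eqref{tilde U equs}. The proof is a continuation argument built on local existence and an a priori estimate.

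\textbf{Step 1: a priori assumption.} We decompose the perturbation into its zero mode and its non-zero mode. For the zero mode we use the anti-derivative variables $(\Phi,\Psi,W)$; they are well defined because the initial perturbation has zero mass with respect to $\tilde U$. We assume
\[
N(T):=\sup_{0\le\tau\le T}\Big(\|(\Phi,\Psi,W)\|_{H^3(\R)}^2+\e^{2}\|(\phi,\psi,\zeta)\|_{H^3}^2\Big)\le \chi,
\]
with $\chi$ small. Using \eqref{relation1}--\eqref{relation2}, the hypothesis \eqref{initial data} implies that $N(0)\lesssim \eta$. Here the factor $\e^3$ is exactly absorbed by the change of scale, and $\sum|\bar\Theta_i|\lesssim \e^{-1}\|\cdot\|_{L^1}\lesssim \sqrt{\eta}\,\e^{1/2}$.

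\textbf{Step 2: zero-mode estimates.} Integrating the zero-mode system in $y_1$ gives a viscous conservation-law system for $(\Phi,\Psi,W)$. The linearization is around $\tilde U$, whose spatial derivatives are $O(\delta+\sum|\bar\Theta_i|)(\e^{-2}+\tau)^{-1/2}$. The plan for the basic $L^2$ energy estimate of $(\Phi,\Psi,W)$ is as follows.
\begin{itemize}
\item Symmetrize using the entropy of the Navier--Stokes--Fourier system, in the manner of Huang--Xin--Yang \cite{HXY} and Huang--Wang--Wang \cite{HWW}.
\item Treat the bad terms of the form $\int |\p_{y_1}\tilde\rho|\,|(\Phi,W)|^2$, which come from the diffusion wave, with the weighted heat-kernel inequality. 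Concretely, we take $\tilde D^1_{-1/2}$ as a weight and prove
\[
\int_0^\tau\!\!\int D_{-1}|\Phi|^2 \le C\Big(\|\Phi_0\|^2+\int_0^\tau\|\p_{y_1}\Phi\|^2\Big).
\]
\item Bound the source terms $\tilde\Gamma$ with \eqref{2026-9-20}. They are integrable in time: a representative term is $(\delta+\sum|\bar\Theta_i|)\int_0^\infty(\e^{-2}+\tau)^{-3/4}\|\Psi\|$.
\end{itemize}
The mismatch between the two diffusion-wave centers $\lambda_1^-\tau$ and $\lambda_3^+\tau$ produces only exponentially small interaction terms. The higher derivatives are estimated in the standard way. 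We get dissipation of $\p_{y_1}\Phi$ from the momentum equation, in the Kawashima manner, via $\int \Psi\,\p_{y_1}\Phi$. The point where large temperature variation enters is that the coefficients $\tilde\rho,\tilde\T$ vary by $O(\delta)$. The commutators they generate carry $\p_{y_1}\tilde\rho=O(\delta)(\e^{-2}+\tau)^{-1/2}$, which is small because $\delta$ is.

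\textbf{Step 3: non-zero modes.} Since $\mathbb T^2/\e$ has Poincaré constant $\sim\e^{-2}$ in $y$, the non-zero modes satisfy $\|\nabla_{y'}f_{\neq}\|\ge c\e\|f_{\neq}\|$. Together with the dissipative structure, this yields
\[
\frac{d}{d\tau}E_{\neq}+c\e^2E_{\neq}\le C(\text{coupling with zero modes}),
\]
which gives $E_{\neq}(\tau)\lesssim E_{\neq}(0)e^{-c\e^2\tau}$ up to coupling terms. Back in $t=\e^2\tau$, this becomes $e^{-ct}$. I expect the main obstacle to be the coupling between the zero and non-zero modes, i.e. terms like $\D_{\neq}(\nabla\tilde U\cdot f)$. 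They are quadratic, so they are controlled by $\sqrt{N(T)}\,E_{\neq}$ and absorbed. However, the $\e^{-2}$ loss in passing from \eqref{relation2} and from the Poincaré constant means the decay only beats these terms after time $t\gtrsim(C_1/(c\e))^2$. For earlier times I would instead use the weaker polynomial bound $(1+t)^{-N/2}$ paid with $\e^{-2N}$, obtained by iterating time-weighted estimates. The exponential smallness $e^{-c_0\e^{-2}}$ of the initial non-zero data is essential here to absorb these $\e$-losses.

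\textbf{Step 4: closing and decay.} Combining the estimates gives $N(T)\le C(\eta+\delta)$, which closes the continuation argument and yields the global solution. The decay rates in \eqref{decay rate} follow from time-weighted energy estimates:
\begin{itemize}
\item multiply the zero-mode $L^2$ estimate of $\p_{y_1}(\Phi,\Psi,W)$ by $(1+\e^2\tau)^{1/2}$, and higher orders by $(1+\e^2\tau)^{3/2}$;
\item use the $L^2$ boundedness of the anti-derivatives to obtain the $\tfrac12$ and $\tfrac32$ rates;
\item add the difference $\tilde U-\bar U$, which is $O(\sum|\bar\Theta_i|)$ and decays at the same rates;
\item convert back to $x,t$ through \eqref{relation2}, which accounts for the powers $\e^3$ and $\e$.
\end{itemize}
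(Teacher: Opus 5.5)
Your overall architecture matches the paper's: a continuation argument, the zero/non-zero splitting, anti-derivatives taken relative to the mass-carrying ansatz, and for the non-zero modes a Gr\"onwall argument up to $t\sim\e^{-2}$ followed by Poincar\'e-driven exponential decay. The zero-mode part, however, has two genuine gaps.

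First, the source terms are not time-integrable. The leading errors in \eqref{2026-9-20} (e.g. $\tilde\Gamma_1$ and $\tilde{\mathbf{\Gamma}}_{21}$) are pointwise of order $(\e^{-2}+\tau)^{-1}$ times a Gaussian, hence of order $(\e^{-2}+\tau)^{-3/4}$ in $L^2(\R)$. Your representative integral $\int_0^\infty(\e^{-2}+\tau)^{-3/4}d\tau$ diverges. Pairing instead via $\norm{\Psi}_{L^\infty}\norm{\tilde\Gamma_1}_{L^1}$, or by Cauchy against a $D_{-1}$-weighted integral, still leaves a source of size $(\e^{-2}+\tau)^{-1/2}$. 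So the $L^2$ norm of $(\Phi,\Psi,W)$ genuinely grows, and your bootstrap bound $\sup_\tau\norm{(\Phi,\Psi,W)}_{H^3}^2\le\chi$ cannot be closed. In the paper, Lemma~\ref{main lemma} has sources $C\bar\delta(\e^{-2}+\tau)^{-1}+C\sqrt\eta\,\e(\e^{-2}+\tau)^{-1/2}$ and yields only $\sum_iE_i\le C(\sqrt\eta+\bar\delta)\e(\e^{-2}+\tau)^{1/2}$. What is bootstrapped is $\norm{(\Phi,\Psi,W)}_{L^\infty}$, recovered by interpolation with the time-weighted bound $E_1\le C(\sqrt\eta+\bar\delta)\e(\e^{-2}+\tau)^{-1/2}$. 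This growth is why \eqref{decay rate} has $(1+t)^{-1/2}$ rather than $(1+t)^{-1}$.

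Once the growth is accepted, lumping the background error (size $\delta$, no power of $\e$) with the mass-carrying waves (size $\sqrt\eta\,\e^{1/2}$), as you do, gives $\norm{V}_{L^2}^2\lesssim\delta^2(\e^{-2}+\tau)^{1/2}$. That loses a factor $\e$ in \eqref{decay rate}. The paper treats the two contributions separately and absorbs the background one into the extra dissipation $G_k$ described below.

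Relatedly, in Step 3 the coupling terms are not controlled by $\sqrt{N(T)}E_{\neq}$. Examples are $\phi_{\neq}\psi_{\neq}\cdot\nabla\tilde\rho$ and $\big(\mathring{\T}/(\mathring{\rho}\tilde\rho)\big)_\tau\phi_{\neq}^2$, where $\p_\tau\mathring{\T}$ involves $\p_{y_1}^2\mathring{\zeta}$. A non-decaying $O(\chi)$ coefficient can neither be absorbed by the $\e^2$ Poincar\'e dissipation nor survive Gr\"onwall on $[0,\infty)$. These terms need the factor $(\e^{-2}+\tau)^{-1/2}$, which forces time decay of second and third derivatives into the bootstrap, as in \eqref{a priori assumption}.

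Second, and more fundamentally, you do not address the failure of the structure conditions in Eulerian coordinates. The anti-derivative energy estimate produces zeroth-order terms $\int|\p_{y_1}\tilde\rho|\,|V|^2\,dy_1$ with weight of order $\delta D^1_{-1/2}$ (cf. \eqref{111}). The heat-kernel inequality you quote controls only $\int_0^\tau\!\int D_{-1}|\Phi|^2$, which is weaker by a factor $(\e^{-2}+\tau)^{1/2}$. Smallness of $\delta$ does not help, because $\int_0^\tau\delta(\e^{-2}+s)^{-1/2}ds$ is unbounded. An entropy symmetrization in the style of Huang--Xin--Yang or Huang--Wang--Wang is exactly what the paper says does not carry over.

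The paper's remedy has four parts:
\begin{enumerate}
\item The change of unknowns \eqref{tilde Phi, Psi, W} turns the viscous and thermal terms into $\frac{2\mu+\lambda}{\tilde\rho}\p_{y_1}^2\tilde\Psi_1$ and $\frac{\kappa}{\tilde\rho}\p_{y_1}^2\tilde W$, up to good remainders.
\item Diagonalizing the hyperbolic part, \eqref{B equ}, leaves the slowly decaying zeroth-order terms acting only on the acoustic components $b_1,b_3$: the middle component of $\hat M^{(1)}_k$ vanishes and $b_2$ does not enter it.
\item Weighting $b_1,b_3$ by $v_1^{n},v_1^{-n}$ with $n=4[\delta^{-1/2}]+1$, and using the monotonicity of $\bar\rho$ together with $\tilde\lambda_1<0<\tilde\lambda_3$, produces the dissipation $G_k=C\delta^{-1/2}\int\p_{y_1}\bar\rho\,(|\p_{y_1}^kb_1|^2+|\p_{y_1}^kb_3|^2)$, which dominates those terms.
\item A separate estimate with the bounded weight $\tilde\Gamma=\int_{-\infty}^{y_1}\tilde D^1_{-1/2}$, again exploiting the non-zero speeds of $b_1,b_3$, controls the terms $\int\tilde D^1_{-1/2}|\p_{y_1}^kb_{1,3}|^2$ coming from higher-order commutators.
\end{enumerate}
Without an argument of this kind your Step 2 does not close.
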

	Based on Theorem \ref{Uniform estimates for well-prepared initial data} and Sobolev embedding, we justify the following low Mach number limit.
	\begin{Cor}[Low Mach number limit for well-prepared initial data]
		Under the assumptions of Theorem \ref{Uniform estimates for well-prepared initial data}, as \(\e\) tends to 0, it holds that
		\begin{equation}
			\left\{
			\begin{aligned}
				&\|(\rho^\varepsilon - \bar{\rho}, \T^\varepsilon - \bar{\T})\|_{L^\infty_x(\R\times\Torus^2)} \le C  \varepsilon^{\frac{3}{2}} (\sqrt{\eta}+\delta)^{\frac{1}{2}} (1+t)^{-\frac{1}{2}} \to 0, \\
				&\|\mathbf{u}^\varepsilon- \bar{\mathbf{u}}\|_{L^\infty_x(\R\times\Torus^2)} \le C  \varepsilon^{\frac{1}{2}} (\sqrt{\eta}+\delta)^{\frac{1}{2}} (1+t)^{-\frac{1}{2}} \to 0.
			\end{aligned}
			\right.
		\end{equation}
	\end{Cor}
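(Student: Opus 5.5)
The corollary follows from the decay rates \eqref{decay rate} of Theorem \ref{Uniform estimates for well-prepared initial data} combined with an anisotropic Gagliardo--Nirenberg inequality on $\Omega=\R\times\Torus^2$. No new energy estimate is needed; the work is only in interpolating correctly.

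\textbf{Interpolation inequality.} For $f\in H^2(\R\times\Torus^2)$ we use
\begin{align*}
\|f\|_{L^\infty(\Omega)}\le C\big(\|f\|_{L^2}^{\frac14}\|\nabla f\|_{L^2}^{\frac12}\|\nabla^2 f\|_{L^2}^{\frac14}+\|f\|_{L^2}^{\frac12}\|\nabla f\|_{L^2}^{\frac12}\big).
\end{align*}
The lower-order term accounts for the compactness of $\Torus^2$. An alternative is to split $f=\mathring f+f_{\neq}$ and estimate the two pieces separately.

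\textbf{Zero mode.} For $\mathring f$, which depends only on $x_1$, the one-dimensional inequality gives $\|\mathring f\|_{L^\infty}^2\le C\|\mathring f\|_{L^2}\|\p_{x_1}\mathring f\|_{L^2}$. Apply this to $f=(\rho^\e-\bar\rho,\T^\e-\bar\T)$, using the first two lines of \eqref{decay rate}:
\begin{align*}
\|\mathring f\|_{L^\infty}^2\le C\,\e^{3}(\sqrt\eta+\delta)(1+t)^{-\frac14-\frac34}=C\,\e^{3}(\sqrt\eta+\delta)(1+t)^{-1}.
\end{align*}
Taking the square root gives exactly the claimed $\e^{3/2}(\sqrt\eta+\delta)^{1/2}(1+t)^{-1/2}$.

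\textbf{Non-zero mode.} For $f_{\neq}$, Poincaré's inequality in $(x_2,x_3)$ removes the low-order term. Combined with the first line of \eqref{non zero decay}, the non-zero-mode bound is $O(\e^{-2}e^{-c_0\e^{-2}})$ times a time-decaying factor. This is far smaller than any power of $\e$ and is absorbed. For the regime $t<(2C_1/(c\e))^2-1$, take $N$ large: $\e^{-2N-2}e^{-c_0\e^{-2}}$ is still exponentially small, and $(1+t)^{-N/2}$ supplies the time decay. One $H^2$-level control of $f_{\neq}$ is needed for $L^\infty$ in three dimensions. Here I would interpolate the exponentially small $H^1$ bound with the third line of \eqref{decay rate}, which bounds $\nabla^2$ by $C\e(\sqrt\eta+\delta)(1+t)^{-3/2}$. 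Via $\|f_{\neq}\|_{L^\infty}\le C\|f_{\neq}\|_{H^1}^{1/2}\|f_{\neq}\|_{H^2}^{1/2}$, the product stays exponentially small in $\e$, with time decay at least $(1+t)^{-1/2}$ after choosing $N\ge2$ in the short-time regime.

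\textbf{Velocity.} The same argument applied to $\e(\mathbf u^\e-\bar{\mathbf u})$ gives an $L^\infty$ bound $C\e^{3/2}(\sqrt\eta+\delta)^{1/2}(1+t)^{-1/2}$. Dividing by $\e$ yields the stated $\e^{1/2}$ rate.

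The main obstacle is the non-zero mode $L^\infty$ bound. Only $H^1$ exponential smallness is stated there, so the $H^2$ control must come from \eqref{decay rate}. The time weights must be checked in both regimes of \eqref{non zero decay} so that the product still decays like $(1+t)^{-1/2}$. If the resulting exponent falls short, I would instead use the full-function interpolation inequality above. The terms $\|f\|_{L^2}^{1/4}\|\nabla f\|^{1/2}\|\nabla^2f\|^{1/4}$ yield $\e^{5/4}$-type powers, which still tend to zero, and the sharp rate is recovered from the zero-mode computation, since the non-zero part is exponentially negligible.
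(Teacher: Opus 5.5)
Your proposal is correct and follows essentially the paper's proof. Both split into $\mathring f$ and $f_{\neq}$, apply the one-dimensional bound $\|\mathring f\|_{L^\infty}^2\le C\|\mathring f\|_{L^2}\|\p_{x_1}\mathring f\|_{L^2}$ with the first two lines of \eqref{decay rate}, control $f_{\neq}$ by a three-dimensional interpolation made exponentially small through \eqref{non zero decay} (the paper uses $\|f_{\neq}\|_{L^2}^{1/4}\|\nabla_x^2 f_{\neq}\|_{L^2}^{3/4}$, you use $\|f_{\neq}\|_{H^1}^{1/2}\|f_{\neq}\|_{H^2}^{1/2}$), and then divide the bound for $\varepsilon(\mathbf u^\varepsilon-\bar{\mathbf u})$ by $\varepsilon$.

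Do drop the full-function inequality you display at the start and keep as a fallback. It is false in three dimensions: for $f(\lambda\,\cdot)$ concentrated in a small ball, its right-hand side scales like $\lambda^{-1/2}$ while $\|f\|_{L^\infty}$ is unchanged. The mode splitting alone suffices.
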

	\begin{proof}
		We take  \(\rho^\e - \bar{\rho}\) as an example. Due to \eqref{decay rate} and \eqref{non zero decay}, one has
		\begin{align*}
			&\norm{\rho^\e - \bar{\rho}}_{L^\infty(\R \times \Torus^2)} \le\norm{\mathring{\rho}^\e - \bar{\rho}}_{L^\infty(\R)} + \norm{\rho_{\neq}^\e}_{L^\infty(\R \times \Torus^2)} \\
			\le& \norm{\mathring{\rho}^\e - \bar{\rho}}_{L^2(\R)}^{\frac{1}{2}} \norm{\p_{x_1} (\mathring{\rho}^\e - \bar{\rho})}_{L^2(\R)}^{\frac{1}{2}} + \norm{\rho_{\neq}^\e}_{L^2(\R \times \Torus^2)}^{\frac{1}{4}} \norm{\nabla^2_x \rho_{\neq}^\e}_{L^2(\R \times \Torus^2)}^\frac{3}{4} \\
			\le& C  \varepsilon^{\frac{3}{2}} (\sqrt{\eta}+\delta)^{\frac{1}{2}} (1+t)^{-\frac{1}{2}}.
		\end{align*}
		The other terms can be treated in a same way.
	\end{proof}
	\begin{Rem}
		For the 1D case \cite{HWW}, the initial perturbation is assumed to zero, whereas the initial data considered in our work have non-zero mass.
		% This gives rise to additional difficulties in the energy estimates.
		Moreover, we obtain an improved convergence rate with respect to \(\e\) compared with \cite{HWW}.
	\end{Rem}
	\begin{Rem}
		The assumption that the initial data in \((\ref{initial data})_1\) are \(O(\e^3)\) 
		leads to a faster convergence rate with respect to \(\e\) for both \((\r^\e,\mathbf{u}^\varepsilon,\T^\e)\). In fact, under a weaker smallness assumption on the initial data, convergence of the \((\r^\e,\mathbf{u}^\varepsilon,\T^\e)\) with a reduced rate is still expected to hold.
	\end{Rem}
	\begin{Rem}
		The condition \eqref{initial data}\(_2\) allows us to establish a global-in-time result for the low Mach number limit in higher dimensions, even for large temperature variations. More precisely, this condition ensures the uniform-in-time smallness of the non-zero modes.
	\end{Rem}
	
	\subsubsection{Challenges and strategy.}
	\begin{itemize}
		\item The limitation of structural conditions.
		
		We focus on the equations for \((\rho,\varepsilon m_1,\mathcal E)\) in the inviscid version of system \eqref{scaled NS Eqs}, since the background state involves only the components \((\bar\rho,\varepsilon\bar m_1,\bar\theta)\)
		\begin{equation}\label{planar entropy waves of scaled NS Eqs}
			\left\{
			\begin{aligned}
				&\partial_{\tau} \rho + \partial_{y_1} (\e m_1) = 0, \\
				&\partial_{\tau} (\e m_1) + \partial_{y_1} [\E + \frac{1}{2} \frac{(\e m_1)^2}{\rho}] = 0, \\
				&\partial_{\tau} \E + \partial_{y_1} \big(\frac{2\e m_1 \E}{\rho} - \frac{(\e m_1)^3}{2\rho^2}\big) = 0.
			\end{aligned}
			\right.
		\end{equation}
		The Jacobi matrix of the flux for (\ref{planar entropy waves of scaled NS Eqs}) is
		\[A(\rho, \e m_1, \mathcal{E}) = 
		\begin{pmatrix}
			0 & 1 & 0 \\
			-\frac{|\e m_1|^2}{2\rho^2} & \frac{\e m_1}{\rho} & 1 \\
			-\frac{2 \e m_1 \mathcal{E}}{\rho^2} + \frac{(\e m_1)^3}{\rho^3} & \frac{2 \mathcal{E}}{\rho} - \frac{3|\e m_1|^2}{2\rho^2} & \frac{2 \e m_1}{\rho}
		\end{pmatrix}.\]
		A direct calculation gives that the second eigenvalue of \(A(\rho, m_1, \mathcal{E})\) is \(\lambda_2 = \frac{\e m_1}{\rho}\), and the corresponding left and right eigenvectors are
		\begin{align*}
			r_2 = \big(1, \frac{\e m_1}{\rho}, \frac{|\e m_1|^2}{2\rho^2}\big)^t, \quad\quad l_2 = \big(\frac{2\E}{\rho} - \frac{3|\e m_1|^2}{2\rho^2}, \frac{\e m_1}{\rho}, -1\big).
		\end{align*}
		Then we have
		\begin{align}\notag
			\nabla l_2 \cdot r_2 \neq 0, \quad \nabla r_2 \cdot r_2 = 0,
		\end{align}
		which means that the classical left and right structural conditions as in \cite{Liu1997} do not work for the Eulerian Navier-Stokes equations in the 3D case. 
		Thus, the emergence of slowly decaying lower-order terms, such as
		\begin{align}\label{111}
			\int_{\Omega}(1+t)^{-\frac12}e^{-\frac{cx^2}{1+t}}
			\Big((\rho-\bar \rho)^2+\e (m_1-\bar m_1)^2+(\theta-\bar \theta)^2\Big)dx,
		\end{align}
		prevents the method developed in \cite{HWW} from being directly applicable.
		% Thus, the method developed in \cite{HWW} from being directly applicable, due to the emergence of slowly decaying lower-order terms such as 
		% \begin{align*}
			%   \int_{\Omega}(1+t)^{-\frac12}e^{-\frac{cx^2}{1+t}}\Big((\rho-\bar \rho)^2+\e (m_1-\bar m_1)^2+(\theta-\bar \theta)^2\Big)dx.
			% \end{align*}
		To overcome this difficulty, motivated by \cite{DHLX-1,DHLX-2}, we introduce a family of transformations \eqref{tilde Phi, Psi, W} for perturbation of $(\bar\rho,\e\bar m,\bar\theta)$, which allows us to improve the decay rate \((1+t)^{-1/2}\) in \eqref{111} to \((1+t)^{-1}\). As a result, the lower-order terms take a form similar to those in \cite{HWW}.

		\item Uniform convergence with respect to \(\e\).
		
		To obtain uniform estimates in the Mach number, we need to control the error terms $(\tilde{\Gamma}_1,\tilde{\mathbf{\Gamma}}_{2i},\tilde{\mathbf{\Gamma}}_3)$ \eqref{2026-9-20} in the anti-derivative system \eqref{the perturbation equs} for the perturbation variables \((\Phi,\Psi_i,W)\) \eqref{anti}.  If these error terms are estimated as in \cite{DHLX-1}, due to $\norm{(\tilde{\Gamma}_1^{\frac k 2},\tilde{\mathbf{\Gamma}}_{2i}^{\frac k 2},\tilde{\mathbf{\Gamma}}_3^{\frac k 2})}_{L^2}\lesssim (\delta+\e^{\frac12})^{\frac k2}(\e^{-2}+\tau)^{-\frac k2+\frac14}$ for $k\in\mathbb{N}^{+}$, the resulting estimate is
		\begin{align*}
			&\norm{\p_{y_1}^{k+1} (\Phi, \Psi, W)}_{L^2(\R)}^2 \lesssim  \delta\e^{2k+1}  \left(1 + t\right)^{-\frac{2k+1}{2}}, \quad k=0,1.
		\end{align*}
		By (\ref{relation1}) and (\ref{relation2}), we have
		\begin{align*}
			\|\mathbf{u}^\varepsilon- \bar{\mathbf{u}}\|_{L^\infty_x(\R\times\Torus^2)} \le C\delta (1+t)^{-\frac{1}{2}},
		\end{align*}
		which is too weak for our purpose in the low Mach number limit.  To overcome this difficulty, we decompose the error terms into two parts: the contributions arising from the diffusion waves,  which carry the extra initial mass, and those associated with the background solution. We then estimate these two parts separately.
		The diffusion wave contribution can be treated  in the same way as \cite{DHLX-1}, since it contains an additional factor $\e^{\frac12}$. 
		For the background solution contribution, however, we exploit the special dissipation structure \(G_k\) in \eqref{2026-9-20-2}, which allows us to estimate \((\tilde{\Gamma}_1^{\frac{3k}{4}},
		\tilde{\mathbf{\Gamma}}_{2i}^{\frac{3k}{4}},
		\tilde{\mathbf{\Gamma}}_3^{\frac{3k}{4}})\) instead of \((\tilde{\Gamma}_1^{\frac{k}{2}},
		\tilde{\mathbf{\Gamma}}_{2i}^{\frac{k}{2}},
		\tilde{\mathbf{\Gamma}}_3^{\frac{k}{2}});\) see the proof of Lemma \ref{main lemma} for details.

		\item The dependence of the Mach number on the Poincar\'e inequality.
		
		Under the scaling \((\ref{transformation})\), the domain is rescaled to
		$
		\Omega_\varepsilon=\mathbb{R}\times(\mathbb{T}/\varepsilon)^2,
		$
		where the transverse periods are of order \(\varepsilon^{-1}\). Accordingly, the Poincar\'e inequality for the non-zero modes takes the form
		\[
		\|f_{\neq}\|_{L^2(\Omega_\varepsilon)}
		\le C\varepsilon^{-k}\|\nabla_y^k f_{\neq}\|_{L^2(\Omega_\varepsilon)}.
		\]
		The resulting \(\varepsilon^{-k}\)-loss creates two main difficulties, preventing a straightforward adaptation of the non-zero mode estimates in \cite{DHLX-1}.
		The first difficulty lies in controlling the coupling terms between the background solution and the perturbation, such as  \(\frac{\mathring\theta}{\mathring\rho\tilde\rho}\phi_{\neq}\psi_{\neq} \cdot \nabla \tilde{\rho}\) arising from $\frac{\mathring{\T}}{\mathring{\rho} \tilde{\rho}} \phi_{\neq} \mathcal{A}_{0\neq}$ and appearing in \(\hat{J}_0\) \eqref{non zero est1}.   To overcome the difficulty, we divide the time interval into two parts and treat them separately. When \(\tau \ge \left(\frac{2C_1}{c \e^2}\right)^2 - \e^{-2}\), $\partial_{y_1} \tilde\rho$ becomes sufficiently small to compensate for the \(\varepsilon\)-dependent loss arising from the Poincar\'e inequality. Consequently, terms of the form \(\frac{\mathring\theta}{\mathring\rho\tilde\rho}\phi_{\neq}\psi_{\neq} \cdot \nabla \tilde{\rho}\) can be absorbed into the viscous dissipation. On the other hand, for \(\tau \le \left(\frac{2C_1}{c \e^2}\right)^2 - \e^{-2}\),  such terms can be effectively controlled by the Gr\"{o}nwall inequality, together with an additional smallness assumption on the initial data for non-zero modes.
		The second difficulty arises from the coupling between the time derivatives of the zero modes and the non-zero modes of the perturbation, such as
		$
		\left(\frac{\mathring{\T}}{\mathring{\rho}\tilde{\rho}}\right)_{\tau}
		\frac{\phi{\neq}^{2}}{2}$
		in \(\hat{J}_0\) \eqref{non zero est1}. Since
		\(
		\partial_\tau \mathring{\theta}
		\approx \partial_{y_1}^2\mathring{\theta}+\cdots
		\approx \partial_{y_1}^2\mathring{\zeta}+\cdots,
		\)
		the control of such terms requires additional time-decay estimates for higher-order  derivatives of the perturbation, including \(\partial_{y_1}^2\mathring{\zeta}\). This additional decay estimate is not required in the argument of \cite{DHLX-1}.

	\end{itemize}
	
	\subsection{Moderately ill-prepared data}
	
	To understand the role of the thermodynamics, we introduce
	\begin{align}\label{transformation1 for ill}
		p^\e(x,t) = e^{\e \underline{p}^\e (x,t)}, \qquad \T^\e (x,t) = e^{\underline{\T}^\e (x,t)}.
	\end{align}
	Then we have
	\begin{align}\label{transformation2 for ill}
		\rho^\e (x,t) = e^{\e \underline{p}^\e (x,t) - \underline{\T}^\e (x,t)}.
	\end{align}
	Using these variables, we can see that  
	\begin{align}
		\underline{p}^{\e} \to 0, \quad \mathbf{u}^{\varepsilon} \to 0 \quad \text{and} \quad \underline{\T}^{\e} \to \underline{\T}_{\pm} \quad \text{as} \quad x_1 \to \pm \infty,
	\end{align}
	where $\underline{\T}_{\pm} = \ln \T_{\pm}$.
	
	Under these changes of variables and coefficients, the non-isentropic compressible Navier-Stokes system (\ref{nondimensional NS Eqs}) and the limiting system take the following form, respectively,
	\begin{equation} \label{NS system in scaling}
		\left\{
		\begin{aligned}
			&\partial_t \underline{p}^\varepsilon
			+(\mathbf{u}^\varepsilon\cdot\nabla)\underline{p}^\varepsilon
			+\frac{1}{\varepsilon}
			\operatorname{div}\left(
			2\mathbf{u}^\varepsilon
			-\kappa e^{-\varepsilon\underline{p}^\varepsilon
				+\underline{\theta}^\varepsilon}
			\nabla\underline{\theta}^\varepsilon
			\right) 
			=\varepsilon e^{-\varepsilon\underline{p}^\varepsilon}
			\left[
			\mathbb{S}(\mathbf{u}^\varepsilon):\nabla\mathbf{u}^\varepsilon
			\right]
			+\kappa e^{-\varepsilon\underline{p}^\varepsilon
				+\underline{\theta}^\varepsilon}
			\nabla\underline{p}^\varepsilon
			\cdot\nabla\underline{\theta}^\varepsilon,
			\\
			&e^{-\underline{\theta}^\varepsilon}
			\left[
			\partial_t\mathbf{u}^\varepsilon
			+(\mathbf{u}^\varepsilon\cdot\nabla)\mathbf{u}^\varepsilon
			\right]
			+\frac{\nabla\underline{p}^\varepsilon}{\varepsilon}
			=e^{-\varepsilon\underline{p}^\varepsilon}
			\operatorname{div}\mathbb{S}(\mathbf{u}^\varepsilon),
			\\
			&\partial_t\underline{\theta}^\varepsilon
			+(\mathbf{u}^\varepsilon\cdot\nabla)\underline{\theta}^\varepsilon
			+\operatorname{div}\mathbf{u}^\varepsilon
			=\varepsilon^2e^{-\varepsilon\underline{p}^\varepsilon}
			\left[
			\mathbb{S}(\mathbf{u}^\varepsilon):\nabla\mathbf{u}^\varepsilon
			\right]
			+\kappa e^{-\varepsilon\underline{p}^\varepsilon}
			\operatorname{div}\left(
			e^{\underline{\theta}^\varepsilon}
			\nabla\underline{\theta}^\varepsilon
			\right).
		\end{aligned}
		\right.
	\end{equation}
	and
	\begin{equation} \label{limit system in scaling}
		\left\{
		\begin{aligned}
			&\left(
			2u_1-\kappa e^{\underline{\theta}}
			\underline{\theta}_{x_1}
			\right)_{x_1}=0,
			\\
			&e^{-\underline{\theta}}
			\left(
			\partial_tu_1+u_1\partial_{x_1}u_1
			\right)
			+\pi_{x_1}
			=(2\mu+\lambda)\partial_{x_1x_1}u_1,
			\\
			&e^{-\underline{\theta}}
			\left(
			\partial_tu_2+u_1\partial_{x_1}u_2
			\right)
			=\mu \partial_{x_1x_1}u_2, \\
			&e^{-\underline{\theta}}
			\left(
			\partial_tu_3 +u_1\partial_{x_1}u_3
			\right)
			=\mu\partial_{x_1x_1}u_3, \\
			&\partial_t\underline{\theta}
			+u_1\partial_{x_1}\underline{\theta}
			+\partial_{x_1}u_1
			=\kappa\partial_{x_1}
			\left(
			e^{\underline{\theta}}
			\underline{\theta}_{x_1}
			\right).
		\end{aligned}
		\right.
	\end{equation}
	We notice that $\underline{\theta}_+$ may not be equal to $\underline{\theta}_-$, so it is necessary to introduce a background profile $\widetilde{\theta}$ for $\theta^{\varepsilon}$.
	Define
	\begin{align}
		\underline{\Theta} = -\ln \hat{\rho},
	\end{align}
	satisfying \(\underline{\Theta} \to \underline{\theta}_{\pm}\) as \( x_1 \to \pm \infty.\)
	To establish the uniform estimate, we shall use the following energy functional:
	\begin{align*}
		\norm{(\underline{p}^\e, \mathbf{u}^\e, \underline{\T}^\e- \underline{\Theta})}_{H^{s,\e}}^2 :=& \sum_{|\alpha| = 0}^{s} \norm{\p^\alpha (\underline{p}^\e, \mathbf{u}^\e) (t)}_{L^2(\Omega)}^2 + \sum_{|\alpha| = 0}^{s+1} \norm{\p^\alpha (\e \underline{p}^\e, \e \mathbf{u}^\e) (t)}_{L^2}^2 + \norm{(\underline{\T}^\e - \underline{\Theta}) (t)}_{L^2}^2 \\
		&+ \sum_{|\alpha| = 1}^{s+1} \norm{\p^\alpha \underline{\T}^\e (t)}_{L^2}^2 + \sum_{|\alpha| = 0}^{s} \int_{0}^{t} \norm{\p^\alpha \nabla (\underline{p}^\e, \mathbf{u}^\e) (\tau)}_{L^2}^2 d \tau \\
		&+ \sum_{|\alpha| = 0}^{s+1} \int_{0}^{t} \norm{\p^\alpha \nabla (\e \mathbf{u}^\e, \underline{\T}^\e) (\tau)}_{L^2}^2 d \tau,
	\end{align*}
	where \(\p^{\alpha} := (\e \p_t)^{\alpha_0} \p_{x_1}^{\alpha_1}  \p_{x_2}^{\alpha_2}  \p_{x_3}^{\alpha_3}\) with multi-index \(\alpha = (\alpha_0, \alpha_1, \alpha_2, \alpha_3)\).
	
	\subsubsection{Main results for ill-prepared initial data}
	\begin{Thm}[\cite{JuMengMMAS}](Uniform estimates for ill-prepared initial data) \label{Uniform estimates Thm for ill-prepared initial data}
		Given an integer \( s \geq 4 \) and a family of initial data \((\underline{p}_0^\e, \mathbf{u}_0^\e, \underline{\T}_0^\e)\) satisfying
		\begin{align}
			\sup_{\e\in (0, 1]} \| (\underline{p}_0^\e, \mathbf{u}_0^\e, \underline{\T}_0^\e - \underline{\Theta}) \|_{H^{s,\e}}^2 \leq \hat{C}_0 < \infty,
		\end{align}
		where \(\hat{C}_0\) is a positive constant independent of \(\varepsilon\). Then there exist positive constants \(T_0\) and \(\varepsilon_0\) depending only on \(\hat{C}_0\) and \(|\underline{\theta}_+ - \underline{\theta}_-|\) such that, for all \(t \in [0, T_{0}]\) and \(\varepsilon \in (0, \varepsilon_{0}]\), the Cauchy problem (\ref{NS system in scaling}) with the initial data \((\underline{p}_0^\e, \mathbf{u}_0^\e, \underline{\T}_0^\e)\) has a unique smooth solution \((\underline{p}^\e, \mathbf{u}^\e, \underline{\T}^\e)\) satisfying
		\begin{align}\label{uniform estimates for ill data}
			\|(\underline{p}^{\e}, \mathbf{u}^{\e}, \underline{\T}^{\e} - \underline{\Theta})(t)\|_{H^{s, \e}}^{2} \leq \tilde{C}_{0},
		\end{align}
		where \(\tilde{C}_{0}\) is a positive constant depending only on \(\hat{C}_0\) and \(|\underline{\T}_+ - \underline{\T}_-|\).
	\end{Thm}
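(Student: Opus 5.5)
The plan follows the Métivier--Schochet / Alazard strategy for uniform estimates of fast singular systems, adapted to a non-constant background \(\underline{\Theta}\). First I rewrite \eqref{NS system in scaling} as a hyperbolic--parabolic system for \(V=(\underline{p}^\e,\mathbf{u}^\e)\) and \(\underline{\T}^\e\). The singular terms are \(\frac1\e\operatorname{div}(2\mathbf{u}^\e)\) and \(\frac1\e\nabla\underline{p}^\e\). With the symmetrizer \(\mathrm{diag}(1/2,\,e^{-\underline{\T}^\e}/2)\) they become skew-symmetric, so they cancel in \(L^2\) estimates.

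The remaining singular contribution is \(-\frac{\kappa}{\e}\operatorname{div}(e^{-\e\underline{p}^\e+\underline{\T}^\e}\nabla\underline{\T}^\e)\) in the pressure equation. I would handle it by one of two devices. The first is to introduce the modified velocity \(\mathbf{v}^\e=\mathbf{u}^\e-\frac{\kappa}{2}e^{-\e\underline{p}^\e+\underline{\T}^\e}\nabla\underline{\T}^\e\), which removes it from the \(\underline{p}\) equation at the cost of terms involving \(\nabla^2\underline{\T}\). The second is to weight the temperature energy so that the \(\underline{\T}\) dissipation absorbs it. Either way, this is why the functional carries \(s+1\) derivatives on \(\underline{\T}^\e\) but only \(s\) on \((\underline{p}^\e,\mathbf{u}^\e)\).

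Local existence for each fixed \(\e\) is classical, since the system is hyperbolic--parabolic. The whole task is therefore an a priori bound \(\|(\underline{p}^\e,\mathbf{u}^\e,\underline{\T}^\e-\underline{\Theta})(t)\|_{H^{s,\e}}^2\le \hat C_0+C(\tilde C_0)(t+\e)^{\theta}\) on a time interval independent of \(\e\). A continuity argument then closes it with \(\tilde C_0=2\hat C_0\) and \(T_0\), \(\e_0\) small.

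The estimates proceed in four steps.
\begin{enumerate}
\item \emph{Time derivatives.} Apply \(\p^\alpha\) with \((\e\p_t)^{\alpha_0}\). These commute with the singular operator up to coefficients depending on \(\underline{\T}^\e\), \(\e\underline{p}^\e\). The commutators \([\p^\alpha, e^{-\underline{\T}^\e}]\p_t\mathbf{u}^\e\) are controlled because \(\e\p_t\) costs no power of \(\e\).
\item \(L^2\) estimates for \(\p^\alpha V\), \(|\alpha|\le s\). The skew structure cancels the \(1/\e\) terms, and viscosity gives the dissipation \(\int_0^t\|\p^\alpha\nabla\mathbf{u}\|^2\).
\item Estimates for \(\p^\alpha\underline{\T}^\e\), \(1\le|\alpha|\le s+1\), and for \(\underline{\T}^\e-\underline{\Theta}\). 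These come from the heat equation: \(\kappa e^{-\e\underline{p}+\underline{\T}}\) is bounded below, and the background enters only through \(\underline{\Theta}_{x_1}\in H^\infty\), whose size is controlled by \(|\underline{\T}_+-\underline{\T}_-|\) via \eqref{diffusion wave of rho}.
\item The \(\e\)-weighted \(s+1\) derivatives of \(V\). I would get these from the same skew-symmetric estimate multiplied by \(\e^2\). There the singular remainder becomes \(O(1)\), and the extra derivative of \(\underline{\T}\) is paid by the \(\int_0^t\|\p^\alpha\nabla\underline{\T}\|^2\) dissipation.
\end{enumerate}

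The main obstacle is the divergence \(\operatorname{div}\mathbf{u}^\e\) and \(\nabla\underline{p}^\e\) at top order. Because of the large-amplitude variable coefficient \(e^{-\underline{\T}^\e}\), the symmetrizer is not constant. Its \(\p^\alpha\)-commutators with the \(\frac1\e\) operator produce terms like \(\frac1\e\p^\alpha\underline{\T}^\e\cdot\nabla\underline{p}^\e\), which do not cancel.

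To treat these, I would follow Alazard and Métivier--Schochet and estimate the fast part \((\operatorname{div}\mathbf{u}^\e,\nabla\underline{p}^\e)\) separately from the slow part (curl, \(\underline{\T}\)). The fast part is estimated through the equations themselves: \(\nabla\underline{p}^\e=-\e e^{-\underline{\T}}(\p_t\mathbf{u}+\dots)+\e(\cdots)\), and \(\operatorname{div}(2\mathbf{u}-\kappa\dots)=-\e(\p_t\underline{p}+\dots)\). These express \(\frac1\e\)-weighted quantities via \(\e\p_t\) derivatives already in the energy. This gains back the \(\e\) lost in the commutators.

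For the curl, the pressure term vanishes after taking \(\operatorname{curl}(e^{\underline{\T}}\,\cdot)\) of the momentum equation, which yields a transport--diffusion estimate. Combining these via the div--curl inequality on \(\R\times\Torus^2\) gives the full \(H^s\) bound with constants depending only on \(\hat C_0\) and \(|\underline{\T}_+-\underline{\T}_-|\). The Gronwall step then fixes \(T_0\).
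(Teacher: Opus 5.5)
This paper does not prove the theorem; it quotes it from \cite{JuMengMMAS}, whose functional \(H^{s,\e}\) is modeled on Alazard's \cite{Alazard2005LowMN}. Your architecture (modified velocity, \(\e\p_t\)-estimates, fast/slow splitting, curl, div--curl, continuity argument) is that route, but as written its central step would not close.

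Steps 1--2 assert that the energy estimates for every \(\p^\alpha V\), \(|\alpha|\le s\), have cancelling singular terms and harmless commutators \([\p^\alpha,e^{-\underline{\T}^\e}]\p_t\mathbf{u}^\e\). This holds only for \(\p^\alpha=(\e\p_t)^m\). The reason is that \(\e\p_t e^{-\underline{\T}^\e}=-\e e^{-\underline{\T}^\e}\p_t\underline{\T}^\e=O(\e)\), because \((\ref{NS system in scaling})_3\) has no \(1/\e\) term; this compensates \(\p_t\mathbf{u}^\e=\e^{-1}(\e\p_t\mathbf{u}^\e)\). For a spatial derivative, \(\p_{x_j}e^{-\underline{\T}^\e}=O(1)\) because the temperature variations are large. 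The commutator is then essentially \(\e^{-1}\p_{x_j}\underline{\T}^\e\,\p^{\alpha-e_j}\nabla\underline{p}^\e\), which is \(O(\e^{-1})\) in the energy identity.

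Your remedy, substituting \(\e^{-1}\nabla\underline{p}^\e=-e^{-\underline{\T}^\e}\p_t\mathbf{u}^\e+O(1)\) to ``gain back the \(\e\)'', gains nothing. For ill-prepared data \(\e\p_t\mathbf{u}^\e\) is \(O(1)\), not \(O(\e)\). What closes is the M\'etivier--Schochet induction:
\begin{itemize}
\item do energy estimates for the fast system only with \((\e\p_t)^m\), never with spatial derivatives of \((\underline{p}^\e,\mathbf{v}^\e)\);
\item recover spatial derivatives from the equations, via \(\|(\e\p_t)^j\nabla\underline{p}^\e\|_{H^{k-1}}+\|(\e\p_t)^j\operatorname{div}\mathbf{v}^\e\|_{H^{k-1}}\lesssim\|(\e\p_t)^{j+1}(\underline{p}^\e,\mathbf{u}^\e)\|_{H^{k-1}}\) plus viscous, temperature and \(O(\e)\) terms;
\item add the curl and induct on \(k\), for the sup-norms and the dissipation alike.
\end{itemize}
So your fast/slow splitting must replace Step 2 for spatial \(\alpha\), not patch its commutators.

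Of your two devices for \(-\frac{\kappa}{\e}\operatorname{div}(e^{-\e\underline{p}^\e+\underline{\T}^\e}\nabla\underline{\T}^\e)\), only the modified velocity works. The weighting device would have to absorb \(\frac{\kappa}{\e}\int e^{-\e\underline{p}^\e+\underline{\T}^\e}\nabla\p^\alpha\underline{\T}^\e\cdot\nabla\p^\alpha\underline{p}^\e\,dx\). Already at \(\alpha=0\), \(\nabla\underline{\T}^\e\) has size \(|\underline{\T}_+-\underline{\T}_-|\). Both factors are only \(O(1)\) in \(L^2_tL^2\), so no \(\e\)-independent weight on the temperature energy absorbs an explicit \(1/\e\). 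An \(\e^{-2}\) weight would force \(\nabla\underline{\T}^\e=O(\e)\), i.e.\ small temperature variations.

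With \(\mathbf{v}^\e=\mathbf{u}^\e-\frac{\kappa}{2}e^{-\e\underline{p}^\e+\underline{\T}^\e}\nabla\underline{\T}^\e\), the cost is \(\p_t\nabla\underline{\T}^\e\) and \(\nabla^3\underline{\T}^\e\) in the \(\mathbf{v}\)-equation, not merely \(\nabla^2\underline{\T}^\e\). These are paid by the order-\((s+1)\) temperature dissipation and the viscous dissipation.

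Finally, the curl must be applied to \((\ref{NS system in scaling})_2\) exactly as written, with \(e^{-\underline{\T}^\e}\) on \(\p_t\mathbf{u}^\e\) and a bare \(\nabla\underline{p}^\e/\e\). This kills the pressure, and since \(\p_te^{-\underline{\T}^\e}=O(1)\) it gives a non-singular equation for \(\operatorname{curl}(e^{-\underline{\T}^\e}\mathbf{u}^\e)\). Your \(\operatorname{curl}(e^{\underline{\T}}\,\cdot)\) multiplies by \(e^{\underline{\T}^\e}\) first, which leaves \(\e^{-1}e^{\underline{\T}^\e}\nabla\underline{\T}^\e\times\nabla\underline{p}^\e\neq0\). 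The div--curl inequality then combines \(\operatorname{div}\mathbf{v}^\e\) with \(\operatorname{curl}(e^{-\underline{\T}^\e}\mathbf{u}^\e)\). These differ from \(\operatorname{div}\mathbf{u}^\e\) and \(\operatorname{curl}\mathbf{u}^\e\) by terms controlled by the temperature part of the norm.
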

	We remark that in \cite{JuMengMMAS}, we
	% were only able to 
	established the uniform energy estimates  \eqref{uniform estimates for ill data}, while the convergence as \(\varepsilon\to0\) 
	% on \(\e\) 
	remained unresolved due to persistent acoustic oscillations in \(\R \times \Torus^2\). In this paper, we address this convergence problem by decomposing the perturbation into  zero and non-zero modes.
	
	%To obtain the convergence result, we also need to decompose the solution into the principal and transversal parts corresponding to the zero and non-zero modes in Fourier space, respectively. Here, we have
	%\begin{align}
	%	\mathbf{D}_0 f := \mathring{f} = \int_{\mathbb{T}^2} f dx_2 dx_3, \qquad \mathbf{D}_{\neq} f := f_{\neq} = f - \mathring{f},
	%\end{align}
	%for any integrable function f on \(\mathbb{T}^2\). 
	Using the \(L^2\)- orthogonality between the zero and non-zero modes, we obtain
	\begin{align}\label{zero mode L^2}
		\norm{\mathbf{D}_0(\underline{p}^\e, \mathbf{u}^\e, \underline{\T}^\e - \underline{\Theta})}_{H^{s,\e}(\mathbb{R})}^2 \le 
		\norm{(\underline{p}^\e, \mathbf{u}^\e, \underline{\T}^\e - \underline{\Theta})}_{H^{s,\e}(\Omega)}^2 \le \tilde{C}_0.
	\end{align}
	On the other hand, applying \(\mathbf{D}_{\neq}\) to (\ref{nondimensional NS Eqs}), we obtain the non-zero mode system as follows:
	\begin{equation}\label{non-zero system}
		\left\{
		\begin{aligned}
			&\p_t \rho_{\neq}^\e + \mathring{\mathbf{u}}^\e \cdot \nabla \rho_{\neq}^\e + \mathring{\rho}^\e \dv \mathbf{u}_{\neq}^\e = f_{1\neq}^\e, \\
			&\mathring{\rho}^\e \p_t \mathbf{u}_{\neq}^\e + \mathbf{D}_0 (\rho^\e \mathbf{u}^\e) \cdot \nabla \mathbf{u}_{\neq}^\e+\frac{1}{\e^2}\left(\mathbf{D}_{\neq} (\rho^\e \nabla \T^\e) + \mathbf{D}_{\neq} (\T^\e \nabla \rho^\e)\right) = \mu \Delta \mathbf{u}_{\neq}^\e + (\lambda + \mu) \nabla \dv \mathbf{u}_{\neq}^\e + \mathbf{f}_{2\neq}^\e, \\
			&\mathring{\rho}^\e \p_t \T_{\neq}^\e + \mathbf{D}_0 (\rho^\e \mathbf{u}^\e) \cdot \nabla \T_{\neq}^\e + \mathbf{D}_0 (\rho^\e \T^\e) \dv \mathbf{u}_{\neq}^\e = \kappa \Delta \T_{\neq}^\e + \e^2 \nabla \mathring{\mathbf{u}}^\e : \mathbb{S}(\mathbf{u}_{\neq}^\e) + f_{3\neq}^\e,
		\end{aligned}
		\right.
	\end{equation}
	where
	\begin{align*}
		&f_{1\neq}^\e := \mathring{\mathbf{u}}^\e \cdot \nabla \rho_{\neq}^\e - \mathbf{D}_{\neq} (\mathbf{u}^\e \cdot \nabla \rho^\e) + \mathring{\rho}^\e \dv \mathbf{u}_{\neq}^\e - \mathbf{D}_{\neq} (\rho^\e \dv \mathbf{u}^\e), \\
		&\mathbf{f}_{2\neq}^\e :=\mathring{\rho}^\e \p_t \mathbf{u}_{\neq}^\e - \mathbf{D}_{\neq} (\rho^\e \p_t \mathbf{u}^\e) + \mathbf{D}_0 (\rho^\e \mathbf{u}^\e) \cdot \nabla \mathbf{u}_{\neq}^\e - \mathbf{D}_{\neq} (\rho^\e \mathbf{u}^\e \cdot \nabla \mathbf{u}^\e), \\
		&f_{3\neq}^\e := \mathring{\rho} \p_t \T_{\neq}^\e - \mathbf{D}_{\neq} (\rho^\e \p_t \T^\e + \mathbf{D}_0 (\rho^\e \mathbf{u}^\e) \cdot \nabla \T_{\neq}^\e - \mathbf{D}_{\neq} (\rho^\e \mathbf{u}^\e \cdot \nabla \T^\e) + \mathbf{D}_0 (\rho^\e \T^\e) \dv \mathbf{u}^\e_{\neq} - \mathbf{D}_{\neq} (\rho^\e \T^\e \dv \mathbf{u}^\e) \\
		&\qquad ~~-\e^2 \nabla \mathring{\mathbf{u}}^\e : \mathbb{S}(\mathbf{u}_{\neq}^\e) + \e^2 \mathbf{D}_{\neq} [\nabla \mathbf{u}^\e : \mathbb{S} (\mathbf{u}^\e)].
	\end{align*}
	The uniform estimates of non-zero modes can be obtained as follows:
	\begin{Thm}(Uniform estimates for non-zero modes)\label{Uniform estimates for non-zero modes Thm}
		Under the assumptions of Theorem \ref{Uniform estimates Thm for ill-prepared initial data} and further assume that the initial data satisfy 
		\begin{align}
			\norm{(\rho^{\varepsilon}_{\neq}, \mathbf{u}^{\varepsilon}_{\neq}, \T^{\varepsilon}_{\neq})(0)}_{H^s{(\mathbb{R} \times \mathbb{T}^2)}} \le e^{-c_0\varepsilon^{-4}}, \label{non-zero initial data for ill}
		\end{align}
		where \(c_0\) is a positive constant independent of \(\e\). Then there exist positive constants \(T_1 \in (0, T_0]\) and \(\e_1 \in (0, \e_0]\) such that for all \(t \in [0, T_1]\) and \(\e \in (0, \e_1]\), the Cauchy problem (\ref{non-zero system}) with the initial data \((\rho^{\varepsilon}_{\neq}, \mathbf{u}^{\varepsilon}_{\neq}, \T^{\varepsilon}_{\neq})(0)\) has a unique smooth solution \((\rho^{\varepsilon}_{\neq}, \mathbf{u}^{\varepsilon}_{\neq}, \T^{\varepsilon}_{\neq})\) satisfying
		\begin{align}\label{non-zero mode est for ill}
			\sup_{0 \le t \le T_1}\norm{(\rho_{\neq}^\e, \mathbf{u}_{\neq}^\e, \T_{\neq}^\e)(t)}_{H^s}^2
			\le e^{-C \e^{-4}},
		\end{align}
		where \(C\) is a positive constant.
	\end{Thm}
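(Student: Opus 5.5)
The plan is an $\e$-weighted energy estimate for the non-zero mode system \eqref{non-zero system}, closed by Gr\"onwall's inequality on a short interval. The zero modes $(\mathring\rho^\e,\mathring{\mathbf u}^\e,\mathring\T^\e)$ are treated as given coefficients. By Theorem \ref{Uniform estimates Thm for ill-prepared initial data} and \eqref{zero mode L^2}, they are bounded uniformly in $\e$ on $[0,T_0]$ in the norm $H^{s,\e}$; in particular $\mathring\rho^\e$ and $\mathring\T^\e$ stay between two positive constants, and their $W^{1,\infty}$ norms are bounded by Sobolev embedding ($s\ge4$). Local existence of the non-zero modes is not a separate issue, since $\mathbf D_{\neq}$ of the solution from Theorem \ref{Uniform estimates Thm for ill-prepared initial data} solves \eqref{non-zero system}. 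So only the bound \eqref{non-zero mode est for ill} has to be proved.

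\textbf{Step 1: the energy functional.} For $|\alpha|\le s$, apply $\p^\alpha$ to \eqref{non-zero system}. Test the $\rho_{\neq}$-equation with $\frac{\mathring\T^\e}{\e^2\mathring\rho^\e}\p^\alpha\rho_{\neq}^\e$, the momentum equation with $\p^\alpha\mathbf u_{\neq}^\e$, and the temperature equation with $\frac{\mathring\rho^\e}{\e^2\mathring\T^\e}\cdot\frac{1}{\mathring\rho^\e}\p^\alpha\T_{\neq}^\e$ (suitably symmetrized). The aim is that the singular terms $\e^{-2}(\mathring\rho\nabla\T_{\neq}+\mathring\T\nabla\rho_{\neq})$ cancel, after integration by parts, against the divergence terms in the other two equations, up to commutators that involve only first derivatives of the zero modes. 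I then define the energy
\[
E_{\neq}(t)=\sum_{|\alpha|\le s}\Big(\e^{-2}\|\p^\alpha(\rho^\e_{\neq},\T^\e_{\neq})\|_{L^2}^2+\|\p^\alpha \mathbf u^\e_{\neq}\|_{L^2}^2\Big).
\]
The commutators carry a factor $\e^{-2}\nabla(\mathring\rho,\mathring\T)$ multiplied by quadratic non-zero quantities. They are therefore bounded by $C\,\|\nabla(\mathring\rho,\mathring\T)\|_{L^\infty}E_{\neq}$, with no worse power of $\e$ than the one already built into $E_{\neq}$.

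\textbf{Step 2: the forcing terms.} The terms $f^\e_{i\neq}$ are bilinear in the non-zero modes, of the form $\mathbf D_{\neq}(g_{\neq}h_{\neq})$, plus zero-mode-times-non-zero-mode terms already placed on the left. The bilinear terms are bounded by $C\|(\rho_{\neq},\mathbf u_{\neq},\T_{\neq})\|_{H^s}^2$ times $W^{1,\infty}$ norms, using the uniform bound of Theorem \ref{Uniform estimates Thm for ill-prepared initial data}. Viscosity and heat conduction absorb the top-order terms. The expected outcome is a differential inequality
\[
\frac{d}{dt}E_{\neq}\le C\e^{-2}E_{\neq}.
\]

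\textbf{Step 3: the main obstacle, and how to close.} The hard part is the time derivatives contained in $\p^\alpha$, together with terms like $\p_t\mathring\rho$ arising from the variable weights. Uniform bounds are available only for $\e\p_t$, so each such time derivative costs a factor $\e^{-1}$. I would accept this loss: the bound $\|\e\p_t(\mathring\rho,\mathring\T)\|_{L^\infty}\le C$ gives a coefficient at worst $C\e^{-2}$, not $\e^{-3}$, after using the $\e^{-2}$ weight in $E_{\neq}$. Gr\"onwall then yields
\[
E_{\neq}(t)\le E_{\neq}(0)\,e^{C\e^{-2}t}\le \e^{-2}e^{-2c_0\e^{-4}}e^{C\e^{-2}T_1}.
\]
Since $\e^{-4}$ dominates $\e^{-2}$, choosing $\e_1$ small makes $\e^{-2}+C\e^{-2}T_1\le c_0\e^{-4}$. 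This gives \eqref{non-zero mode est for ill} with $C=c_0$.

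If the coefficient turns out to be as bad as $\e^{-4}$ (for instance from $\p_t$ weights combined with the $\e^{-2}$ scaling), I instead take $T_1$ small relative to $c_0/C$. The super-exponential initial smallness $e^{-c_0\e^{-4}}$ is exactly what makes even an $e^{C\e^{-4}t}$ growth acceptable. Finally, a continuity argument keeps the non-zero modes small enough that the quadratic terms from Step 2 remain dominated throughout $[0,T_1]$.
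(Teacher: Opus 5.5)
Your argument is correct and has the same skeleton as the paper's proof. Both use $\p^\alpha$-energy estimates for \eqref{non-zero system}, with the zero modes as coefficients controlled by Theorem~\ref{Uniform estimates Thm for ill-prepared initial data}. Both close with a linear Gr\"onwall inequality at an $\e$-dependent rate, and the initial smallness $e^{-c_0\e^{-4}}$ absorbs the growth.

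Where you differ is the treatment of the singular pressure terms.
\begin{itemize}
\item The paper uses the unweighted energy $\norm{(\rho_{\neq},\mathbf u_{\neq},\T_{\neq})}_{H^s}^2$. It plays the $\e^{-2}$ terms directly against the viscous and thermal dissipation by Young's inequality, e.g. $\e^{-2}\norm{\nabla\p^\alpha\T_{\neq}}_{L^2}\norm{\p^\alpha\mathbf u_{\neq}}_{L^2}\le\frac{\kappa}{120}\norm{\nabla\p^\alpha\T_{\neq}}_{L^2}^2+C\e^{-4}\norm{\p^\alpha\mathbf u_{\neq}}_{L^2}^2$. This gives the rate $C\e^{-4}$, which is exactly why the hypothesis has $\e^{-4}$ in the exponent, and the paper closes by taking $c_0=2C$.
\item Your symmetrizer, with the $\e^{-2}$ weight on $(\rho_{\neq},\T_{\neq})$, cancels the leading acoustic terms. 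Carried out, it gives a rate of order $\e^{-2}$, so the $\e^{-4}$ in the hypothesis is more than you need and any $T_1\le T_0$ works for small $\e$.
\item Your fallback is precisely the paper's estimate. Choosing $T_1\lesssim c_0/C$ there has the small advantage of working for every given $c_0>0$, rather than requiring $c_0$ large.
\item The paper's route is shorter, since no weights (or their time derivatives) have to be tracked.
\end{itemize}

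Some bookkeeping in your sketch needs correcting, although none of it breaks the argument.
\begin{itemize}
\item Take $\p^\alpha$ purely spatial, matching the $H^s$ norm in the statement. If $E_{\neq}$ contained $\e\p_t$-derivatives, \eqref{non-zero initial data for ill} would not control $E_{\neq}(0)$. For example, $\e\p_t\p^{s-1}\mathbf u_{\neq}(0)$ contains $\e\,\p^{s+1}\mathbf u_{\neq}(0)$, which is bounded but not small. With spatial $\p^\alpha$, the only time derivatives are those of zero-mode coefficients.
\item The commutators $\e^{-2}(\T_{\neq}\p_{x_1}\mathring\rho+\rho_{\neq}\p_{x_1}\mathring\T)u_{1\neq}$ are of size $C\e^{-1}E_{\neq}$, not $CE_{\neq}$. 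This is because $\p_{x_1}\mathring\T=O(1)$ and only $(\rho_{\neq},\T_{\neq})$ carry the weight.
\item The $f_{i\neq}$ are not purely bilinear. They contain linear terms with zero-mode coefficients, such as $\rho_{\neq}\p_t\mathring{\mathbf u}$, $\mathbf u_{\neq}\cdot\nabla\mathring\rho$ and $\rho_{\neq}\dv\mathring{\mathbf u}$. Their top-order parts need $s+1$ spatial derivatives or one time derivative of the zero modes, and so cost powers of $\e^{-1}$ through the $H^{s,\e}$ norm; this stays within your budget.
\item The top-order parts of $\mathbf D_{\neq}(\rho_{\neq}\p_t\mathbf u_{\neq})$ and $\mathbf D_{\neq}(\rho_{\neq}\p_t\T_{\neq})$ cannot be bounded by $E_{\neq}$, because $\p_t\p^\alpha\mathbf u_{\neq}$ contains $\e^{-2}\nabla\p^\alpha(\rho_{\neq},\T_{\neq})$. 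Move them into the energy as $\frac12\frac{d}{dt}\int\rho_{\neq}|\p^\alpha\mathbf u_{\neq}|^2\,dx$, as the paper does; equivalently, weight by the full density.
\item No continuity argument is needed. Theorem~\ref{Uniform estimates Thm for ill-prepared initial data} already bounds every coefficient, including the non-zero factors in the quadratic terms.
\end{itemize}
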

	Then we have the following low Mach number limit.
	\begin{Thm}(Low Mach number limit for Moderately ill-prepared initial data) \label{Convergence Thm for ill-prepared initial data}
		Under the assumptions of Theorem \ref{Uniform estimates for non-zero modes Thm} and further assume that the initial data satisfy 
		\begin{align}
			&(\mathring{\underline{p}}_0^\e, \mathring{\mathbf{u}}_0^\e, \mathring{\underline{\T}}_0^\e - \underline{\Theta}) 
			\to 
			(\mathring{\underline{p}}_0, \mathring{\mathbf{u}}_0, \mathring{\underline{\T}}_0 - \underline{\Theta}), 
			\quad \text{in}~ H^s(\mathbb{R}) \quad \text{as}~ \e \to 0, \notag\\
			&|\mathring{\underline{\T}}_0 - \underline{\T}_+| \le C x_1^{-1-\sigma}, 
			\quad \text{for}~ x_1 \in [1, +\infty), 
		\end{align}
		where \(\sigma\) and \(C\) are positive constants. Then the solution \((\underline{p}^\e, \mathbf{u}^\e, \underline{\T}^\e)\) of (\ref{NS system in scaling}) converges strongly in \(L^2(0, T_1; H^{s'}_{loc}(\mathbb{R}\times \Torus^2))\) for all \(s' < s\) to \((0, \bar{u}_1, \bar{u}_2, \bar{u}_3, \bar{\underline{\T}})\), where \((\bar{u}_1, \bar{u}_2, \bar{u}_3, \bar{\underline{\T}})\) is the unique solution of (\ref{limit system in scaling}) with the initial data \((\underline{\mathring{w}}_0, \mathring{u}_{20}, \mathring{u}_{30}, \underline{\mathring{\T}}_0)\), where \(\underline{\mathring{w}}_0\) is determined by 
		\begin{align*}
			\underline{\mathring{w}}_0 = \frac{1}{2} \kappa e^{\underline{\mathring{\T}}_0} \p_{x_1} \underline{\mathring{\T}}_0.
		\end{align*}
	\end{Thm}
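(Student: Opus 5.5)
We split the solution into its zero and non-zero modes in the transverse variables and treat the two parts separately, using the uniform bounds already available. By Theorem \ref{Uniform estimates Thm for ill-prepared initial data} and \eqref{zero mode L^2}, the zero modes $(\mathring{\underline{p}}^\e,\mathring{\mathbf{u}}^\e,\mathring{\underline{\T}}^\e-\underline{\Theta})$ are bounded in $H^{s,\e}(\R)$ uniformly in $\e$ on $[0,T_1]$. By Theorem \ref{Uniform estimates for non-zero modes Thm}, the non-zero modes are bounded by $e^{-C\e^{-4}}$ in $H^s$. The non-zero part therefore converges to $0$ strongly in $L^\infty(0,T_1;H^s(\R\times\Torus^2))$. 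It remains to identify the limit of the zero modes, which solve a one-dimensional system. That system is \eqref{NS system in scaling} averaged over $\Torus^2$, with source terms quadratic in the non-zero modes. By \eqref{non-zero mode est for ill} and the uniform $H^s$ bounds, these sources are $O(e^{-C\e^{-4}}\e^{-2})$ even after multiplication by the singular factors $\e^{-1}$ or $\e^{-2}$, so they vanish in the limit.

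For the zero-mode system we follow M\'etivier--Schochet \cite{Métivier2001}. First, the uniform bound on $\p^\alpha$ with $\alpha_0\ge1$ gives bounds on $\p_t\mathring{\underline{\T}}^\e$. Aubin--Lions then gives strong compactness of $\mathring{\underline{\T}}^\e$ and of the non-acoustic part of $\mathring{\mathbf{u}}^\e$. The transverse components $\mathring u_2^\e,\mathring u_3^\e$ satisfy equations without the $\e^{-1}$ pressure term, since $\p_{x_2},\p_{x_3}$ annihilate the zero modes. Hence their time derivatives are bounded and they are compact. For the one-dimensional acoustic pair $(\mathring{\underline{p}}^\e,\mathring u_1^\e-\frac{\kappa}{2}e^{\mathring{\underline{\T}}^\e}\p_{x_1}\mathring{\underline{\T}}^\e)$, we write the system as $\e\p_t V+L(\mathring{\underline{\T}}^\e,\p_{x_1})V=\e(\dots)$. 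Here $L$ is a wave operator with variable sound speed determined by $e^{\underline{\T}}$.

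The key step, and the main obstacle, is to show that this acoustic component converges to $0$ in $L^2(0,T_1;L^2_{loc}(\R))$. This is local energy decay for a one-dimensional wave operator with variable coefficients. The coefficient tends to different constants as $x_1\to\pm\infty$, which is exactly where $\underline{\T}_+\neq\underline{\T}_-$ enters. We will invoke the M\'etivier--Schochet auxiliary lemma: if the limiting operator, after a change of variables, has no nontrivial $L^2$ eigenfunctions, then oscillating solutions decay locally. To apply it we must rule out trapped eigenmodes of $\p_{x_1}(a(x_1)\p_{x_1}\cdot)$ with $a=e^{\underline{\T}}$. The decay condition $|\mathring{\underline{\T}}_0-\underline{\T}_+|\le Cx_1^{-1-\sigma}$ makes $a$ converge to its end states at an integrable rate. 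This lets us use a Jost-type ODE argument to show that every $L^2$ solution of the eigenvalue problem vanishes. On the left side, $\underline{\Theta}$ has Gaussian tails, so the same argument applies there. We also need the limiting coefficient to be attained strongly, which follows from the compactness of $\mathring{\underline{\T}}^\e$ obtained above. The main technical difficulty is the unboundedness of $\R$ in this one-dimensional setting: one-dimensional waves do not disperse. The argument must therefore rely on the waves escaping to infinity, through a Mourre or Morawetz-type estimate that uses the same non-trapping ODE property.

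Once the acoustic part is known to vanish locally, we pass to the limit in the weak formulation of the zero-mode equations. The nonlinear products converge by strong compactness. The pressure gradient $\e^{-1}\p_{x_1}\mathring{\underline{p}}^\e$ is recovered as $\p_{x_1}\pi$, and the constraint $(2u_1-\kappa e^{\underline{\T}}\underline{\T}_{x_1})_{x_1}=0$ follows from the vanishing of the acoustic component. This yields \eqref{limit system in scaling} with the stated initial data, including the prescribed $\underline{\mathring{w}}_0$. Uniqueness of the limit solution gives convergence of the whole family. Finally, interpolating the uniform $H^s$ bounds with the $L^2_{loc}$ convergence upgrades it to $L^2(0,T_1;H^{s'}_{loc})$ for every $s'<s$, and adding back the exponentially small non-zero modes gives the claimed convergence.
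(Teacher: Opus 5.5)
Your route is the paper's: exponential smallness of the non-zero modes, Aubin--Lions compactness for $\mathring{\underline{\theta}}^\varepsilon$ and $(\mathring u_2^\varepsilon,\mathring u_3^\varepsilon)$, the M\'etivier--Schochet local-decay lemma for the one-dimensional acoustic problem, then identification of the limit and uniqueness. For the acoustic problem the paper works with the wave equation $\frac{\varepsilon^2}{2}\partial_t^2\mathring{\underline{p}}^\varepsilon-\partial_{x_1}(e^{\mathring{\underline{\theta}}^\varepsilon}\partial_{x_1}\mathring{\underline{p}}^\varepsilon)=o(1)$ and then with $\varepsilon\partial_t\mathring{\underline{p}}^\varepsilon$, which amounts to your first-order pair. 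Your Levinson/Jost check of the no-eigenvalue hypothesis is a detail the paper leaves inside its citation. One justification needs correcting. The $H^{s,\varepsilon}$ norm controls only $(\varepsilon\partial_t)$-derivatives, so it gives no uniform bound on $\partial_t\mathring{\underline{\theta}}^\varepsilon$. That bound must be read off the averaged temperature equation, which has no $\varepsilon^{-1}$ term. This is how the paper does it, and how you yourself treat $\mathring u_2^\varepsilon,\mathring u_3^\varepsilon$.

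The substantive missing step is the decay of the coefficient for $t>0$. Your hypothesis $|\mathring{\underline{\theta}}_0-\underline{\theta}_+|\le Cx_1^{-1-\sigma}$ concerns only the initial datum. The local-decay lemma, however, is applied with the time-frozen limit coefficient $e^{\bar\theta(t,\cdot)}$, where $\bar\theta$ is the limit of $\mathring{\underline{\theta}}^\varepsilon$, for every $t\in[0,T_1]$. So the decay has to be propagated in time. Propagating it needs a closed equation for $\bar\theta$, so the constraint must be available before the acoustic part is shown to vanish, not after as in your ordering. The paper gets the constraint from weak limits alone. Passing to the limit in the averaged equations gives $\partial_{x_1}\bar p=0$ and $\partial_{x_1}(2\bar u_1-\kappa e^{\bar\theta}\partial_{x_1}\bar\theta)=0$. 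Together with the limit of the temperature equation this yields $\bar\theta_t=\frac{\kappa}{2}e^{\bar\theta}\bar\theta_{x_1x_1}$. A weighted energy estimate for $H=x_1^{1+\sigma}(\bar\theta-\underline{\theta}_+)$ on $[1,\infty)$ then gives $|\bar\theta(x_1,t)-\underline{\theta}_+|\le Cx_1^{-1-\sigma}$ on $[0,T_1]$.

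Two smaller points. Your claim that the left end is handled by the Gaussian tails of $\underline{\Theta}$ is unjustified, since $\bar\theta-\underline{\Theta}$ is only known to lie in $H^{s+1}$. It is also unnecessary: in one dimension, integrable convergence at $+\infty$ alone already rules out $L^2$ eigenfunctions, which is why only right-end decay is assumed. Finally, no Mourre or Morawetz estimate is needed, because the M\'etivier--Schochet lemma turns absence of eigenvalues directly into the time-averaged local decay you need.
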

	
	\subsubsection{Challenges and strategy.}
	
	Compared to \(\R^3\), the periodic transverse geometry weakens acoustic dispersion in \(\mathbb{R} \times \mathbb{T}^2\). The convergence theorem established in \cite{Métivier2001} cannot be applied directly in this case. To overcome this difficulty, we decompose the system (\ref{nondimensional NS Eqs}) into two parts: the zero mode system and the non-zero mode system, and derive energy estimates for each of the parts separately. On the one hand, the zero mode is controlled by the estimate \eqref{zero mode L^2}. On the other hand, the rapid convergence of the non-zero modes follows from the Poincar\'e inequality together with the smallness assumption on their initial data \eqref{non-zero initial data for ill}. Thus, we reduce the convergence analysis to estimates of one-dimensional type for the remaining acoustic modes along \(\R\). This method enable us to
	establish the low Mach number limit despite the weaker dispersion with Moderately ill-prepared initial data.
	% These estimates enable us to control the fast acoustic oscillations and establish the low Mach number limit despite the weaker dispersion.

	% On the other hand,
	% the Poincar\'e inequality,  together with the smallness assumption on their initial data \eqref{non-zero initial data for ill}, yields the required decay estimates.

	% the rapid convergence of the non-zero modes follows from the Poincar\'e inequality together with the smallness assumption on their initial data \eqref{non-zero initial data for ill}. Thus, we reduce the convergence analysis to estimates of one-dimensional type for the remaining acoustic modes along \(\R\). These estimates enable us to control the fast acoustic oscillations and establish the low Mach number limit despite the weaker dispersion.
	
	%We impose an additional initial condition (\ref{non-zero initial data for ill}) so that the non-zero modes tend to \(0\) as \(\e \to 0\). Under this condition, we reduce the convergence analysis to estimates of one-dimensional type for the remaining acoustic modes along \(\R\). These estimates enable us to control the fast acoustic oscillations and establish the low Mach number limit despite the weaker dispersion.

	\section{Uniform estimates for well-prepared initial data} \label{section3}
	
	In this section, we are devoted to the Mach number limit for well-prepared initial data. For simplicity, we omit the subscript \(y\) in \(\dv_{y}\), \(\nabla_y\) and \(\Delta_y\).
	
	\subsection{Perturbation equations}
	Recall the definition of the perturbations,
	\begin{align*}
		(\phi, \varphi, w, \psi, \zeta) = (\rho-\tilde{\rho}, \e \mathbf{m} - \e \tilde{\mathbf{m}}, \E-\tilde{\E}, \e \mathbf{u} - \e \tilde{\mathbf{u}}, \T-\tilde{\T}).
	\end{align*}
	Combining (\ref{scaled NS Eqs}) and (\ref{tilde U equs}), we can write the equations of \((\phi, \psi, \zeta)\) as follows,
	\begin{equation}\label{the perturbation equs}
		\left\{
		\begin{aligned}
			&\p_\tau \phi + \e \mathbf{u} \cdot \nabla \phi + \rho \dv \psi = \mathcal{R}_1, \\
			&\rho \p_\tau \psi + \e \rho \mathbf{u} \cdot \nabla \psi + (\T \nabla \phi + \rho \nabla \zeta) = \mu \Delta \psi + (\mu + \lambda) \nabla \dv \psi + \mathcal{R}_2, \\
			&\rho \p_\tau \zeta + \e \rho \mathbf{u} \cdot \nabla \zeta + \rho \T \dv \psi = \kappa \Delta \zeta + \mathcal{R}_3,
		\end{aligned}
		\right.
	\end{equation}
	where
	\begin{align*}
		\mathcal{R}_1 =& - \p_{y_1} \tilde{\Gamma}_1 - \psi \cdot \nabla \tilde{\rho} - \dv (\e \tilde{\mathbf{u}}) \phi, \\
		\mathcal{R}_2 =& - \rho \psi \cdot \nabla (\e \tilde{\mathbf{u}}) + \nabla \tilde{\rho} \big(\frac{\tilde{\T}}{\tilde{\rho}}\phi - \zeta\big) - \frac{\phi}{\tilde{\rho}} [\mu \Delta (\e \tilde{\mathbf{u}}) + (\mu + \lambda) \nabla \dv (\e \tilde{\mathbf{u}})] -\frac{\rho}{\tilde{\rho}}(\p_{y_1} \tilde{\mathbf{\Gamma}}_2 - \e \tilde{\mathbf{u}} (\p_{y_1} \tilde{\Gamma}_1)), \\
		\mathcal{R}_3 =& \frac{\mu}{2} |\e \nabla \mathbf{u} + (\e \nabla \mathbf{u})^t |^2 + \lambda |\dv (\e \mathbf{u})|^2 - \frac{\mu}{2} |\e \nabla \tilde{\mathbf{u}} + (\e \nabla \tilde{\mathbf{u}})^t|^2- \lambda |\dv (\e \tilde{\mathbf{u}})|^2 \\
		&- \rho \psi \cdot \nabla \tilde{\T} - \rho \dv (\e \tilde{\mathbf{u}}) \zeta - \frac{\phi}{\tilde{\rho}} \big[\kappa \Delta \tilde{\T} + \frac{\mu}{2} |\e \nabla \tilde{\mathbf{u}} + (\e \nabla \tilde{\mathbf{u}})^t|^2 + \lambda |\dv (\e \tilde{\mathbf{u}})|^2\big] \\ 
		&- \frac{\rho}{\tilde{\rho}} (\dv \tilde{\mathbf{\Gamma}}_3 - \e \tilde{\mathbf{u}} \cdot \dv \tilde{\mathbf{\Gamma}}_2 + (\frac{|\e \tilde{\mathbf{u}}|^2}{2} - \tilde{\T}) \p_{y_1} \tilde{\Gamma}_1).
	\end{align*}
	
	We study the perturbations for the zero mode of \((\rho, \e \mathbf{m}, \E)\). The anti-derivatives for the zero mode is crucial in our analysis, and thus we denote
	\begin{align}\label{anti}
		\Phi:= \int_{-\infty}^{y_1} \mathring{\phi} dz_1, \quad \Psi := \int_{-\infty}^{y_1} \mathring{\varphi} dz_1, \quad W:=\int_{-\infty}^{y_1} \mathring{w} dz_1.
	\end{align}
	Applying \(\mathbf{D}_0\) to (\ref{scaled NS Eqs}) and using (\ref{tilde U equs}), we have
	\begin{equation}\label{the anti equs}
		\left\{
		\begin{aligned}
			&\p_\tau \Phi + \p_{y_1} \Psi_1 = \mathcal{S}_1, \\
			&\p_\tau \Psi_1 + \p_{y_1} W = (2\mu + \lambda) \p_{y_1} (\e \mathring{u}_1 - \e \tilde{u}_1) + \mathcal{S}_{21}, \\
			&\p_\tau 
			\Psi_i = \mu \p_{y_1} (\e \mathring{u}_i - \e \tilde{u}_i) + \mathcal{S}_{2i}, ~i=2,3, \\
			&\p_\tau W +2\tilde{\T} \p_{y_1} \Psi_1 = \kappa \p_{y_1} \mathring{\zeta} + \mathcal{S}_3,
		\end{aligned}
		\right.
	\end{equation}
	where
	\begin{align*}
		&\mathcal{N} (U) = \frac{|\e m_1|^2}{\rho} - \frac{|\e \mathbf{m}|^2}{2\rho}, \qquad \mathcal{S}_1 = -\tilde{\Gamma}_1, \\
		&\mathcal{S}_{21} = \big(-\mathring{\mathcal{N}}(U) + \mathcal{N}(\tilde{U})\big) - \tilde{\mathbf{\Gamma}}_{21} := \mathcal{S}_{21}^{(1)} + \mathcal{S}_{21}^{(2)},\\
		&\mathcal{S}_{2i} = \left(-\mathbf{D}_0 \left(\frac{\e m_1 \e m_i}{\rho}\right) + \mathbf{D}_0 \left(\frac{\e \tilde{m}_1 \e \tilde{m}_i}{\tilde{\rho}}\right)\right) - \tilde{\mathbf{\Gamma}}_{2i} := \mathcal{S}_{2i}^{(1)} + \mathcal{S}_{2i}^{(2)}, \quad i = 2,3,\\
		&\mathcal{S}_3 = (2\tilde{\T} \p_{y_1} \Psi_1 - \mathring{\hat{N}}_{31}(U) + \mathring{\hat{N}}_{31}(\tilde{U})) - \tilde{\mathbf{\Gamma}}_{3} := \mathcal{S}_3^{(1)} + \mathcal{S}_3^{(2)}, \quad \hat{N}_3 (U) = \frac{\e \mathbf{m} \E}{\rho} + p \frac{\e \mathbf{m}}{\rho} - \frac{\e \mathbf{m}}{\rho} \mathbb{S}(\e \mathbf{u}).
	\end{align*}
	
	\subsection{Statement of the a priori estimates}
	
	\subsubsection{Local existence theorem \ref{Uniform estimates for well-prepared initial data}.}

	In this subsection, we use \(\Omega_\e\) to replace \(\mathbb{R} \times \frac{\mathbb{T}^2}{\e^2}\) for convenience. 
	We state the local existence theorem first. Let us introduce the solution space.
	\begin{align*}
		X_{m,M} := \sup_{0 \le \tau \le T} \{ (\phi, \psi, \zeta, \Phi, \Psi, W) |& (\phi, \psi, \zeta) \in C(0, +\infty; H^3(\Omega_\e)), \quad (\phi, \nabla \psi, \nabla \zeta) \in L^2(0,+\infty; H^3(\Omega_\e)), \\
		&(\Phi, \Psi, W) \in C(0, +\infty; H^2(\mathbb{R})), \quad (\p_{y_1} \Phi, \p_{y_1} \Psi, \p_{y_1} W) \in L^2(0, +\infty; H^2(\mathbb{R})), \\
		&\|(\Phi, \Psi, W)\|_{H^2(\mathbb{R})} + \|(\phi, \psi, \zeta)\|_{H^3(\Omega_\e)} \le M, \\
		&\inf_{y, \tau} (\phi+\bar{\rho}) \ge m >0. \}
	\end{align*}
	Using a standard argument as \cite{HXY,HWW,MatsumuraNishida1980}, we have the property of local existence
	\begin{Prop}\label{local existence}
		There exist constants \(m, M >0\), so that if the initial data satisfies that
		\begin{align*}
			\|(\Phi_0, \Psi_0, W_0)\|_{H^3(\mathbb{R})} + \|(\phi_0, \psi_0, \zeta_0)\|_{H^3(\Omega_\e)} + \|(\phi_0, \varphi_0, w_0)\|_{L^1(\Omega_\e)} \le M, \quad \text{and} \quad \inf_y (\phi_0 + \bar{\rho}(y_1, 0)) \ge m > 0,
		\end{align*}
		then there exists a positive time \(T = T(m,M) >0\), so that the system (\ref{scaled NS Eqs}) admits a unique solution \((\rho, \mathbf{u}, \T)\) satisfying
		\begin{align*}
			(\phi, \psi, \zeta, \Phi, \Psi, W) \in X_{\frac{1}{2}m, 2M} (T).
		\end{align*} 
		Moreover, \(\|(\Phi, \Psi, W)\|_{L^\infty (\Omega_\e)} \le 2M.\)
	\end{Prop}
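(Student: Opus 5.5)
The plan is to follow the classical scheme of Matsumura--Nishida for symmetric hyperbolic--parabolic systems, combined with a separate treatment of the one-dimensional anti-derivative variables. Throughout, \(\varepsilon\) is fixed; no uniformity in \(\varepsilon\) is needed for the local statement. The ansatz \((\tilde\rho,\varepsilon\tilde{\mathbf m},\tilde{\mathcal E})\) is smooth, and its derivatives are bounded in \(L^\infty\) on \([0,\infty)\) by constants depending on \(\delta\), \(\varepsilon\) and \(\sum_i|\bar\Theta_i|\). Moreover, the error terms \(\tilde\Gamma_1,\tilde{\mathbf\Gamma}_{2i},\tilde{\mathbf\Gamma}_3\) and their \(y_1\)-derivatives lie in \(L^1\cap H^k(\mathbb R)\), uniformly in \(\tau\), by \eqref{2026-9-20}.

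First, I would construct approximate solutions of the perturbation system \eqref{the perturbation equs} by linearization and iteration. Given an iterate \((\phi^n,\psi^n,\zeta^n)\) in \(X_{\frac12 m,2M}(T)\), I solve the linear problem in which \(\phi^{n+1}\) satisfies a transport equation \(\partial_\tau\phi^{n+1}+\varepsilon\mathbf u^n\cdot\nabla\phi^{n+1}=-\rho^n\operatorname{div}\psi^n+\mathcal R_1(\cdot^n)\), solved along characteristics. The pair \((\psi^{n+1},\zeta^{n+1})\) solves a linear parabolic (Lamé and heat) system on \(\Omega_\varepsilon\) with coefficients \(\rho^n,\mathbf u^n,\theta^n\), using that \(\rho^n\ge m/2\) and \(\mu>0\), \(2\mu+3\lambda>0\), \(\kappa>0\). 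Standard \(H^3\) energy estimates for the transport part, and \(H^3\) estimates with \(L^2_\tau H^4\) parabolic smoothing for \((\psi,\zeta)\), show that for \(T=T(m,M)\) small the map keeps the iterates in \(X_{\frac12 m,2M}(T)\). Here the lower bound \(\inf(\phi+\bar\rho)\ge m/2\) is preserved because \(\|\partial_\tau\phi\|_{L^\infty}\le C(M)\) by Sobolev embedding (\(H^3\subset W^{1,\infty}\) in 3D). Contraction in the lower norm \(C([0,T];L^2)\) then gives a unique limit solving \eqref{scaled NS Eqs}, and uniqueness follows from the same \(L^2\) difference estimate.

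Second, I would recover the anti-derivative bounds. Since \(\mathbf D_0\) commutes with the conservative form, the zero modes satisfy \eqref{the anti equs}. I would take \((\Phi,\Psi,W)\) as defined by \eqref{anti} and check that they stay in \(H^2(\mathbb R)\), which amounts to showing they tend to \(0\) as \(y_1\to+\infty\). This is a conservation-of-mass statement. Integrating the zero-mode equations in \(y_1\), the fluxes vanish at \(\pm\infty\) and \(\int_{\mathbb R}(\tilde\Gamma_1,\tilde{\mathbf\Gamma}_{2},\tilde{\mathbf\Gamma}_3)\,dy_1\) enters only through \(\partial_{y_1}\)-derivatives, so \(\int_{\mathbb R}(\mathring U-\tilde U)\,dy_1\) is constant in \(\tau\). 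It equals zero by the zero-mass condition following \eqref{tilde}. Here the \(L^1\) assumption on \((\phi_0,\varphi_0,w_0)\) guarantees that these integrals are defined, and \(L^1\) propagation on \([0,T]\) follows from the parabolic structure with integrable sources. Then \(\|(\Phi,\Psi,W)\|_{L^2}\) is estimated directly from \eqref{the anti equs}: multiply by \((\Phi,\Psi,W)\), bound the right-hand sides by \(\|(\mathring\phi,\mathring\varphi,\mathring w)\|_{H^1}\le C\|(\phi,\psi,\zeta)\|_{H^2(\Omega_\varepsilon)}\) and by \(\|\mathcal S\|_{L^2}\), and apply Gronwall for small \(T\). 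Higher derivatives of \((\Phi,\Psi,W)\) are just zero modes of the perturbations, which are already controlled. Finally, the \(L^\infty\) bound \(\|(\Phi,\Psi,W)\|_{L^\infty}\le 2M\) follows from \(H^1(\mathbb R)\subset L^\infty\) after possibly shrinking \(T\).

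The main obstacle is the \(L^1\)/zero-mass bookkeeping, namely showing that \(\Phi,\Psi,W\) decay at \(+\infty\) for \(\tau>0\). The \(H^3\) theory itself is routine. For the decay at \(+\infty\), I would first use a weighted \(L^1\) estimate, obtained by multiplying by \(\operatorname{sgn}\) regularizations, to propagate integrability on \([0,T]\). I would then justify differentiating \(\int\mathring\phi\,dy_1\) in \(\tau\) via truncation at \(|y_1|=R\) and letting \(R\to\infty\).
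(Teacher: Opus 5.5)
Your scheme for the \(H^3\) part is the standard Matsumura--Nishida argument that the paper only cites, and it is fine: you linearize, solve a transport problem for \(\phi^{n+1}\) and a Lam\'e/heat system for \((\psi^{n+1},\zeta^{n+1})\), close bounds for small \(T\) at fixed \(\varepsilon\), contract in \(C([0,T];L^2)\), and keep \(\rho\ge m/2\) via \(\|\partial_\tau\phi\|_{L^\infty}\le C(M)\). The weak point is the anti-derivative part. There you misplace the difficulty and rely on a step your ingredients do not support.

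First, showing \((\Phi,\Psi,W)\to0\) as \(y_1\to+\infty\) does not amount to membership in \(H^2(\mathbb R)\). A primitive of a zero-mass \(L^1\cap H^3\) function can decay like \(|y_1|^{-1/2}\) and fail to be in \(L^2\). What must be propagated from \(\Phi_0\in L^2\) is square-integrable tails. Second, the \(L^1\) propagation you invoke does not follow from ``the parabolic structure.'' The density equation is a transport equation. Multiplying by a regularized \(\operatorname{sgn}\phi\) gives \(\frac{d}{d\tau}\|\phi\|_{L^1}\lesssim\|\phi\|_{L^1}+\|\operatorname{div}\psi\|_{L^1}+\cdots\), or \(\|\operatorname{div}\varphi\|_{L^1}\) in conservative form, so you need \(\nabla\psi\in L^1_\tau L^1_y\). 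Your hypothesis controls in \(L^1\) only the zero-order data \((\phi_0,\varphi_0,w_0)\), and the \(H^3\) bounds give nothing in \(L^1\) along the unbounded \(y_1\)-direction. The momentum equation is forced by \(\nabla\phi\), so in the constant-coefficient model \(\nabla\psi\in L^1\) would require \(\nabla^2\) of the heat kernel acting on \(L^1\), whose \((\tau-s)^{-1}\) singularity is not time-integrable. As sketched, this step is unjustified.

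None of it is needed, however. The right-hand sides of \eqref{the anti equs} are zero modes of the \(H^3\) solution you have already built, plus the Gaussian errors \(\tilde\Gamma_1,\tilde{\mathbf\Gamma}_{2i},\tilde{\mathbf\Gamma}_3\). They therefore lie in \(C([0,T];L^2(\mathbb R))\). So define directly \(\Phi(\tau):=\Phi_0-\int_0^\tau(\mathring\varphi_1+\tilde\Gamma_1)\,ds\), and analogously \(\Psi\) and \(W\) from the momentum and energy flux equations. Then \(\Phi\in C([0,T];L^2)\) with \(\|\Phi(\tau)-\Phi_0\|_{L^2}\le C\tau\), where \(C\) depends on \(M\) and the fixed background. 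Differentiating in \(y_1\) and using the zero-mode law \(\partial_\tau\mathring\phi+\partial_{y_1}\mathring\varphi_1=-\partial_{y_1}\tilde\Gamma_1\) gives \(\partial_{y_1}\Phi=\mathring\phi\). Hence \(\Phi(\tau)\in H^1\subset C_0(\mathbb R)\) coincides with the improper integral in \eqref{anti}, and the zero-mass property persists for every \(\tau\). This needs no \(L^1\) information, truncation, or Gronwall argument. The \(L^1\) hypothesis on the data enters only at \(\tau=0\), through the coefficients \(\bar\Theta_i\) of the ansatz. Finally, the \(L^\infty\) bound needs no shrinking of \(T\), since \(\|f\|_{L^\infty(\mathbb R)}\le 2^{-1/2}\|f\|_{H^1(\mathbb R)}\le 2M\).
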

	
	\subsubsection{The a priori assumptions.}
	
	Frist, we assume the following a priori assumptions:
	\begin{equation}\label{a priori assumption}
		\left\{
		\begin{aligned}
			&\sup_{0 \le \tau \le T} \biggl\{\|(\Phi, \Psi, W)\|_{L^\infty_y(\mathbb{R})}^2 + \frac{(1+\e^2 \tau)^{\frac{1}{2}}}{\e} \|(\mathring{\phi}, \mathring{\psi}, \mathring{\zeta})\|_{L^2_y(\mathbb{R})}^2 + \frac{(1+\e^2 \tau)^{\frac{3}{2}}}{\e^3} \|\p_{y_1}(\mathring{\phi}, \mathring{\psi}, \mathring{\zeta})\|_{L^2_y(\mathbb{R})}^2 \\
			& \qquad \qquad+\frac{(1+\e^2 \tau)^{\frac{3}{2}}}{\e^2} \|\nabla_y^2 (\phi, \psi, \zeta)\|_{L_y^2(\Omega_\e)} ^2+ \frac{(1+\e^2 \tau)^{\frac{3}{2}}}{\e^2} \|\nabla_y^3 (\phi, \psi, \zeta)\|_{L_y^2(\Omega_\e)}^2 \biggr\} \le \chi^2, \\
			&\sup_{0 \le \tau \le T} (\e^{-2} + \tau)^{\frac{N}{2}} \|(\phi_{\neq}, \psi_{\neq}, \zeta_{\neq})\|_{H^1_y(\Omega_\e)}^2 \le C_N \e^{-2N-3}  e^{-c_0 \e^{-2}}, \qquad \forall N \in \mathbb{Z}^+, \quad \forall N \in \mathbb{Z}^+,
		\end{aligned}
		\right.
	\end{equation}
	where \(\chi\) is a small constant determined later, and \(C_N\) is a positive constant only depending on \(N\). To close the above a priori assumptions, in view of the local existence result, it suffices to establish the following a priori estimates for the proof of Theorem \ref{Uniform estimates for well-prepared initial data}.
	\begin{Prop}[a priori estimates]\label{a priori estimates}
		Assume that \((\phi, \psi, \zeta,\Phi,\Psi,W)\) is the unique solution given in Proposition \ref{local existence} and satisfies the  a priori assumptions \cref{a priori assumption}. Then the following estimates hold
		\begin{equation}
			\left\{
			\begin{aligned}
				&\|(\Phi, \Psi, W)\|_{L^\infty_y(\mathbb{R})}^2 \le C (\sqrt{\eta}+\delta) \e, \\
				&\|(\mathring{\phi}, \mathring{\psi}, \mathring{\zeta})\|_{L^2_y(\mathbb{R})}^2 \le C (\sqrt{\eta}+\delta) \e^2 (1+\e^2 \tau)^{-\frac{1}{2}}, \\
				&\|\p_{y_1} (\mathring{\phi}, \mathring{\psi}, \mathring{\zeta})\|_{L^2_y(\mathbb{R})}^2 \le C (\sqrt{\eta}+\delta) \e^4 (1+\e^2 \tau)^{-\frac{3}{2}}, \\
				&\|\nabla_y^2 (\phi, \psi, \zeta)\|_{L^2_y(\Omega_\e)}^2 \le C \delta \e^2 (1+\e^2 \tau)^{-\frac{3}{2}}, \\
				&\|\nabla_y^3 (\phi, \psi, \zeta)\|_{L^2_y(\Omega_\e)}^2 \le C \delta \e^2 (1+\e^2 \tau)^{-\frac{3}{2}}, \\
				&\|(\phi_{\neq}, \psi_{\neq}, \zeta_{\neq})\|_{H^1_y(\Omega_\e)}^2 \le C \e^{-2N-3} (\e^{-2} + \tau)^{-\frac{N}{2}} e^{-c_0 \e^{-2}}, \qquad \forall N \in \mathbb{Z}^+.
			\end{aligned}
			\right.
		\end{equation}
	\end{Prop}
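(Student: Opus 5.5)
The a priori estimates of Proposition \ref{a priori estimates} will be proved by a layered energy method. The zero modes are handled through the anti-derivative system \eqref{the anti equs}. The full perturbation is handled through \eqref{the perturbation equs}. The non-zero modes are handled separately by applying $\mathbf{D}_{\neq}$ to \eqref{the perturbation equs}. Throughout we work under the a priori assumptions \eqref{a priori assumption} with $\chi$ small, and we use the bounds \eqref{2026-9-20} on the error terms. In these bounds we keep the diffusion-wave contributions, weighted by $\sum|\bar\Theta_i|\le C\e^{-1}\|(U^\e-\bar U^\e)(0)\|_{L^1}\le C\sqrt\eta\,\e^{1/2}$, apart from the background contributions weighted by $\delta$.

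\textbf{Zero-mode anti-derivative estimates.} The system \eqref{the anti equs} for $(\Phi,\Psi,W)$ is first symmetrized. Following \cite{DHLX-1,DHLX-2}, we introduce the family of transformations of $(\Phi,\Psi_1,W)$ (diagonalizing the linearized flux at $\tilde U$ up to a $\tilde\T$-dependent change) so that the bad lower-order term \eqref{111} acquires the improved weight $(1+t)^{-1}$. We then multiply by the transformed variables and integrate over $\R$. This yields
\[
\|(\Phi,\Psi,W)\|_{L^2}^2+\int_0^\tau\|\p_{y_1}(\Phi,\Psi,W)\|^2_{L^2}\lesssim \|(\Phi_0,\Psi_0,W_0)\|^2+\int_0^\tau\!\!\int |\tilde\Gamma|\,|(\Phi,\Psi,W)|+\dots
\]
The wave-interaction terms with $D_{-1}$ weights are absorbed by a Gaussian-weighted (heat-kernel) inequality of the type used in \cite{HXY}. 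The source terms are bounded by $(\sqrt\eta+\delta)\e$. Time-weighted versions, obtained by multiplying the derivative estimates by $(1+\e^2\tau)^{k+1/2}$, give the decay for $\mathring\phi,\mathring\psi,\mathring\zeta=\p_{y_1}(\Phi,\Psi,W)+\cdots$ and for their $y_1$-derivatives. These produce the rates $\e^2(1+\e^2\tau)^{-1/2}$ and $\e^4(1+\e^2\tau)^{-3/2}$. The $L^\infty$ bound then follows from Gagliardo–Nirenberg. To reach the sharp $\e$-power, the background part of $\tilde\Gamma$ is estimated through the dissipation $G_k$ at the power $\frac{3k}{4}$ rather than $\frac k2$, as announced in the strategy.

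\textbf{Higher derivatives on $\Omega_\e$.} Next we perform standard $H^2$–$H^3$ energy estimates on \eqref{the perturbation equs}, using the symmetrizer $(\T/\rho)\phi$, $\psi$, $\zeta/\T$. We close with the Kawashima-type cross term $\nabla\phi\cdot\psi$ to recover dissipation of $\nabla\phi$. The lower-order contributions are controlled by the zero-mode bounds above and the non-zero bounds below, with time weights $(1+\e^2\tau)^{3/2}$. This gives the estimates with the factor $\delta\e^2$.

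\textbf{Non-zero modes, the main obstacle.} We will apply $\mathbf{D}_{\neq}$ and estimate $\hat J_0$ from \eqref{non zero est1}, using the transverse Poincaré inequality $\|f_{\neq}\|\le C\e^{-1}\|\nabla f_{\neq}\|$. The time interval is split at $\tau_*=(2C_1/(c\e^2))^2-\e^{-2}$. For $\tau\ge\tau_*$, we have $|\nabla\tilde\rho|\le C_1\e(\e^{-2}+\tau)^{-1/2}$ small enough, so the coupling terms $\phi_{\neq}\psi_{\neq}\cdot\nabla\tilde\rho$ are absorbed into $c\|\nabla(\psi_{\neq},\zeta_{\neq})\|^2$ together with $\e^2\|(\phi,\psi,\zeta)_{\neq}\|^2$. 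This yields exponential decay $e^{-c\tau}$ from the time $\tau_*$ on. For $\tau<\tau_*$, Grönwall with a rate $C\e^{-1}(\e^{-2}+\tau)^{-1/2}$ only loses polynomial factors in $\e^{-1}$, which are absorbed using the initial smallness $e^{-c_0\e^{-2}}$; this gives the $(\e^{-2}+\tau)^{-N/2}$ bound for every $N$. The term $(\mathring\T/(\mathring\rho\tilde\rho))_\tau\phi_{\neq}^2$ requires $\|\p_{y_1}^2\mathring\zeta\|_{L^\infty}$ to be integrable in time. This is supplied by the $H^3$ decay of the previous step, and it is the most delicate interlocking point of the argument.

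Finally, we choose $\chi$, $\eta_0$, $\delta_0$, $\e_0$ small so that every right-hand side is at most $\chi^2/2$, which closes \eqref{a priori assumption}.
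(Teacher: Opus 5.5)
Your overall architecture matches the paper's: zero modes via the transformed anti-derivative system, higher-order energy estimates on $\Omega_\e$, and non-zero modes split at $\tau_*$. However, the step you single out as the main obstacle is wrong as you state it.

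\textbf{The non-zero-mode Gr\"onwall step.} The paper's non-zero energy inequality (Lemma \ref{non zero est}) reads
\[
\frac{d}{d\tau}\norm{(\phi_{\neq},\psi_{\neq},\zeta_{\neq})}_{H^1}^2+c\norm{(\nabla\phi_{\neq},\nabla\psi_{\neq},\nabla\zeta_{\neq},\nabla^2\psi_{\neq},\nabla^2\zeta_{\neq})}_{L^2}^2\le C_1(\e^{-2}+\tau)^{-\frac12}\norm{(\phi_{\neq},\psi_{\neq},\zeta_{\neq})}_{L^2}^2 ,
\]
with $C_1$ independent of $\e$. The coefficients in $\hat J_0$ are only $O\big((\e^{-2}+\tau)^{-1/2}\big)$; for example $\p_{y_1}\bar\rho=O(\delta)(\e^{-2}+\tau)^{-1/2}$, with no extra factor $\e$. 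On $[0,\tau_*]$ Gr\"onwall therefore costs
\[
\exp\Big(2C_1\big[(\e^{-2}+\tau_*)^{\frac12}-\e^{-1}\big]\Big)\le e^{\frac{4C_1^2}{c}\e^{-2}} .
\]
This is an exponential loss of exactly the same order as the data, not a polynomial one. It is absorbed only because the non-zero data are $O(e^{-c_0\e^{-2}})$ with $c_0>4C_1^2/c$; the paper takes $c_0>6C_1^2/c$ to also cover the restart at $\tau_*$. That is why $c_0$ must be large.

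With the rate $C\e^{-1}(\e^{-2}+\tau)^{-1/2}$ you wrote, the exponent would be of order $\e^{-3}$. No assumption of the form $e^{-c_0\e^{-2}}$ could compensate that, so your argument does not close.

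Your scalings are also inconsistent with your own $\tau_*$. If $|\nabla\tilde\rho|\lesssim\e(\e^{-2}+\tau)^{-1/2}$, absorption into the Poincar\'e damping $c\e^2\norm{\cdot}_{L^2}^2$ would already happen at $\tau\sim\e^{-2}$. It is the balance $C_1(\e^{-2}+\tau)^{-1/2}\le\frac c2\e^2$ that gives $\tau_*\sim\e^{-4}$. For $\tau\ge\tau_*$ the resulting decay is $e^{-\frac c2\e^2\tau}$, not $e^{-c\tau}$, because the Poincar\'e constant on $(\Torus/\e)^2$ is $\e^{-1}$.

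\textbf{The integrability claim for $\p_{y_1}^2\mathring\zeta$.} You say the term $(\mathring\T/(\mathring\rho\tilde\rho))_\tau\phi_{\neq}^2$ is controlled because $\norm{\p_{y_1}^2\mathring\zeta}_{L^\infty}$ is integrable in time. This is false. From \eqref{a priori assumption} and \eqref{relation1} one only gets $\norm{\p_{y_1}^2\mathring\zeta}_{L^\infty(\R)}\lesssim\chi\e^{1/2}(\e^{-2}+\tau)^{-3/4}$, which is not integrable. The paper uses it only through the bound $C(\e^{-2}+\tau)^{-1/2}$, absorbed into $C_1$. This non-integrability, together with that of $\nabla\tilde\rho$, is exactly what forces the splitting and the exponentially small non-zero data.

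\textbf{How to repair it.}
\begin{enumerate}
\item Prove the inequality above with an $\e$-independent $C_1$.
\item Run Gr\"onwall on $[0,\tau_*]$, with the explicit $e^{C\e^{-2}}$ loss balanced against $c_0$.
\item Use Poincar\'e to get damping for $\tau\ge\tau_*$.
\end{enumerate}
The $(\e^{-2}+\tau)^{-N/2}$ form on $[0,\tau_*]$ is then automatic, since $(\e^{-2}+\tau)^{N/2}\le(2C_1/c)^N\e^{-2N}$ there; this is where the factor $\e^{-2N-3}$ comes from.

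\textbf{A smaller point on the zero modes.} You invoke $G_k$ without saying where it comes from. It is produced by the weights $v_1^{\pm n}$, $n=4[\delta^{-1/2}]+1$, in the diagonalized system. It is needed first of all to absorb $\int\p_{y_1}\bar\rho(|b_1|^2+|b_3|^2)$ coming from $\hat M_k^{(1)}$, not only to sharpen the $\e$-power.
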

	\subsubsection*{Proof of Theorem \ref{Uniform estimates for well-prepared initial data}}
	It suffices to consider \(\rho^\e -\bar{\rho}\),
	since the other terms can be treated in the same way. By \eqref{tilde} and Proposition \ref{a priori estimates}, it holds that
	\begin{align*}
		&\norm{\rho^\e -\bar{\rho}}_{L^2(\R \times \Torus^2)}^2 \le \norm{\rho^\e - \tilde{\rho}}_{L^2(\R \times \Torus^2)}^2 + \norm{\tilde{\rho} - \bar{\rho}}_{L^2(\R \times \Torus^2)}^2 \\
		\le& \norm{\mathring{\rho}^\e - \tilde{\rho}}_{L^2(\R)}^2 + \norm{\rho_{\neq}^\e}_{L^2(\R \times \Torus^2)}^2 + \norm{\tilde{\rho} - \bar{\rho}}_{L^2(\R \times \Torus^2)}^2 \\
		\le& C (\sqrt{\eta} + \delta) \e^3 (1+t)^{-\frac{1}{2}}.
	\end{align*}
	By Proposition \ref{a priori estimates}   and Proposition \ref{local existence},  employing the standard continuity argument, we finally obtain Theorem \ref{Uniform estimates for well-prepared initial data}. In the next section, we give the proof of Proposition \ref{a priori estimates}.
	
	\subsection{Estimates for the zero mode}
	
	We introduce the following important transformation:
	\begin{align}\label{tilde Phi, Psi, W}
		\tilde{\Phi} = \tilde{\T} \Phi, \qquad \tilde{\Psi} = \Psi - \e\tilde{\mathbf{u}} \Phi \qquad \tilde{W} = W - \e \tilde{\mathbf{u}} \cdot \tilde{\Psi} - \tilde{\T} \Phi.
	\end{align}
	Taking the first-order derivative with respect to space, we obtain
	\begin{equation}\label{p_1 psi, W}
		\begin{aligned}
			&\p_{y_1} \Psi_i = \p_{y_1} \tilde{\Psi}_i + \p_{y_1} (\e \tilde{\mathbf{u}} \Phi) = \p_{y_1} \tilde{\Psi}_i + \e \tilde{\mathbf{u}} \p_{y_1} \Phi + \p_{y_1}(\e \tilde{u}_i) \Phi := \p_{y_1} \tilde{\Psi}_i + \mathcal{C}_{ir}, \quad i=1,2,3, \\
			&\p_{y_1} W = \p_{y_1} \tilde{W} + \p_{y_1} (\tilde{\T} \Phi) + \p_{y_1} (\e \tilde{\mathbf{u}} \cdot \tilde{\Psi}) := \p_{y_1} \tilde{W} + \p_{y_1} \tilde{\Phi} + \mathcal{C}_{wr}.
		\end{aligned}
	\end{equation}
	Moreover, taking the first-order derivative with respect to time, we obtain
	\begin{equation}\label{p_tau psi, W}
		\begin{aligned}
			\p_\tau \Psi_i =& \p_\tau \tilde{\Psi}_i + \e \p_\tau \tilde{u}_i \Phi + \e \tilde{u}_i \mathcal{S}_1 - \e \tilde{u}_i \p_{y_1} \Psi_1 := \p_\tau \tilde{\Psi}_i + \mathcal{C}_{id}, \\
			\p_\tau W =& \p_\tau \tilde{W} - \tilde{\T} \p_{y_1} \tilde{\Psi}_1 - \tilde{\T} \p_{y_1}(\e \tilde{u}_1 \Phi) + \tilde{\T} \mathcal{S}_1 + \e \tilde{\mathbf{u}} \cdot \p_\tau \tilde{\Psi} + \p_\tau (\e \tilde{\mathbf{u}}) \cdot \tilde{\Psi} + \p_\tau \tilde{\T} \Phi \\
			:=& \p_\tau \tilde{W} - \tilde{\T} \p_{y_1} \tilde{\Psi}_1 + \mathcal{C}_{wd}.
		\end{aligned}
	\end{equation}
	According to (\ref{the anti equs}), (\ref{tilde Phi, Psi, W}), (\ref{p_1 psi, W}) and (\ref{p_tau psi, W}), the system of \((\tilde{\Phi}, \tilde{\Psi}, \tilde{W})\) can be written as follows:
	\begin{equation}\label{the equ of tilde Phi, Psi, W}
		\left\{
		\begin{aligned}
			&\p_\tau \tilde{\Phi} + \tilde{\T} \p_{y_1} \tilde{\Psi}_1 = \tilde{\T} \mathcal{S}_1 + Y_1, \\
			&\p_\tau \tilde{\Psi}_1 + \p_{y_1} (\tilde{W} + \tilde{\Phi}) = \frac{2\mu + \lambda}{\tilde{\rho}} \p_{y_1}^2 \tilde{\Psi}_1 + \mathcal{S}_{21} + Y_{21}, \\
			&\p_\tau \tilde{\Psi}_i = \frac{\mu}{\tilde{\rho}} \p_{y_1}^2 \tilde{\Psi}_i + \mathcal{S}_{2i} + Y_{2i}, \qquad i=2,3, \\
			&\p_\tau \tilde{W} + \tilde{\T} \p_{y_1} \tilde{\Psi}_1 = \frac{\kappa}{\tilde{\rho}} \p_{y_1}^2 \tilde{W} + \mathcal{S}_3 + Y_3,
		\end{aligned}
		\right.
	\end{equation}
	where
	\begin{align*}
		&Y_1 = \p_\tau \tilde{\T} \Phi - \tilde{\T} \mathcal{C}_{1r}, \\
		&Y_{21} = [-\mathcal{C}_{1d} -\mathcal{C}_{wr}] + \left[(2\mu + \lambda) \p_{y_1} (\e \mathring{u}_1 - \e \tilde{u}_1) - \frac{2\mu+\lambda}{\tilde{\rho}} \p_{y_1}^2 \tilde{\Psi}_1\right] :=Y_{21}^{(1)} + Y_{21}^{(2)}, \\
		&Y_{2i} = -\mathcal{C}_{id} +\left[\mu \p_{y_1} (\e \mathring{u}_i- \e \tilde{u}_i) - \frac{\mu}{\tilde{\rho}} \p_{y_1}^2 \tilde{\Psi}_i\right] := Y_{2i}^{(1)} + Y_{2i}^{(2)}, \qquad i=2,3,\\
		&Y_3 = [-\mathcal{C}_{wd} - 2\tilde{\T} \mathcal{C}_{1r}] + \left[\kappa \p_{y_1} \mathring{\zeta} - \frac{\kappa}{\tilde{\rho}} \p_{y_1}^2 \tilde{W}\right] := Y_3^{(1)} + Y_3^{(2)}.
	\end{align*}
	For convenience, we use the following  notations:
	\begin{equation}
		\begin{aligned}
			V :=& (\tilde{\Phi}, \tilde{\Psi}, \tilde{W})^t, \qquad v:= (\phi, \psi, \zeta)^t, \qquad \tilde{V} := (\tilde{\Phi}, \tilde{\Psi}_1, \tilde{W})^t, \\
			\mathcal{D}^{(k)}_1 :=& (\e+\delta)\sum_{j=1}^{k+2} D^1_{-\frac{j}{2}} | \p_{y_1}^{k-j+2} V|, \qquad \mathcal{D}^{(k)}_2 := \sqrt{\eta}\e^{\frac{1}{2}}\sum_{j=1}^{k+2} D^2_{-\frac{j}{2}} | \p_{y_1}^{k-j+2} V|, \\
			\mathcal{L}^{(0)}_1 :=& |\p_{y_1}V|^2 + (\delta + \e) D^1_{-\frac{1}{2}} | \p_{y_1}V|, \qquad \mathcal{L}^{(0)}_2 := \sqrt{\eta} \e^{\frac{1}{2}} D^2_{-\frac{1}{2}} | \p_{y_1}V|, \\
			\mathcal{L}^{(1)}_1 :=& |\p_{y_1}^2 V| |\p_{y_1} V| + (\delta + \e) (D_{-\frac{1}{2}}^1 |\p_{y_1} V|^2 + D_{-1}^1 |\p_{y_1} V| + D_{-\frac{1}{2}}^1 |\p_{y_1}^2 V|), \\
			\mathcal{L}^{(1)}_2 :=& \sqrt{\eta}\e^{\frac{1}{2}} (D_{-\frac{1}{2}}^2 |\p_{y_1} V|^2 + D_{-1}^2 |\p_{y_1} V| + D_{-\frac{1}{2}}^2 |\p_{y_1}^2 V|), \\
			\mathcal{L}^{(2)}_1 :=& |\p_{y_1}^3 V| |\p_{y_1}V| + |\p_{y_1}^2 V|^2 + (\delta+\e) D_{-\frac{3}{2}}^1 |\p_{y_1} V| \\
			&+ (\delta+\e) D_{-\frac{1}{2}}^1 (|\p_{y_1} V| |\p_{y_1}^2 V| + |\p_{y_1}^3 V|) + (\delta+\e) D_{-1}^1 (|\p_{y_1}^2 V| + |\p_{y_1}V|^2), \\
			\mathcal{L}^{(2)}_2 :=& \sqrt{\eta} \e^{\frac{1}{2}} D_{-\frac{3}{2}}^2 |\p_{y_1} V| + \sqrt{\eta} \e^{\frac{1}{2}} D_{-\frac{1}{2}}^2 (|\p_{y_1} V| |\p_{y_1}^2 V| + |\p_{y_1}^3 V|) + \sqrt{\eta} \e^{\frac{1}{2}} D_{-1}^2 (|\p_{y_1}^2 V| + |\p_{y_1}V|^2), \\
			\mathcal{I}^{(0)} :=& |v_{\neq}|^2, \qquad\qquad\qquad\qquad \mathcal{I}^{(1)} := |v_{\neq}| |\nabla v_{\neq}|, \\
			\mathcal{I}^{(2)} :=& |\nabla v_{\neq}|^2 + |\nabla^2 v_{\neq}| |v_{\neq}|, \qquad \mathcal{I}^{(3)} := |\nabla v_{\neq}| |\nabla^2 v_{\neq}| + |\nabla^3 v_{\neq}| |v_{\neq}|. 
		\end{aligned}
	\end{equation}
	Moreover, for \(k=0,1,2\), we denote
	\begin{align*}
		\mathcal{D}^{(k)} = \mathcal{D}^{(k)}_1 + \mathcal{D}^{(k)}_2, \qquad \mathcal{L}^{(k)} = \mathcal{L}^{(k)}_1 +\mathcal{L}^{(k)}_2.
	\end{align*}
	First, we estimate the terms of  \(\mathcal{I}^{(i)}\), \(i=0,1,2,3\) as follows:
	\begin{Lem}\label{Z est}
		Under the same assumptions as Proposition \ref{a priori estimates}, it holds that
		\begin{align}
			\|\mathcal{I}^{(i)}\|_{L^2(\Omega_\e)} \le C \e^{-\frac{2N+3}{8}} e^{-\frac{c_0\e^{-2}}{8}} (\e^{-2} + \tau)^{-\frac{N}{16}}, \qquad i= 0,1,2,3,
		\end{align}
		where \(N\) is the same as that in Theorem \ref{Uniform estimates for well-prepared initial data}.
	\end{Lem}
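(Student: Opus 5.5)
The plan is to interpolate between the strong smallness of the low-order non-zero modes, given by the second line of the a priori assumption \eqref{a priori assumption}, and the uniform boundedness of the higher derivatives, given by the first line. All four quantities \(\mathcal{I}^{(i)}\) are quadratic in \(v_{\neq}\) and its derivatives up to order three. We therefore bound each factor in \(L^\infty\) or \(L^4\) and the other in \(L^2\), reducing everything to \(\|v_{\neq}\|_{H^1}\), which carries the factor \(e^{-c_0\e^{-2}}\), and to \(\|\nabla^2 v\|_{L^2}\), \(\|\nabla^3 v\|_{L^2}\), which are only \(O(\chi)\).

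\textbf{Step 1 (bounds on the non-zero modes).} From the second line of \eqref{a priori assumption},
\[
\|v_{\neq}\|_{H^1(\Omega_\e)}^2\le C_N\e^{-2N-3}e^{-c_0\e^{-2}}(\e^{-2}+\tau)^{-\frac N2}.
\]
Since \(\mathbf{D}_{\neq}\) commutes with derivatives and is bounded on \(L^2\), we have \(\|\nabla^k v_{\neq}\|_{L^2}\le\|\nabla^k v\|_{L^2}\). The first line of \eqref{a priori assumption} then gives \(\|\nabla^k v_{\neq}\|_{L^2}\le C\chi\) for \(k=2,3\), because \((1+\e^2\tau)^{3/2}\e^{-2}\ge 1\).

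\textbf{Step 2 (interpolation).} On \(\Omega_\e=\R\times(\Torus/\e)^2\), with constants independent of \(\e\) (the torus is large, so the whole-space Gagliardo–Nirenberg inequalities apply uniformly), we use
\[
\|f\|_{L^\infty}\le C\|f\|_{H^1}^{1/4}\|\nabla^2 f\|_{L^2}^{3/4}+\dots, \qquad \|\nabla f\|_{L^4}\le C\|f\|_{H^1}^{1/4}\|\nabla^3 f\|_{H^0}^{3/4}\text{-type bounds}.
\]
The terms are then handled as follows:
\begin{itemize}
\item \(\mathcal{I}^{(0)}\): \(\|v_{\neq}\|_{L^2}\|v_{\neq}\|_{L^\infty}\).
\item \(\mathcal{I}^{(1)}\): \(\|v_{\neq}\|_{L^\infty}\|\nabla v_{\neq}\|_{L^2}\).
\item \(\mathcal{I}^{(2)}\): \(\|\nabla v_{\neq}\|_{L^4}^2+\|v_{\neq}\|_{L^\infty}\|\nabla^2 v_{\neq}\|_{L^2}\).
\item \(\mathcal{I}^{(3)}\): \(\|\nabla v_{\neq}\|_{L^\infty}\|\nabla^2 v_{\neq}\|_{L^2}+\|v_{\neq}\|_{L^\infty}\|\nabla^3 v_{\neq}\|_{L^2}\). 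For the first product we take \(\|\nabla v_{\neq}\|_{L^\infty}\le C\|\nabla v_{\neq}\|_{L^2}^{1/4}\|\nabla^3 v_{\neq}\|_{L^2}^{3/4}\).
\end{itemize}
In every case at least a quarter power of \(\|v_{\neq}\|_{H^1}\), that is, an eighth power of \(\|v_{\neq}\|_{H^1}^2\), survives. The remaining factors are bounded by \(C\chi\le C\), or by further powers of \(\|v_{\neq}\|_{H^1}\), which are smaller than \(1\) for \(\e\) small.

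\textbf{Step 3 (worst exponent).} Raising the Step 1 bound to the power \(1/8\) gives
\[
\e^{-\frac{2N+3}{8}}e^{-\frac{c_0\e^{-2}}{8}}(\e^{-2}+\tau)^{-\frac N{16}}.
\]
Larger powers of \(\|v_{\neq}\|_{H^1}\) are bounded by this one, since the base quantity is \(\le 1\) once \(\e\) is small: \(e^{-c_0\e^{-2}}\) beats \(\e^{-2N-3}\). This yields the stated bound uniformly for \(i=0,\dots,3\).

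The main obstacle is making sure the interpolation inequalities hold on \(\Omega_\e\) with \(\e\)-independent constants, and that the highest-order factors are indeed controlled. For \(\mathcal{I}^{(3)}\) this means \(\nabla^3 v_{\neq}\) in \(L^2\), and no \(\nabla^4\) may be needed. My first attempt would be the Fourier-series form of Gagliardo–Nirenberg on \(\R\times\Torus_L^2\) with \(L=\e^{-1}\ge 1\). Its constants are uniform for \(L\ge1\), because only the lower-order \(H^1\) norm is added. If an exponent comes out slightly better than \(1/4\), I would simply weaken it to \(1/4\) using the smallness of \(\|v_{\neq}\|_{H^1}\).
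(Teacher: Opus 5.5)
Your proposal is correct and is essentially the paper's argument. You split each product in $\mathcal{I}^{(i)}$ as $L^\infty\times L^2$, use Gagliardo--Nirenberg (e.g.\ $\|f_{\neq}\|_{L^\infty}\lesssim\|f_{\neq}\|_{L^2}^{1/4}\|\nabla^2 f_{\neq}\|_{L^2}^{3/4}$) to extract a $\tfrac14$-power of the exponentially small $H^1$ norm of $v_{\neq}$, and bound the remaining $\nabla^2 v,\nabla^3 v$ factors by the first line of \eqref{a priori assumption}, which gives exactly the $\tfrac18$-power of the non-zero-mode bound. The only differences are cosmetic: the paper gets $\e$-independent constants by rescaling to $x$-variables on $\R\times\Torus^2$, where the powers of $\e$ cancel by scale invariance, rather than invoking uniform inequalities on large tori; and it treats $|\nabla v_{\neq}|^2$ by $L^\infty\times L^2$ rather than $L^4\times L^4$.
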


	\begin{proof}
		According to the a priori assumptions (\ref{a priori assumption}) and the Gagliardo-Nirenberg inequality, one has
		\begin{align*}
			\|\mathcal{I}^{(3)}\|_{L^2(\Omega_\e)} \lesssim& \|\nabla v_{\neq}\|_{L^{\infty}(\Omega_\e)} \|\nabla^2 v_{\neq}\|_{L^2(\Omega_\e)} + \|v_{\neq}\|_{L^{\infty}(\Omega_\e)} \|\nabla^3 v_{\neq}\|_{L^2(\Omega_\e)} \\
			\lesssim& \e^{\frac{3}{2}} \|\nabla_x v_{\neq} \|_{L^\infty(\Omega)} \|\nabla_x^2 v_{\neq} \|_{L^2(\Omega)} + \e^{\frac{3}{2}} \| v_{\neq} \|_{L^\infty(\Omega)} \|\nabla_x^3 v_{\neq} \|_{L^2(\Omega)} \\
			\lesssim& \e^{\frac{3}{2}} \left[\|\nabla_x v_{\neq} \|_{L^2(\Omega)}^{\frac{1}{4}} \|\nabla_x^3 v_{\neq} \|_{L^2(\Omega)}^{\frac{3}{4}} \|\nabla_x^2 v_{\neq} \|_{L^2(\Omega)} \right] \\
			&+ \e^{\frac{3}{2}} \left[ \|v_{\neq} \|_{L^2(\Omega)}^{\frac{1}{4}} \|\nabla_x^2 v_{\neq} \|_{L^2(\Omega)}^{\frac{3}{4}} \|\nabla_x^3 v_{\neq} \|_{L^2(\Omega)}\right] \\
			\lesssim& \e^{\frac{3}{2}} \left[\e^{\frac{3}{8} - 3 \cdot \frac{3}{8} - \frac{1}{2}} \|\nabla v_{\neq} \|_{L^2(\Omega_\e)}^{\frac{1}{4}} \|\nabla^3 v_{\neq} \|_{L^2(\Omega_\e)}^{\frac{3}{4}} \|\nabla^2 v_{\neq} \|_{L^2(\Omega_\e)}\right] \\
			&+ \e^{\frac{3}{2}} \left[\e^{\frac{3}{8} - 1 \cdot \frac{3}{8} - 3 \cdot \frac{1}{2}} \|v_{\neq} \|_{L^2(\Omega_\e)}^{\frac{1}{4}} \|\nabla^2 v_{\neq} \|_{L^2(\Omega_\e)}^{\frac{3}{4}} \|\nabla^3 v_{\neq} \|_{L^2(\Omega_\e)}\right] \\
			\lesssim& \|\nabla v_{\neq} \|_{L^2(\Omega_\e)}^{\frac{1}{4}} \|\nabla^3 v_{\neq} \|_{L^2(\Omega_\e)}^{\frac{3}{4}} \|\nabla^2 v_{\neq} \|_{L^2(\Omega_\e)} + \|v_{\neq} \|_{L^2(\Omega_\e)}^{\frac{1}{4}} \|\nabla^2 v_{\neq} \|_{L^2(\Omega_\e)}^{\frac{3}{4}} \|\nabla^3 v_{\neq} \|_{L^2(\Omega_\e)} \\
			\lesssim& \e^{-\frac{2N+3}{8}} e^{-\frac{c_0\e^{-2}}{8}} (\e^{-2} + \tau)^{-\frac{N}{16}}.
		\end{align*}
		Similarly, we have
		\begin{align*}
			\|\mathcal{I}^{(2)}\|_{L^2(\Omega_\e)} \lesssim& \|\nabla v_{\neq}\|_{L^{\infty}(\Omega_\e)} \|\nabla v_{\neq}\|_{L^2(\Omega_\e)} + \|v_{\neq}\|_{L^{\infty}(\Omega_\e)} \|\nabla^2 v_{\neq} \|_{L^2(\Omega_\e)} \\
			\lesssim& \| \nabla v_{\neq} \|_{L^2(\Omega_\e)}^{\frac{1}{4}} \|\nabla^3 v_{\neq} \|_{L^2(\Omega_\e)}^{\frac{3}{4}} \|\nabla v_{\neq} \|_{L^2(\Omega_\e)} + \|v_{\neq} \|_{L^2(\Omega_\e)}^\frac{1}{4} \|\nabla^2 v_{\neq} \|_{L^2(\Omega_\e)}^{\frac{3}{4}} \|\nabla^2 v_{\neq} \|_{L^2(\Omega_\e)} \\
			\lesssim& \e^{-\frac{2N+3}{8}} e^{-\frac{c_0\e^{-2}}{8}} (\e^{-2} + \tau)^{-\frac{N}{16}}, \\
			\|\mathcal{I}^{(1)}\|_{L^2(\Omega_\e)} \lesssim& \|v_{\neq}\|_{L^{\infty}(\Omega_\e)} \|\nabla v_{\neq}\|_{L^2(\Omega_\e)} \le \| v_{\neq} \|_{L^2(\Omega_\e)}^{\frac{1}{4}} \|\nabla^2 v_{\neq} \|_{L^2(\Omega_\e)}^{\frac{3}{4}} \|\nabla v_{\neq} \|_{L^2(\Omega_\e)} \\
			\lesssim& \e^{-\frac{2N+3}{8}} e^{-\frac{c_0\e^{-2}}{8}} (\e^{-2} + \tau)^{-\frac{N}{16}},
		\end{align*}
		and
		\begin{align*}
			\|\mathcal{I}^{(0)}\|_{L^2(\Omega_\e)} \lesssim& \|v_{\neq}\|_{L^{\infty}(\Omega_\e)} \|v_{\neq}\|_{L^2(\Omega_\e)} \lesssim \|v_{\neq}\|_{L^2(\Omega_\e)}^{\frac{1}{4}} \|\nabla^2 v_{\neq}\|_{L^2(\Omega_\e)}^{\frac{3}{4}}
			\lesssim \e^{-\frac{2N+3}{8}} e^{-\frac{c_0\e^{-2}}{8}} (\e^{-2} + \tau)^{-\frac{N}{16}}.
		\end{align*}
		Therefore, we have completed the estimates for \(\|\mathcal{I}^{(i)}\|_{L^2(\Omega_\e)}\), \(i=0,1,2\).
	\end{proof}
	
	\begin{Lem}\label{Q est}
		Under the same assumptions as Proposition \ref{a priori estimates}, it holds that
		\begin{align*}
			&|\p_{y_1}^k \mathcal{S}_{2i}^{(1)}| \le C \left(\mathcal{L}^{(k)} +\mathbf{D}_0 \mathcal{I}^{(k)}\right), \quad k=0,1,2,~i=1,2,3, \\
			&|\p_{y_1}^k \mathcal{S}_3^{(1)}| \le C \left(\mathcal{L}^{(k)} +\mathcal{L}^{(k+1)} + \mathbf{D}_0 \mathcal{I}^{(k)}+\mathbf{D}_0\mathcal{I}^{(k+1)}\right), \quad k=0,1, \\
			&|\p_{y_1}^k \mathcal{S}_{21}^{(2)}| \le C (\delta+\e) D_{-\frac{2+k}{2}}^1 + C \sqrt{\eta} \e^{\frac{1}{2}} D_{-\frac{2+k}{2}}^2, \quad k=0,1,2, \\
			&|\p_{y_1}^k \mathcal{S}_{2i}^{(2)}| \le C \eta^{\frac{1}{2}} \e^{\frac{1}{2}} D_{-\frac{2+k}{2}}^1, \quad k=0,1,2,~i=2,3, \\
			&|\p_{y_1}^k \mathcal{S}_{3}^{(2)}| \le C (\delta + \e) D_{-\frac{3+k}{2}}^1 + C \sqrt{\eta} \e^{\frac{1}{2}} D_{-\frac{2+k}{2}}^2, \quad k=0,1,2.
		\end{align*}
	\end{Lem}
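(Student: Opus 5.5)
The bounds split naturally into the nonlinear parts $\mathcal S^{(1)}$ and the pure error parts $\mathcal S^{(2)}$, and I would treat them separately.

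\textbf{The terms $\mathcal S^{(2)}$.} These are just $-\tilde{\mathbf\Gamma}_{21}$, $-\tilde{\mathbf\Gamma}_{2i}$ and $-\tilde{\mathbf\Gamma}_3$ (with $\tilde\Gamma_1$ entering through $\mathcal S_1$). Their pointwise bounds come from the explicit structure of the ansatz \eqref{tilde}. I would first record the derivative bounds of the building blocks:
\begin{itemize}
\item $\p_{y_1}^{k}\bar\rho$ and $\p_{y_1}^{k-1}(\e\bar u_1)$ are $O(\delta)D^1_{-k/2}$;
\item $\bar Q_1=O(\delta)D^1_{-1}$ and $\bar Q_2=O(\delta)D^1_{-3/2}$ in the scaled variables;
\item $\p_{y_1}^k(\bar\Theta_j\Theta_j)$ is $O(\sum|\bar\Theta_i|)D^2_{-(k+1)/2}$ for $j=1,3$, and $O(\sum|\bar\Theta_i|)D^1_{-(k+1)/2}$ for $j=4,5$.
\end{itemize}
The initial assumption \eqref{initial data} together with $\sum|\bar\Theta_i|\le C\e^{-1}\|U^\e-\bar U^\e\|_{L^1}$ gives $\sum|\bar\Theta_i|\le C\sqrt\eta\,\e^{1/2}$, which is the origin of the factor $\sqrt\eta\e^{1/2}$. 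For the difference terms in $\tilde{\mathbf\Gamma}$, for example $\frac{\e\tilde m_1\e\tilde{\mathbf m}}{\tilde\rho}-\frac{\e\bar m_1\e\bar{\mathbf m}}{\bar\rho}$, I would apply the mean value theorem. Each difference is a product of a background quantity with a wave term, or a quadratic in wave terms, so it gains an extra decay factor. The one term that produces a mixed $D^1D^2$ structure is the interaction $2(\frac{\e\tilde m_1\tilde\E}{\tilde\rho}-\dots)$, where the linear parts $\lambda\frac{\E_\pm}{\rho_\pm}\bar\Theta\Theta$ have been subtracted. The leftover, a variable coefficient minus its far-field value times $\Theta_{1,3}$, is bounded using the fact that $D^1$ and $D^2$ have centers separated by $|\lambda|(\e^{-2}+\tau)$, so that $D^1_{-a}D^2_{-b}\le C D_{-a-b-1/2}$-type bounds hold. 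Taking $\p_{y_1}^k$ of each piece then costs one factor $(\e^{-2}+\tau)^{-1/2}$ per derivative, which yields the stated rates. The $\delta+\e$ in front of $D^1$ comes from the bar-ansatz terms, since $\delta$ together with $\e^2$-order corrections is absorbed as $\delta+\e$.

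\textbf{The terms $\mathcal S^{(1)}$.} Write $\mathring{\mathcal N}(U)-\mathcal N(\tilde U)=[\mathbf D_0\mathcal N(U)-\mathcal N(\mathring U)]+[\mathcal N(\mathring U)-\mathcal N(\tilde U)]$. The first bracket vanishes for a linear function, so by a Taylor expansion around $\mathring U$ it is a $\mathbf D_0$ of a quadratic in $v_{\neq}$, that is $\mathbf D_0\mathcal I^{(0)}$. Its $\p_{y_1}^k$ derivatives give $\mathbf D_0\mathcal I^{(k)}$ up to lower-order products, which are controlled by the a priori bounds. For the second bracket, $\mathring U-\tilde U=\p_{y_1}(\Phi,\Psi,W)$. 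The quadratic and linear expansion around $\tilde U$ has linear coefficients $\nabla\mathcal N(\tilde U)$, and these are $O(\e\tilde u)=O((\delta+\e)D^1_{-1/2}+\sqrt\eta\e^{1/2}D^2_{-1/2})$ because $\mathcal N$ is quadratic in momentum. One then converts $(\Phi,\Psi,W)$ to $V$ via \eqref{tilde Phi, Psi, W}, \eqref{p_1 psi, W}: the correction terms carry $\p_{y_1}\tilde\T$ and $\e\tilde u$ factors and therefore fit into $\mathcal L^{(k)}$. Differentiating $k$ times by the Leibniz rule produces exactly the terms listed in $\mathcal L^{(k)}$. For $\mathcal S_3^{(1)}$, the flux $\hat N_{31}$ contains $\mathbb S(\e\mathbf u)$, which involves $\p_{y_1}(\e\mathring u)$, one more derivative. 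The leading term $p\,\e m_1/\rho$ linearizes to $2\tilde\T\p_{y_1}\Psi_1$, which cancels, and what remains is of one order higher. This explains the $\mathcal L^{(k+1)}$ and $\mathcal I^{(k+1)}$ on the right and the restriction $k\le1$.

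\textbf{Expected difficulty.} The main obstacle is the bookkeeping in the cross terms between the diffusion waves $\Theta_{1,3}$ and the background. The bound must be such that only $\sqrt\eta\e^{1/2}D^2$ appears, with no loss in the decay exponent, especially in $\mathcal S_3^{(2)}$, where the $D^1$ part must reach $-(3+k)/2$ rather than $-(2+k)/2$. I would first check this by verifying that every $D^1$-supported term in $\tilde{\mathbf\Gamma}_3$ contains either $\bar Q_2$ or a product of two first derivatives of the background.
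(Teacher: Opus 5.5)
Your proposal is correct and is essentially the paper's argument. For $\mathcal S^{(1)}$ the paper uses exactly your splitting: $\mathbf D_0\mathcal N(U)-\mathcal N(\mathring U)$ is bounded by $\mathbf D_0\mathcal I^{(0)}$, and $\mathcal N(\mathring U)-\mathcal N(\tilde U)$ is Taylor-expanded about $\tilde U$ with $O(\e\tilde{\mathbf u})$ linear coefficients. For $\mathcal S^{(2)}=-\tilde{\mathbf\Gamma}$ the paper simply relies on the pointwise bounds \eqref{2026-9-20}, so your account of the $D^1$/$D^2$ separation and of the $D^1_{-(3+k)/2}$ rate in $\mathcal S_3^{(2)}$ fills in what the paper leaves as ``easy to check''.

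There are two harmless slips. First, and shared with the paper, the conversion terms $\p_{y_1}(\e\tilde u_i)\Phi$ and $\p_{y_1}\tilde\T\,\tilde\Phi$ contain undifferentiated $V$, so they land in $\mathcal D^{(k)}$ rather than $\mathcal L^{(k)}$; this is fine, since $\mathcal D^{(k)}$ accompanies $\mathcal L^{(k)}$ wherever the lemma is used. Second, it is $(\E+p)\,\e m_1/\rho$, not $p\,\e m_1/\rho$ alone, that linearizes to $2\tilde\T\p_{y_1}\Psi_1$.
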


	\begin{proof}
		First, we estimate \(\mathcal{S}_{21}^{(1)}\). Notice that
		\begin{align*}
			-\mathcal{N}(\mathring{U}) + \mathring{\mathcal{N}}(\tilde{U}) = - \left(\mathcal{N}(\mathring{U}) - \mathcal{N}(\tilde{U})\right) +  \left(\mathcal{N}(\mathring{U}) - \mathring{\mathcal{N}}(U)\right).
		\end{align*}
		On the one hand, we have
		\begin{align*}
			\mathcal{N}(\mathring{U}) - \mathcal{N}(\tilde{U}) - \nabla \mathcal{N}(\tilde{U}) (\mathring{U} - \tilde{U}) \lesssim \left[|\p_{y_1}\Psi|^2 + |\p_{y_1} \Phi|^2\right].
		\end{align*}
		A direct calculations gives that
		\begin{align*}
			\nabla \mathcal{N}(\tilde{U}) (\mathring{U} - \tilde{U}) =& \left[\frac{\e \tilde{m}_1}{\tilde{\rho}} \p_{y_1} \Psi_1 - \frac{|\e \tilde{m}_1|^2}{2\tilde{\rho}^2} \p_{y_1} \Phi\right] - \sum_{i=2,3} \left[\frac{\e \tilde{m}_i}{\tilde{\rho}} \p_{y_1} \Psi_i - \frac{|\e \tilde{m}_i|^2}{2\tilde{\rho}^2} \p_{y_1} \Phi\right] \\
			\le& C (\delta+\e) D_{-\frac{1}{2}}^1 |\p_{y_1}V| + C \sqrt{\eta} \e^{\frac{1}{2}}  D_{-\frac{1}{2}}^2 |\p_{y_1}V| \\
			\le& C\mathcal{L}^{(0)}.
		\end{align*}
		On the other hand, 
		\begin{align*}
			\mathcal{N}(\mathring{U}) - \mathring{\mathcal{N}}(U) = \mathbf{D}_0 \left[\frac{|\e \mathring{m}_1|^2}{\mathring{\rho}} - \frac{|\e \mathbf{\mathring{m}}|^2}{2\mathring{\rho}} - \frac{|\e m_1|^2}{\rho} + \frac{|\e \mathbf{m}|^2}{2\rho}\right] \le C\mathbf{D}_0(|v_{\neq}|^2) \le C \mathbf{D}_0 \mathcal{I}^{(0)}.
		\end{align*}
		Combining the above two inequalities, we find that
		\begin{align*}
			|-\mathcal{N}(\mathring{U}) + \mathring{\mathcal{N}}(\tilde{U})| \le C\mathcal{L}^{(0)} + C \mathbf{D}_0 \mathcal{I}^{(0)}.
		\end{align*}
		Similarly, for \(\mathcal{S}_{22}^{(1)}\). \(\mathcal{S}_{23}^{(1)}\) and \(\mathcal{S}_3^{(1)}\), it is easy to check that
		\begin{align*}
			\left|-\mathbf{D}_0 \left(\frac{\e m_1 \e m_i}{\rho} + \frac{\e \tilde{m}_1 \e \tilde{m}_i}{\tilde{\rho}}\right)\right| \le& C|\p_{y_1} V|^2 + C (\delta +\e) D_{-\frac{1}{2}}^1 |\p_{y_1} V| + C \sqrt{\eta} \e^{\frac{1}{2}} D_{-\frac{1}{2}}^2 |\p_{y_1} V| + C \mathbf{D}_0 (|v_{\neq}|^2) \\
			\le& C\mathcal{L}^{(0)} + C \mathbf{D}_0 \mathcal{I}^{(0)}, 
		\end{align*}
		and
		\begin{align*}
			&|2\tilde{\T} \p_{y_1} \Psi_1 - \mathring{\hat{N}}_{31} (U) + \mathring{\hat{N}}_{31} (\tilde{U})| \\
			\le& C(|\p_{y_1} V|^2 + |\p_{y_1}^2 V| |\p_{y_1}V|) + C(\delta+\e)  D_{-\frac{1}{2}}^1 |\p_{y_1} V| + C \sqrt{\eta} \e^{\frac{1}{2}} D_{-\frac{1}{2}}^2 |\p_{y_1} V| \\
			&+ C (\delta+\e) \left(D_{-\frac{1}{2}}^1 |\p_{y_1} V|^2 + D_{-1}^1 |\p_{y_1} V| + D_{-\frac{1}{2}}^1 |\p_{y_1}^2 V|\right) \\
			&+C \sqrt{\eta} \e^{\frac{1}{2}} \left(D_{-\frac{1}{2}}^2 |\p_{y_1} V|^2 + D_{-1}^2 |\p_{y_1} V| + D_{-\frac{1}{2}}^2 |\p_{y_1}^2 V|\right) + C \mathbf{D}_0 (|v_{\neq}| |\nabla v_{\neq}|) \\
			\le& C \left(\mathcal{L}^{(0)} +\mathcal{L}^{(1)}+\mathbf{D}_0 \mathcal{I}^{(1)}\right), 
		\end{align*}
		Moreover, we have
		\begin{align*}
			&\left|\p_{y_1} \left[\mathring{\mathcal{N}} (U) - \mathcal{N} (\tilde{U})\right]\right| + \left|\p_{y_1} \left[-\mathbf{D}_0 \left(\frac{\e m_1 \e m_i}{\rho}\right) + \mathbf{D}_0 \left(\frac{\e \tilde{m}_1 \e \tilde{m}_i}{\tilde{\rho}}\right)\right]\right| \\
			\le& C|\p_{y_1}^2 V| |\p_{y_1}V| + C(\delta+\e) \left(D_{-\frac{1}{2}}^1 |\p_{y_1}V|^2 + D_{-1}^1 |\p_{y_1} V| + D_{-\frac{1}{2}}^1 |\p_{y_1}^2 V| \right) \\
			&+ C \sqrt{\eta} \e^{\frac{1}{2}} \left(D_{-\frac{1}{2}}^2 |\p_{y_1}V|^2 + D_{-1}^2 |\p_{y_1} V| + D_{-\frac{1}{2}}^2 |\p_{y_1}^2 V| \right) + C \mathbf{D}_0(|\nabla v_{\neq} | |v_{\neq}|) \\
			\le& C \left(\mathcal{L}^{(1)}_1 + \mathcal{L}^{(1)}_2+\mathbf{D}_0\mathcal{I}^{(1)}\right), \\
			&\left|\p_{y_1}^2 \left[\mathcal{N}(\mathring{U}) - \mathcal{N} (\tilde{U})\right]\right| + \left|\p_{y_1}^2 \left[-\mathbf{D}_0 \left(\frac{\e m_1 \e m_i}{\rho}\right) + \mathbf{D}_0 \left(\frac{\e \tilde{m}_1 \e \tilde{m}_i}{\tilde{\rho}}\right)\right]\right| \\
			\le& C (|\p_{y_1}^3 V| |\p_{y_1} V| + |\p_{y_1}^2 V|^2) + C (\delta+\e) D_{-\frac{3}{2}}^1 |\p_{y_1}V| + C\sqrt{\eta} \e^{\frac{1}{2}} D_{-\frac{3}{2}}^2 |\p_{y_1}V| + C(\delta+\e) D_{-1}^1 (|\p_{y_1}^2V| + |\p_{y_1} V|^2) \\
			&+ C\sqrt{\eta} \e^{\frac{1}{2}} D_{-1}^2 (|\p_{y_1}^2V| + |\p_{y_1} V|^2) + C (\delta+\e) D_{-\frac{1}{2}}^1 (|\p_{y_1}V| |\p_{y_1}^2 V| + |\p_{y_1}^3 V|) \\
			&+ C\sqrt{\eta} \e^{\frac{1}{2}}  D_{-\frac{1}{2}}^2 (|\p_{y_1}V| |\p_{y_1}^2 V| + |\p_{y_1}^3 V|) + C\mathbf{D}_0(|\nabla v_{\neq}|^2 + |\nabla^2 v_{\neq}| |v_{\neq}|) \\
			\le& C \left(\mathcal{L}^{(2)}+\mathbf{D}_0\mathcal{I}^{(2)}\right),
		\end{align*}
		and
		\begin{align*}
			&\left| \p_{y_1} \left[2\tilde{\T} \p_{y_1} \Psi_1 - \mathring{\hat{N}}_{31} (U) + \mathring{\hat{N}}_{31} (\tilde{U})\right]\right| \\
			\le& C (|\p_{y_1}^2V| |\p_{y_1}V| + |\p_{y_1}^3 V| |\p_{y_1}V| + |\p_{y_1}^2 V|^2) + C\mathbf{D}_0 (|\nabla v_{\neq}|^2 + |\nabla^2 v_{\neq}| |v_{\neq}|)\\
			&+ C (\delta+\e) D_{-\frac{1}{2}}^1 (|\p_{y_1} V| |\p_{y_1}^2 V| + |\p_{y_1}^2 V| + |\p_{y_1}^3 V|) + C\sqrt{\eta} \e^{\frac{1}{2}} D_{-\frac{1}{2}}^2 (|\p_{y_1} V| |\p_{y_1}^2 V| + |\p_{y_1}^2 V| + |\p_{y_1}^3 V|) \\
			&+ C (\delta + \e) D_{-1}^1 (|\p_{y_1}V| + |\p_{y_1}^2 V| + |\p_{y_1} V|^2) + C\sqrt{\eta}\e^{\frac{1}{2}} D_{-1}^2 (|\p_{y_1}V| + |\p_{y_1}^2 V| + |\p_{y_1} V|^2) \\
			&+ C (\delta+\e) D_{-\frac{1}{2}}^1 |\p_{y_1}^2V| + C \sqrt{\eta} \e^{\frac{1}{2}} D_{-\frac{1}{2}}^2 |\p_{y_1}^2V| \\
			\le& C \left(\mathcal{L}^{(0)} +\mathcal{L}^{(1)}+\mathcal{L}^{(2)}+\mathbf{D}_0 \mathcal{I}^{(2)}\right).
		\end{align*}
		Then we complete the proof of Lemma \ref{Q est}.
	\end{proof}
	
	\begin{Lem} \label{J est}
		Under the same assumptions as Proposition \ref{a priori estimates}, it holds that
		\begin{align*}
			&|\p_{y_1}^k Y_1, \p_{y_1}^k Y_{2i}^{(1)}| \le C \mathcal{D}^{(k)}, \quad k=0,1,2, ~i=1,2,3, \\
			&|\p_{y_1}^k Y_{2i}^{(2)}, \p_{y_1}^k Y_3^{(2)}| \le C(\mathcal{D}^{(k+1)} + \mathcal{L}^{(k+1)} + \mathbf{D}_0 \mathcal{I}^{(k)} + \mathbf{D}_0\mathcal{I}^{(k+1)}), \quad k=0,1, ~i=1,2,3, \\
			&|\p_{y_1}^k Y_3^{(1)}| \le C(\delta + \e) D_{-\frac{k+3}{2}}^1 + C \sqrt{\eta} \e^{\frac{1}{2}} D_{-\frac{k+2}{2}}^2 + C (\mathcal{D}^{(k)}+\mathcal{D}^{(k+1)}+\mathcal{L}^{(k+1)}+\mathbf{D}_0 \mathcal{I}^{(k)} + \mathbf{D}_0 \mathcal{I}^{(k+1)}), \quad k=0,1.
		\end{align*}
	\end{Lem}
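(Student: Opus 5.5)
The plan is a direct pointwise bookkeeping, in the spirit of Lemma \ref{Q est}. We expand each $Y$-term from its definition through the correction terms $\mathcal{C}_{ir},\mathcal{C}_{id},\mathcal{C}_{wr},\mathcal{C}_{wd}$ in \eqref{p_1 psi, W}--\eqref{p_tau psi, W}. Every term is a product of a derivative of the ansatz $(\tilde\rho,\e\tilde{\mathbf u},\tilde\T)$ with one of $\Phi,\Psi,W$ or their $y_1$-derivatives. We then rewrite $(\Phi,\Psi,W)$ in terms of $V=(\tilde\Phi,\tilde\Psi,\tilde W)$ by inverting \eqref{tilde Phi, Psi, W}: $\Phi=\tilde\Phi/\tilde\T$, $\Psi=\tilde\Psi+\e\tilde{\mathbf u}\Phi$, and so on. This inversion is smooth because $\tilde\T$ is bounded above and below.

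The ansatz pieces are controlled by the construction \eqref{tilde} and the decay of $\bar\rho$ and of $\Theta_i$. Each $y_1$- or $\tau$-derivative of the background part gains a factor $(\delta+\e)D^1_{-\frac{j}{2}}$. The diffusion-wave part carries $\sum|\bar\Theta_i|\lesssim \e^{-1}\|(U^\e-\bar U^\e)(0)\|_{L^1}\lesssim \sqrt{\eta}\,\e^{1/2}$ by \eqref{initial data}, and so gains $\sqrt\eta\,\e^{1/2}D^2_{-\frac{j}{2}}$. We will also use $\p_\tau\sim\p_{y_1}^2$ on the ansatz, which holds by the heat-type equations.

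\textbf{Step 1: $Y_1$ and $Y_{2i}^{(1)}$.} We show these are linear in $(\Phi,\Psi,\p_{y_1}\Phi,\dots)$ with coefficients from derivatives of the ansatz. One subtlety appears in $\mathcal{C}_{id}$ and $\mathcal{C}_{wd}$, which contain $\mathcal{S}_1$ and $\p_{y_1}\Psi_1$. There $\mathcal{S}_1=-\tilde\Gamma_1$ is a source, but it enters multiplied by $\e\tilde u_i$ and is absorbed into the $\sqrt\eta\e^{1/2}D^2$-part. The remaining terms pair a coefficient decaying like $D_{-\frac{j}{2}}$ with $\p_{y_1}^{k-j+2}V$, which is exactly $\mathcal{D}^{(k)}$. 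Differentiating $k$ times with Leibniz distributes the derivatives so that the total order is preserved. This gives $|\p_{y_1}^kY_1|+|\p_{y_1}^kY^{(1)}_{2i}|\le C\mathcal{D}^{(k)}$.

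\textbf{Step 2: the viscous remainders $Y_{2i}^{(2)}$ and $Y_3^{(2)}$.} This is the main obstacle. We write
\[
\e\mathring u_i-\e\tilde u_i=\frac{\e\mathring m_i-\e\tilde m_i}{\mathring\rho}-\frac{\e\tilde m_i}{\mathring\rho\tilde\rho}\mathring\phi+\bigl(\e\mathbf D_0 u_i-\e\mathring m_i/\mathring\rho\bigr).
\]
The first term is $\p_{y_1}\Psi_i/\tilde\rho$ up to a quadratic error $|\p_{y_1}V|^2$. The second term is $\mathcal{D}$-type. The last term is a non-zero-mode product, bounded by $\mathbf D_0\mathcal{I}^{(0)}$. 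Taking one more $y_1$-derivative and replacing $\p_{y_1}^2\Psi_i$ by $\p_{y_1}^2\tilde\Psi_i$ plus derivatives of $\mathcal{C}_{ir}$ produces the cancellation with $\frac{\mu}{\tilde\rho}\p_{y_1}^2\tilde\Psi_i$. What remains is of the form $\mathcal{D}^{(k+1)}+\mathcal{L}^{(k+1)}+\mathbf D_0(\mathcal{I}^{(k)}+\mathcal{I}^{(k+1)})$.

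For $Y_3^{(2)}$ we argue the same way. We express $\mathring\zeta=\mathring\T-\tilde\T$ through $\mathring w,\mathring\phi,\mathring\varphi$, linearizing $\T=\E/\rho-|\e\mathbf m|^2/(2\rho^2)$ at $\tilde U$. The linear part gives $(\p_{y_1}W-\tilde\T\p_{y_1}\Phi)/\tilde\rho$ plus $\e\tilde{\mathbf u}$-weighted terms, and these are designed to match $\p_{y_1}\tilde W/\tilde\rho$ through \eqref{p_1 psi, W}. The Taylor remainder is quadratic ($\mathcal{L}$), and the zero-mode-of-product commutator gives $\mathbf D_0\mathcal{I}$. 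The care needed is to check that the $\Phi$-terms cancel exactly, so that no bare undifferentiated $|V|$ appears without a decaying weight.

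\textbf{Step 3: $Y_3^{(1)}$.} This term contains $\mathcal{C}_{wd}$, and $\mathcal{C}_{wd}$ contains $\tilde\T\mathcal S_1$, $\p_\tau\tilde\T\Phi$ and $\e\tilde{\mathbf u}\cdot\p_\tau\tilde\Psi$. For the last one we substitute $\p_\tau\tilde\Psi$ from \eqref{the equ of tilde Phi, Psi, W}, which brings in $\mathcal S_{2i}$ and $Y_{2i}$, already bounded in Lemma \ref{Q est} and in Steps 1--2. The source parts yield $C(\delta+\e)D^1_{-\frac{k+3}{2}}+C\sqrt\eta\e^{1/2}D^2_{-\frac{k+2}{2}}$. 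The extra half power on $D^1$ comes from the prefactor $\e\tilde u\sim D^1_{-1/2}$ multiplying $\mathcal S^{(2)}_{21}\sim D^1_{-1}$. The remaining pieces give $\mathcal{D}^{(k)}+\mathcal{D}^{(k+1)}+\mathcal{L}^{(k+1)}+\mathbf D_0(\mathcal{I}^{(k)}+\mathcal{I}^{(k+1)})$. This closes the lemma for $k=0,1$.
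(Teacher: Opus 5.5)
Your plan is the paper's proof. The paper also argues by direct pointwise bookkeeping, writing out only $Y_3$. There $\kappa(\p_{y_1}\mathring\zeta-\p_{y_1}^2\tilde W/\tilde\rho)$ is split into a non-zero-mode commutator, a quadratic Taylor remainder and a linear part ($\mathcal G_1,\mathcal G_2,\mathcal G_3$). Its bound on $\mathcal C_{wd}$, with sources $(\delta+\e)D^1_{-\frac32}+\sqrt\eta\,\e^{\frac12}D^2_{-1}$, is what your Step 3 produces from $\tilde\T\mathcal S_1$ and $\e\tilde{\mathbf u}\cdot\p_\tau\tilde\Psi$.

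There is, however, a concrete error in Step 1. The term $\e\tilde u_i\mathcal S_1=-\e\tilde u_i\tilde\Gamma_1$ coming from $\mathcal C_{id}$ contains no perturbation at all. By contrast, every summand of $\mathcal D^{(k)}$, including those of $\mathcal D^{(k)}_2$, carries a factor $|\p_{y_1}^jV|$. So this term cannot be absorbed into $C\mathcal D^{(k)}$, and the pointwise bound $|\p_{y_1}^kY^{(1)}_{2i}|\le C\mathcal D^{(k)}$ fails wherever $V$ vanishes. It has to be carried as an explicit source of size $C\sqrt\eta\,\e^{\frac12}(\delta+\sqrt\eta\,\e^{\frac12})D^2_{-\frac{k+3}{2}}$. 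The first line of the lemma is itself missing such a term; the paper's proof does not treat $Y^{(1)}_{2i}$ explicitly and has the same omission.

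Step 2 has a similar defect. Since $\p_{y_1}\Psi_i=\p_{y_1}\tilde\Psi_i+\e\tilde u_i\p_{y_1}\Phi+\p_{y_1}(\e\tilde u_i)\Phi$, one gets exactly $\frac{\e\mathring m_i}{\mathring\rho}-\frac{\e\tilde m_i}{\tilde\rho}=\frac{\p_{y_1}\tilde\Psi_i+\p_{y_1}(\e\tilde u_i)\Phi}{\mathring\rho}$, so the $\p_{y_1}\Phi$-terms do cancel as you hoped. But the $\tilde\Psi_i$-equation is written with the non-divergence term $\frac{\mu}{\tilde\rho}\p_{y_1}^2\tilde\Psi_i$. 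Hence $Y^{(2)}_{2i}$ retains $-\mu\,\mathring{\rho}^{-2}\p_{y_1}\tilde\rho\,\p_{y_1}\tilde\Psi_i$, whose $k$-th derivative contains a piece of size $\delta D^1_{-\frac12}|\p_{y_1}^{k+1}\tilde\Psi_i|$. That is a $\mathcal D^{(k)}$ term, not covered by $\mathcal D^{(k+1)}+\mathcal L^{(k+1)}$. Likewise $Y_3^{(2)}$ retains $\kappa\,\p_{y_1}(\tilde\rho^{-1})\p_{y_1}\tilde W$. The paper's own estimate for $\mathcal G_3$ exhibits this $D_{-\frac12}|\p_{y_1}V|$ term before asserting $\mathcal G_3\le C(\mathcal D^{(1)}+\mathcal L^{(1)})$.

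So the second line should carry $\mathcal D^{(k)}+\mathcal D^{(k+1)}$, as the third line already does. Both corrections are harmless for Lemma \ref{main lemma}, which already carries sources of this type and sums over $l=0,1$. Still, the inequalities you claim to derive should be stated with them.

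A minor point: $\p_\tau\sim\p_{y_1}^2$ fails for $\Theta_1$ and $\Theta_3$, which solve transport--diffusion equations, so $\p_\tau\Theta_{1,3}\sim\p_{y_1}\Theta_{1,3}$. The weights still match $\mathcal D^{(k)}_2$ only because $\bar\Theta_i\Theta_i$ is already $O(\sqrt\eta\,\e^{\frac12}D^2_{-\frac12})$.
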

	
	\begin{proof}
		We take the estimation of \(Y_3\) as an example. First, in order to estimate \(Y_3^{(2)}\), we write \(\kappa \left(\p_{y_1} \mathring{\zeta} - \frac{\p_{y_1}^2 \tilde{W}}{\tilde{\rho}}\right)\) as
		\begin{align*}
			\kappa \left(\p_{y_1} \mathring{\zeta} - \frac{\p_{y_1}^2 \tilde{W}}{\tilde{\rho}}\right) =& \kappa \p_{y_1} \mathbf{D}_0 \left[\frac{\E}{\rho} - \frac{\mathring{\E}}{\mathring{\rho}} + \frac{|\e \mathring{\mathbf{m}}|^2}{2\mathring{\rho}^2}- \frac{|\e \mathbf{m}|^2}{2\rho^2}\right] - \kappa \p_{y_1} \left[\frac{|\e \mathring{\mathbf{m}}|^2}{2\mathring{\rho}^2} - \frac{|\e \tilde{\mathbf{m}}|^2}{2\tilde{\rho}^2} + \frac{|\e \tilde{\mathbf{u}}|^2 \mathring{\phi}}{2\mathring{\rho}}\right] \\
			&+ \kappa \left[\p_{y_1} \left(\frac{\mathring{w}}{\mathring{\rho}}\right) - \frac{\p_{y_1}^2 W}{\tilde{\rho}} - \p_{y_1} \left(\frac{\mathring{\phi} \tilde{\T}}{\mathring{\rho}}\right) + \frac{\p_{y_1}^2 (\tilde{\T} \Phi)}{\tilde{\rho }} + \frac{\p_{y_1} \mathcal{C}_{wr}}{\tilde{\rho}}\right], \\
			:=& \mathcal{G}_1 +\mathcal{G}_2 + \mathcal{G}_3.
		\end{align*}
		By direct calculations, we obtain
		\begin{align*}
			|\mathcal{G}_1| \le C\mathbf{D}_0(|v_{\neq}| |v_{\neq}|+|\nabla v_{\neq}| |v_{\neq}|).
		\end{align*}
		For \(\mathcal{G}_2\), since
		\begin{align*}
			\frac{|\e \mathring{\mathbf{m}}|^2}{2\mathring{\rho}^2} - \frac{|\e \tilde{\mathbf{m}}|^2}{2\tilde{\rho}^2} = \frac{\e \tilde{\mathbf{m}}}{\tilde{\rho}^2} \cdot (\e \mathring{\mathbf{m}} - \e \tilde{\mathbf{m}}) + \frac{|\e \tilde{\mathbf{m}}|^2}{\tilde{\rho}^3} (\mathcal{\rho} - \tilde{\rho}) + O (|\e \mathring{\mathbf{m}} - \e \tilde{\mathbf{m}}|^2 + |\mathring{\rho} - \tilde{\rho}|), 
		\end{align*}
		it is straightforward to verify that
		\begin{align*}
			\left|\p_{y_1} \left(\frac{|\e \mathring{\mathbf{m}}|^2}{2\mathring{\rho}^2} - \frac{|\e \tilde{\mathbf{m}}|^2}{2\tilde{\rho}^2}\right)\right| \le& C [(\delta + \e)D_{-1}^1 + \sqrt{\eta} \e^{\frac{1}{2}} D_{-1}^2] |\p_{y_1} V| + C [(\delta+\e)D_{-\frac{1}{2}}^1 + \sqrt{\eta} \e^{\frac{1}{2}} D_{-\frac{1}{2}}^2] |\p_{y_1}^2 V| + C |\p_{y_1}^2 V| |\p_{y_1} V|.
		\end{align*}
		On the other hand, one has
		\begin{align*}
			\left|\kappa \p_{y_1} \left(\frac{|\e \tilde{\mathbf{u}}|^2 \mathring{\phi}}{2\mathring{\rho}}\right)\right| \le& C [(\delta + \e) D_{-1}^1 + \sqrt{\eta} \e^{\frac{1}{2}} D_{-1}^2] |\p_{y_1}^2 V| + C [(\delta + \e) D_{-\frac{3}{2}}^1 + \sqrt{\eta} \e^{\frac{1}{2}} D_{-\frac{3}{2}}^2] |\p_{y_1} V| \\
			&+ C [(\delta + \e) D_{-\frac{1}{2}}^1 + \sqrt{\eta} \e^{\frac{1}{2}} D_{-\frac{1}{2}}^2] |\p_{y_1} V|^2.
		\end{align*}
		Combining the above inequalities, we find
		\begin{align*}
			|\mathcal{G}_2| \le& C (\mathcal{D}^{(1)} + \mathcal{L}^{(1)}).
		\end{align*}
		As for \(\mathcal{G}_3\), the key term can be estimated as follows:
		\begin{align*}
			\p_{y_1} \left(\frac{\mathring{w}}{\mathring{\rho}}\right) - \frac{\p_{y_1}^2 W}{\tilde{\rho}} =& \frac{\p_{y_1}^2 W}{\mathring{\rho}} - \frac{\p_{y_1}^2 W}{\tilde{\rho}} - \frac{\p_{y_1}W (\p_{y_1} \mathring{\rho} - \p_{y_1} \tilde{\rho} + \p_{y_1} \tilde{\rho})}{\mathring{\rho}^2} \\
			\le& C|\p_{y_1}^2 V| |\p_{y_1}V| + C[(\delta+\e) D_{-\frac{1}{2}}^1 + \sqrt{\eta} \e^{\frac{1}{2}} D_{-\frac{1}{2}}^2] |\p_{y_1}V|,
		\end{align*}
		which leads to 
		\begin{align*}
			\mathcal{G}_3 \le C (\mathcal{D}^{(1)} + \mathcal{L}^{(1)}).
		\end{align*}
		Then, we have
		\begin{align*}
			Y_3^{(2)} \le C (\mathcal{D}^{(1)} + \mathcal{L}^{(1)} + \mathbf{D}_0\mathcal{I}^{(0)} + \mathbf{D}_0 \mathcal{I}^{(1)}).
		\end{align*}
		For \(Y_3^{(1)}\), it holds that
		\begin{align*}
			\mathcal{C}_{wd} =& -\tilde{\T} \p_{y_1} (\e \tilde{u}_1 \Phi) + \tilde{\T} \mathcal{S}_1 + \e \tilde{\mathbf{u}} \cdot \p_\tau \tilde{\Psi} + \p_\tau(\e \tilde{\mathbf{u}}) \cdot \tilde{\Psi} + \p_\tau \tilde{\T} \Phi \\
			\le&  C(\delta + \e) D_{-\frac{3}{2}}^1 + C\sqrt{\eta} \e^{\frac{1}{2}} D_{-1}^2 + C (\mathcal{D}^{(0)}+\mathcal{D}^{(1)}+\mathcal{L}^{(1)} + \mathbf{D}_0\mathcal{I}^{(0)} + \mathbf{D}_0\mathcal{I}^{(1)}).
		\end{align*}
		The estimates for \(\partial_{y_1}Y_3^{(j)}\), \(j=1,2\),
		follow similarly by differentiating the above expressions
		and applying Leibniz's rule.
		The remaining terms can be treated in the same manner.
		Then we have proved Lemma \ref{J est}.
	\end{proof}

	Recall \(\tilde{V} = (\tilde{\Phi}, \tilde{\Psi}_1, \tilde{W})^t\), then the equation of \(\tilde{V}\) can be written as 
	\begin{align}\label{tilde V equ}
		\p_\tau \tilde{V} + A_1 \p_{y_1} \tilde{V} = A_2 \p_{y_1}^2 \tilde{V} + A_3,
	\end{align}
	where
	\begin{align*}
		A_1 := \begin{pmatrix} 
			0 & \tilde{\theta} & 0 \\ 
			1 & 0 & 1 \\ 
			0 & \tilde{\theta} & 0 
		\end{pmatrix}, \quad
		A_2 := \begin{pmatrix} 
			0 & 0 & 0 \\ 
			0 & \frac{\lambda + 2\mu}{\tilde{\rho}} & 0 \\ 
			0 & 0 & \frac{\kappa}{\tilde{\rho}} 
		\end{pmatrix}, \quad
		A_3 := \begin{pmatrix}
			A_{31} \\
			A_{32} \\
			A_{33}
		\end{pmatrix} = \begin{pmatrix}
			Y_1 + \tilde{\T} \mathcal{S}_1 \\
			Y_{21} + \mathcal{S}_{21} \\
			Y_3 + \mathcal{S}_3
		\end{pmatrix}.
	\end{align*}
	It follows from a direct computation that the eigenvalues of the matrix \(A_1\) are \(\tilde{\lambda}_1\), \(0\), \(\tilde{\lambda}_3\), with \(\tilde{\lambda}_1 = -\tilde{\lambda}_3 = - \sqrt{2\tilde{\T}}\). The corresponding normalized left and right eigenvectors can be chosen as
	\begin{align*}
		\tilde{L} := \begin{pmatrix}
			\frac{1}{2} & \frac{1}{2} \tilde{\lambda}_1& \frac{1}{2} \\
			\frac{1}{\sqrt{2}} & 0 & -\frac{1}{\sqrt{2}} \\
			\frac{1}{2} & -\frac{1}{2} \tilde{\lambda}_1& \frac{1}{2}
		\end{pmatrix}, \qquad
		\tilde{R} := \begin{pmatrix}
			\frac{1}{2} & \frac{1}{\sqrt{2}} & \frac{1}{2} \\
			\frac{\tilde{\lambda}_1}{2\tilde{\T}} & 0 & -\frac{\tilde{\lambda}_1}{2\tilde{\T}} \\
			\frac{1}{2} & -\frac{1}{\sqrt{2}}& \frac{1}{2}
		\end{pmatrix}.
	\end{align*}
	Set \(B = \tilde{L} \tilde{V} = (b_1, b_2, b_3)^t\), then we have \(\tilde{V} = \tilde{R} B\).
	It follows from (\ref{tilde V equ}) that 
	\begin{align}\label{B equ}
		\p_\tau B + \Lambda \p_{y_1} B = \tilde{L} A_2 \tilde{R} \p_{y_1}^2 B + 2 \tilde{L} A_2 \p_{y_1} \tilde{R} \p_{y_1} B + \left[(\p_\tau \tilde{L} + \Lambda \p_{y_1} \tilde{L}) \tilde{R} +\tilde{L} A_2 \p_{y_1}^2 \tilde{R}\right] B + \tilde{L} A_3,
	\end{align}
	where \(\Lambda = \text{diag} \{\tilde{\lambda}_1, 0, \tilde{\lambda}_3\}\).
	For \(i = 0,1,2\), we denote
	\begin{align}
		K_i :=& \int_{\R} \left|\p_{y_1}^{i+1} \tilde{\Phi}\right|^2 + \left|\p_{y_1}^{i+1} \tilde{\Psi}\right|^2 + \left|\p_{y_1}^{i+1} \tilde{W}\right|^2 dy_1, \label{K_i} \\
		E_i :=& \tilde{E}_i + \int_{\R} \left|\p_{y_1}^{i} \tilde{\Psi}_2\right|^2 + \left|\p_{y_1}^{i} \tilde{\Psi}_3\right|^2 dy_1 + \tilde{c} \int_{\R} \left(\p_{y_1}^{i+1} \tilde{\Phi} \p_{y_1}^i \tilde{\Psi}_1 + \frac{\lambda + 2\mu}{2\tilde{\rho} \tilde{\T}} |\p_{y_1}^{i+1} \tilde{\Phi}|^2 - (\lambda+2\mu) \frac{\p_{y_1} \Phi |\p_{y_1}^{i+1} \tilde{\Phi}|^2}{2\mathring{\rho} \tilde{\rho}}\right), \label{E_i}
	\end{align}
	where \(\tilde{E}_i\) is defined later, and \(\tilde{c}\) is a sufficiently small constant.
	For convenience, we introduce the following notation. For any functions \(a\) and \(b\), we write \(a \approx b\) when
	\begin{align*}
		\|a-b\|_{L^2(\Omega_\e)}^2 \le C  \e^{-2N-3} (\e^{-2} + \tau)^{-\frac{N}{2}} e^{-c_0\e^{-2}}, \quad \forall N \in \mathbb{Z}^+.
	\end{align*}
	Then, it follows from (\ref{a priori assumption}) and (\ref{tilde Phi, Psi, W}) that
	\begin{align}
		\mathring{\phi} &= \partial_{y_1} \Phi = \frac{\tilde{\theta} \partial_{y_1} \tilde{\Phi} - \partial_{y_1} \tilde{\theta} \tilde{\Phi}}{\tilde{\theta}^2} \implies \left\| \partial_{y_1} \tilde{\Phi} \right\|_{L^2}^2 \leq C \e \chi^2 (1 + \e^2 \tau)^{-\frac{1}{2}}, \notag\\
		\mathring{\psi}_i &\approx \frac{1}{\mathring{\rho}} \partial_{y_1} \tilde{\Psi}_i + \frac{1}{\mathring{\rho}} \mathcal{C}_{ir} - \frac{\tilde{m}_i}{\mathring{\rho} \tilde{\rho}} \partial_{y_1} \Phi \implies \left\| \partial_{y_1} \tilde{\Psi}_i \right\|_{L^2}^2 \leq C \e \chi^2 (1 + \e^2 \tau)^{-\frac{1}{2}}, \\
		\mathring{\zeta} &\approx \frac{1}{\mathring{\rho}} \partial_{y_1} \tilde{W} + \frac{1}{\mathring{\rho}} \partial_{y_1} \tilde{\Phi} + \frac{1}{\mathring{\rho}} \mathcal{C}_{wr} - \frac{\tilde{\E}}{\mathring{\rho} \tilde{\rho}} \partial_{y_1} \Phi - \left( \frac{|\mathring{\mathbf{m}}|^2}{\mathring{\rho}^2} - \frac{|\tilde{\mathbf{m}}|^2}{\tilde{\rho}^2} \right) \implies \left\| \partial_{y_1} \tilde{W} \right\|_{L^2}^2 \leq C \e \chi^2 (1 + \e^2 \tau)^{-\frac{1}{2}}. \notag
	\end{align}
	Similarly, we have
	\begin{align}
		\left\| \partial_{y_1} (\mathring{\phi}, \mathring{\psi}_i, \mathring{\zeta}) \right\|_{L^2(\R)}^2 \leq C \e^3 \chi^2 (1+\e^2 \tau)^{-\frac{3}{2}}
		\implies \left\| \partial_{y_1}^2 V \right\|_{L^2(\R)}^2 \leq C \e^3 \chi^2 (1+\e^2 \tau)^{-\frac{3}{2}}.
	\end{align}
	
	For convenience, we set
	\begin{align}\label{bar delta and eta}
		\bar{\delta} = \delta + \e, \qquad \bar{\eta} = \chi + \bar{\delta}^{\frac{1}{2}} + \sqrt{\eta}.
	\end{align}
	Next, we will prove the following lemma.
	\begin{Lem}\label{main lemma}
		Under the same assumptions as Proposition \ref{a priori estimates}, it holds that
		\begin{align}
			&\frac{d}{d \tau} \left(\sum_{i=0}^{2} E_i\right) + \sum_{i=0}^{2} (K_i + G_i) \le C \bar{\eta} (\e^{-2} + \tau)^{-1}  \left(\sum_{i=0}^{2} E_i\right) + C \bar{\delta} (\e^{-2} + \tau)^{-1} +  C \sqrt{\eta} \e (\e^{-2} + \tau)^{-\frac{1}{2}}, \label{E_0 est}\\
			&\frac{d}{d \tau} \left(\sum_{i=1}^{2} E_i\right) + \sum_{i=1}^{2} (K_i + G_i) \le C \bar{\eta} (\e^{-2} + \tau)^{-1}  \left(\sum_{i=1}^{2} E_i + G_0\right) + C \bar{\eta} (\e^{-2} + \tau)^{-2} E_0 \notag \\
			&\qquad \qquad \qquad \qquad \qquad \qquad \qquad+ C \bar{\delta}(\e^{-2} + \tau)^{-2} +  C \sqrt{\eta} \e (\e^{-2} + \tau)^{-\frac{3}{2}}, \label{E_1 est}\\
			&\frac{d}{d \tau} E_2 + K_2 + G_2 \le C \bar{\eta} \sum_{i=0}^{2} (\e^{-2} + \tau)^{-(3-i)} E_i + C \bar{\eta} [(\e^{-2} + \tau) ^{-2} G_0 + (\e^{-2} + \tau) ^{-1} G_1] \notag\\
			&\qquad \qquad \qquad \qquad \qquad + C \bar{\delta} (\e^{-2} + \tau)^{-3} +  C \sqrt{\eta} \e (\e^{-2} + \tau)^{-\frac{5}{2}}, \label{E_2 est}
		\end{align}
		where \(G_i, i=1,2,3\) are defined later.
	\end{Lem}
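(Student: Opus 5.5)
We will derive the three inequalities of Lemma \ref{main lemma} from $L^2$-type energy estimates of $\p_{y_1}^i$ of the diagonalized system \eqref{B equ}, $i=0,1,2$, together with the equations for $\tilde\Psi_2,\tilde\Psi_3$ in \eqref{the equ of tilde Phi, Psi, W}. For fixed $i$ we apply $\p_{y_1}^i$ to \eqref{B equ} and take the $L^2(\R)$ inner product with $\p_{y_1}^i B$. We then set $\tilde E_i:=\frac12\int_{\R}|\p_{y_1}^iB|^2dy_1$, which is equivalent to $\frac12\int|\p_{y_1}^i\tilde V|^2$ since $\tilde L,\tilde R$ are uniformly invertible.

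The transport part $\Lambda\p_{y_1}B$ contributes only $\frac12\int\p_{y_1}\Lambda\,|\p_{y_1}^iB|^2$. Because $\p_{y_1}\tilde\T=O(\bar\delta D^1_{-1/2}+\sqrt\eta\e^{1/2}D^2_{-1/2})$, this term is bounded by $C\bar\eta(\e^{-2}+\tau)^{-1}E_i$ after weighted interpolation of the Gaussian against $\p_{y_1}^iB$. The second-order term $\tilde LA_2\tilde R\p_{y_1}^2B$ is only positive semidefinite, since $A_2$ has a zero row. It therefore yields dissipation of $\p_{y_1}^{i+1}\tilde\Psi_1$ and $\p_{y_1}^{i+1}\tilde W$ only.

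The missing dissipation of $\p_{y_1}^{i+1}\tilde\Phi$ comes from the cross term in \eqref{E_i}. Multiplying the $\tilde\Psi_1$-equation by $\p_{y_1}^{i+1}\tilde\Phi$, using the $\tilde\Phi$-equation to trade $\p_\tau\p_{y_1}^{i+1}\tilde\Phi$, and absorbing the viscous term $\frac{2\mu+\lambda}{\tilde\rho}\p_{y_1}^{i+2}\tilde\Psi_1\,\p_{y_1}^{i+1}\tilde\Phi$ gives $\int|\p_{y_1}^{i+1}\tilde\Phi|^2$. This absorption uses the $\tilde\Phi$-equation in the form $\p_{y_1}\tilde\Psi_1\approx-\tilde\rho\,\p_\tau\tilde\Phi$, which explains the corrector $\frac{\lambda+2\mu}{2\tilde\rho\tilde\T}|\p_{y_1}^{i+1}\tilde\Phi|^2$ and the nonlinear correction with $\p_{y_1}\Phi/\mathring\rho$ coming from $\mathring\rho\neq\tilde\rho$. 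With $\tilde c$ small, $E_i$ remains equivalent to $\int|\p^i_{y_1}V|^2$ and the full $K_i$ appears on the left. The $\tilde\Psi_{2,3}$ equations are scalar heat equations and are handled directly. The terms $G_i$ will be defined as the weighted dissipation $\int D^1_{-1}|\p_{y_1}^iV|^2$-type quantities (the ``special dissipation structure''). These come from the nonlinear coefficient terms $[(\p_\tau\tilde L+\Lambda\p_{y_1}\tilde L)\tilde R+\tilde LA_2\p^2_{y_1}\tilde R]B$ after the transformation \eqref{tilde Phi, Psi, W}. Their sign is fixed by the heat-kernel structure of $\bar\rho$ in \eqref{diffusion wave of rho}, which is exactly what the transformation is designed to expose.

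The source $\tilde LA_3$ is estimated with Lemmas \ref{Q est} and \ref{J est}, and the non-zero mode contributions are controlled by Lemma \ref{Z est}. The products $\mathcal L^{(k)},\mathcal D^{(k)}$ are bounded by Cauchy--Schwarz and the a priori bounds \eqref{a priori assumption}. Quadratic terms such as $|\p_{y_1}^{k+1}V||\p_{y_1}V|$ are absorbed into $K_i$ using $\|\p_{y_1}V\|_{L^\infty}\lesssim\chi$. Gaussian-weighted linear terms $D^j_{-\alpha}|\p^{m}_{y_1}V|$ produce either $\bar\eta(\e^{-2}+\tau)^{-1}E_i$ or lower-index $G_{j}$, $E_j$ with the extra powers $(\e^{-2}+\tau)^{-(3-i)}$ that appear in \eqref{E_1 est}--\eqref{E_2 est}. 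The pure source parts $(\delta+\e)D^1_{-(2+k)/2}$ are handled as follows. Paired against $\p^i_{y_1}B$, they would give only $(\e^{-2}+\tau)^{-1/2}$-type growth if estimated crudely. We therefore integrate by parts once, so that they pair with $\p_{y_1}^{i+1}B$ (part of $K_i$). Alternatively we use the $G_k$ weights with exponent $\tfrac{3k}{4}$, which yields $C\bar\delta(\e^{-2}+\tau)^{-1-i}$. The diffusion-wave parts carry the factor $\sqrt\eta\e^{1/2}$ and can be treated crudely as in \cite{DHLX-1}, giving $C\sqrt\eta\e(\e^{-2}+\tau)^{-1/2-i}$.

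Summing over $i\ge0$, $i\ge1$ and $i=2$ gives \eqref{E_0 est}, \eqref{E_1 est} and \eqref{E_2 est} respectively. The main obstacle is the background-source step: obtaining $\bar\delta(\e^{-2}+\tau)^{-1-i}$ rather than a rate with only $\tfrac12$ gain. This requires identifying $G_i$ with the right sign and verifying that every background error term in $\mathcal S^{(2)}$ and $Y^{(1)}_3$ can be split between $K_i$ and $G_i$ with the Gaussian exponent matched. I would first try a weighted Poincaré-type inequality $\int D^1_{-1}|f|^2\lesssim\|\p_{y_1}f\|^2+\tfrac{d}{d\tau}(\cdots)$ in the spirit of Huang--Li--Matsumura.
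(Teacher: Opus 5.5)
There is a genuine gap in how you treat the transport term and in what you take $G_i$ to be. With the unweighted multiplier $\p_{y_1}^iB$, the term $\frac12\int_{\R}\p_{y_1}\Lambda\,|\p_{y_1}^iB|^2dy_1$ has coefficient $\p_{y_1}\tilde\lambda_{1,3}\sim\p_{y_1}\bar\rho\sim\delta D^1_{-\frac12}$, whose size is $\delta(\e^{-2}+\tau)^{-\frac12}$. It cannot be bounded by $C\bar\eta(\e^{-2}+\tau)^{-1}E_i$ plus absorbable dissipation. To see this, set $L=(\e^{-2}+\tau)^{\frac12}$ and $f(y_1)=g(y_1/L)$. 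An inequality $\delta\int D^1_{-\frac12}|f|^2\le \nu\|\p_{y_1}f\|^2+C\bar\eta L^{-2}\|f\|^2$ then becomes $\delta\int e^{-dz^2}|g|^2dz\le L^{-1}(\nu\|g'\|^2+C\bar\eta\|g\|^2)$, which fails since $L\ge\e^{-1}$ is large.

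At $i=0$ there is no lower-order dissipation to fall back on. This term, together with the $O(\p_{y_1}\bar\rho)B$ contributions of $(\p_\tau\tilde L+\Lambda\p_{y_1}\tilde L)\tilde RB$ in rows $1$ and $3$, is exactly the slowly decaying quantity \eqref{111}. Left alone, it gives Gronwall growth like $\exp(C\delta(\e^{-2}+\tau)^{\frac12})$.

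The background source $\mathcal S^{(2)}_{21}\sim\bar\delta D^1_{-1}$ paired with $b_1,b_3$ has the same defect: Cauchy--Schwarz gives only $\bar\delta(\e^{-2}+\tau)^{-\frac12}$. Integrating by parts does not help either. A primitive of $D^1_{-1}$ does not vanish as $y_1\to+\infty$, and even a localized primitive (size $(\e^{-2}+\tau)^{-\frac12}$, width $(\e^{-2}+\tau)^{\frac12}$) only yields $\bar\delta^2(\e^{-2}+\tau)^{-\frac12}$.

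Your proposed $G_i$ cannot repair this. The coefficient matrix $(\p_\tau\tilde L+\Lambda\p_{y_1}\tilde L)\tilde R+\tilde LA_2\p_{y_1}^2\tilde R$ has no sign. Moreover, a $D^1_{-1}$-weighted quantity, which is all a Huang--Li--Matsumura-type inequality controls, is too weak to absorb $D^1_{-\frac12}$-weighted terms.

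The missing idea is the weighted energy of Step~1 in the paper. Assuming $\p_{y_1}\bar\rho>0$, test $\p_{y_1}^k$ of \eqref{B equ} against $\bar B^{(k)}=(v_1^n\p_{y_1}^kb_1,\p_{y_1}^kb_2,v_1^{-n}\p_{y_1}^kb_3)$, with $v_1=\bar\rho/\rho_+$ and $n=4[\delta^{-\frac12}]+1$. Because $\tilde\lambda_1<0<\tilde\lambda_3$, the transport term now produces $a_1|\p_{y_1}^kb_1|^2+a_2|\p_{y_1}^kb_3|^2$ with $a_1,a_2\ge C\delta^{-\frac12}\p_{y_1}\bar\rho-C(\bar\delta D^1_{-1}+\sqrt\eta\e^{\frac12}D^2_{-1})$. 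This is the positive dissipation $G_k=C\delta^{-\frac12}\int_{\R}\p_{y_1}\bar\rho(|\p_{y_1}^kb_1|^2+|\p_{y_1}^kb_3|^2)dy_1$. Since $v_1^{\pm n}=1+O(\delta^{\frac12})$, $\tilde E_k$ stays equivalent to the unweighted energy.

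The large factor $\delta^{-\frac12}$ is what does the work. It lets $G_k$ absorb every $O(\p_{y_1}\bar\rho)|\p_{y_1}^kb_{1,3}|^2$ term. It also gives the sharp source rate via $\bar\delta\int D^1_{-1}|b_{1,3}|\le\bar\delta\int\p_{y_1}\bar\rho|b_{1,3}|^2+C\bar\delta(\e^{-2}+\tau)^{-1}$. This is consistent because the second row of $\tilde L$ is the constant vector $(\frac1{\sqrt2},0,-\frac1{\sqrt2})$. Hence the $b_2$-equation contains neither the slow zeroth-order coefficients nor $\mathcal S^{(2)}_{21}$, only faster-decaying terms.

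For $k\ge1$, the commutators with $\p_{y_1}^2\bar\rho$ and $\p_{y_1}^3\bar\rho$ leave terms $\int\tilde D^1_{-\frac12}|\p_{y_1}^kb_{1,3}|^2$ with a weaker Gaussian, which $G_k$ does not control. The paper handles these by testing the $b_1$ (resp.\ $b_3$) equation with $\tilde\Gamma\,\p_{y_1}^kb_1$ (resp.\ $\tilde\Gamma\,\p_{y_1}^kb_3$), where $\tilde\Gamma=\int_{-\infty}^{y_1}\tilde D^1_{-\frac12}$, and using $\tilde\lambda_{1,3}\neq0$.

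Your treatment of $\p_{y_1}^{i+1}\tilde\Phi$ through the cross term, and of $\tilde\Psi_{2,3}$, does agree with the paper.
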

	\begin{proof}
		Applying \(\p_{y_1}^k\), \(k = 0,1,2\) to (\ref{B equ}), we obtain
		\begin{align}\label{p^k B equ}
			\p_{y_1}^k B_{\tau} + \Lambda \p_{y_1}^{k+1} B  = \tilde{L} A_2 \tilde{R} \p_{y_1}^{k+2} B + \hat{M}_k,
		\end{align}
		where
		\begin{align*}
			\hat{M}_k := &\, [\p_{y_1}^k, \Lambda] \p_{y_1} B + [\p_{y_1}^k, \tilde{L} A_2 \tilde{R}]\p_{y_1}^2 B + 2 \sum_{i=0}^k \partial_{y_1}^i (\tilde{L} A_2 \partial_{y_1} \tilde{R}) \partial_{y_1}^{k-i+1} B  \\
			&+ \sum_{i=0}^k \left\{ \partial_{y_1}^i \left[ (\tilde{L}_\tau + \Lambda \partial_{y_1} \tilde{L}) \tilde{R} + \tilde{L} A_2 \ \p_{y_1}^2 \tilde{R} \right] \partial_{y_1}^{k-i} B \right\} + \partial_{y_1}^k (\tilde{L} A_3) \\
			\leq &\, C \sum_{j=1}^{k+1} \left( \left| \partial_{y_1}^j \bar{\rho} \right| \left| \partial_{y_1}^{k-j+1} b_1 \right|, 0, \left| \partial_{y_1}^j \bar{\rho} \right| \left| \partial_{y_1}^{k-j+1} b_3 \right| \right)^t + C \sum_{j=1}^{k+2} \left| (\bar{\delta} D_{-\frac{j}{2}}^1 + \eta \e^{\frac{1}{2}} D_{-\frac{j}{2}}^2) \partial_{y_1}^{k-j+2} B \right| + \partial_{y_1}^k (\tilde{L} A_3) \\
			:=& \sum_{i=1}^3 \hat{M}_k^{(i)}.
		\end{align*}
		
		\subsubsection*{Step 1. The diagonalized system.}

		\(~\)
		
		Without loss of generality, we assume that \(\p_{y_1} \bar{\rho} > 0\). The case that \(\p_{y_1} \bar{\rho} < 0\) can be treated in a similar way. Let \(v_1 = \frac{\bar{\rho}}{\rho_+}\), then we find that \(|v_1 - 1| \le C\delta\). Set \(\bar{B}^{(k)} = (v_1^n \p_{y_1}^k b_1, \p_{y_1}^k b_2, v_1^{-n} \p_{y_1}^k b_3)\), where \(n = 4 [\delta^{-\frac{1}{2}}] + 1\). Multiplying (\ref{p^k B equ}) by \(\bar{B}^{(k)}\) and integrating with respect to \(y_1\), one has
		\begin{align}\label{eq;diag}
			&\int_{\R} \left(\frac{v_1^n}{2} \left|\p_{y_1}^k b_1\right|^2 + \frac{1}{2} \left|\p_{y_1}^k b_2\right|^2 + \frac{v_1^{-n}}{2} \left|\p_{y_1}^k b_3\right|^2\right)_{\tau} + \p_{y_1} \bar{B}^{(k)} A_4 \p_{y_1}^{k+1} B dy_1 + \int_{\R} a_1 |\p_{y_1}^k b_1|^2 + a_2 |\p_{y_1}^k b_3|^2 dy_1 \notag \\
			=& \int_{\R} \bar{B}^{(k)} \p_{y_1}A_4 \p_{y_1}^{k+1} B + \left[\left(\frac{v_1^n}{2}\right)_{\tau} \left|\p_{y_1}^k b_1\right|^2 + \left(\frac{v_1^{-n}}{2}\right)_{\tau} \left|\p_{y_1}^k b_3\right|^2\right] + \bar{B}^{(k)} \hat{M}_k dy_1 \notag \\
			:=& I_1 +I_2 + I_3,
		\end{align}
		where 
		\begin{align*}
			&a_1 := - \frac{v_1^{n-1}}{2}\left(n \tilde{\lambda}_1 \p_{y_1}v_1 + v_1\p_{y_1}\tilde{\lambda}_1\right) \ge C \delta^{-\frac{1}{2}} \p_{y_1}\bar{\rho} - C\left(\bar{\delta} D_{-1}^1 + \eta \e^{\frac{1}{2}} D_{-1}^2\right), \\
			&a_2 := \frac{v_1^{-(n+1)}}{2}\left(n \tilde{\lambda}_3 \p_{y_1}v_1 - v_1\p_{y_1}\tilde{\lambda}_3\right) \ge C \delta^{-\frac{1}{2}} \p_{y_1}\bar{\rho} - C\left(\bar{\delta} D_{-1}^1 + \eta \e^{\frac{1}{2}} D_{-1}^2\right).
		\end{align*}
		and \(A_4 = \tilde{L} A_2 \tilde{R}\) is a  symmetric nonnegative matrix.
		In fact, we have
		\begin{align*}
			A_4 = \begin{pmatrix}
				\frac{\tilde{\mu}}{2} + \frac{\tilde{\kappa}}{4} & a_3 & -\frac{\tilde{\mu}}{2} + \frac{\tilde{\kappa}}{4} \\
				a_3 & \frac{\tilde{\kappa}}{2} & a_3 \\
				-\frac{\tilde{\mu}}{2} + \frac{\tilde{\kappa}}{4} & a_3 & \frac{\tilde{\mu}}{2} + \frac{\tilde{\kappa}}{4}
			\end{pmatrix},
		\end{align*}
		where \(\tilde{\mu} = \frac{\lambda + 2\mu}{\tilde{\rho}}\), \(\tilde{\kappa} = \frac{\kappa}{\tilde{\rho}}\) and \(a_3 = -\sqrt{\frac{1}{2}} \frac{\tilde{\kappa}}{2}\).
		For convenience, we denote energy and two kinds of dissipation norm by $\tilde{E}_k,\tilde{H}_k ,G_k$, respectively
		\begin{align}
			&\tilde{E}_k := \int_{\R} \left(\frac{v_1^n}{2} \left|\p_{y_1}^k b_1\right|^2 + \frac{1}{2} \left|\p_{y_1}^k b_2\right|^2 + \frac{v_1^{-n}}{2} \left|\p_{y_1}^k b_3\right|^2\right) dy_1,  \label{tilde E_k} \\
			&\tilde{H}_k := \int_{\R} \p_{y_1}^{k+1} B^t A_4 \p_{y_1}^{k+1} B dy_1, \qquad G_k := C\delta^{-\frac{1}{2}} \int_{\R} \p_{y_1}\bar{\rho} \left|\p_{y_1}^k b_1\right|^2 + \p_{y_1}\bar{\rho} \left|\p_{y_1}^k b_3\right|^2 dy_1.\label{2026-9-20-2}
		\end{align}
		Indeed, for \(k=0\), \(\tilde{E}_0 = C \|\tilde{V}\|_{L^2(\R)}^2\). For \(k \ge 1\), it is straightforward to check that
		\begin{align}\label{tilde E_k norm}
			C \left( \left\| \partial_{y_1}^k \tilde{V} \right\|_{L^2}^2 - \bar{\eta} \sum_{j=0}^{k-1} (\e^{-2}+\tau)^{-(k-j)} \left\| \partial_{y_1}^j \tilde{V} \right\|_{L^2}^2 \right) 
			\leq \tilde{E}_k 
			\leq C \left( \left\| \partial_{y_1}^k \tilde{V} \right\|_{L^2}^2 + \bar{\eta} \sum_{j=0}^{k-1} (\e^{-2} + \tau)^{-(k-j)} \left\| \partial_{y_1}^j \tilde{V} \right\|_{L^2}^2 \right).
		\end{align}
		For the dissipation norm $\tilde{H}_k$, a straightforward calculation gives that
		\begin{align}
			\p_{y_1}^{k+1} B^t A_4 \p_{y_1}^{k+1} B &= \tilde{\kappa} \left[\frac{1}{2}\left(\p_{y_1}^{k+1} b_1 + \p_{y_1}^{k+1} b_3 \right) - \frac{1}{\sqrt{2}} \p_{y_1}^{k+1} b_2\right]^2 + \tilde{\mu} \left[\frac{1}{\sqrt{2}} \left(\p_{y_1}^{k+1} b_3 - \p_{y_1}^{k+1} b_1\right)\right]^2 \notag \\
			&\ge C \left(\left|\p_{y_1}^{k+1} \tilde{\Psi}_1\right|^2 + \left|\p_{y_1}^{k+1} \tilde{W}\right|^2\right) - C \sum_{j=1}^{k+1} \left(\bar{\delta} D_{-j}^1 + \eta \e^{\frac{1}{2}} D_{-j}^2\right) \left|\p_{y_1}^{k+1-j} \tilde{\Psi}_1\right|^2.
		\end{align}
		Thus, we have
		\begin{align}
			\int_{\R} \p_{y_1} \left(\bar{B}^{(k)} -  \p_{y_1}^k B^t\right) A_4 \p_{y_1}^{(k+1)} B dy_1 \le C (\chi + \bar{\delta}^{\frac{1}{2}}) (\e^{-2} + \tau)^{-1} E_k + C (\chi + \bar{\delta}^{\frac{1}{2}}) K_k.
		\end{align}
		Now, we estimate the right side of \eqref{eq;diag}. A direct calculation yields
		\begin{align}\label{I_1+I_2}
			I_1 +I_2 =& \int_{\R} \bar{B}^{(k)} \p_{y_1}A_4 \p_{y_1}^k \p_{y_1} B dy_1 + \int_{\R} \left[\left(\frac{v_1^n}{2}\right)_{\tau} \left|\p_{y_1}^k b_1\right|^2 + \left(\frac{v_1^{-n}}{2}\right)_{\tau} \left|\p_{y_1}^k b_3\right|^2\right] dy_1 \notag \\
			\le&C (\chi + \bar{\delta}^{\frac{1}{2}}) (\e^{-2} + \tau)^{-1} E_k + C (\chi + \bar{\delta}^{\frac{1}{2}}) K_k.
		\end{align}
		Next, we will estimate \(I_3\) 
		\begin{align*}
			&\int_{\R} \bar{B}^{(k)} \hat{M}_k^{(2)} \le C \sum_{j=1}^{k+2}\left(\bar{\delta} \norm{D_{-\frac{j}{2}}^1 \p_{y_1}^{k-j+2} B}_{L^2(\R)} + \eta \e^{\frac{1}{2}} \norm{D_{-\frac{j}{2}}^2 \p_{y_1}^{k-j+2} B}_{L^2(\R)} \right) \norm{\bar{B}^{(k)}}_{L^2(\R)} \\
			\le& C \bar{\delta} \sum_{j=1}^{k+2} D_{-\frac{j}{2}}^1 \norm{\bar{B}^{(k)}}_{L^2(\R)} \norm{\p_{y_1}^{k-j+2} B}_{L^2(\R)} + C\sqrt{\eta}\e^{\frac{1}{2}} \sum_{j=1}^{k+2} D_{-\frac{j}{2}}^2 \norm{\bar{B}^{(k)}}_{L^2(\R)} \norm{\p_{y_1}^{k-j+2} B}_{L^2(\R)} \\
			\le& C \bar{\eta} \sum_{j=0}^{k+1} (\e^{-2} + \tau)^{-j} \norm{\p_{y_1}^{k-j+1} B}_{L^2(\R)}^2.
		\end{align*}
		For \(\int_{\R} \bar{B}^{(k)} \hat{M}_k^{(1)} dy_1\), we consider the case that \(k=0\) first
		\begin{align}
			\int_{\R} \bar{B}^{(0)} \hat{M}_0^{(1)} dy_1 = \int_{\R} \bar{B}^{(0)} \sum_{j=1}^{k+1}\left(|\p_{y_1} \bar{\rho}| |b_1|, 0, |\p_{y_1} \bar{\rho}| |b_3|\right)^t dy_1 \le C \int_{\R} \p_{y_1} \bar{\rho} (|b_1|^2 + |b_3|^2) dy_1.
		\end{align}
		As \(1 \le k \le 2\), notice that
		\begin{align*}
			\left|\p_{y_1}^2 \bar{\rho}\right| \lesssim \left[\frac{|y_1|}{\e^{-2} + \tau} +\delta (\e^{-2} + \tau)^{-\frac12}\right]\p_{y_1} \bar{\rho}, \qquad 	\left|\p_{y_1}^3 \bar{\rho}\right| \lesssim \frac{|y_1|^2}{(\e^{-2} + \tau)^2} \p_{y_1} \bar{\rho} + (\e^{-2} + \tau)^{-1} \p_{y_1} \bar{\rho}.
		\end{align*}
		We have
		\begin{align}
			\int_{\R} \bar{B}^{(1)} \hat{M}_1^{(1)} =& \int_{\R} \left(v_1^n \p_{y_1} b_1, \p_{y_1} b_2, v_1^{-n} \p_{y_1}b_3\right) \begin{pmatrix}
				|\p_{y_1} \bar{\rho}| |\p_{y_1} b_1| \\
				0\\
				|\p_{y_1} \bar{\rho}| |\p_{y_1} b_3|
			\end{pmatrix} dy_1 \notag\\
			&+ \int_{\R} \left(v_1^n \p_{y_1} b_1, \p_{y_1} b_2, v_1^{-n} \p_{y_1}b_3\right) \begin{pmatrix}
				|\p_{y_1}^2 \bar{\rho}| |b_1| \\
				0\\
				|\p_{y_1}^2 \bar{\rho}| |b_3|
			\end{pmatrix} dy_1 \notag\\
			\le& \sum_{i=1,3}\int_{\R} C \frac{|y_1|}{\e^{-2} + \tau} \p_{y_1} \bar{\rho} |b_i| |\p_{y_1} b_i| dy_1 + \sum_{i=1,3} \int_{\R} C \p_{y_1} \bar{\rho} |\p_{y_1} b_i|^2 dy_1 \label{int of B^1 M_1^1}\\
			\le& \sum_{i=1,3} C \left( \int_{\R} \frac{|y_1|}{\e^{-2} + \tau} \p_{y_1} \bar{\rho} |\p_{y_1}^2 b_i|^2 dy_1 +  \int_{\R} \frac{1}{\e^{-2} + \tau} \p_{y_1} \bar{\rho} | b_i|^2 dy_1\right) \notag\\
			&+ \sum_{i=1,3} \int_{\R} C \p_{y_1} \bar{\rho} |b_i| |\p_{y_1}^2 b_i|^2 dy_1\notag\\
			\le& C\bar{\delta} \sum_{i=1,3} \int_{\R} \tilde D_{-\frac12}^1  \left|\p_{y_1}^2 b_i\right|^2 dy_1 + \sum_{j=0}^{1} \sum_{i=1,3} C (\e^{-2} + \tau)^{-j} \int_{\R} \p_{y_1}\bar{\rho} \left|\p_{y_1}^{1-j} b_i\right|^2 dy_1, \notag\\
			\int_{\R} \bar{B}^{(2)} \hat{M}_2^{(1)} \le& \sum_{i=1,3} \int_{\R} C \frac{|y_1|^2}{(\e^{-2} + \tau)^2} \p_{y_1} \bar{\rho} |b_i| |\p_{y_1}^2 b_i| dy_1 + \sum_{i=1,3} \int_{\R} C \frac{|y_1|}{\e^{-2} + \tau} \p_{y_1} \bar{\rho} |\p_{y_1} b_i| |\p_{y_1}^2 b_i| dy_1 \notag\\
			&+ C(\e^{-2} + \tau)^{-1} \sum_{i=1,3} \int_{\R} \p_{y_1} \bar{\rho} |b_i| |\p_{y_1}^2 b_i| dy_1 + \sum_{i=1,3} C \int_{\R} \p_{y_1} \bar{\rho} |\p_{y_1}^2 b_i|^2 dy_1 \label{int of B^2 M_2^1}\\
			\le& C\bar{\delta} \sum_{i=1,3} \int_{\R} \tilde D_{-\frac12}^1  \left|\p_{y_1}^2 b_i\right|^2 dy_1 + \sum_{j=0}^{2} \sum_{i=1,3} C (\e^{-2} + \tau)^{-j} \int_{\R} \p_{y_1}\bar{\rho} \left|\p_{y_1}^{2-j} b_i\right|^2 dy_1, \notag
		\end{align}
		where we have used the fact that
		\begin{align*}
			\int_{\R} \frac{|y_1|}{\e^{-2} + \tau} \p_{y_1} \bar{\rho} |b_i| |\p_{y_1} b_i| dy_1 \lesssim&  \int_{\R} \frac{|y_1|^2}{(\e^{-2} + \tau)^2} \p_{y_1} \bar{\rho} |\p_{y_1} b_i|^2 dy_1 + \int_{\R} \frac{1}{\e^{-2} + \tau} \p_{y_1}\bar{\rho} |b_i|^2 dy_1 \\
			\lesssim& \bar{\delta} \int_{\R} \tilde D_{-\frac12}^1  \left|\p_{y_1} b_i\right|^2 dy_1 + (\e^{-2} + \tau)^{-1} \int_{\R} \p_{y_1}\bar{\rho} \left|b_i\right|^2 dy_1, \\
			\int_{\R} \frac{|y_1|^2}{(\e^{-2} + \tau)^2} \p_{y_1} \bar{\rho} |b_i| |\p_{y_1}^2 b_i| dy_1 \lesssim& \int_{\R} \frac{|y_1|^4}{(\e^{-2} + \tau)^2} \p_{y_1} \bar{\rho} |\p_{y_1}^2 b_i|^2 dy_1 + \int_{\R} \frac{1}{(\e^{-2} + \tau)^2} \p_{y_1} \bar{\rho} |b_i|^2 dy_1 \\
			\lesssim& \bar{\delta} \int_{\R} \tilde D_{-\frac12}^1  \left|\p_{y_1}^2 b_i\right|^2 dy_1 + (\e^{-2} + \tau)^{-2} \int_{\R} \p_{y_1} \bar{\rho} \left|b_i\right|^2 dy_1.
		\end{align*}
		Then, we will estimate terms involving \(\hat{M}_k^{(3)}\).
		For \(k=0\), it holds that
		\begin{align*}
			&\norm{\bar{B}^{(0)} \hat{M}_0^{(3)}}_{L^1(\R)} = \int_{\R} \left(v_1^n b_1, b_2, v_1^{-n} b_3\right) \left(\tilde{L} A_3\right) dy_1 \\
			\le& C \int_{\R} \left|(Y_1, Y_{21}, Y_3, \mathcal{S}_1, \mathcal{S}_{21}, \mathcal{S}_3)\right| \left|(b_1, b_3)\right|dy_1 +  C \int_{\R} \left|(Y_1, Y_3, \mathcal{S}_1, \mathcal{S}_3)\right| |b_2| dy_1\\
			\le& C \int_{\R} (\bar{\delta} D_{-1}^1 + \eta \e^{\frac{1}{2}} D_{-1}^2) \left|(b_1, b_3)\right| dy_1 + C \int_{\R} \sum_{l=0}^{1} \left( \mathcal{D}^{(l)} + \mathcal{L}^{(l)} +  \mathbf{D}_0\mathcal{I}^{(l)}\right) \left|(b_1, b_2, b_3)\right| dy_1 + C \int_{\R} (\bar{\delta} D^1_{-\frac{3}{2}}  + \eta \e^{\frac{1}{2}} D^2_{-1}) |b_2| dy_1 \\
			:=& \sum_{j=1}^{10} I_4^{(j)}.
		\end{align*}
		By a straightforward calculation, one observes that
		\begin{align}
			I_4^{(1)} \le& C\bar{\delta} \int_{\R} D_{-1}^1 \left|(b_1, b_3)\right| dy_1 \le \bar{\delta} \int_{\R} \p_{y_1} \bar{\rho} |(b_1, b_3)|^2 dy_1 + C \bar{\delta} \left(\e^{-2} + \tau\right)^{-1},  \label{I_4^1}\\
			I_4^{(2)} \le& C \sqrt{\eta} \e^{\frac{1}{2}} \int_{\R} D_{-1}^2 \left|(b_1, b_3)\right| dy_1 \le \sqrt{\eta}  (\e^{-2} + \tau)^{-1} \norm{V}_{L^2(\R)}^2 + \sqrt{\eta} \e \left(\e^{-2} + \tau\right)^{-\frac{1}{2}}, \label{I_4^2}\\
			I_4^{(3)} \le& C \int_{\R} \mathcal{D}^{(0)} \left|\bar{B}^{(0)}\right| dy_1 \le C \bar{\delta} \int_{\R} \left(D_{-\frac{1}{2}} \left|\p_{y_1} V\right| + D_{-1} \left|V\right|\right) \left|\bar{B}^{(0)}\right| dy_1 \notag \\
			\le& C\bar{\delta} \left(\e^{-2} + \tau\right)^{-1} \norm{V}_{L^2(\R)}^2 + C \bar{\delta} \norm{\p_{y_1} V}_{L^2(\R)}^2,\label{I_4^3} \\
			I_4^{(4)} \le& C \int_{\R} \mathcal{D}^{(1)} \left|\bar{B}^{(0)}\right| dy_1 \le C \bar{\delta} \int_{\R} \left(D _{-\frac{1}{2}} \left|\p_{y_1}^2 V\right| + D_{-1} \left|\p_{y_1}V\right| + D_{-\frac{3}{2}} \left|V\right|\right)\left|\bar{B}^{(0)}\right| dy_1 \notag \\
			\le& C\bar{\delta} \left(\e^{-2} + \tau\right)^{-1} \norm{V}_{L^2(\R)}^2 + C \bar{\delta} \norm{\p_{y_1} V}_{H^1(\R)}^2,\label{I_4^4} \\
			I_4^{(5)} \le& C \int_{\R} \mathcal{L}^{(0)} \left|\bar{B}^{(0)}\right| dy_1 \le C \int_{\R} \left(\left|\p_{y_1} V\right|^2 + \bar{\delta} D_{-\frac{1}{2}} \left|\p_{y_1}V\right|\right)\left|\bar{B}^{(0)}\right| dy_1 \notag \\
			\le& C\bar{\delta} \left(\e^{-2} + \tau\right)^{-1} \norm{V}_{L^2(\R)}^2 + C \left(\bar{\delta} + \chi\right) \norm{\p_{y_1} V}_{L^2(\R)}^2, \label{I_4^5}\\
			I_4^{(6)} \le& C \int_{\R} \mathcal{L}^{(1)} \left|\bar{B}^{(0)}\right| dy_1 \le C \int_{\R} \left(\left|\p_{y_1}^2 V\right| \left|\p_{y_1}V\right| + \bar{\delta}\left(D_{-\frac{1}{2}} \left|\p_{y_1}V\right|^2 + D_{-1} \left|\p_{y_1}V\right| + D_{\frac{1}{2}} \left|\p_{y_1}^2 V\right|\right)\right) \left|\bar{B}^{(0)}\right| dy_1 \notag \\
			\le& C\bar{\delta} \left(\e^{-2} + \tau\right)^{-1} \norm{V}_{L^2(\R)}^2 + C \left(\bar{\delta} + \chi\right) \norm{\p_{y_1} V}_{H^1(\R)}^2, \label{I_4^6}\\
			I_4^{(7)} \le& C \int_{\R} \mathbf{D}_0 \mathcal{I}^{(0)} \left|\bar{B}^{(0)}\right| dy_1
			\le C \e \left(\e^{-2} + \tau\right)^{-1} \norm{V}_{L^2(\R)}^2 + C \e^{-\frac{2N+3}{4} + 1} e^{-\frac{c_0\e^{-2}}{4}} (\e^{-2} + \tau)^{-\frac{N}{8} + 1}, \label{I_4^7}\\
			I_4^{(8)} \le& C \int_{\R}  \mathbf{D}_0 \mathcal{I}^{(1)} \left|\bar{B}^{(0)}\right| dy_1
			\le C \e \left(\e^{-2} + \tau\right)^{-1} \norm{V}_{L^2(\R)}^2 + C \e^{-\frac{2N+3}{4}+1} e^{-\frac{c_0\e^{-2}}{4}} (\e^{-2} + \tau)^{-\frac{N}{8} + 1}, \label{I_4^8} \\
			I_4^{(9)} \le& C \int_{\R} \bar{\delta} D^1_{-\frac{3}{2}} |b_2| dy_1 \le C \bar{\delta} \left(\e^{-2} + \tau\right)^{-1} \norm{V}_{L^2(\R)}^2 + C \bar{\delta} \left(\e^{-2} + \tau\right)^{-\frac{3}{2}},  \label{I_4^9} \\
			I_4^{(10)} \le& C \int_{\R} \sqrt{\eta} \e^{\frac{1}{2}} D^2_{-1} |b_2| dy_1 \le C \sqrt{\eta} \left(\e^{-2} + \tau\right)^{-1} \norm{V}_{L^2(\R)}^2 +\sqrt{\eta} \e \left(\e^{-2} + \tau\right)^{-\frac{1}{2}}. \label{I_4^10}
		\end{align}
		For \(k=1\), we have
		\begin{align}\label{int of B^1 M_1^3}
			&\int_{\R} \bar{B}^{(1)} \hat{M}_1^{(3)} dy_1 = \int_{\R} \bar{B}^{(1)} \p_{y_1} \left(\tilde{L} A_3\right) dy_1\notag \\
			\le& C \int_{\R} \sum_{l=1}^{2}\left( (\bar{\delta}  D_{-\frac{3}{2}}^1 + \sqrt{\eta} \e^{\frac{1}{2}} D_{-\frac{3}{2}}^2) \mathcal{D}^{(l)}+ \mathcal{L}^{(l)} +  \mathbf{D}_0\mathcal{I}^{(l)}\right) |\p_{y_1}(b_1, b_3)| dy_1 \notag\\
			&+ C\int_{\R}\bar{\delta} \left(\e^{-2} + \tau\right)^{-\frac{1}{2}} \sum_{l=0}^{1}\left[\mathcal{D}^{(l)} + \mathcal{L}^{(l)} +  \mathbf{D}_0 \mathcal{I}^{(l)} + \bar{\delta} D_{-1} \right] |\p_{y_1}(b_1, b_3)| dy_1 \notag\\
			&+ C\int_{\R} \sum_{l=0}^{1} \left(\bar{\delta} D^1_{-\frac{3}{2}}  + \sqrt{\eta} \e^{\frac{1}{2}} D^2_{-1} + \mathcal{D}^{(l)} + \mathcal{L}^{(l)} + \mathbf{D}_0 \mathcal{I}^{(l)}\right) \left|\p_{y_1}^2 b_2\right| dy_1.
		\end{align}
		For the error terms in (\ref{int of B^1 M_1^3}), we have
		\begin{align*}
			&\int_{\R} \bar{\delta} D_{-\frac{3}{2}}^1 |\p_{y_1}(b_1, b_3)| dy_1 \le \bar{\delta} \int_{\R} \p_{y_1}\bar{\rho} |\p_{y_1}(b_1, b_3)|^2 dy_1 + C \bar{\delta} \left(\e^{-2} + \tau\right)^{-2},\\
			&\int_{\R} \sqrt{\eta} \e^{\frac{1}{2}} D_{-\frac{3}{2}}^2 |\p_{y_1}(b_1, b_3)| dy_1 \le \sqrt{\eta} (\e^{-2} + \tau)^{-1} \norm{\p_{y_1} V}_{L^2(\R)}^2 + \sqrt{\eta} \e \left(\e^{-2} + \tau\right)^{-\frac{3}{2}},\\
			&\int_{\R} \bar{\delta} D^1_{-\frac{3}{2}}  \left|\p_{y_1}^2 b_2\right| dy_1 \le C\bar{\delta} \norm{\p_{y_1}^2 B}_{L^2(\R)}^2 + C \bar{\delta} \left(\e^{-2} + \tau\right)^{-\frac{5}{2}}, \\
			&\int_{\R} \sqrt{\eta} \e^{\frac{1}{2}} D_{-1}^2 \left|\p_{y_1}^2 b_2\right| dy_1 \le C \sqrt{\eta} \norm{\p_{y_1}^2 B}_{L^2(\R)}^2 + \sqrt{\eta} \e \left(\e^{-2} + \tau\right)^{-\frac{3}{2}},
		\end{align*}
		and the other terms in (\ref{int of B^1 M_1^3}) can be estimated in the same way as (\ref{I_4^2}) - (\ref{I_4^10}). Overall, it holds that
		\begin{align*}
			\int_{\R} \bar{B}^{(1)} \hat{M}_1^{(3)} dy_1 \le C \bar{\eta} \sum_{i=0}^{2} \left(\e^{-2} + \tau\right)^{-i} \norm{\p_{y_1}^{2-i} B}_{L^2(\R)}^2 + C \bar{\delta} \left(\e^{-2} + \tau\right)^{-2} + \sqrt{\eta} \e \left(\e^{-2} + \tau\right)^{-\frac{3}{2}}.
		\end{align*}
		For \(k=2\), we have
		\begin{equation}\label{int of B^2 M_2^3}
			\begin{aligned}
				&\int_{\R} \bar{B}^{(2)} \hat{M}_2^{(3)} dy_1 = \int_{\R} \bar{B}^{(2)} \p_{y_1}^2 \left(\tilde{L} A_3\right) dy_1  \\
				\le& C \int_{\R} (\bar{\delta} D_{-2}^1 + \sqrt{\eta} \e^{\frac{1}{2}} D_{-2}^2) |\p_{y_1}^2 (b_1, b_3)| dy_1 + C \int_{\R} (\bar{\delta} D^1_{-2}  + \sqrt{\eta} \e^{\frac{1}{2}} D^2_{-\frac{3}{2}}) |\p_{y_1}^3 b_2| dy_1 \\
				&+ C \int_{\R} \sum_{l=0}^{1} \left( \mathcal{D}^{(l)} + \mathcal{L}^{(l)} +  \mathbf{D}_0 \mathcal{I}^{(l)}\right) \left(\bar{\delta} D_{-\frac{1}{2}} \left|\p_{y_1}^3 B\right| + \bar{\delta}^{-\frac{3}{2}} D_{-1} \left|\p_{y_1}^2 B\right|\right) dy_1 \\
				&+ C \int_{\R} \sum_{l=1}^{2} \left( \mathcal{D}^{(l)} + \mathcal{L}^{(l)} +  \mathbf{D}_0 \mathcal{I}^{(l)}\right) \left(\left|\p_{y_1}^3 B\right| + \bar{\delta}^{\frac{1}{2}} D_{-\frac{1}{2}} \left|\p_{y_1}^2 B\right|\right)dy_1, \\
			\end{aligned}
		\end{equation}
		where the error terms satisfy
		\begin{align}
			&\int_{\R} \bar{\delta} D_{-2}^1 |\p_{y_1}^2 (b_1, b_3)| dy_1  \le \bar{\delta} \int_{\R} \p_{y_1}\bar{\rho} |\p_{y_1}^2 (b_1, b_3)|^2 dy_1 + C \bar{\delta} \left(\e^{-2} + \tau\right)^{-3},\notag\\
			&\int_{\R} \sqrt{\eta} \e^{\frac{1}{2}} D_{-2}^2 |\p_{y_1}^2 (b_1, b_3)| dy_1 \le C \sqrt{\eta} (\e^{-2} + \tau)^{-1} \norm{\p_{y_1}^2 V}_{L^2(\R)}^2 + \sqrt{\eta} \e \left(\e^{-2} + \tau\right)^{-\frac{5}{2}},\notag\\
			&\int_{\R} \bar{\delta} D^1_{-2}  \left|\p_{y_1}^3 b_2\right| dy_1 \le C\bar{\delta} \norm{\p_{y_1}^3 B}_{L^2(\R)}^2 + C \bar{\delta} \left(\e^{-2} + \tau\right)^{-\frac{7}{2}},\notag \\
			&\int_{\R} \sqrt{\eta} \e^{\frac{1}{2}} D_{-\frac{3}{2}}^2 \left|\p_{y_1}^3 b_2\right| dy_1 \le C \sqrt{\eta} \norm{\p_{y_1}^3 B}_{L^2(\R)}^2 + \sqrt{\eta} \e \left(\e^{-2} + \tau\right)^{-\frac{5}{2}}.
		\end{align}
		Next, we concentrate on the third-order derivative terms in (\ref{int of B^2 M_2^3}). It follows from (\ref{a priori assumption}) that
		\begin{align*}
			&\int_{\R} \mathcal{D}^{(1)} \left|\p_{y_1}^3 B\right| dy_1 = \int_{\R} \bar{\delta} \left(D_{-\frac{1}{2}}\left|\p_{y_1}^2 V\right| + D_{-1} \left|\p_{y_1} V\right| + D_{-\frac{3}{2}} \left|V\right|\right) \left|\p_{y_1}^3B\right| dy_1 \\
			\le&  C \bar{\delta} \norm{\p_{y_1}^3 B}_{L^2(\R)}^2 + C\bar{\delta} \left(\e^{-2} + \tau\right)^{-1} \norm{\p_{y_1}^2 V}_{L^2(\R)}^2 + C\bar{\delta} \left(\e^{-2} + \tau\right)^{-2} \norm{\p_{y_1} V}_{L^2(\R)}^2 + C\bar{\delta} \left(\e^{-2} + \tau\right)^{-3} \norm{V}_{L^2(\R)}^2 \\
			\le& C\bar{\delta} \sum_{j=0}^{3} \left(\e^{-2} + \tau\right)^{-j} \norm{\p_{y_1}^{3-j} V}_{L^2(\R)}^2, \\
			&\int_{\R} \mathcal{D}^{(2)} \left|\p_{y_1}^3 B\right| dy_1 = \int_{\R} \bar{\delta} \left(D_{-\frac{1}{2}} \left|\p_{y_1}^3V\right| + D_{-1} \left|\p_{y_1}^2 V\right| + D_{-\frac{3}{2}} \left|\p_{y_1} V\right| + D_{-2} \left|V\right|\right) \left|\p_{y_1}^3 B\right| dy_1 \\
			\le& C\bar{\delta} \sum_{j=0}^{3} \left(\e^{-2} + \tau\right)^{-j} \norm{\p_{y_1}^{3-j} V}_{L^2(\R)}^2, \\
			&\int_{\R} \mathcal{L}^{(1)} \left|\p_{y_1}^3 B\right|dy_1 \le \int_{\R} \left[\left|\p_{y_1}^2 V\right| \left|\p_{y_1} V\right| + C\bar{\delta} \left(D_{-\frac{1}{2}} \left|\p_{y_1} V\right|^2 + D_{-1} \left|\p_{y_1} V\right| + D_{\frac{1}{2}}\left|\p_{y_1}^2 V\right|\right)\right] \left|\p_{y_1}^3 B\right| dy_1 \\
			\le& C(\bar{\delta} + \chi) \sum_{j=0}^{3} \left(\e^{-2} + \tau\right)^{-j} \norm{\p_{y_1}^{3-j} V}_{L^2(\R)}^2, \\
			&\int_{\R} \mathcal{L}^{(2)} \left|\p_{y_1}^3 B\right|dy_1 \le \int_{\R} \left(\left|\p_{y_1}^3 V\right| \left|\p_{y_1} V\right| + \left|\p_{y_1}^2 V\right|^2 \right) \left|\p_{y_1}^3 B\right| + C \bar{\delta} D_{\frac{1}{2}} \left(\left|\p_{y_1} V\right| \left|\p_{y_1}^2 V\right| + \left|\p_{y_1}^3 V\right|\right) \left|\p_{y_1}^3 B\right| dy_1 \\
			&+ \int_{\R} \bar{\delta} D_{-1} \left(\left|\p_{y_1}^2 V\right| + \left|\p_{y_1} V\right|^2\right) \left|\p_{y_1}^3 B\right| + \bar{\delta} D_{-\frac{3}{2}} \left|\p_{y_1}V\right| \left|\p_{y_1}^3 B\right| dy_1 \\
			\le& C (\bar{\delta} + \chi) \sum_{j=0}^{3} \left(\e^{-2} + \tau\right)^{-j} \norm{\p_{y_1}^{3-j} V}_{L^2(\R)}^2 + C\norm{\p_{y_1}^2 V}_{L^4(\R)}^4,
		\end{align*}
		where we have used the fact that
		\begin{align*}
			\int_{\R} \left|\p_{y_1}^2 V\right| \left|\p_{y_1}V\right| \left|\p_{y_1}^3 B\right| dy_1 \le& \norm{\p_{y_1}^2 V}_{L^\infty(\R)} \norm{\p_{y_1}V}_{L^2(\R)} \norm{\p_{y_1}^3 V}_{L^2(\R)} \\
			\le& \norm{\p_{y_1}^2 V}_{L^2(\R)}^{\frac{1}{2}} \norm{\p_{y_1}^3 V}_{L^2(\R)}^{\frac{1}{2}} \norm{\p_{y_1}V}_{L^2(\R)} \norm{\p_{y_1}^3 V}_{L^2(\R)} \\
			\le& \chi\norm{\p_{y_1}^3 V}_{L^2 (\R)}^2 + C \chi^{-1}\norm{\p_{y_1}^2 V}_{L^2(\R)}^2 \norm{\p_{y_1} V}_{L^2(\R)}^4 \\
			\le& C(\bar{\delta} + \chi) \sum_{j=0}^{3} \left(\e^{-2} + \tau\right)^{-j} \norm{\p_{y_1}^{3-j} V}_{L^2(\R)}^2, \\
			\int_{\R} \left|\p_{y_1}^3 V\right| \left|\p_{y_1} V\right| \left|\p_{y_1}^3 B\right| dy_1 \le&\norm{\p_{y_1} V}_{L^\infty(\R)} \norm{\p_{y_1}^3 V}_{L^2(\R)}^2 \le \norm{\p_{y_1} V}_{L^2(\R)}^{\frac{1}{2}} \norm{\p_{y_1}^2 V}_{L^2(\R)}^{\frac{1}{2}} \norm{\p_{y_1}^3 V}_{L^2(\R)}^2 \\
			\le& C (\bar{\delta} + \chi) \norm{\p_{y_1}^3 V}_{L^2(\R)}^2, \\
			\int_{\R} \left|\p_{y_1}^2 V\right|^2 \left|\p_{y_1}^3 B\right| \le& C\bar{\delta} \norm{\p_{y_1}^3 V}_{L^2(\R)}^2 + C\norm{|\p_{y_1}^2 V|^2}_{L^2(\R)}^2 \le C\chi \norm{\p_{y_1}^3 V}_{L^2(\R)}^2 + C \chi^{-1} \norm{\p_{y_1}^2 V}_{L^4(\R)}^4.
		\end{align*}
		Using the Gagliardo-Nirenberg inequality, we find that
		\begin{align*}
			\norm{\p_{y_1}^2 V}_{L^4(\R)}^4 \le C \norm{\p_{y_1}^3 V}_{L^2(\R)}^\frac{5}{2} \norm{\p_{y_1} V}_{L^2(\R)}^\frac{3}{2} \le C\chi^2 \norm{\p_{y_1}^3 V}_{L^2(\R)}^2.
		\end{align*}
		The terms containing \(\mathcal{I}^{(l)}\) can be treated in the same way as (\ref{I_4^7}) and (\ref{I_4^8}).
		In summary, we obtain that
		\begin{align*}
			\int_{\R} \bar{B}^{(k)} \hat{M}_k^{(3)} dy_1 \le C\bar{\delta} \left(\e^{-2} + \tau\right)^{-1 -k} + \sqrt{\eta} \e \left(\e^{-2} + \tau\right)^{-\frac{1}{2} -k} + C \bar{\eta} \sum_{j=0}^{k+1} \left(\e^{-2} + \tau\right)^{-j} \norm{\p_{y_1}^{k+1-j} V}_{L^2(\R)}^2.
		\end{align*}
		
		\subsubsection*{Step 2. Estimates on \(\int_{\R} \tilde D_{-\frac12}^1  \left|\p_{y_1}^k b_i\right|^2 dy_1\)}

		\(~\)
		
		We still need to estimate \(\int_{\R} \tilde D_{-\frac12}^1  \left|\p_{y_1}^k b_i\right|^2 dy_1\) in the preceding subsection, because \(\int_{\R} \tilde D_{-\frac12}^1  \left|\p_{y_1}^k b_i\right|^2 dy_1\) can not be controlled by \(G_k\), \(k=0,1,2\). Let \(\tilde{\Gamma}(y,\tau) = \int_{-\infty}^{y_1} \tilde D_{-\frac12}^1  (z, \tau) dz\), then it is easy to check that
		\[\|\tilde{\Gamma}\|_{L^\infty} \le C \quad \text{and} \quad 4\tilde{c}_0 \p_\tau \tilde{\Gamma} = \p_{y_1} \tilde D_{-\frac12}^1 .\]
		Multiplying \((\ref{B equ})_1\) by \(\tilde{\Gamma} \p_{y_1}^k b_1\) yields that
		\begin{align}\label{b_1 cdot (h b_1)}
			&\frac{1}{2} \frac{d}{d \tau} \left(\tilde{\Gamma} \p_{y_1}^k b_1\right)^2 - \frac{1}{2} \p_\tau \tilde{\Gamma} \left(\p_{y_1}^k b_1\right)^2 -\frac{1}{2} \p_{y_1} \left(\tilde{\Gamma} \tilde{\lambda}_1\right) \left(\p_{y_1}^k b_1\right)^2 + \frac{1}{2} \p_{y_1} \left[\tilde{\Gamma} \tilde{\lambda}_1 \left(\p_{y_1}^k b_1\right)^2\right]\notag \\
			=& \left(\hat{M}_k^{(1)}\right)_1 \tilde{\Gamma} \p_{y_1}^k b_1 + \left[\left(\tilde{L} A_2 \tilde{R} \p_{y_1}^{k+2} B\right)_1 + \sum_{i=2}^{3} \left(\hat{M}^{(i)}\right)_1\right] \tilde{\Gamma} \p_{y_1}^k b_1.
		\end{align}
		A direct calculation yields that
		\begin{align*}
			\int_{\R} \left(\hat{M}_k^{(1)}\right)_1 \tilde{\Gamma} \p_{y_1}^k b_1 dy_1 \lesssim \bar{\delta} \int_{\R} \tilde D_{-\frac12}^1  \left|\p_{y_1}^k b_1\right|^2 dy_1 + \sum_{i=0}^{k} \left(\e^{-2} + \tau\right)^{-i} \int_{\R}  \p_{y_1} \bar{\rho} \left|\p_{y_1}^{k-1} b_1\right|^2 dy_1.
		\end{align*}
		Using the Cauchy inequality, it is straightforward to get that
		\begin{align*}
			&\int_{\R} \left[\left(\tilde{L} A_2 \tilde{R} \p_{y_1}^{k+2} B\right)_1 + \sum_{i=2}^{3} \left(\hat{M}^{(i)}\right)_1\right] \tilde{\Gamma} \p_{y_1}^k b_1 dy_1 \\
			\lesssim& \bar{\eta} \sum_{j=0}^{k+1} \left(\e^{-2} + \tau\right)^{j-k-1} \norm{\p_{y_1}^j V}_{L^2(\R)}^2 + \sqrt{\eta} \e \left(\e^{-2} +\tau\right)^{-\frac{1}{2} - k} + \bar{\delta}  \left(\e^{-2} +\tau\right)^{-1 - k}.
		\end{align*}
		Integrating (\ref{b_1 cdot (h b_1)}) with respect to \(y_1\), we get
		\begin{align*}
			\int_{\R} \tilde D_{-\frac12}^1  \left|\p_{y_1}^k b_1\right|^2 dy_1 + \p_\tau \int_{\R} \tilde{\Gamma} \left|\p_{y_1}^k b_1\right|^2 dy_1 \lesssim& \bar{\eta} \sum_{j=0}^{k+1} \left(\e^{-2} + \tau\right)^{j-k-1} \norm{\p_{y_1}^j V}_{L^2(\R)}^2 + \sqrt{\eta} \e \left(\e^{-2} +\tau\right)^{-\frac{1}{2} - k} + \bar{\delta}  \left(\e^{-2} +\tau\right)^{-1 - k} \\
			&+ \sum_{j=0}^{k} \left(\e^{-2} + \tau\right)^{-j} \int_{\R} \p_{y_1}\bar{\rho} \left|\p_{y_1}^{k-j} b_1\right|^2 dy_1.
		\end{align*}
		By a similar argument, we find that
		\begin{align*}
			\int_{\R} \tilde D_{-\frac12}^1  \left|\p_{y_1}^k b_3\right|^2 dy_1 - \p_\tau \int_{\R} \tilde{\Gamma} \left|\p_{y_1}^k b_3\right|^2 dy_1 \lesssim& \bar{\eta} \sum_{j=0}^{k+1} \left(\e^{-2} + \tau\right)^{j-k-1} \norm{\p_{y_1}^j V}_{L^2(\R)}^2 + \sqrt{\eta} \e \left(\e^{-2} +\tau\right)^{-\frac{1}{2} - k} + \bar{\delta}  \left(\e^{-2} +\tau\right)^{-1 - k} \\
			&+ \sum_{j=0}^{k} \left(\e^{-2} + \tau\right)^{-j} \int_{\R} \p_{y_1}\bar{\rho} \left|\p_{y_1}^{k-j} b_3\right|^2 dy_1.
		\end{align*}
		Combining the above inequalities yields that
		\begin{align}
			&\frac{d}{d \tau} \left(\sum_{i=0}^{2} \tilde{E}_i\right) + \sum_{i=0}^{2} \left(\tilde{H}_i + G_i\right) \le C \bar{\eta} \left[\left(\e^{-2} + \tau\right)^{-1} \left(\sum_{i=0}^{2} E_i\right) + \left(\norm{\p_{y_1} \tilde{\Phi}}_{H^2(\R)}^2 + \norm{\p_{y_1} \tilde{\Psi}_2}_{H^2(\R)}^2 + \norm{\p_{y_1} \tilde{\Psi}_3}_{H^2(\R)}^2\right)\right] \notag \\
			&\qquad\qquad\qquad\qquad\qquad\qquad\qquad+ C \bar{\delta} \left(\e^{-2} + \tau\right)^{-1} + C\sqrt{\eta} \e \left(\e^{-2} + \tau\right)^{-\frac{1}{2}}, \label{tilde E_0 est}\\
			&\frac{d}{d \tau} \left(\sum_{i=1}^{2} \tilde{E}_i\right) + \sum_{i=1}^{2} \left(\tilde{H}_i + G_i\right) \le C\bar{\eta} \left(\e^{-2} + \tau\right)^{-1} \left(\sum_{i=1}^{2} E_i + G_0\right) + C\bar{\delta} \left(\e^{-2} + \tau\right)^{-2} + C \sqrt{\eta} \e\left(\e^{-2} + \tau\right)^{-\frac{3}{2}} \notag\\
			& \qquad\qquad\qquad\qquad\qquad\qquad\qquad+ C \bar{\eta} \left(\norm{\p_{y_1}^2 \tilde{\Phi}}_{H^2(\R)}^2 + \norm{\p_{y_1}^2 \tilde{\Psi}_2}_{H^2(\R)}^2 + \norm{\p_{y_1}^2 \tilde{\Psi}_3}_{H^2(\R)}^2\right)+C\bar{\eta} \left(\e^{-2} + \tau\right)^{-2} E_0, \label{tilde E_1 est}\\
			&\frac{d}{d \tau} \tilde{E}_2 + \tilde{H}_2 + G_2 \le C\bar{\eta} \sum_{i=0}^{2} \left(\e^{-2} + \tau\right)^{-(3-i)} E_i + C\bar{\eta} \left(\norm{\p_{y_1}^3 \tilde{\Phi}}_{H^2(\R)}^2 + \norm{\p_{y_1}^3 \tilde{\Psi}_2}_{H^2(\R)}^2 + \norm{\p_{y_1}^3 \tilde{\Psi}_3}_{H^2(\R)}^2\right) \notag \\
			& \qquad\qquad\qquad\qquad\qquad C\bar{\eta} \left[\left(\e^{-2} + \tau\right)^{-2} G_0 + \left(\e^{-2} + \tau\right)^{-1} G_1\right] + C \bar{\delta} \left(\e^{-2} + \tau\right)^{-3} + C\sqrt{\eta} \e \left(\e^{-2} + \tau\right)^{-\frac{5}{2}}. \label{tilde E_2 est}
		\end{align}
		
		\subsubsection*{Step 3. Estimates on \(\norm{\p_{y_1}^{k+1} \tilde{\Phi}}_{L^2(\R)}\)}

		\(~\)
		
		For \(k = 0, 1, 2\), considering \(\int_{\R} \p_{y_1}^k (\ref{the equ of tilde Phi, Psi, W})_2 \p_{y_1}^{k+1} \tilde{\Phi} dy_1\), we have
		\begin{align}
			\p_\tau \int_{\R} \p_{y_1}^{k+1} \tilde{\Phi} \p_{y_1}^k \tilde{\Psi}_1 dy_1 + \norm{\p_{y_1}^{k+1} \tilde{\Phi}}_{L^2(\R)}^2 = \int_{\R} \frac{\lambda + 2\mu}{\tilde{\rho}} \p_{y_1}^{k+2}\tilde{\Psi}_1 \p_{y_1}^{k+1}\tilde{\Phi} dy_1 + I_5 + I_6,
		\end{align}
		where
		\begin{align*}
			I_5 =& \int_{\R} \p_{y_1}^{k+1}\tilde{\Phi} \p_{y_1}^k Y_{21}^{(2)} dy_1, \\
			I_6 =& \int_{\R} \p_{y_1}^{k+1}\tilde{\Phi} \left[\p_{y_1}^k \mathcal{S}_{21} + \p_{y_1}^{k+1} \tilde{W} + \p_{y_1}^k Y_{21}^{(1)} + \sum_{i=0}^{k-1} \p_{y_1}^{k-i} \left(\frac{\lambda + 2\mu}{\tilde{\rho}}\right) \p_{y_1}^{i+2} \tilde{\Psi}_1 \right] dy_1  \\
			&- \int_{\R} \p_{y_1}^{k+1} \tilde{\Psi}_1 \p_{y_1}^k \left(Y_1 - \tilde{\T} \mathcal{S}_1 - \tilde{\T} \p_{y_1}\tilde{\Psi}_1\right) dy_1.
		\end{align*}
		On the other hand, considering \(\int_{\R} \frac{\lambda + 2\mu}{\tilde{\T} \tilde{\rho}} \p_{y_1}^{k+1} (\ref{the equ of tilde Phi, Psi, W})_1 \p_{y_1}^{k+1} \tilde{\Phi} dy_1\), we obtain
		\begin{align}
			\p_\tau \int_{\R} \frac{\lambda + 2\mu}{2 \tilde{\T} \tilde{\rho}}\left|\p_{y_1}^{k+1} \tilde{\Phi}\right|^2 dy_1 = -\int_{\R} \frac{\lambda + 2\mu}{\tilde{\rho}} \p_{y_1}^{k+2} \tilde{\Psi}_1 \p_{y_1}^{k+1} \tilde{\Phi} dy_1 + I_7 + I_8,
		\end{align}
		where
		\begin{align*}
			I_7 =& \int_{\R} \frac{\lambda + 2\mu}{ \tilde{\T} \tilde{\rho}} \p_{y_1}^{k+1} Y_1 \p_{y_1}^{k+1} \tilde{\Phi} dy_1, \\
			I_8 =& \int_{\R} \p_\tau \left(\frac{\lambda + 2\mu}{2 \tilde{\T} \tilde{\rho}}\right) \left|\p_{y_1}^{k+1} \tilde{\Phi}\right|^2 + (\lambda + 2\mu) \p_{y_1}^{k+1} \tilde{\Phi} \left[\frac{\p_{y_1}^{k+1} (\tilde{\T} \mathcal{S}_1)}{\tilde{\T} \tilde{\rho}} - \sum_{i=0}^{k+1} \frac{\p_{y_1}^{k+1-i} \tilde{\T} \p_{y_1}^i \tilde{\Psi}_1}{\tilde{\T} \tilde{\rho}}\right] dy_1.
		\end{align*}
		It follows from Lemma \ref{Z est} - Lemma \ref{J est} and (\ref{a priori assumption}) that
		\begin{align}\label{I_6 + I_8}
			\left|I_6\right| + \left|I_8\right| \le& \frac{1}{160} \norm{\p_{y_1}^{k+1} \tilde{\Phi}}_{L^2(\R)}^2 + C \left(\sum_{i=1}^{3} \norm{\p_{y_1}^{k+1} \tilde{\Psi}_i}_{L^2(\R)}^2 + \norm{\p_{y_1}^{k+1} \tilde{W}}_{L^2(\R)}^2\right) \notag \\
			&+ C\bar{\eta} \sum_{j=1}^{k} \left(\e^{-2} + \tau\right)^{-j} \norm{\p_{y_1}^{k+1-j} V}_{L^2(\R)}^2 + C\bar{\delta} \left(\e^{-2} + \tau\right)^{-\frac{3}{2} - k}.
		\end{align}
		As for \(I_5\), particular attention should be paid to the highest-order term. In fact,
		\begin{align}\label{I_5}
			&I_5 = \int_{\R} \p_{y_1}^{k+1} \tilde{\Phi} \p_{y_1}^k \left[(\lambda + 2\mu) \p_{y_1} \left(\e \mathring{u}_1 - \e \tilde{u}_1 \right) - \frac{\lambda+2\mu}{\tilde{\rho}} \p_{y_1}^2 \tilde{\Psi}\right] dy_1 \notag\\
			=& (\lambda + 2\mu) \int_{\R} \p_{y_1}^k \left[\p_{y_1} \mathbf{D}_0 \left(\frac{\e m_1}{\rho} - \frac{\e \mathring{m}_1}{\mathring{\rho}}\right) + \p_{y_1} \mathbf{D}_0 \left(\frac{\e \mathring{m}_1}{\mathring{\rho}} - \frac{\e \tilde{m}_1}{\tilde{\rho}}\right) - \p_{y_1} \left(\frac{1}{\tilde{\rho}} \p_{y_1} \tilde{\Psi}_1\right) + \p_{y_1} \left(\frac{1}{\tilde{\rho}}\right) \p_{y_1} \tilde{\Psi}_1\right] \p_{y_1}^{k+1} \tilde{\Phi} dy_1 \notag\\
			=& (\lambda + 2\mu) \int_{\R} \p_{y_1}^k \left[\p_{y_1} \mathbf{D}_0 \left(\frac{\e m_1}{\rho} - \frac{\e \mathring{m}_1}{\mathring{\rho}}\right) + \p_{y_1} \mathbf{D}_0 \left(\frac{\e \mathring{m}_1 \tilde{\rho} - \e \tilde{m}_1 \tilde{\rho} + \e \tilde{m}_1 \tilde{\rho} - \e \tilde{m}_1 \mathring{\rho}}{\mathring{\rho} \tilde{\rho}}\right)\right. \notag\\
			&\left. - \p_{y_1} \left(\frac{1}{\tilde{\rho}} \p_{y_1} \tilde{\Psi}_1\right) + \p_{y_1} \left(\frac{1}{\tilde{\rho}}\right) \p_{y_1} \tilde{\Psi}_1 \right] \p_{y_1}^{k+1} \tilde{\Phi} dy_1 \notag\\
			=& (\lambda + 2\mu) \int_{\R} \p_{y_1}^k \left[\p_{y_1} \mathbf{D}_0 \left(\frac{\e m_1}{\rho} - \frac{\e \mathring{m}_1}{\mathring{\rho}}\right) + \p_{y_1} \mathbf{D}_0 \left(\frac{\p_{y_1} \Psi_1}{\mathring{\rho}}\right) - \p_{y_1} \left(\frac{\p_{y_1} \tilde{\Psi}_1}{\tilde{\rho}}\right) \right. \notag\\
			&\left. - \p_{y_1} \left(\frac{\e \tilde{u}_1 \p_{y_1} \Phi}{\mathring{\rho}}\right) + \p_{y_1}\left(\frac{1}{\tilde{\rho}}\right) \p_{y_1} \tilde{\Psi}_1\right] \p_{y_1}^{k+1} \tilde{\Phi} dy_1 \notag\\
			=& (\lambda + 2\mu) \int_{\R} \p_{y_1}^k \left[\p_{y_1} \left(\frac{1}{\tilde{\rho}}\right) \p_{y_1} \Psi_1 - \p_{y_1} \left(\frac{\p_{y_1} \Phi \p_{y_1} \Psi_1}{\mathring{\rho} \tilde{\rho}}\right) + \frac{1}{\tilde{\rho}} \p_{y_1} \mathcal{C}_{1r}- \p_{y_1} \left(\frac{\e \tilde{u}_1 \p_{y_1} \Phi}{\mathring{\rho}}\right) \right. \notag\\
			&\left.+ \p_{y_1} \mathbf{D}_0 \left(\frac{\e m_1}{\rho} - \frac{\e \tilde{m}_1}{\tilde{\rho}}\right)\right] \p_{y_1}^{k+1} \tilde{\Phi} dy_1.
		\end{align}
		We focus on the term which could contain the (k + 2)-th order derivative:
		\[\p_{y_1}^k \left[- \p_{y_1} \left(\frac{\p_{y_1} \Phi \p_{y_1} \Psi_1}{\mathring{\rho} \tilde{\rho}}\right) + \frac{1}{\tilde{\rho}} \p_{y_1} \mathcal{C}_{1r}- \p_{y_1} \left(\frac{\e \tilde{u}_1 \p_{y_1} \Phi}{\mathring{\rho}}\right)\right].\]
		Notice that
		\begin{align*}
			\p_{y_1}^{k+2} \Phi =& \p_{y_1}^{k+2} \left(\frac{\tilde{\Phi}}{\tilde{\T}}\right) = \frac{1}{\tilde{\T}} \p_{y_1}^{k+2} \tilde{\Phi} + [\p_{y_1}^{k+2}, \frac{1}{\tilde{\T}}] \tilde{\Phi} 
			=\frac{1}{\tilde{\T}} \p_{y_1}^{k+2} \tilde{\Phi} + \sum_{i=0}^{k+1} \p_{y_1}^{k+2-i} \left(\frac{1}{\tilde{\T}}\right) \p_{y_1}^i \tilde{\Phi},
		\end{align*}
		then,  integration by parts can be applied to the terms containing  \(\p_{y_1}^{k+2} \Phi\) to transfer the derivatives. In what follows, we shall only provide a detailed treatment of the term \(-\frac{1}{\mathring{\rho} \tilde{\rho}} \p_{y_1} \Phi \p_{y_1}^{k+2} \Psi_1\) in \(-\p_{y_1}^{k+1} \left(\frac{\p_{y_1}\Phi \p_{y_1}\Psi_1}{\mathring{\rho} \tilde{\rho}}\right)\). 
		By \((\ref{the anti equs})_1\), we have
		\begin{equation}\label{k+2 order key term 1}
			\begin{aligned}
				&-\int_{\R} \frac{\p_{y_1} \Phi \p_{y_1}^{k+2} \Psi_1}{\mathring{\rho} \tilde{\rho}} \p_{y_1}^{k+1} \tilde{\Phi} dy_1 = - \int_{\R} \frac{\p_{y_1} \Phi \p_{y_1}^{k+1} (\mathcal{S}_1 - \p_\tau \Phi)}{\mathring{\rho} \tilde{\rho}} \p_{y_1}^{k+1} \tilde{\Phi} dy_1 \\
				=&\int_{\R} \frac{\p_{y_1}\Phi \p_\tau \p_{y_1}^{k+1} \Phi - \p_{y_1} \Phi \p_{y_1}^{k+1} \mathcal{S}_1}{\mathring{\rho} \tilde{\rho}} \p_{y_1}^{k+1} \tilde{\Phi} dy_1 \\
				=&\int_{\R} \frac{\p_{y_1}\Phi \p_\tau \left[\frac{1}{\tilde{\T}} \p_{y_1}^{k+1} \tilde{\Phi} + [\p_{y_1}^{k+1}, \frac{1}{\tilde{\T}}] \tilde{\Phi}\right] - \p_{y_1}\Phi \p_{y_1}^{k+1} \mathcal{S}_1}{\mathring{\rho} \tilde{\rho}} \p_{y_1}^{k+1} \tilde{\Phi} dy_1 \\
				=& \int_{\R} \left[\frac{\p_{y_1} \Phi \p_\tau \p_{y_1}^{k+1} \tilde{\Phi}}{\tilde{\T}\mathring{\rho} \tilde{\rho}} + \frac{\p_{y_1}\Phi \p_{y_1}^{k+1} \tilde{\Phi} \p_\tau \left(\frac{1}{\tilde{\T}}\right)}{\mathring{\rho} \tilde{\rho}} + \frac{\p_{y_1}\Phi [\p_{y_1}^{k+1}, \frac{1}{\tilde{\T}}]\tilde{\Phi} - \p_{y_1}\Phi \p_{y_1}^{k+1} \mathcal{S}_1}{\mathring{\rho} \tilde{\rho}}\right] \p_{y_1}^{k+1} \tilde{\Phi} dy_1 \\
				=& \p_\tau \int_{\R} \frac{\p_{y_1} \Phi (\p_{y_1}^{k+1} \tilde{\Phi})^2}{2\tilde{\T}\mathring{\rho} \tilde{\rho}} dy_1 - \int_{\R} \left[\p_\tau \left(\frac{\p_{y_1} \Phi}{2\tilde{\T}\mathring{\rho} \tilde{\rho}}\right) - \frac{\p_{y_1}\Phi \p_\tau (\frac{1}{\tilde{\T}})}{\mathring{\rho} \tilde{\rho}}\right] \left|\p_{y_1}^{k+1} \tilde{\Phi}\right|^2 dy_1 \\
				&- \int_{\R} \frac{\p_{y_1} \Phi \p_{y_1}^{k+1} \mathcal{S}_1 \p_{y_1}^{k+1} \tilde{\Phi}}{\mathring{\rho} \tilde{\rho}}	dy_1 + \int_{\R} \frac{\p_{y_1}\Phi}{\mathring{\rho} \tilde{\rho}} \p_\tau \left[\sum_{i=0}^{k} \p_{y_1}^{k+1-i} \left(\frac{1}{\tilde{\T}}\right) \p_{y_1}^i \tilde{\Phi}\right] \p_{y_1}^{k+1} \tilde{\Phi} dy_1.
			\end{aligned}
		\end{equation}
		Combining (\ref{I_5}), (\ref{k+2 order key term 1}), Lemma \ref{Z est}, Lemma \ref{J est}, Lemma \ref{Q est} and the a priori assumptions \ref{a priori assumption}, we obtain
		\begin{align}
			I_5 - (\lambda + 2\mu) \p_\tau \int_{\R} \frac{\p_{y_1} \Phi (\p_{y_1}^{k+1} \tilde{\Phi})^2}{2\tilde{\T}\mathring{\rho} \tilde{\rho}} dy_1 \le C\bar{\eta} \sum_{j=0}^{k} \left(\e^{-2} + \tau\right)^{-j} \norm{\p_{y_1}^{k+1-j} V}_{L^2(\R)}^2 +C \e^2 \norm{\mathcal{I}^{(k+1)}}_{L^2(\R)}^2.
		\end{align}
		For \(I_7\), we shall also be concerned with the highest-order term:
		\begin{equation}\label{k+2 order key term 2}
			\begin{aligned}
				I_7 =& \int_{\R} (\lambda + 2\mu) \frac{\p_{y_1}^{k+1} (\p_\tau \tilde{\T} \Phi - \tilde{\T} \mathcal{C}_{1r})}{\tilde{\T} \tilde{\rho}} \p_{y_1}^{k+1} \tilde{\Phi} dy_1 \\
				=& \int_{\R} (\lambda + 2\mu) \frac{\p_{y_1}^{k+1} \left(\p_\tau \tilde{\T} \Phi - \e \tilde{u}_1 \tilde{\T} \p_{y_1} \Phi - \tilde{\T} \p_{y_1}(\e \tilde{u}_1) \Phi\right)}{\tilde{\T} \tilde{\rho}} \p_{y_1}^{k+1} \tilde{\Phi} dy_1 \\
				=&\int_{\R} (\lambda + 2\mu) \frac{\p_{y_1}^{k+1} \left(\p_\tau \tilde{\T} \Phi - \e \tilde{u}_1 \p_{y_1} \tilde{\Phi} + \e \tilde{u}_1 \p_{y_1} \tilde{\T} \Phi - \p_{y_1}(\e \tilde{u}_1) \tilde{\Phi}\right)}{\tilde{\T} \tilde{\rho}} \p_{y_1}^{k+1} \tilde{\Phi} dy_1 \\
				=& \int_{\R} (\lambda + 2\mu) \frac{\p_{y_1}^{k+1} \left(\p_\tau \tilde{\T} \Phi + \e \tilde{u}_1 \p_{y_1} \tilde{\T} \Phi - \p_{y_1}(\e \tilde{u}_1) \tilde{\Phi}\right) - \sum_{i=0}^{k} \p_{y_1}^{k+1-i} (\e \tilde{u}_1) \p_{y_1}^{i+1} \tilde{\Phi}}{\tilde{\T} \tilde{\rho}} \p_{y_1}^{k+1} \tilde{\Phi} dy_1 \\
				&+ \int_{\R} (\lambda + 2\mu) \p_{y_1} \left(\frac{\e \tilde{u}_1}{2\tilde{\T} \tilde{\rho}}\right) \left|\p_{y_1}^{k+1} \tilde{\Phi}\right|^2 dy_1 \\
				\le&C\bar{\eta} \sum_{j=0}^{k}  \left(\e^{-2} + \tau\right)^{-j} \norm{\p_{y_1}^{k+1-j} V}_{L^2(\R)}^2.
			\end{aligned}
		\end{equation}
		Combining the above inequalities, we get
		\begin{equation}\label{tilde Phi est}
			\begin{aligned}
				&\frac{d}{d \tau} \int_{\R} \left(\p_{y_1}^{k+1} \tilde{\Phi} \p_{y_1}^k \tilde{\Psi}_1 + \frac{\lambda + 2\mu}{2\tilde{\rho} \tilde{\T}} \left|\p_{y_1}^{k+1} \tilde{\Phi}\right|^2 - (\lambda + 2\mu) \frac{\p_{y_1} \Phi |\p_{y_1}^{k+1} \tilde{\Phi}|^2}{2\mathring{\rho} \tilde{\rho}}\right) dy_1 + \frac{1}{2} \norm{\p_{y_1}^{k+1} \tilde{\Phi}}_{L^2(\R)}^2 \\
				\le& C\bar{\eta} \sum_{j=1}^{k} \left(\e^{-2} + \tau\right)^{-j} \norm{\p_{y_1}^{k+1-j} V}_{L^2(\R)}^2 + C \bar{\delta} \left(\e^{-2} + \tau\right)^{-\frac{3}{2}-k} + C \left(\sum_{i=1}^{3} \norm{\p_{y_1}^{k+1} \tilde{\Psi}_i}_{L^2(\R)}^2 + \norm{\p_{y_1}^{k+1} \tilde{W}}_{L^2(\R)}^2\right).
			\end{aligned}
		\end{equation}
		
		\subsubsection*{Step 4. Estimates on \(\tilde{\Psi}_i, i=2,3\)}

		\(~\)
		
		For \(k=0,1,2\) and \(i=2,3\), considering \(\int_{\R} \p_{y_1}^k (\ref{the equ of tilde Phi, Psi, W})_3 \p_{y_1}^k \tilde{\Psi}_i dy_1\), we have
		\begin{align*}
			\frac{d}{d \tau} \norm{\p_{y_1}^k \tilde{\Psi}_i}_{L^2(\R)}^2 + \int_{\R} \frac{\mu}{\tilde{\rho}} \left|\p_{y_1}^{k+1} \tilde{\Psi}_i\right|^2 dy_1 \le& C \int_{\R} \left|\p_{y_1}^2 \left(\frac{\mu}{\tilde{\rho}}\right)\right| \left|\p_{y_1}^{k} \tilde{\Psi}_i\right|^2 dy_1 + \int_{\R} \left|\p_{y_1}^k (J_{2i} + \mathcal{S}_i) \right|\p_{y_1}^{k} \tilde{\Psi}_i dy_1 \\
			:=& I_{\psi}^{k1} + I_{\psi}^{k2}.
		\end{align*}
		Immediately, \(I_{\psi}^{k1}\) can be estimated as 
		\begin{align*}
			I_{\psi}^{k1} \le& C \int_{\R} \left|\frac{|\p_{y_1} \tilde{\rho}|^2}{\tilde{\rho}^2} - \frac{\p_{y_1}^2 \tilde{\rho}}{\tilde{\rho}}\right| |\p_{y_1}^2 \tilde{\Psi}_i| dy_1 
			\le C\bar{\eta} \sum_{j=0}^{k+1} \left(\e^{-2} + \tau\right)^{-j} \norm{\p_{y_1}^{k+1-j} V}_{L^2(\R)}^2,
		\end{align*}
		and \(I_{\psi}^{k2}\) can be treated in a same way as (\ref{I_4^1}) - (\ref{I_4^10}). Finally, it reaches
		\begin{equation}\label{tilde Psi_i est}
			\begin{aligned}
				&\sum_{i=2}^{3} \left[\frac{d}{d \tau} \norm{\p_{y_1}^k \tilde{\Psi}_i}_{L^2(\R)}^2 + \int_{\R} \frac{\mu}{\tilde{\rho}} \left|\p_{y_1}^{k+1} \tilde{\Psi}_i\right|^2 dy_1\right] \\
				\le& C \bar{\eta} \left[\norm{\p_{y_1}^{k+1} \tilde{\Phi}}_{L^2(\R)}^2 + \norm{\p_{y_1}^{k+1} \tilde{\Psi}_1}_{L^2(\R)}^2 + \norm{\p_{y_1}^{k+1} \tilde{W}}_{L^2(\R)}^2 + \sum_{j=1}^{k+1} \left(\e^{-2} + \tau\right)^{-j} \norm{\p_{y_1}^{k+1-j} V}_{L^2(\R)}^2\right] \\
				&+ C\bar{\delta} \left(\e^{-2} + \tau\right)^{-1 - k} + C \sqrt{\eta} \e \left(\e^{-2} + \tau\right)^{-\frac{1}{2} - k}.
			\end{aligned}
		\end{equation}
		Combining (\ref{tilde Phi est}), (\ref{tilde Psi_i est}) and (\ref{tilde E_0 est}) with (\ref{tilde E_2 est}), we have proved the lemma \ref{main lemma}.
	\end{proof}
	
	\subsection*{Decay rate for the zero modes}

	\(~\)
	
	\begin{Lem}\label{zero mode est}
		Under the same assumptions as Proposition \ref{a priori estimates}, it holds that
		\begin{align*}
			&\norm{\p_{y_1}^k (\mathring{\phi}, \mathring{\psi}, \mathring{\zeta})}_{L^2(\R)}^2 \le C \e (\eta+\bar\delta) \left(\e^{-2} + \tau\right)^{-\frac{2k+1}{2}}, \quad k=0,1, \\
			&\norm{(\mathring{\phi}, \mathring{\psi}, \mathring{\zeta})}_{L^\infty(\R)} \le C  \e^{\frac{1}{2}}(\eta+\bar\delta)\left(\e^{-2} + \tau\right)^{-\frac{1}{2}}.
		\end{align*}
	\end{Lem}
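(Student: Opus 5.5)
The plan is to integrate the three differential inequalities of Lemma \ref{main lemma} in time, from the highest-order one down to the lowest. The resulting bounds on the energies $E_i$ are then converted into bounds on $\p_{y_1}^k(\mathring{\phi},\mathring{\psi},\mathring{\zeta})$. We use the relations between $\mathring{\phi},\mathring{\psi},\mathring{\zeta}$ and $\p_{y_1}V$ recorded just before Lemma \ref{main lemma}, together with the equivalence \eqref{tilde E_k norm}.

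\emph{Zeroth-order bound.} From \eqref{E_0 est}, set $\mathcal{E}=\sum_{i=0}^2E_i$. Since $\int_0^\infty \bar\eta(\e^{-2}+\tau)^{-1}d\tau$ diverges only logarithmically, I would not apply Gr\"onwall naively. Instead I would use the dissipation $K_0+G_0$, which contains $\|\p_{y_1}V\|^2$. The term $\bar\eta(\e^{-2}+\tau)^{-1}E_0$ is handled through a Hardy-type weighted estimate: $(\e^{-2}+\tau)^{-1}\|V\|^2$ is dominated by $G_0$ together with the diffusive part, as in \cite{HWW,DHLX-1}. Accepting an $(\e^{-2}+\tau)^{C\bar\eta}$ growth is a fallback if this fails.

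Integration then gives $\mathcal{E}(\tau)\le \mathcal{E}(0)+C\bar\delta\log(\cdots)+C\sqrt\eta\,\e(\e^{-2}+\tau)^{1/2}$. The anti-derivative initial data are $O(\eta\e^3)$ in $x$-variables, which by the norm relations is $O(\eta)$ in $L^2_y$. Next I multiply \eqref{E_1 est} by $(\e^{-2}+\tau)$ and integrate. The term $\frac{d}{d\tau}[(\e^{-2}+\tau)E_1]$ produces $E_1$, and this is absorbed by $\int K_0$ from the previous step. The source terms then give $(\e^{-2}+\tau)\sum_{i\ge1}E_i\lesssim \e^{-2}E_1(0)+\bar\delta+\sqrt\eta\,\e(\e^{-2}+\tau)^{1/2}$. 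Dividing by $\e^{-2}+\tau$, and noting $\|\mathring\phi\|^2\approx\|\p_{y_1}V\|^2\lesssim E_1+(\e^{-2}+\tau)^{-1}E_0$, yields $\|(\mathring\phi,\mathring\psi,\mathring\zeta)\|^2\lesssim \e(\eta+\bar\delta)(\e^{-2}+\tau)^{-1/2}$. For this I use $\bar\delta(\e^{-2}+\tau)^{-1}\le\bar\delta\e(\e^{-2}+\tau)^{-1/2}$, and the $\approx$ errors from the non-zero modes are exponentially small by \eqref{a priori assumption}$_2$ and Lemma \ref{Z est}.

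\emph{First-order bound.} I multiply \eqref{E_2 est} by $(\e^{-2}+\tau)^2$ and integrate. The contribution $2(\e^{-2}+\tau)E_2$ is controlled by $\int(\e^{-2}+\tau)K_1$ from the previous step. The terms $(\e^{-2}+\tau)^{-1}E_0$ and $\bar\eta G_0,\bar\eta(\e^{-2}+\tau)G_1$ are integrable by the earlier steps. The sources give $\bar\delta(\e^{-2}+\tau)^{-1}$ and $\sqrt\eta\,\e(\e^{-2}+\tau)^{-1/2}$, both integrable against the weight $(\e^{-2}+\tau)^{2}$ up to growth $(\e^{-2}+\tau)^{1/2}$. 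This gives $E_2\lesssim \e(\eta+\bar\delta)(\e^{-2}+\tau)^{-3/2}$, and hence the $k=1$ bound via $\|\p_{y_1}\mathring\phi\|\approx\|\p_{y_1}^2V\|$ plus lower-order corrections of size $D_{-1/2}|\p_{y_1}V|$.

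\emph{$L^\infty$ bound and main obstacle.} The $L^\infty$ estimate follows from Gagliardo--Nirenberg: $\|f\|_{L^\infty}\le\|f\|^{1/2}\|\p_{y_1}f\|^{1/2}$, combined with the two $L^2$ bounds. I expect the main obstacle to be the first step. There, the slowly decaying $\bar\eta(\e^{-2}+\tau)^{-1}E_0$ and the source $\sqrt\eta\,\e(\e^{-2}+\tau)^{-1/2}$ allow $E_0$ to grow like $(\e^{-2}+\tau)^{1/2}$. I must check that this growth is exactly compensated when passing to $\|\mathring\phi\|$, so that the stated rate, and not a logarithmic loss, comes out.
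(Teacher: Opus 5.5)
Your overall scheme matches the paper's. You integrate \eqref{E_0 est}, then \eqref{E_1 est} with weight $(\e^{-2}+\tau)$ and \eqref{E_2 est} with weight $(\e^{-2}+\tau)^2$. You absorb the extra $E_1$ and $2(\e^{-2}+\tau)E_2$ through $\int K_0$ and $\int(\e^{-2}+s)K_1$, read off the $k=0,1$ bounds via \eqref{tilde E_k norm}, and finish with Gagliardo--Nirenberg. Your second and third steps are fine; the $\bar\delta$-source there integrates to $\bar\delta\log(\e^2(\e^{-2}+\tau))$ rather than $\bar\delta$, which is harmless.

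The gap is in step one, in how you dispose of $C\bar\eta(\e^{-2}+\tau)^{-1}\sum_iE_i$.

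\textbf{The Hardy-type domination is false.} Take $b_1=b_3=0$ and $b_2$ a bump of height $A$ and width $L\gg(\e^{-2}+\tau)^{1/2}$. Then $G_0=0$ and the dissipation is $O(A^2/L)$, while $(\e^{-2}+\tau)^{-1}\norm{V}_{L^2}^2\sim A^2L/(\e^{-2}+\tau)$. Heat-kernel weighted estimates only control localized weights, and that localization was already discarded in the statement of Lemma~\ref{main lemma}.

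\textbf{The fallback loses the rate.} If it means placing the factor $(1+\e^2\tau)^{C\bar\eta}$ in front of the whole right-hand side, you get $E_0\lesssim(1+\e^2\tau)^{\frac12+C\bar\eta}$. That loss then passes through $\int_0^\tau K_0$ and $\int_0^\tau G_0$ into your $k=0,1$ rates, so the stated decay would not follow.

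\textbf{The missing observation.} The linear term costs nothing, because the source already forces growth like the power $\frac12>C\bar\eta$. Set $E=\sum_{i=0}^2E_i$, $T=\e^{-2}+\tau$ and $c=C\bar\eta\le\frac14$. The integrating factor gives $\frac{d}{d\tau}(T^{-c}E)\le T^{-c}\big(C\bar\delta T^{-1}+C\sqrt\eta\,\e T^{-\frac12}\big)$, hence
\begin{align*}
E(\tau)\le(\e^2T)^{c}E(0)+C\bar\delta(\e^2T)^{c}\log(\e^2T)+\frac{C\sqrt\eta\,\e}{\frac12-c}\,T^{\frac12}\le C(\sqrt\eta+\bar\delta)\,\e\,T^{\frac12}.
\end{align*}
The last inequality uses $\e^2T\ge1$, $\e T^{\frac12}=(\e^2T)^{\frac12}$, $x^{c}\log x\lesssim x^{\frac12}$ for $x\ge1$, and $E(0)=O(\eta)$.

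Equivalently, you can bootstrap $M(\tau)=\sup_{s\le\tau}(1+\e^2s)^{-\frac12}E(s)$. This uses $\int_0^\tau(\e^{-2}+s)^{-1}(1+\e^2s)^{\frac12}ds\le2(1+\e^2\tau)^{\frac12}$ and $2C\bar\eta\le\frac12$. Integrating \eqref{E_0 est} once more then gives the same bound for $\int_0^\tau(K_0+G_0)$. This is what the paper's one-line appeal to Gr\"onwall provides, and from there your argument coincides with the paper's.

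Note also that the source carries $\sqrt\eta$. What you, and the paper, actually obtain is therefore the factor $\sqrt\eta+\bar\delta$ rather than $\eta+\bar\delta$, which is the form used in Proposition~\ref{a priori estimates}.
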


	%First, since \(\e^{-2} + \tau \ge \e^{-2}\), we have \(\left(\e^{-2} + \tau\right)^{-\frac{1}{2}} \le \e\). Consequently, \(\bar{\delta} \left(\e^{-2} + \tau\right)^{-1} \le \bar{\delta} \e \left(\e^{-2} + \tau\right)^{-\frac{1}{2}}\). By a same way, we obtain that
	%\[\bar{\delta} \left(\e^{-2} + \tau\right)^{-2} \le \bar{\delta} \e \left(\e^{-2} + \tau\right)^{-\frac{3}{2}}, \qquad \bar{\delta} \left(\e^{-2} + \tau\right)^{-3} \le \bar{\delta} \e \left(\e^{-2} + \tau\right)^{-\frac{5}{2}}.\]
	%Hence, provided that \(\e\) tends to 0, the source terms in (\ref{E_0 est})-(\ref{E_2 est}) may be bounded respectively by
	%\[C \delta \e \left(\e^{-2} + \tau\right)^{-\frac{1}{2}}, \qquad C \delta \e \left(\e^{-2} + \tau\right)^{-\frac{3}{2}}, \qquad C \delta \e \left(\e^{-2} + \tau\right)^{-\frac{5}{2}}.\]
	\begin{proof}
		According to (\ref{tilde E_k norm}), we find
		\begin{align*}
			E_1 \le C K_0 + C \bar{\eta} \left(\e^{-2} + \tau\right)^{-1} E_0, \qquad E_2 \le C K_1 + C\bar{\eta} \left(\e^{-2} + \tau\right)^{-1} K_0 + C \bar{\eta} \left(\e^{-2} + \tau\right)^{-2} E_0.
		\end{align*}
		By the Grönwall's inequality and (\ref{E_0 est}), one gets
		\begin{align*}
			\sum_{i=0}^{2} E_i \le C  (\sqrt{\eta}+\bar \delta)  \e \left(\e^{-2} + \tau\right)^{\frac{1}{2}}, \quad \int_{0}^{\tau} \sum_{i=0}^{2} (K_i + G_i) ds \le C  (\sqrt{\eta}+\bar \delta) \e \left(\e^{-2} + \tau\right)^{\frac{1}{2}}.
		\end{align*}
		Considering \(\int_{0}^{\tau} (\ref{E_1 est}) \times (\e^{-2} + s) ds\), we have
		\begin{align}
			(\e^{-2} + \tau)\sum_{i=1}^{2} E_i + \int_{0}^{\tau} (\e^{-2} + s) \sum_{i=1}^{2} (K_i + G_i) ds \le C  (\e^2 + \sqrt{\eta}+ \bar \delta) \e \left(\e^{-2} + \tau\right)^{\frac{1}{2}},
		\end{align}
		which leads to 
		\begin{align}
			\sum_{i=1}^{2} E_i \le C  (\sqrt{\eta}+\bar \delta) \e \left(\e^{-2} + \tau\right)^{-\frac{1}{2}}, \quad \int_{0}^{\tau} (\e^{-2} + s) \sum_{i=1}^{2} (K_i + G_i) ds \le C  (\e^2 + \sqrt{\eta} + \bar \delta) \e \left(\e^{-2} + \tau\right)^{\frac{1}{2}}.
		\end{align}
		Considering \(\int_{0}^{\tau} (\ref{E_2 est}) \times (\e^{-2} + s)^2 ds\), we have
		\begin{align}\label{lab1}
			(\e^{-2} + \tau)^2 E_2 + \int_{0}^{\tau} (\e^{-2} + s)^2 (K_2 + G_2) ds \le C  (\sqrt{\eta}+\bar \delta) \e  \left(\e^{-2} + \tau\right)^{\frac{1}{2}},
		\end{align}
		which leads to 
		\begin{align}
			E_2 \le C  (\sqrt{\eta}+\bar \delta) \e \left(\e^{-2} + \tau\right)^{-\frac{3}{2}}.
		\end{align}
		Then we have
		\begin{align}
			\norm{(\mathring{\phi}, \mathring{\psi}, \mathring{\zeta})}_{L^\infty} \le& \norm{(\mathring{\phi}, \mathring{\psi}, \mathring{\zeta})}_{L^2}^\frac{1}{2} \norm{\p_{y_1} (\mathring{\phi}, \mathring{\psi}, \mathring{\zeta})}_{L^2}^\frac{1}{2}
			\le C  (\sqrt{\eta}+\bar \delta)^{\frac{1}{2}} \e^{\frac{1}{2}} \left(\e^{-2} + \tau\right)^{-\frac{1}{2}}.
		\end{align}
		Therefore, we obtain the decay rate for the zero mode.
	\end{proof}

	\subsection{Estimates for the non-zero modes}
	
	In this subsection, we will give the energy estimate for the non-zero mode of the perturbation. Since \[\int_{(\frac{\mathbb{T}}{\e})^2} (\phi_{\neq}, \psi_{\neq}, \zeta_{\neq}) dy_3 = 0,\] then we have the following Poincar\'e inequality for the non-zero mode in \(\Omega_{\e}\):
	\begin{align}
		\norm{(\phi_{\neq}, \psi_{\neq}, \zeta_{\neq})}_{L^2(\Omega_\e)} \le C \e^{-k} \norm{\nabla^k (\phi_{\neq}, \psi_{\neq}, \zeta_{\neq})}_{L^2(\Omega_\e)}, \qquad \forall k \ge 1.
	\end{align} 
	Under the transformation \(y = \frac{x}{\e}\), the length in the periodic directions is scaled by \(\frac{1}{\e}\). This is the reason that the \(k\)th-order Poincar\'e inequality on \(\Omega_\e\) holds with \(\e^{-k}\).
	Applying \(\mathbf{D}_{\neq}\) to (\ref{the perturbation equs}),  we have
	\begin{equation}\label{the non-zero equs}
		\left\{
		\begin{aligned}
			&\p_\tau \phi_{\neq} + \e \mathring{\mathbf{u}} \cdot \nabla \phi_{\neq} + \mathring{\rho} \dv \psi_{\neq} = \mathcal{A}_{0 \neq}, \\
			&\p_\tau \psi_{\neq} + \e \mathring{\mathbf{u}} \cdot \nabla \psi_{\neq} + \frac{1}{\tilde{\rho}} \left(\mathring{\T} \nabla \phi_{\neq} + \mathring{\rho} \nabla \zeta_{\neq}\right) = \frac{\mu}{\tilde{\rho}} \Delta \psi_{\neq} + \frac{\lambda+\mu}{\tilde{\rho}} \nabla \dv \psi_{\neq} + \mathcal{A}_{\neq}, \\
			&\p_\tau \zeta_{\neq} + \e \mathring{\mathbf{u}} \cdot \nabla \zeta_{\neq} + \mathring{\T} \dv \psi_{\neq} = \frac{\kappa}{\tilde{\rho}} \Delta \zeta_{\neq} + \mathcal{A}_{4 \neq},
		\end{aligned}
		\right.
	\end{equation}
	where
	\begin{align*}
		\mathcal{A}_{0 \neq} =& \e \mathring{\mathbf{u}} \cdot \nabla \phi_{\neq} - \mathbf{D}_{\neq} \left(\e \mathbf{u} \cdot \nabla \phi \right) + \mathring{\rho} \dv \psi_{\neq} - \mathbf{D}_{\neq} \left(\rho \dv \psi \right) - \psi_{\neq} \cdot \nabla \tilde{\rho} - \phi_{\neq} \dv (\e \tilde{\mathbf{u}}) , \\
		\mathcal{A}_{\neq} =& \e \mathring{\mathbf{u}} \cdot \nabla \psi_{\neq} - \mathbf{D}_{\neq} \left(\e \mathbf{u} \cdot \nabla \psi \right) + \frac{1}{\tilde{\rho}} \left(\mathring{\T} \nabla \phi_{\neq} + \mathring{\rho} \nabla \zeta_{\neq}\right) - \frac{\nabla p_{\neq}}{\tilde{\rho}} - \psi_{\neq} \cdot \nabla (\e \tilde{\mathbf{u}}) + \frac{1}{\tilde{\rho}} \mathbf{D}_{\neq} \left(\frac{\phi \nabla p}{\rho}\right) \\
		&+ \mathbf{D}_{\neq} \left[\left(\frac{1}{\rho} - \frac{1}{\tilde{\rho}}\right) (\mu \Delta (\e \mathbf{u})) + (\lambda+\mu) \nabla \dv (\e \mathbf{u})\right], \\
		\mathcal{A}_{4 \neq} =& \e \mathring{\mathbf{u}} \cdot \nabla \zeta_{\neq} - \mathbf{D}_{\neq} \left(\e \mathbf{u} \cdot \nabla \zeta \right) + \mathring{\T} \dv \psi_{\neq} - \mathbf{D}_{\neq} (\T \dv \psi) - \psi_{\neq} \cdot \nabla \tilde{\T} - \zeta_{\neq} \dv (\e \tilde{\mathbf{u}}) + \kappa \mathbf{D}_{\neq} \left[\left(\frac{1}{\rho} - \frac{1}{\tilde{\rho}}\right) \Delta \T\right] \\
		&+ \mathbf{D}_{\neq} \left[\frac{1}{\rho} \left(\frac{\mu}{2}\left|\nabla (\e \mathbf{u}) + \nabla (\e \mathbf{u})^t \right|^2 + \lambda |\dv (\e \mathbf{u})|^2\right) - \frac{1}{\tilde{\rho}} \left(\frac{\mu}{2}\left|\nabla (\e \tilde{\mathbf{u}}) + \nabla (\e \tilde{\mathbf{u}})^t \right|^2 + \lambda |\dv (\e \tilde{\mathbf{u}})|^2\right) \right].
	\end{align*}
	Then we conclude that
	\begin{Lem}\label{non zero est}
		Under the same assumptions as Proposition \ref{a priori estimates}, it holds that
		\begin{equation}\label{non zero inequ}
			\begin{aligned}
				&\frac{d}{d \tau} \norm{(\phi_{\neq}, \psi_{\neq}, \zeta_{\neq})}_{H^1(\Omega_\e)}^2 + c \norm{(\nabla \phi_{\neq}, \nabla \psi_{\neq}, \nabla \zeta_{\neq}, \nabla^2 \psi_{\neq}, \nabla^2 \zeta_{\neq})}_{L^2(\Omega_\e)}^2 \\
				\le &C_1 \left(\e^{-2} + \tau\right)^{-\frac{1}{2}} \norm{( \phi_{\neq}, \psi_{\neq}, \zeta_{\neq})}_{L^2(\Omega_\e)}^2.
			\end{aligned}
		\end{equation}
	\end{Lem}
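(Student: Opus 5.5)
We prove \eqref{non zero inequ} by a symmetrized energy method on the non-zero mode system \eqref{the non-zero equs}, applied at order zero and at order one. Everything on the right-hand side except one linear coupling term will be absorbed into the dissipation.

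\emph{Zero-order estimate.} We multiply $(\ref{the non-zero equs})_1$ by $\frac{\mathring{\T}}{\mathring{\rho}\tilde{\rho}}\phi_{\neq}$, $(\ref{the non-zero equs})_2$ by $\psi_{\neq}$, and $(\ref{the non-zero equs})_3$ by $\frac{\mathring{\rho}}{\mathring{\T}\tilde{\rho}}\zeta_{\neq}$, then integrate over $\Omega_\e$. With these weights the acoustic terms $\mathring{\rho}\dv\psi_{\neq}$, $\frac{1}{\tilde\rho}(\mathring{\T}\nabla\phi_{\neq}+\mathring{\rho}\nabla\zeta_{\neq})$ and $\mathring{\T}\dv\psi_{\neq}$ cancel after integration by parts. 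What remains are commutators containing derivatives of the weights. These weights depend only on $(y_1,\tau)$.

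The viscous and heat terms give the dissipation $c\|(\nabla\psi_{\neq},\nabla\zeta_{\neq})\|^2$, up to commutators with $\nabla(1/\tilde\rho)$. All remaining terms fall into three groups:
\begin{itemize}
\item[(a)] terms of the type $\frac{\mathring\T}{\mathring\rho\tilde\rho}\phi_{\neq}\psi_{\neq}\cdot\nabla\tilde\rho$, $\big(\frac{\mathring{\T}}{\mathring{\rho}\tilde{\rho}}\big)_\tau\frac{\phi_{\neq}^2}{2}$ and $\nabla(\e\mathring{\mathbf u})|v_{\neq}|^2$, all linear in the background gradients;
\item[(b)] nonlinear terms in $\mathcal{A}_{0\neq},\mathcal{A}_{\neq},\mathcal{A}_{4\neq}$, quadratic in the perturbation;
\item[(c)] mixed zero/non-zero interactions such as $\mathbf{D}_{\neq}(\mathring{\phi}\,\dv\psi_{\neq})$.
\end{itemize}
For (a) we use $|\nabla\tilde\rho|+|\nabla\tilde\T|+|\nabla(\e\tilde{\mathbf u})|\le C(\e^{-2}+\tau)^{-1/2}$. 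For the time derivative we use $\p_\tau\mathring\T\approx\p_{y_1}^2\mathring\zeta+\cdots$, together with the decay of $\p_{y_1}\mathring{v}$ and $\p_{y_1}^2\mathring v$ from Lemma \ref{zero mode est} and \eqref{a priori assumption}, which gives $\|\p_\tau(\cdot)\|_{L^\infty}\le C(\e^{-2}+\tau)^{-1/2}$ as well. All of (a) is thus bounded by $C_1(\e^{-2}+\tau)^{-1/2}\|v_{\neq}\|_{L^2}^2$, which is exactly the right-hand side of \eqref{non zero inequ}. We deliberately do \emph{not} absorb this into dissipation via the Poincar\'e inequality, since that would cost $\e^{-1}$. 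Groups (b) and (c) are bounded by $C(\chi+\bar\delta)\|\nabla v_{\neq}\|_{H^1}^2$. For this we use $L^\infty$ smallness of $\mathring v$, $\nabla v$, $\nabla^2 v$ from \eqref{a priori assumption} and Sobolev embedding on $\Omega_\e$ (Lemma \ref{Z est}-type Gagliardo--Nirenberg bounds), so these terms are absorbed.

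\emph{Dissipation for $\nabla\phi_{\neq}$.} The zero-order step gives no dissipation for $\phi_{\neq}$. We recover it in the standard Matsumura--Nishida way: take the inner product of $(\ref{the non-zero equs})_2$ with $\nabla\phi_{\neq}$, and use $(\ref{the non-zero equs})_1$ to rewrite $\int\p_\tau\psi_{\neq}\cdot\nabla\phi_{\neq}$ as $\frac{d}{d\tau}\int\psi_{\neq}\cdot\nabla\phi_{\neq}+\int\mathring\rho|\dv\psi_{\neq}|^2+\cdots$. This yields
\[
\frac{d}{d\tau}\int\psi_{\neq}\cdot\nabla\phi_{\neq}+c\|\nabla\phi_{\neq}\|^2\le C\|(\nabla\psi_{\neq},\nabla\zeta_{\neq},\nabla^2\psi_{\neq})\|^2+\text{(small)}.
\]
We will add this inequality multiplied by a small constant $\tilde c$.

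\emph{First-order estimate.} We apply $\nabla$ to \eqref{the non-zero equs} and repeat the weighted multiplication with $\nabla\phi_{\neq},\nabla\psi_{\neq},\nabla\zeta_{\neq}$, which produces the dissipation $\|(\nabla^2\psi_{\neq},\nabla^2\zeta_{\neq})\|^2$. The dangerous terms are those with $\nabla^2\phi_{\neq}$, for instance from $\nabla(\mathring\rho\dv\psi_{\neq})$ and the transport term $\e\mathring{\mathbf u}\cdot\nabla^2\phi_{\neq}$. The transport term is handled by integration by parts. The acoustic ones cancel by symmetrization, and their commutators carry $\nabla\mathring\rho$, so they are again of type (a) or small. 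Summing the zero-order estimate, the first-order estimate and $\tilde c$ times the $\nabla\phi_{\neq}$ estimate, the combination is equivalent to $\|v_{\neq}\|_{H^1}^2$, and we obtain \eqref{non zero inequ}.

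\emph{Main obstacle.} The hardest step is the group (a) terms involving $\p_\tau$ of the zero-mode coefficients and the $\nabla\tilde\rho$ couplings. These must be bounded with the precise factor $(\e^{-2}+\tau)^{-1/2}$ and no $\e$-loss. This requires checking that $\|\p_{y_1}^2\mathring\zeta\|_{L^\infty}$ decays at that rate, using the $\nabla^3$ bound in \eqref{a priori assumption} and Lemma \ref{zero mode est}, and keeping the remainders in $L^2$ rather than passing through the Poincar\'e inequality.
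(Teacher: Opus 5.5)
Your proposal is correct and follows essentially the same route as the paper. Both use the same weighted $L^2$ estimate with multipliers $\frac{\mathring{\T}}{\mathring{\rho}\tilde{\rho}}\phi_{\neq}$, $\psi_{\neq}$, $\frac{\mathring{\rho}}{\mathring{\T}\tilde{\rho}}\zeta_{\neq}$, keep the $(\e^{-2}+\tau)^{-1/2}\|v_{\neq}\|_{L^2}^2$ couplings on the right rather than passing through Poincar\'e, recover $\|\nabla\phi_{\neq}\|^2$ in a separate step, and close with the analogous first-order weighted estimate. The only real difference is in recovering $\nabla\phi_{\neq}$: the paper uses the multiplier $\tilde{\rho}\nabla\phi_{\neq}$ with the corrector $\frac{2\mu+\lambda}{2\mathring{\rho}}|\nabla\phi_{\neq}|^2$, which cancels the viscous term, whereas you keep $\|\nabla^2\psi_{\neq}\|^2$ on the right and absorb it with a small multiple $\tilde c$ through the first-order dissipation; both work. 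One precision: state your bound for groups (b) and (c) in the dissipation norm, which excludes $\nabla^2\phi_{\neq}$, rather than in $\|\nabla v_{\neq}\|_{H^1}$, which includes it and could not be absorbed.
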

	\begin{proof}
		The proof of this lemma is divided into the following three steps.
		\subsubsection*{Step 1}
		Multiplying \((\ref{the non-zero equs})_1\) by \(\frac{\mathring{\T}}{\mathring{\rho} \tilde{\rho}} \phi_{\neq}\), \((\ref{the non-zero equs})_2\) by \(\psi_{\neq}\) and \((\ref{the non-zero equs})_3\) by \(\frac{\mathring{\rho}}{\mathring{\T} \tilde{\rho}} \zeta_{\neq}\), one has
		\begin{align}\label{non zero est1}
			\p_\tau \left(\frac{\mathring{\T}}{2\mathring{\rho} \tilde{\rho}} \phi_{\neq}^2 + \frac{|\psi_{\neq}|^2}{2} + \frac{\mathring{\rho}}{2\mathring{\T} \tilde{\rho}} \zeta_{\neq}^2\right) + \frac{\mu}{\tilde{\rho}} \left|\nabla \psi_{\neq}\right|^2 + \frac{\lambda+ \mu}{\tilde{\rho}}  \left|\dv \psi_{\neq}\right|^2 + \frac{\kappa \mathring{\rho}}{\mathring{\T} \tilde{\rho}^2} \left|\dv \zeta_{\neq}\right|^2 = \hat{J}_0 + \dv(...),
		\end{align}
		where
		\begin{align*}
			\hat{J}_0 :=& \left[\left(\frac{\mathring{\T}}{\mathring{\rho} \tilde{\rho}}\right)_{\tau} + \dv \left(\frac{\e \mathring{\mathbf{u}} \mathring{\T}}{\mathring{\rho} \tilde{\rho}}\right)\right] \frac{\phi_{\neq}^2}{2} + \left[\left(\frac{\mathring{\rho}}{\mathring{\T} \tilde{\rho}}\right)_{\tau} + \dv \left(\frac{\e \mathring{\mathbf{u}} \mathring{\rho}}{\mathring{\T} \tilde{\rho}}\right)\right] \frac{\zeta_{\neq}^2}{2} + \frac{\p_{y_1}(\e \mathring{u}_1)}{2} |\psi_{\neq}|^2 + \frac{\mathring{\T}}{\mathring{\rho} \tilde{\rho}} \phi_{\neq} \mathcal{A}_{0\neq} + \frac{\mathring{\rho}}{\mathring{\T} \tilde{\rho}} \zeta_{\neq} \mathcal{A}_{4\neq} \\
			&- \p_{y_1} \left(\frac{1}{\tilde{\rho}}\right) \left(\mu \sum_{j=1}^{3} \p_{y_1} \psi_{j\neq} \psi_{j\neq} +(\lambda+\mu) \dv \psi_{\neq} \psi_{j\neq}\right) - \p_{y_1} \left(\frac{\kappa \mathring{\rho}}{\mathring{\T} \tilde{\rho}^2}\right) \dv \zeta_{\neq} \zeta_{\neq} + \psi_{\neq} \cdot \mathcal{A}_{\neq}.
		\end{align*}
		We need to estimate \(\int_{\Omega_\e} \hat{J}_0 dy\).
		By the Gagliardo-Nirenberg inequality and (\ref{relation1}), it yields that
		\begin{align}\label{non zero est11}
			&\int_{\Omega_\e} \frac{\p_{y_1}(\e \mathring{u}_1)}{2} |\psi_{\neq}|^2 dy = \int_{\Omega_\e} \frac{\p_{y_1} \mathring{\psi}_1 + \p_{y_1}(\e \tilde{u}_1)}{2} |\psi_{\neq}|^2 dy \notag\\
			\le& C\int_{\Omega_\e} \left[\e^{\frac{1}{2}} \left(\delta \e (\e^{-2} + \tau)^{-\frac{3}{2}}\right)^{\frac{1}{4}} \left(\chi^2 \e^{-1} (\e^{-2} + \tau)^{-\frac{3}{2}}\right)^{\frac{1}{4}} + \bar{\delta}(\e^{-2} + \tau)^{-1}\right] |\psi_{\neq}|^2 dy \\
			\le& C \e^{\frac{1}{2}} (\e^{-2} + \tau)^{-\frac{3}{4}}\norm{\psi_{\neq}}_{L^2(\Omega_\e)}^2. \notag
		\end{align}
		Using the Cauchy inequality, the Gagliardo-Nirenberg inequality and (\ref{relation1}), one obtains
		\begin{equation}
			\begin{aligned}
				&\int_{\Omega_\e} - \p_{y_1} \left(\frac{1}{\tilde{\rho}}\right) \left(\mu \sum_{j=1}^{3} \p_{y_1} \psi_{j\neq} \psi_{j\neq} +(\lambda+\mu) \dv \psi_{\neq} \psi_{j\neq}\right) dy \\
				\le& C\delta (\e^{-2} + \tau)^{-\frac{1}{2}} \norm{\psi_{\neq}}_{L^2(\Omega_\e)} \norm{\nabla \psi_{\neq}}_{L^2(\Omega_\e)} \\
				\le& \delta \norm{\nabla \psi_{\neq}}_{L^2(\Omega_\e)}^2+ C \delta (\e^{-2} + \tau)^{-1} \norm{ \psi_{\neq}}_{L^2(\Omega_\e)}^2,
			\end{aligned}
		\end{equation}
		
		and
		\begin{equation}
			\begin{aligned}
				&\int_{\Omega_\e} \p_{y_1} \left(\frac{\kappa \mathring{\rho}}{\mathring{\T} \tilde{\rho}^2}\right) \dv \zeta_{\neq} \zeta_{\neq} dy \\
				\le& \left(\norm{\p_{y_1} (\mathring{\phi}, \mathring{\zeta})}_{L^\infty(\R)} + \norm{\p_{y_1} (\tilde{\rho}, \tilde{\T})}_{L^\infty(\R)}\right) \norm{\zeta_{\neq}}_{L^2(\Omega_\e)} \norm{\nabla \zeta_{\neq}}_{L^2(\Omega_\e)} \\
				\le& \bar{\delta} \norm{\nabla \zeta_{\neq}}_{L^2(\Omega_\e)}^2+ C \bar{\delta} (\e^{-2} + \tau)^{-1} \norm{ \zeta_{\neq}}_{L^2(\Omega_\e)}^2.
			\end{aligned}
		\end{equation}
		
		As for the term \(\int_{\Omega_\e} \frac{\mathring{\T}}{\mathring{\rho} \tilde{\rho}} \phi_{\neq} \mathcal{A}_{0\neq} dy\), we need to make an estimation for \(\mathcal{A}_{0\neq}\).
		Using the Gagliardo-Nirenberg inequality and (\ref{relation1}), one has the following.
		\begin{align*}
			&\norm{\e \mathring{\mathbf{u}} \cdot \nabla \phi_{\neq} - \mathbf{D}_{\neq} (\e \mathbf{u} \cdot \nabla \phi)}_{L^2(\Omega_\e)} \\
			\le& \norm{\nabla \mathring{\phi}}_{L^2(\R)}^{\frac{1}{2}} \norm{\nabla^2 \mathring{\phi}}_{L^2(\R)}^{\frac{1}{2}} \norm{\psi_{\neq}}_{L^2(\Omega_\e)} + \norm{\psi_{\neq}}_{L^2(\Omega_\e)}^{\frac{1}{4}} \norm{\nabla^2 \psi_{\neq}}_{L^2(\Omega_\e)}^{\frac{3}{4}} \norm{\nabla \phi_{\neq}}_{L^2(\Omega_\e)} \\
			\le&  \e^{\frac{1}{2}} \norm{\nabla \mathring{\phi}}_{L^2(\R)}^{\frac{1}{2}} \norm{\nabla^2 \phi}_{L^2(\Omega_\e)}^{\frac{1}{2}} \norm{\psi_{\neq}}_{L^2(\Omega_\e)} + \norm{\psi_{\neq}}_{L^2(\Omega_\e)}^{\frac{1}{4}} \norm{\nabla^2 \psi_{\neq}}_{L^2(\Omega_\e)}^{\frac{3}{4}} \norm{\nabla \phi_{\neq}}_{L^2(\Omega_\e)}.
		\end{align*}
		According to the a priori assumptions (\ref{a priori assumption}), \(\norm{\psi_{\neq}}_{L^2(\Omega_\e)}^{\frac{1}{4}} \norm{\nabla^2 \psi_{\neq}}_{L^2(\Omega_\e)}^{\frac{3}{4}}\) is sufficiently small and enjoys an arbitrarily high polynomial decay rate in time. Thus, our main attention is devoted to the estimate of the first term on the right-hand side of the above inequality. Using Lemma \ref{zero mode est} and the a priori assumptions (\ref{a priori assumption}), one gets
		\begin{align*}
			&\e^{\frac{1}{2}} \norm{\nabla \mathring{\phi}}_{L^2(\R)}^{\frac{1}{2}} \norm{\nabla^2 \phi}_{L^2(\Omega_\e)}^{\frac{1}{2}} \norm{\psi_{\neq}}_{L^2(\Omega_\e)} \\
			\le& C\left[\e^{\frac{1}{2}} \left(\delta\e (\e^{-2} + \tau)^{-\frac{3}{2}}\right)^{\frac{1}{4}} \left(\chi^2 \e^{-1} (\e^{-2} + \tau)^{-\frac{3}{2}}\right)^{\frac{1}{4}}\right] \norm{\psi_{\neq}}_{L^2(\Omega_\e)} \\
			\le& C \e^{\frac{1}{2}} (\e^{-2} + \tau)^{-\frac{3}{4}}  \norm{\psi_{\neq}}_{L^2(\Omega_\e)}.
		\end{align*}
		By a same argument, we find
		\begin{align*}
			&\norm{\mathring{\rho} \nabla \cdot \psi_{\neq} - \mathbf{D}_{\neq} (\rho \nabla \cdot \psi)}_{L^2(\Omega_\e)} \\
			\le& \norm{\nabla \mathring{\psi}}_{L^2(\R)}^{\frac{1}{2}} \norm{\nabla^2 \mathring{\psi}}_{L^2(\R)}^{\frac{1}{2}} \norm{\phi_{\neq}}_{L^2(\Omega_\e)} + \norm{\phi_{\neq}}_{L^2(\Omega_\e)}^{\frac{1}{4}} \norm{\nabla^2 \phi_{\neq}}_{L^2(\Omega_\e)}^{\frac{3}{4}} \norm{\nabla \psi_{\neq}}_{L^2(\Omega_\e)} \\
			\le& C \e^{\frac{1}{2}} (\e^{-2} + \tau)^{-\frac{3}{4}}  \norm{\phi_{\neq}}_{L^2(\Omega_\e)}  
			+ \norm{\phi_{\neq}}_{L^2(\Omega_\e)}^{\frac{1}{4}} \norm{\nabla^2 \phi_{\neq}}_{L^2(\Omega_\e)}^{\frac{3}{4}} \norm{\nabla \psi_{\neq}}_{L^2(\Omega_\e)}.
		\end{align*}
		Moreover, a direct calculation gives 
		\begin{align*}
			&\norm{\psi_{\neq} \cdot \nabla \tilde{\rho}}_{L^2(\Omega_\e)} 
			\le C\bar{\delta} (\e^{-2} + \tau)^{-\frac{1}{2}} \norm{\psi_{\neq}}_{L^2(\Omega_\e)}, \\
			&\norm{\phi_{\neq} \dv (\e \tilde{\mathbf{u}})}_{L^2(\Omega_\e)} \le \norm{\dv (\e \tilde{\mathbf{u}})}_{L^\infty(\R)} \norm{\phi_{\neq}}_{L^2(\Omega_\e)} 
			\le C \bar{\delta} (\e^{-2} + \tau)^{-1} \norm{\phi_{\neq}}_{L^2(\Omega_\e)}.
		\end{align*}
		Combining the above estimates, we obtain
		\begin{align*}
			\norm{\mathcal{A}_{0\neq}}_{L^2(\Omega_\e)} \le 
			C \bar{\delta} (\e^{-2} + \tau)^{-\frac{1}{2}} \norm{(\phi_{\neq}, \psi_{\neq})}_{L^2(\Omega_\e)} + C\norm{(\phi_{\neq}, \psi_{\neq})}_{L^2(\Omega_\e)}^{\frac{1}{4}} \norm{(\nabla \phi_{\neq}, \nabla \psi_{\neq})}_{L^2(\Omega_\e)},
		\end{align*}
		which leads to
		\begin{equation}\label{F_0 term}
			\begin{aligned}
				&\int_{\Omega_\e} \frac{\mathring{\T}}{\mathring{\rho} \tilde{\rho}} \phi_{\neq} \mathcal{A}_{0\neq} dy 
				\le \norm{\phi_{\neq}}_{L^2(\Omega_\e)} \norm{\mathcal{A}_{0\neq}}_{L^2(\Omega_\e)} \\
				\le& C \bar{\delta} (\e^{-2} + \tau)^{-\frac{1}{2}} \norm{(\phi_{\neq}, \psi_{\neq})}_{L^2(\Omega_\e)}^2 + C(\e^{-2} + \tau)^{\frac{1}{2}} \norm{(\phi_{\neq}, \psi_{\neq})}_{L^2(\Omega_\e)}^{\frac{1}{2}} \norm{(\nabla \phi_{\neq}, \nabla \psi_{\neq})}_{L^2(\Omega_\e)}^2.
			\end{aligned}
		\end{equation}
		To estimate the term \(\int_{\Omega_\e} \frac{\mathring{\rho}}{\mathring{\T} \tilde{\rho}} \zeta_{\neq} \mathcal{A}_{4\neq} dy\), it remains to estimate \(\mathcal{A}_{4\neq}\). We carry out the calculation of the term including \(\kappa \mathbf{D}_{\neq} \left[\left(\frac{1}{\rho} - \frac{1}{\tilde{\rho}}\right) \Delta \T\right]\) as
		\begin{align*}
			&\int_{\Omega_\e} \kappa\frac{\mathring{\rho}}{\mathring{\T} \tilde{\rho}} \zeta_{\neq} \mathbf{D}_{\neq} \left[\left(\frac{1}{\rho} - \frac{1}{\tilde{\rho}}\right) \Delta \T\right] dy  \\
			=& - \int_{\Omega_\e} \kappa\frac{\mathring{\rho}}{\mathring{\T} \tilde{\rho}} \zeta_{\neq}
			\mathbf{D}_{\neq} \left[\nabla \left(\frac{1}{\rho} - \frac{1}{\tilde{\rho}}\right) \cdot \nabla \T\right] dy - \int_{\Omega_\e} \kappa\frac{\mathring{\rho}}{\mathring{\T} \tilde{\rho}} \nabla \zeta_{\neq} \cdot
			\mathbf{D}_{\neq} \left[\left(\frac{1}{\rho} - \frac{1}{\tilde{\rho}}\right) \nabla \T\right] dy \\
			& - \int_{\Omega_\e} \kappa \zeta_{\neq} \nabla \left(\frac{\mathring{\rho}}{\mathring{\T} \tilde{\rho}}\right) \cdot \mathbf{D}_{\neq} \left[\left(\frac{1}{\rho} - \frac{1}{\tilde{\rho}}\right) \nabla \T\right] dy \\
			\le& \bar{\delta} \norm{(\nabla \phi_{\neq}, \nabla \zeta_{\neq})}_{L^2(\Omega_\e)}^2 + 
			C  (\e^{-2} + \tau)^{-\frac{1}{2}} \norm{(\phi_{\neq}, \zeta_{\neq})}_{L^2(\Omega_\e)}^2.
		\end{align*}
		The calculation of the remaining terms in \(\mathcal{A}_{4\neq}\) is similar to the calculation of the terms in \(\mathcal{A}_{0\neq}\). Then, we have
		\begin{align}\label{F_4 term}
			\int_{\Omega_\e} \frac{\mathring{\rho}}{\mathring{\T} \tilde{\rho}} \zeta_{\neq} \mathcal{A}_{4\neq} dy 
			\le C (\e^{-2} + \tau)^{-\frac{1}{2}} \norm{(\phi_{\neq}, \psi_{\neq}, \zeta_{\neq})}_{L^2(\Omega_\e)}^2 +\bar{\delta} \norm{(\nabla \phi_{\neq}, \nabla \psi_{\neq}, \nabla \zeta_{\neq})}_{L^2(\Omega_\e)}^2.
		\end{align}
		For the term \(\int_{\Omega_\e} \psi_{\neq} \cdot \mathcal{A}_{\neq} dy\), we first derive an estimate for the term containing \(\mathbf{D}_{\neq} \left[\left(\frac{1}{\rho} - \frac{1}{\tilde{\rho}}\right) (\mu \Delta (\e \mathbf{u})) + (\lambda+\mu) \nabla \dv (\e \mathbf{u})\right]\). It follows from integration by parts that
		\begin{align*}
			&\int_{\Omega_\e} \mathbf{D}_{\neq} \left[\left(\frac{1}{\rho} - \frac{1}{\tilde{\rho}}\right) (\mu \Delta (\e \mathbf{u})) + (\lambda+\mu) \nabla \dv (\e \mathbf{u})\right] \cdot \psi_{\neq} dy \\
			=& \int_{\Omega_\e} \mathbf{D}_{\neq} \left(\frac{1}{\rho} - \frac{1}{\tilde{\rho}}\right) (\mu \Delta \left(\e \mathring{\mathbf{u}}) + (\mu+\lambda) \nabla \dv (\e \mathring{\mathbf{u}})\right) \cdot \psi_{\neq} dy 
			+ \int_{\Omega_\e} \mathbf{D}_{0} \left(\frac{1}{\rho} - \frac{1}{\tilde{\rho}}\right) \left(\mu \Delta (\e \mathbf{u}_{\neq}) + (\mu + \lambda) \nabla \dv (\e \mathbf{u}_{\neq})\right) \cdot \psi_{\neq} dy \\
			&+ \int_{\Omega_\e} \mathbf{D}_{\neq} \left(\frac{1}{\rho} - \frac{1}{\tilde{\rho}}\right) 
			\left(\mu \Delta (\e \mathbf{u}_{\neq}) + (\mu + \lambda) \nabla \dv (\e \mathbf{u}_{\neq})\right) \cdot \psi_{\neq} dy \\
			\le& \norm{\nabla (\e \mathring{\mathbf{u}})}_{L^\infty(\R)} \norm{\mathbf{D}_{\neq} \nabla \left(\frac{1}{\rho} - \frac{1}{\tilde{\rho}}\right)}_{L^2(\Omega_\e)} \norm{\psi_{\neq}}_{L^2(\Omega_\e)} 
			+ \norm{\nabla (\e \mathring{\mathbf{u}})}_{L^\infty(\R)} \norm{\mathbf{D}_{\neq} \left(\frac{1}{\rho} - \frac{1}{\tilde{\rho}}\right)}_{L^2(\Omega_\e)} \norm{\nabla \psi_{\neq}}_{L^2(\Omega_\e)} \\
			&+ \norm{\nabla \mathbf{D}_0 \left(\frac{1}{\rho} - \frac{1}{\tilde{\rho}}\right)}_{L^\infty(\R)} \norm{\nabla \psi_{\neq}}_{L^2(\Omega_\e)} \norm{\psi_{\neq}}_{L^2(\Omega_\e)} 
			+ \norm{\mathbf{D}_0 \left(\frac{1}{\rho} - \frac{1}{\tilde{\rho}}\right)}_{L^\infty(\R)} \norm{\nabla \psi_{\neq}}_{L^2(\Omega_\e)}^2 \\
			&+ \norm{\psi_{\neq}}_{L^\infty(\Omega_\e)} \norm{\nabla \psi_{\neq}}_{L^2(\Omega_\e)} \norm{\mathbf{D}_{\neq} \nabla \left(\frac{1}{\rho} - \frac{1}{\tilde{\rho}}\right)}_{L^2(\Omega_\e)} + \norm{\mathbf{D}_{\neq} \left(\frac{1}{\rho} - \frac{1}{\tilde{\rho}}\right)} \norm{\nabla \psi_{\neq}}_{L^2(\Omega_\e)}^2 \\
			\le& C (\e^{-2} + \tau)^{-\frac{1}{2}} \norm{( \phi_{\neq}, \psi_{\neq})}_{L^2(\Omega_\e)}^2 + \bar{\eta} \norm{(\nabla \phi_{\neq}, \nabla \psi_{\neq})}_{L^2(\Omega_\e)}^2.
		\end{align*}
		For the other terms in \(\mathcal{A}_{\neq}\), we have
		\begin{align*}
			&\norm{\frac{1}{\tilde{\rho}} (\mathring{\T} \nabla \phi_{\neq} + \mathring{\rho} \nabla \zeta_{\neq}) - \frac{\nabla p_{\neq}}{\tilde{\rho}}}_{L^2(\Omega_\e)} \\
			\le& \norm{\frac{1}{\tilde{\rho}} (\p_{y_1} \mathring{\phi} \zeta_{\neq} + \p_{y_1} \tilde{\rho} \zeta_{\neq} + \p_{y_1} \mathring{\zeta} \phi_{\neq} + \p_{y_1} \tilde{\T} \phi_{\neq})}_{L^2(\Omega_\e)} + \sum_{i=1}^{3} \norm{\frac{1}{\tilde{\rho}} (\zeta_{\neq} \p_{y_i} \phi_{\neq} + \phi_{\neq} \p_{y_i} \zeta_{\neq})}_{L^2(\Omega_\e)} \\
			\le& C (\e^{-2} + \tau)^{-\frac{1}{2}} \norm{(\phi_{\neq}, \zeta_{\neq})}_{L^2(\Omega_\e)} + \norm{(\phi_{\neq}, \zeta_{\neq})}_{L^2(\Omega_\e)}^{\frac{1}{4}} \norm{(\nabla^2 \phi_{\neq}, \nabla^2 \zeta_{\neq})}_{L^2(\Omega_\e)}^{\frac{3}{4}} \norm{(\nabla \phi_{\neq}, \nabla \zeta_{\neq})}_{L^2(\Omega_\e)}.
		\end{align*}
		The remaining terms can be treated in a same way as in \(\mathcal{A}_{0\neq}\). Then one obtains
		\begin{align}\label{F term}
			\int_{\Omega_\e} \psi_{\neq} \cdot \mathcal{A}_{\neq} dy \le C (\e^{-2} + \tau)^{-\frac{1}{2}} \norm{(\phi_{\neq}, \psi_{\neq}, \zeta_{\neq})}_{L^2(\Omega_\e)}^2 + \bar{\eta} \norm{(\nabla \phi_{\neq}, \nabla \zeta_{\neq})}_{L^2(\Omega_\e)}^2.
		\end{align}
		Then, we will deal with the term \(\int_{\Omega_\e} \left[\left(\frac{\mathring{\T}}{\mathring{\rho} \tilde{\rho}}\right)_{\tau} + \dv \left(\frac{\e \mathring{\mathbf{u}} \mathring{\T}}{\mathring{\rho} \tilde{\rho}}\right)\right] \frac{\phi_{\neq}^2}{2} dy\). A direct calculation gives that
		\begin{align*}
			&\int_{\Omega_\e} \left(\frac{\mathring{\T}}{\mathring{\rho} \tilde{\rho}}\right)_{\tau} \frac{\phi_{\neq}^2}{2} dy \le C \int_{\Omega_\e} (|\mathring{\T}_{\tau}| + |\mathring{\rho}_{\tau}| + |\tilde{\rho}_\tau|) \frac{\phi_{\neq}^2}{2} dy.
		\end{align*}
		According to \(\mathbf{D}_0 (\ref{scaled NS Eqs})_3\) and the a priori assumptions (\ref{a priori assumption}), it holds that
		\begin{align*}
			&\int_{\Omega_\e} |\mathring{\T}_\tau| \frac{\phi_{\neq}^2}{2} dy = \int_{\Omega_\e} \left|\kappa\mathbf{D}_0 (\frac{\Delta \T}{\rho}) - \mathbf{D}_0 (\e \mathbf{u} \cdot \nabla \T) - \mathbf{D}_0 (\T \dv(\e \mathbf{u}))+\Do\left(\frac{\mathbb{S(\e\mathbf u):\nabla(\e\mathbf u)}}{\rho}\right)\right| \frac{\phi_{\neq}^2}{2} dy \\
			\le& C (\norm{\p_{y_1}^2 \mathring{\T}}_{L^\infty} + \norm{\mathbf{D}_0 \left(\Delta \T_{\neq} (\frac{1}{\rho})_{\neq}\right)}_{L^\infty} + \norm{\mathbf{D}_0 (\e \mathbf{u} \cdot \nabla \T)}_{L^\infty} + \norm{\mathbf{D}_0 (\T \dv(\e \mathbf{u}))}_{L^\infty}+\e^2\norm{\nabla \mathbf u}_{L^\infty}^2) \frac{\phi_{\neq}^2}{2} dy \\
			\le& C \left(\norm{\nabla^2 \T}_{L^2}^{\frac{1}{2}} \norm{\nabla^3 \T}_{L^2}^{\frac{1}{2}} + \norm{(\e \mathbf{u}, \nabla \T)}_{L^2}^{\frac{1}{2}} \norm{(\nabla (\e \mathbf{u}), \nabla^2 \T)}_{L^2}^{\frac{1}{2}} + \e\norm{ \nabla \mathbf{u}}_{L^2}^{\frac{1}{2}} \norm{ \nabla^2\mathbf{u}}_{L^2}^{\frac{1}{2}}\right)\norm{\phi_{\neq}}_{L^2(\Omega_\e)}^2 \\
			\le& C  (\e^{-2} + \tau)^{-\frac{1}{2}} \norm{\phi_{\neq}}_{L^2(\Omega_\e)}^2,
		\end{align*}
		and the terms \(\int_{\Omega_\e} |\mathring{\rho}_\tau| \frac{\phi_{\neq}^2}{2} dy\), \(\int_{\Omega_\e} |\tilde{\rho}_\tau| \frac{\phi_{\neq}^2}{2} dy\) and \(\int_{\Omega_\e} \dv \left(\frac{\e \mathring{\mathbf{u}} \mathring{\T}}{\mathring{\rho} \tilde{\rho}}\right) \frac{\phi_{\neq}^2}{2} dy\) satisfy the same estimates.
		Then we obtain
		\begin{align}\label{J_0 1st}
			\int_{\Omega_\e} \left[\left(\frac{\mathring{\T}}{\mathring{\rho} \tilde{\rho}}\right)_{\tau} + \dv \left(\frac{\e \mathring{\mathbf{u}} \mathring{\T}}{\mathring{\rho} \tilde{\rho}}\right)\right] \frac{\phi_{\neq}^2}{2} dy \le C (\e^{-2} + \tau)^{-\frac{1}{2}} \norm{\phi_{\neq}}_{L^2(\Omega_\e)}^2.
		\end{align}
		By the similar argument, we have
		\begin{align}\label{J_0 2st}
			\int_{\Omega_\e} \left[\left(\frac{\mathring{\rho}}{\mathring{\T} \tilde{\rho}}\right)_{\tau} + \dv \left(\frac{\e \mathring{\mathbf{u}} \mathring{\rho}}{\mathring{\T} \tilde{\rho}}\right)\right] \frac{\zeta_{\neq}^2}{2} dy \le C (\e^{-2} + \tau)^{-\frac{1}{2}} \norm{\zeta_{\neq}}_{L^2(\Omega_\e)}^2.
		\end{align}
		Combining (\ref{non zero est11})-(\ref{J_0 2st}), one gets
		\begin{align}\label{hat J_0 est}
			\int_{\Omega_\e} \hat{J}_0 dy \le C (\e^{-2} + \tau)^{-\frac{1}{2}} \norm{(\phi_{\neq}, \psi_{\neq}, \zeta_{\neq})}_{L^2(\Omega_\e)}^2 + \bar{\eta} \norm{(\nabla \phi_{\neq},\nabla \psi_{\neq}, \nabla \zeta_{\neq})}_{L^2(\Omega_\e)}^2.
		\end{align}
		Integration (\ref{non zero est1}) with respect to \(y\) and using (\ref{hat J_0 est}) yield that
		\begin{align}\label{non zero low order est}
			\frac{d}{d \tau} \norm{(\phi_{\neq}, \psi_{\neq}, \zeta_{\neq})}_{L^2(\Omega_\e)}^2 + \norm{\nabla (\psi_{\neq}, \zeta_{\neq})}_{L^2(\Omega_\e)}^2 
			\le C (\e^{-2} + \tau)^{-\frac{1}{2}} \norm{(\phi_{\neq}, \psi_{\neq}, \zeta_{\neq})}_{L^2(\Omega_\e)}^2 + \bar{\eta} \norm{\nabla \phi_{\neq}}_{L^2(\Omega_\e)}^2.
		\end{align}

		\subsubsection*{Step 2}
		Next, we will estimate \(\norm{\nabla \phi_{\neq}}_{L^2(\Omega_\e)}\). Multiplying \((\ref{the non-zero equs})_2\) by \(\tilde{\rho} \nabla \phi_{\neq}\), \((\ref{the non-zero equs})_3\) by \(\frac{2\mu+ \lambda}{\tilde{\rho}} \nabla \phi_{\neq}\), one finds
		\begin{align}\label{nabla phi_neq equality}
			\p_\tau \left(\frac{2\mu + \lambda}{2\mathring{\rho}} |\nabla \phi_{\neq}|^2 + \tilde{\rho} \psi_{\neq} \cdot \nabla \phi_{\neq}\right) + \mathring{\T} |\nabla \phi_{\neq}|^2 = \hat{J}_1 + \dv(...),
		\end{align}
		where
		\begin{align*}
			\hat{J}_1 =& - \tilde{\rho} \nabla \phi_{\neq} \cdot (\e \mathring{\mathbf{u}}) \cdot \nabla \psi_{\neq} -\mathring{\rho} \nabla \phi_{\neq} \cdot \nabla \zeta_{\neq} + \p_\tau \tilde{\rho} \psi_{\neq} \cdot \nabla \phi_{\neq} +\dv(\tilde{\rho} \psi_{\neq}) (\e \mathring{\mathbf{u}}) \cdot \nabla \phi_{\neq}
			+ \dv (\tilde{\rho} \psi_{\neq}) \mathring{\rho} \dv \psi_{\neq} \\
			&+ \p_\tau \left(\frac{2\mu+\lambda}{2\mathring{\rho}}\right) |\nabla \phi_{\neq}|^2 - (2\mu+\lambda) \frac{\p_{y_1}(\e \mathring{\mathbf{u}})}{\mathring{\rho}} \cdot \nabla \phi_{\neq} \p_{y_1}\phi_{\neq} + (2\mu+\lambda) \p_{y_1} \left(\frac{\e \mathring{u}_1}{\mathring{\rho}}\right) |\nabla \phi_{\neq}|^2 + \tilde{\rho} \psi_{\neq} \cdot \nabla \mathcal{A}_{0\neq} \\
			&+ \frac{2\mu+\lambda}{\mathring{\rho}} \nabla \phi_{\neq} \cdot \nabla \mathcal{A}_{0\neq} - (2\mu+\lambda) \frac{\p_{y_1}\mathring{\rho} \dv \psi_{\neq}}{\mathring{\rho}} \p_{y_1} \phi_{\neq} + \tilde{\rho} \mathcal{A}_{\neq} \cdot \nabla \phi_{\neq}.
		\end{align*}
		Here we have used the fact that
		\begin{align*}
			&\tilde{\rho} \p_\tau \psi_{\neq} \cdot \nabla \phi_{\neq} = \p_\tau(\tilde{\rho} \psi_{\neq} \cdot \nabla \phi_{\neq}) - \p_\tau \tilde{\rho}\psi_{\neq} \cdot \nabla
			\phi_{\neq} - \tilde{\rho} \psi_{\neq} \cdot \nabla \p_\tau \phi_{\neq} \\
			=& \p_\tau(\tilde{\rho} \psi_{\neq} \cdot \nabla \phi_{\neq}) - \p_\tau \tilde{\rho} \psi_{\neq} \cdot \nabla \phi_{\neq} - \dv (\tilde{\rho} \psi_{\neq}) (\e \mathring{\mathbf{u}}) \cdot \nabla \phi_{\neq} - \dv (\tilde{\rho} \psi_{\neq}) \mathring{\rho} \dv \psi_{\neq} - \tilde{\rho} \psi_{\neq} \cdot \nabla \mathcal{A}_{0\neq} + \dv(...).
		\end{align*}
		Using the a priori assumptions (\ref{a priori assumption}) and the Cauchy inequality, we get
		\begin{align}\label{int of J_1}
			\int_{\Omega_\e} \hat{J}_1 dy \le C \bar{\eta} \norm{\nabla \phi_{\neq}}_{L^2(\Omega_\e)}^2 + C \left(\norm{(\nabla \psi_{\neq}, \nabla \zeta_{\neq})}_{L^2(\Omega_\e)}^2 + \bar{\eta} \norm{(\nabla^2 \psi_{\neq}, \nabla^2 \zeta_{\neq})}_{L^2(\Omega_\e)}^2\right).
		\end{align}
		Considering \(\int_{\Omega_\e} (\ref{nabla phi_neq equality}) dy\) and using (\ref{int of J_1}), we have
		\begin{equation}\label{nabla phi_neq est}
			\begin{aligned}
				&\frac{d}{d \tau} \int_{\Omega_\e} \left(\frac{2\mu + \lambda}{2\mathring{\rho}} |\nabla \phi_{\neq}|^2 + \tilde{\rho} \psi_{\neq} \cdot \nabla \phi_{\neq}\right) dy + \int_{\Omega_\e} \frac{\mathring{\T}}{2} |\nabla \phi_{\neq}|^2 dy \\
				\le& C \left(\norm{(\nabla \psi_{\neq}, \nabla \zeta_{\neq})}_{L^2(\Omega_\e)}^2 + \bar{\eta} \norm{(\nabla^2 \psi_{\neq}, \nabla^2 \zeta_{\neq})}_{L^2(\Omega_\e)}^2\right).
			\end{aligned}
		\end{equation}
		
		\subsubsection*{Step 3}
		Then, we will estimate \(\norm{\nabla (\psi_{\neq}, \zeta_{\neq})}_{L^2(\Omega_\e)}\). Taking
		\(\nabla (\ref{the non-zero equs})_1 \times \frac{\mathring{\T}}{\mathring{\rho} \tilde{\rho}} \nabla \phi_{\neq} + \nabla (\ref{the non-zero equs})_2 \times \nabla \psi_{\neq} + \nabla (\ref{the non-zero equs})_3 \times \frac{\mathring{\rho}}{\mathring{\T} \tilde{\rho}} \nabla \zeta_{\neq}\), we find that
		\begin{equation}\label{non zero est2}
			\begin{aligned}
				&\p_\tau \left(\frac{\mathring{\T}}{2\mathring{\rho} \tilde{\rho}} |\nabla \phi_{\neq}|^2 + \frac{|\nabla\psi_{\neq}|^2}{2} +\frac{\mathring{\rho}}{2\mathring{\T} \tilde{\rho}} |\nabla \zeta_{\neq}|^2\right) + \frac{\mu}{\tilde{\rho}} |\Delta \psi_{\neq}|^2 + \frac{\mu+ \lambda}{\tilde{\rho}} |\nabla \dv \psi_{\neq}|^2 + \frac{\kappa \mathring{\rho}}{\tilde{\rho}^2 \mathring{\T}} |\Delta \zeta_{\neq}|^2 \\
				=& \hat{J}_2 + \dv(...),
			\end{aligned}
		\end{equation}
		where
		\begin{align*}
			\hat{J}_2 =& \left[\left(\frac{\mathring{\T}}{\mathring{\rho} \tilde{\rho}}\right)_{\tau} + \dv \left(\frac{\e \mathring{\mathbf{u}} \mathring{\T}}{\mathring{\rho} \tilde{\rho}}\right)\right] \frac{|\nabla \phi_{\neq}|^2}{2} + \left[\left(\frac{\mathring{\rho}}{\mathring{\T} \tilde{\rho}}\right)_{\tau} + \dv \left(\frac{\e \mathring{\mathbf{u}} \mathring{\rho}}{\mathring{\T} \tilde{\rho}}\right)\right] \frac{|\nabla \zeta_{\neq}|^2}{2} + \frac{\p_{y_1} (\e \mathring{u}_1)}{2} |\nabla \psi_{\neq}|^2 \\
			&- \frac{\mathring{\T} \p_{y_1}\mathring{\rho}}{\mathring{\rho} \tilde{\rho}} \dv \psi_{\neq} \p_{y_1} \phi_{\neq}
			- \frac{\mathring{\rho} \p_{y_1}\mathring{\T}}{\mathring{\T} \tilde{\rho}} \dv \psi_{\neq} \p_{y_1} \zeta_{\neq} 
			- \frac{\mathring{\T}}{\mathring{\rho} \tilde{\rho}} \p_{y_1} (\e \mathring{\mathbf{u}}) \cdot \nabla \phi_{\neq} \p_{y_1} \phi_{\neq} - \p_{y_1}(\e \mathring{\mathbf{u}}) \cdot \nabla \psi_{\neq} \cdot \p_{y_1} \psi_{\neq} \\
			&-\frac{\mathring{\rho}}{\mathring{\T} \tilde{\rho}} \p_{y_1} (\e \mathring{\mathbf{u}}) \cdot \nabla \zeta_{\neq} \p_{y_1}\zeta_{\neq} - \p_{y_1} \left(\frac{\mathring{\T}}{\tilde{\rho}}\right) \p_{y_1} \psi_{\neq} \cdot \nabla \phi_{\neq} + \p_{y_1} \left(\frac{\mathring{\T}}{\tilde{\rho}}\right) \nabla \psi_{1\neq} \cdot \nabla \phi_{\neq} + \frac{\mathring{\T}}{\mathring{\rho} \tilde{\rho}} \nabla \phi_{\neq} \cdot \nabla \mathcal{A}_{0\neq} \\
			&- \p_{y_1} \left(\frac{\mathring{\rho}}{\tilde{\rho}}\right) \p_{y_1} \psi_{1\neq} \cdot \nabla \zeta_{\neq} + \p_{y_1} \left(\frac{\mathring{\rho}}{\tilde{\rho}}\right) \nabla \psi_{1\neq} \cdot \nabla \zeta_{\neq} - \p_{y_1} \left(\frac{\mu+\lambda}{\tilde{\rho}}\right) \dv \psi \Delta \psi_{1\neq} + \nabla \psi_{\neq} \cdot \mathcal{A}_{\neq} \\
			&- \p_{y_1} \left(\frac{\mu+\lambda}{\tilde{\rho}}\right) \dv \psi_{\neq} \p_{y_1} \dv \psi_{\neq} - \frac{\mu}{\tilde{\rho}} \p_{y_1} \left(\frac{\mathring{\rho}}{\mathring{\T} \tilde{\rho}}\right) \p_{y_1} \zeta_{\neq} \Delta \zeta_{\neq} + \frac{\mathring{\rho}}{\mathring{\T} \tilde{\rho}} \nabla \zeta_{\neq} \cdot \nabla \mathcal{A}_{4\neq}.
		\end{align*}
		Using integration by parts, we have
		\begin{align}\label{int of J_2}
			\int_{\Omega_\e} \hat{J}_2 dy \le C \bar{\eta} \norm{(\nabla \phi_{\neq}, \nabla \psi_{\neq}, \nabla \zeta_{\neq})}_{L^2(\Omega_\e)}^2
			+ C \bar{\eta} \norm{(\nabla^2 \psi_{\neq}, \nabla^2 \zeta_{\neq})}_{L^2(\Omega_\e)}^2.
		\end{align}
		Tnen, integrating (\ref{non zero est2}) over \(\Omega_\e\) and applying (\ref{int of J_2}) yield that
		\begin{align}\label{non zero high order est}
			\frac{d}{d \tau} \norm{(\nabla \phi_{\neq}, \nabla \psi_{\neq}, \nabla \zeta_{\neq})}_{L^2(\Omega_\e)}^2  + \norm{(\nabla^2 \psi_{\neq}, \nabla^2 \zeta_{\neq})}_{L^2(\Omega_\e)}^2 \le C \bar{\eta} \norm{(\nabla \phi_{\neq}, \nabla \psi_{\neq}, \nabla \zeta_{\neq})}_{L^2(\Omega_\e)}^2.
		\end{align}
		Combining (\ref{non zero low order est}), (\ref{nabla phi_neq est}) and (\ref{non zero high order est}), we complete the proof of Lemma \ref{non zero est}.
	\end{proof}
	
	\subsection*{Decay rate for the non-zero modes}
	
	\begin{Lem}\label{non-zero est for well}
		Under the same assumptions as Proposition \ref{a priori estimates}, it holds that
		\begin{equation}
			\begin{aligned}
				& \norm{(\phi_{\neq}, \psi_{\neq}, \zeta_{\neq})}_{H^1}^2 
				\le C\e^{-3} e^{-\frac{c}{2} \e^2 \tau - c_0 \e^{-2}}, \qquad \qquad\qquad \qquad\qquad \quad~~\text{if}~~ \tau \ge \left(\frac{2C_1}{c \e^2}\right)^2 - \e^{-2}, \\
				&\norm{(\phi_{\neq}, \psi_{\neq}, \zeta_{\neq})}_{H^1}^2 \le C  \e^{-2N-3} (\e^{-2} + \tau)^{-\frac{N}{2}} e^{-c_0 \e^{-2}},~ \forall N  \in \mathbb{Z}^+, \qquad \text{if}~~ \tau < \left(\frac{2C_1}{c \e^2}\right)^2 - \e^{-2}.
			\end{aligned}
		\end{equation}
		Here, \(c_0>0\) is chosen sufficiently large.
	\end{Lem}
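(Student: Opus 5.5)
The plan is to integrate the differential inequality of Lemma \ref{non zero est}, using the Poincar\'e inequality on $\Omega_\e$ to turn the dissipation into damping. Write $X(\tau):=\norm{(\phi_{\neq},\psi_{\neq},\zeta_{\neq})}_{H^1(\Omega_\e)}^2$. The first task is to show that the dissipation controls $X$ itself. The left side of \eqref{non zero inequ} contains $\norm{\nabla(\phi_{\neq},\psi_{\neq},\zeta_{\neq})}_{L^2}^2$. The first-order Poincar\'e inequality on $\Omega_\e$ gives $\norm{v_{\neq}}_{L^2}^2\le C\e^{-2}\norm{\nabla v_{\neq}}_{L^2}^2$. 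Hence $c\norm{\nabla v_{\neq}}^2_{L^2}\ge \frac{c}{2}\norm{\nabla v_{\neq}}^2_{L^2}+\frac{c}{2C}\e^2\norm{v_{\neq}}_{L^2}^2$, which is at least $c'\e^2 X$ after renaming constants. We therefore expect
\begin{align*}
\frac{d}{d\tau}X+c\,\e^2 X\le C_1(\e^{-2}+\tau)^{-\frac12}X .
\end{align*}
Here the left-hand energy in Lemma \ref{non zero est} is taken to be equivalent to $X$, which follows from the weighted functionals built in its proof together with the Cauchy inequality on $\tilde\rho\psi_{\neq}\cdot\nabla\phi_{\neq}$.

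For large times we use a threshold on the coefficient. When $\tau\ge(\frac{2C_1}{c\e^2})^2-\e^{-2}$, we have $(\e^{-2}+\tau)^{1/2}\ge \frac{2C_1}{c\e^2}$, so $C_1(\e^{-2}+\tau)^{-1/2}\le \frac{c}{2}\e^2$. The right side is then absorbed, leaving $\frac{d}{d\tau}X+\frac c2\e^2X\le0$ on that range. Gr\"onwall's inequality from the threshold time $\tau_*$ gives $X(\tau)\le X(\tau_*)e^{-\frac c2\e^2(\tau-\tau_*)}$. Since $\e^2\tau_*=O(\e^{-2})$, the factor $e^{\frac c2\e^2\tau_*}$ is at most $e^{C\e^{-2}}$. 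This is absorbed by shrinking $c_0$ slightly, which is admissible because $c_0$ is taken large. Inserting the small-time bound at $\tau_*$ then yields $C\e^{-3}e^{-\frac c2\e^2\tau-c_0\e^{-2}}$.

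For small times, $\tau<\tau_*$, we drop the damping and apply Gr\"onwall directly:
\begin{align*}
X(\tau)\le X(0)\exp\Big(C_1\int_0^\tau(\e^{-2}+s)^{-\frac12}ds\Big)\le X(0)e^{2C_1(\e^{-2}+\tau)^{1/2}}.
\end{align*}
On this range $(\e^{-2}+\tau)^{1/2}<\frac{2C_1}{c\e^2}$, so the exponential is at most $e^{C\e^{-2}}$. By \eqref{relation2} and the assumption \eqref{initial data}$_2$ in the original variables, $X(0)\le C\e^{-3}e^{-c_0\e^{-2}}$. Hence $X(\tau)\le C\e^{-3}e^{-(c_0-C)\e^{-2}}$, and we again rename $c_0$.

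To get the polynomial factor, note that $\e^{-2}+\tau\le C\e^{-4}$ on this range, so $(\e^{-2}+\tau)^{N/2}\le C_N\e^{-2N}$. Therefore $X\le C\e^{-2N-3}(\e^{-2}+\tau)^{-N/2}e^{-c_0\e^{-2}}$ for every $N$. The main delicate point is this bookkeeping of $c_0$: the losses $e^{C\e^{-2}}$ from the two Gr\"onwall steps must be absorbed by a fixed fraction of $c_0$. This works because $C$ depends only on $C_1$ and $c$, not on $c_0$, and that is why $c_0$ must be chosen sufficiently large.
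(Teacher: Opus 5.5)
Your proof is correct. For the exponential part it follows the paper's argument:
\begin{itemize}
\item the $\e^{-1}$-Poincar\'e inequality on $\Omega_\e$;
\item the threshold $\tau_*=(\frac{2C_1}{c\e^2})^2-\e^{-2}$;
\item Gr\"onwall on $[0,\tau_*]$ with loss $e^{4C_1^2/(c\e^2)}$;
\item pure damping $\frac{d}{d\tau}X+\frac c2\e^2X\le 0$ beyond $\tau_*$, with $c_0$ absorbing both $e^{O(\e^{-2})}$ losses.
\end{itemize}
The one cosmetic difference is where you apply Poincar\'e. You apply it to the dissipation to produce the damping $c\e^2X$. The paper applies it to the source term, comparing $C_1\e^{-2}(\e^{-2}+\tau)^{-1/2}\norm{\nabla v_{\neq}}_{L^2}^2$ with $c\norm{\nabla v_{\neq}}_{L^2}^2$. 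Renaming $c$ after Poincar\'e only shifts the threshold by a constant factor. On the intermediate range the two bounds differ by at most $e^{O(\e^{-2})}$, which is absorbed into $c_0$.

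You genuinely depart from the paper in the polynomial bound for $\tau<\tau_*$. The paper multiplies \eqref{non zero inequ} successively by $(\e^{-2}+\tau)^{1/2}$, $(\e^{-2}+\tau)$, \dots and integrates, feeding in the weighted time-integrated dissipation from the previous stage. You instead note that $\e^{-2}+\tau\le(2C_1/c)^2\e^{-4}$ on this bounded interval. Hence the weight $(\e^{-2}+\tau)^{N/2}$ costs at most $(2C_1/c)^N\e^{-2N}$, which is exactly the $\e^{-2N}$ loss allowed in the statement.

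Your route is shorter and avoids tracking $\e$-powers through the iteration. It also shows the constant is $C_N=C(2C_1/c)^N$, so the ``$C$'' in the statement must depend on $N$, consistent with \eqref{a priori assumption}$_2$. The paper's weighted iteration additionally yields bounds on $\int_0^\tau(\e^{-2}+s)^{k/2}\norm{\nabla v_{\neq}}_{L^2}^2\,ds$, but the lemma does not need them.
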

	
	\begin{proof}
		For notational simplicity, \(c_0\) denotes a positive constant independent of \(\varepsilon\) that may decrease from line to line.
		According to Lemma \ref{non zero est}, we find that
		\begin{align*}
			&\frac{d}{d \tau} \norm{(\phi_{\neq}, \psi_{\neq}, \zeta_{\neq})}_{H^1(\Omega_\e)}^2 
			+ c \norm{(\nabla \phi_{\neq}, \nabla \psi_{\neq}, \nabla \zeta_{\neq}, \nabla^2 \psi_{\neq}, \nabla^2 \zeta_{\neq})}_{L^2(\Omega_\e)}^2 \\
			\le& C_1 \left(\e^{-2} + \tau\right)^{-\frac{1}{2}} \norm{(\phi_{\neq},  \psi_{\neq}, \zeta_{\neq})}_{L^2(\Omega_\e)}^2 \\
			\le& C_1 \e^{-2} \left(\e^{-2} + \tau\right)^{-\frac{1}{2}} \norm{(\nabla \phi_{\neq}, \nabla \psi_{\neq}, \nabla \zeta_{\neq})}_{L^2(\Omega_\e)}^2.
		\end{align*}
		Set \(\tau_* = \left(\frac{2C_1 }{c \e^2}\right)^2 - \e^{-2}\), which is nonnegative for sufficiently small \(\varepsilon\). We first estimate the solution on \([0,\tau_*]\). According to 
		\begin{align*}
			\frac{d}{d \tau} \norm{(\phi_{\neq}, \psi_{\neq}, \zeta_{\neq})}_{H^1(\Omega_\e)}^2 
			\le C_1 \left(\e^{-2} + \tau\right)^{-\frac{1}{2}} \norm{(\phi_{\neq},  \psi_{\neq}, \zeta_{\neq})}_{L^2(\Omega_\e)}^2,
		\end{align*}
		it holds that 
		\begin{align*}
			\norm{(\phi_{\neq}, \psi_{\neq}, \zeta_{\neq})}_{H^1(\Omega_\e)}^2 \le& \norm{(\phi_{\neq}, \psi_{\neq}, \zeta_{\neq})(0)}_{H^1(\Omega_\e)}^2 e^{\frac{4C_1^2}{c} \e^{-2}} \\
			\le& C \e^{-3} e^{-\left(c_0 - \frac{4C_1^2}{c}\right) \e^{-2}}, \qquad \text{for}~~ 0 \le \tau \le \tau_*.
		\end{align*}
		When \(\tau \ge \tau_*\), it is easy to check that
		\(C_1 \e^{-2} \left(\e^{-2} + \tau\right)^{-\frac{1}{2}} \le \frac{c}{2}\). Then we have
		\begin{align*}
			\frac{d}{d \tau} \norm{(\phi_{\neq}, \psi_{\neq}, \zeta_{\neq})}_{H^1(\Omega_\e)}^2 + \frac{c}{2} \norm{(\nabla \phi_{\neq}, \nabla \psi_{\neq}, \nabla \zeta_{\neq}, \nabla^2 \psi_{\neq}, \nabla^2 \zeta_{\neq})}_{L^2(\Omega_\e)}^2 \le 0,
		\end{align*}
		which leads to
		\begin{align*}
			&\frac{d}{d \tau} \norm{(\phi_{\neq}, \psi_{\neq}, \zeta_{\neq})}_{H^1(\Omega_\e)}^2 + \frac{c}{2} \e^2 \norm{(\phi_{\neq}, \psi_{\neq}, \zeta_{\neq})}_{H^1(\Omega_\e)}^2 \le 0.
		\end{align*}
		Applying the Grönwall's inequality to the above inequality, we obtain
		\begin{align}
			\norm{(\phi_{\neq}, \psi_{\neq}, \zeta_{\neq})}_{H^1(\Omega_\e)}^2 \le& \norm{(\phi_{\neq}, \psi_{\neq}, \zeta_{\neq})(\tau_*)}_{H^1(\Omega_\e)}^2 e^{-\frac{c}{2}\e^2 (\tau- \tau_*)} \notag\\
			\le& C \e^{-3} e^{-\left(c_0 - \frac{4C_1^2}{c}\right) \e^{-2}} e^{-\frac{c}{2}\e^2 (\tau- \tau_*)} \\
			\le& C \e^{-3} e^{-\frac{c}{2} \e^2 \tau - \left(c_0 - \frac{6C_1^2}{c}\right)s^{-2}}. \notag
		\end{align}
		The constant \(c_0\) in the initial smallness assumption (\ref{initial data}) is chosen larger than \(6C_1^2/c\). After decreasing it if necessary, we continue to denote the resulting positive exponent constant by \(c_0\).
		Then we have
		\begin{align}
			\norm{(\phi_{\neq}, \psi_{\neq}, \zeta_{\neq})}_{H^1(\Omega_\e)}^2 \le& C \e^{-3} e^{-\frac{c}{2} \e^2 \tau - c_0 s^{-2}}.
		\end{align}
		
		When \(\tau < \left(\frac{2C_1}{c \e^2}\right)^2 - \e^{-2}\), we find \((\e^{-2} + \tau)^{\frac{1}{2}} < \frac{2C_1}{c \e^2}\).
		The Grönwall's inequality leads to 
		\begin{align*}
			&\norm{(\phi_{\neq}, \psi_{\neq}, \zeta_{\neq})(\tau)}_{H^1(\Omega_\e)}^2 + \int_{0}^{\tau} \norm{(\nabla \phi_{\neq}, \nabla \psi_{\neq}, \nabla \zeta_{\neq}, \nabla^2 \psi_{\neq}, \nabla^2 \zeta_{\neq})}_{L^2(\Omega_\e)}^2 ds \\
			\le& \norm{(\phi_{\neq}, \psi_{\neq}, \zeta_{\neq})(0)}_{H^1(\Omega_\e)}^2 e^{2C_1  ((\e^{-2} + \tau)^{\frac{1}{2}} - \e^{-1})} \\
			\le& \norm{(\phi_{\neq}, \psi_{\neq}, \zeta_{\neq})(0)}_{H^1(\Omega_\e)}^2  e^{\frac{4C_1^2}{c \e^2} - 2C_1 \bar{\eta}\e^{-1}}.
		\end{align*}
		%Suppose \(\frac{4C_1^2}{c} (\chi+ \sqrt{\delta})^2 < \frac{1}{2}\), then we have \(\frac{4C_1^2}{c} \bar{\eta}^2 <\frac{3}{4}\) when \(\e\) and \(\eta\) are sufficiently small. 
		According to (\ref{initial data}), it reaches that
		\begin{align}\label{non zero decay0}
			\norm{(\phi_{\neq}, \psi_{\neq}, \zeta_{\neq})(\tau)}_{H^1(\Omega_\e)}^2 \le C\e^{-3} e^{-c_0 \e^{-2}}, \quad \int_{0}^{\tau} \norm{(\nabla \phi_{\neq}, \nabla \psi_{\neq}, \nabla \zeta_{\neq}, \nabla^2 \psi_{\neq}, \nabla^2 \zeta_{\neq})}_{L^2(\Omega_\e)}^2 ds \le C \e^{-3}e^{-c_0 \e^{-2}}.
		\end{align}
		Here \(c_0 \) satisfies $c_0>\frac{4C_1^2}{c}$.
		%which may be decreased if necessary.
		Taking \((\e^{-2} + \tau)^{\frac{1}{2}} (\ref{non zero inequ})\) and using (\ref{non zero decay0}), we have
		\begin{align*}
			&\frac{d}{d \tau}\left[(\e^{-2} + \tau)^{\frac{1}{2}}\norm{(\phi_{\neq}, \psi_{\neq}, \zeta_{\neq})}_{H^1(\Omega_\e)}^2\right] + c(\e^{-2} + \tau)^{\frac{1}{2}} 
			\norm{(\nabla \phi_{\neq}, \nabla \psi_{\neq}, \nabla \zeta_{\neq}, \nabla^2 \psi_{\neq}, \nabla^2 \zeta_{\neq})}_{L^2(\Omega_\e)}^2 \\
			%\le&\frac{1}{2}  (\e^{-2} + \tau)^{-\frac{1}{2}} \norm{(\nabla \phi_{\neq}, \nabla \varphi_{\neq}, \nabla \zeta_{\neq})}_{H^1(\Omega_\e)}^2+ C_1 \bar{\eta} \e^{-2} \norm{(\nabla \phi_{\neq}, \nabla \varphi_{\neq}, \nabla \zeta_{\neq})}_{L^2(\Omega_\e)}^2 \\
			%\le& C (\e^{-2} + \tau)^{-\frac{1}{2}} e^{-c_0 \e^{-3}}
			%+ C_1 \bar{\eta} \e^{-2} \norm{(\nabla \phi_{\neq}, \nabla \varphi_{\neq}, \nabla \zeta_{\neq})}_{L^2(\Omega_\e)}^2 \\
			\le& C (\e^{-2} + \tau)^{-\frac{1}{2}} e^{-c_0 \e^{-2}}
			+ C_1 \bar{\eta} \e^{-2} \norm{(\nabla \phi_{\neq}, \nabla \psi_{\neq}, \nabla \zeta_{\neq})}_{L^2(\Omega_\e)}^2.
		\end{align*}
		Integrating the above inequality on \([0, \tau]\), we obtain
		\begin{align*}
			&(\e^{-2} + \tau)^{\frac{1}{2}}\norm{(\phi_{\neq}, \psi_{\neq}, \zeta_{\neq})}_{H^1(\Omega_\e)}^2 + c \int_{0}^{\tau} (\e^{-2} + s)^{\frac{1}{2}} 
			\norm{(\nabla \phi_{\neq}, \nabla \psi_{\neq}, \nabla \zeta_{\neq}, \nabla^2 \psi_{\neq}, \nabla^2 \zeta_{\neq})}_{L^2(\Omega_\e)}^2 ds \\
			\le&\e^{-1} \norm{(\phi_{\neq}, \psi_{\neq}, \zeta_{\neq})(0)}_{H^1(\Omega_\e)}^2 + 2C e^{-c_0 \e^{-2}} \left((\e^{-2} + \tau)^{\frac{1}{2}} - \e^{-1}\right) + C_1 \e^{-2} e^{-c_0 \e^{-2}} \\
			\le& \e^{-1} \norm{(\phi_{\neq}, \psi_{\neq}, \zeta_{\neq})(0)}_{H^1(\Omega_\e)}^2 + C \e^{-2} e^{-c_0 \e^{-2}} \\
			\le& C \e^{-5} e^{-c_0 \e^{-2}}.
		\end{align*}
		This leads to 
		\begin{equation}\label{non zero decay1}
			\begin{aligned}
				&\norm{(\phi_{\neq}, \psi_{\neq}, \zeta_{\neq})}_{H^1(\Omega_\e)}^2 \le C \e^{-2} (\e^{-2} + \tau)^{-\frac{1}{2}} e^{-c_0 \e^{-2}},\\
				&\int_{0}^{\tau} (\e^{-2} + s)^{\frac{1}{2}} 
				\norm{(\nabla \phi_{\neq}, \nabla^2 \psi_{\neq}, \nabla^2 \zeta_{\neq})}_{L^2(\Omega_\e)}^2 ds \le C \e^{-2} e^{-c_0 \e^{-2}}.
			\end{aligned}
		\end{equation}
		Taking \((\e^{-2} + \tau) (\ref{non zero inequ})\) and using (\ref{non zero decay1}), we have
		\begin{align*}
			&\frac{d}{d \tau} \left[(\e^{-2} + \tau) \norm{(\phi_{\neq}, \psi_{\neq}, \zeta_{\neq})}_{H^1(\Omega_\e)}^2\right] + c(\e^{-2} + \tau) \norm{(\nabla \phi_{\neq}, \nabla \psi_{\neq}, \nabla \zeta_{\neq}, \nabla^2 \psi_{\neq}, \nabla^2 \zeta_{\neq})}_{L^2(\Omega_\e)}^2 \\ 
			\le& \norm{(\phi_{\neq}, \psi_{\neq}, \zeta_{\neq})}_{H^1(\Omega_\e)}^2 + C_1 \e^{-2} \left(\e^{-2} + \tau\right)^{\frac{1}{2}} \norm{(\nabla \phi_{\neq}, \nabla \psi_{\neq}, \nabla \zeta_{\neq})}_{L^2(\Omega_\e)}^2.
		\end{align*}
		Integrating the above inequality on \([0, \tau]\), we find
		\begin{align*}
			& (\e^{-2} + \tau) \norm{(\phi_{\neq}, \psi_{\neq}, \zeta_{\neq})}_{H^1(\Omega_\e)}^2 + c \int_{0}^{\tau} (\e^{-2} + s) 
			\norm{(\nabla \phi_{\neq}, \nabla \psi_{\neq}, \nabla \zeta_{\neq}, \nabla^2 \psi_{\neq}, \nabla^2 \zeta_{\neq})}_{L^2(\Omega_\e)}^2 ds\\
			\le& \e^{-2} \norm{(\phi_{\neq}, \psi_{\neq}, \zeta_{\neq})(0)}_{H^1(\Omega_\e)}^2 + \int_{0}^{\tau}  \norm{(\phi_{\neq}, \psi_{\neq}, \zeta_{\neq})}_{H^1(\Omega_\e)}^2  ds \\
			&+ C_1  \e^{-2} \int_{0}^{s} \left(\e^{-2} + s\right)^{\frac{1}{2}} \norm{(\nabla \phi_{\neq}, \nabla \psi_{\neq}, \nabla \zeta_{\neq})}_{L^2(\Omega_\e)}^2 ds \\
			\le& C\e^{-7} e^{-c_0 \e^{-2}},
		\end{align*}
		which leads to 
		\begin{equation}
			\begin{aligned}
				&\norm{(\phi_{\neq}, \psi_{\neq}, \zeta_{\neq})}_{H^1(\Omega_\e)}^2 \le C \e^{-7} (\e^{-2} + \tau)^{-1} e^{-c_0 \e^{-2}}, \\
				&\int_{0}^{\tau} (\e^{-2} + s) 
				\norm{(\nabla \phi_{\neq}, \nabla \psi_{\neq}, \nabla \zeta_{\neq}, \nabla^2 \psi_{\neq}, \nabla^2 \zeta_{\neq})}_{L^2(\Omega_\e)}^2 ds \le C_N \e^{-7} e^{-c_0 \e^{-2}}.
			\end{aligned}
		\end{equation}
		Continuing in this manner, we can prove the following conclusion:
		\begin{align}
			\norm{(\phi_{\neq}, \psi_{\neq}, \zeta_{\neq})}_{H^1(\Omega_\e)}^2 \le C \e^{-2N-3} (\e^{-2} + \tau)^{-\frac{N}{2}} e^{-c_0 \e^{-2}}, \qquad \forall N \in \mathbb{Z}^+,
		\end{align}
		where \(C_N\) is independent of \(\varepsilon\) and \(\tau\).  Thus, we complete the proof of Lemma \ref{non-zero est for well}.
	\end{proof}
	
	\subsection{Higher order estimates}
	
	The main purpose of this section is to establish energy estimates for the original perturbation equations \eqref{the perturbation equs} and to justify the a priori assumptions. Recall the system of the perturbation variables \eqref{the perturbation equs}
	\begin{equation}\label{the perturbation equs1}
		\left\{
		\begin{aligned}
			&\p_\tau \phi + \e \mathbf{u} \cdot \nabla \phi + \rho \dv \psi = \mathcal{R}_1, \\
			&\rho \p_\tau \psi + \e \rho \mathbf{u} \cdot \nabla \psi + (\T \nabla \phi + \rho \nabla \zeta) = \mu \Delta \psi + (\mu + \lambda) \nabla \dv \psi + \mathcal{R}_2, \\
			&\rho \p_\tau \zeta + \e \rho \mathbf{u} \cdot \nabla \zeta + \rho \T \dv \psi = \kappa \Delta \zeta + \mathcal{R}_3,
		\end{aligned}
		\right.
	\end{equation}
	The energy estimates involving the zeroth- and first-order derivatives have been previously obtained. Consequently, the remaining task in verifying the a priori assumptions is to derive suitable estimates for the second- and third-order derivatives. In this subsection, we denote \(\p_{y_i}\) by \(\p_i\) for conveniece.
	
	For \(i, j =1,2,3\), applying \(\p_j \p_i\) to \((\ref{the perturbation equs1})_1\), \(\p_j\) to the i-th component of \((\ref{the perturbation equs1})_2\) and \(\p_j\) to \((\ref{the perturbation equs1})_3\), we find
	\begin{equation}\label{high-order equs}
		\left\{
		\begin{aligned}
			&\p_\tau \p_{ij} \phi + \e \mathbf{u} \cdot \nabla \p_{ij} \phi + \rho \dv \p_{ij} \psi = \p_{ij} \mathcal{R}_1 + \mathcal{Q}_1, \\
			&\rho \p_\tau \p_j \psi + \e \rho \mathbf{u} \cdot \nabla \p_j \psi + (\T \nabla \p_j \phi + \rho \nabla \p_j \zeta) = \mu \Delta \p_j \psi + (\mu + \lambda) \nabla \dv \p_j \psi + \p_j \mathcal{R}_2 + \mathcal{Q}_2,\\
			&\rho \p_\tau \p_j \zeta + \e\rho \mathbf{u} \cdot \nabla \p_j \zeta + \rho \T \dv \p_j \psi = \kappa \Delta \p_j \zeta + \p_j \mathcal{R}_3 + \mathcal{Q}_3,
		\end{aligned}
		\right.
	\end{equation}
	where 
	\begin{align*}
		&\mathcal{Q}_1 := -\p_{ij} (\e \mathbf{u} \cdot \nabla \phi+\rho \dv \psi) + \e \mathbf{u} \cdot \nabla \p_{ij} \phi + \rho \dv \p_{ij} \psi, \\
		&\mathcal{Q}_2 := -\p_j \rho \p_\tau \psi - \p_j(\e \rho \mathbf{u}) \cdot \nabla \psi + \p_j \T \nabla \phi + \p_j \rho \nabla \zeta, \\
		&\mathcal{Q}_3 := -\p_j \rho \p_\tau \zeta -  \p_j(\e \rho \mathbf{u}) \cdot \nabla \zeta - \p_j(\rho \T) \dv \psi.
	\end{align*}
	We first derive an estimate for \(\norm{\mathcal{R}_i}_{L^2(\Omega_\e)}\), \(i=1,2,3\). By the a priori assumptions (\ref{a priori assumption}), we find that
	\begin{align}
		&\norm{\nabla^k \mathcal{R}_1}_{L^2(\Omega_\e)} \le C \eta \e^{-\frac{1}{2}} (\e^{-2} + \tau)^{-\frac{5}{4} - \frac{k}{2}} + C \bar{\eta} \sum_{l=0}^{k} (\e^{-2} + \tau)^{-\frac{k-l+1}{2}} \norm{\nabla^l (\phi, \psi, \zeta)}_{L^2(\Omega_\e)}, \label{R_1}\\
		&\norm{\nabla^k \mathcal{R}_2}_{L^2(\Omega_\e)} \le C \bar{\delta} \e^{-1}(\e^{-2} + \tau)^{-\frac{5}{4} - \frac{k}{2}} + C \bar{\eta} \sum_{l=0}^{k} (\e^{-2} + \tau)^{-\frac{k-l+1}{2}} \norm{\nabla^l (\phi, \psi, \zeta)}_{L^2(\Omega_\e)}, \label{R_2}\\
		&\norm{\nabla^k \mathcal{R}_3}_{L^2(\Omega_\e)} \le C \norm{\nabla^k (\mathcal{R}_1, \mathcal{R}_2)}_{L^2(\Omega_\e)} + C \chi (\e^{-2} + \tau)^{-\frac{3}{4}} \norm{\nabla \psi}_{H^k(\Omega_\e)} + C \bar{\eta} \sum_{l=0}^{k} (\e^{-2} + \tau)^{-\frac{k-l+1}{2}} \norm{\nabla^{l+1} \psi}_{L^2(\Omega_\e)}. \label{R_3}
	\end{align}
	Moreover, by the a priori assumptions (\ref{a priori assumption}), we have the result
	\begin{equation}\label{Q}
		\begin{aligned}
			&\norm{\nabla^k \mathcal{Q}_1}_{L^2(\Omega_\e)} \le C (\bar{\delta} + \chi + \eta) (\e^{-2} + \tau)^{-\frac{1}{2}} \norm{\nabla (\phi, \psi, \zeta)}_{H^{k+1}(\Omega_\e)}, \\
			&\norm{\nabla^k \mathcal{Q}_i}_{L^2(\Omega_\e)} \le C (\bar{\delta} + \chi + \eta) (\e^{-2} + \tau)^{-\frac{1}{2}} \norm{\nabla (\phi, \psi, \zeta)}_{H^{k+1}(\Omega_\e)} + C \e^{-1}\bar{\delta} (\e^{-2} + \tau)^{-\frac{7}{4} - \frac{k}{2}},
		\end{aligned}
	\end{equation}
	for \(i=2,3\), \(k= 0, 1\). 
	Then, we state the following result.
	\begin{Lem}\label{high order lemma1}
		Under the same assumptions as Proposition \ref{a priori estimates}, it holds that
		\begin{equation}
			\begin{aligned}
				\frac{d}{d \tau} E_3 + K_3 
				\le& C \bar\delta \e^{-1}(\e^{-2} + \tau)^{-\frac{5}{2}}
				+ C \bar{\eta} (\e^{-2} + \tau)^{-1} \norm{\nabla (\phi, \psi, \zeta)}_{L^2(\Omega_\e)}^2 
				+ C \bar{\eta} (\e^{-2} + \tau)^{-2} \norm{(\phi, \psi, \zeta)}_{L^2(\Omega_\e)}^2\\
				&+ C \bar{\eta} \e^{-2} \sum_{i=0}^{2} (\e^{-2} + \tau)^{-(3-i)} E_i + C \bar{\eta} \e^{-2} [(\e^{-2} + \tau) ^{-2} G_0 + (\e^{-2} + \tau) ^{-1} G_1],
			\end{aligned}
		\end{equation}
		where
		\begin{align*}
			E_3 =& \int_{\Omega_\e} \frac{2\mu+\lambda}{2\rho} \tilde{C}_1|\nabla^2 \phi|^2 + \tilde{C}_1\rho \nabla \phi \cdot \nabla^2 \phi +\frac{\rho}{2} |\nabla^2 (\psi, \zeta)|^2 dy + \frac{\tilde{C}_2}{\e^2} E_2 + \tilde{C}_2 \norm{(\phi_{\neq}, \psi_{\neq}, \zeta_{\neq})}_{H^1(\Omega_\e)}^2, \\
			K_3 =& \frac{1}{2} \int_{\Omega_\e} \left(\tilde{C}_1\T|\nabla^2 \phi|^2 + \mu |\nabla^3 \psi|^2 + (\mu+\lambda)|\nabla^2 \dv \psi|^2 + \kappa |\nabla^3 \zeta|^2\right) dy\\
			&+ \frac{\tilde{C}_2}{\e^2} (K_2 + G_2) + \tilde{C}_2 \norm{(\nabla \phi_{\neq}, \nabla^2 \psi_{\neq}, \nabla^2 \zeta_{\neq})}_{L^2(\Omega_\e)}^2.
		\end{align*}
		Here \(\tilde{C}_1\) is a suitably small constant used to ensure that \(E_3\) is positive, and \(\tilde{C}_2\) is a positive constant determined later.
	\end{Lem}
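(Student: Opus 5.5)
The estimate will be proved by a standard second-order energy method applied to the differentiated system \eqref{high-order equs}. The dissipation of \(\nabla^2\phi\) comes from a cross term, and the lower-order contributions are absorbed by adding \(\tilde C_2\e^{-2}E_2\) and the non-zero-mode energy. This is why those pieces are built into \(E_3\) and \(K_3\).

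\textbf{Step 1 (symmetrized second-order energy).} Multiply \((\ref{high-order equs})_1\) by \(\frac{\T}{\rho}\p_{ij}\phi\), \((\ref{high-order equs})_2\) (for the \(i\)-th component, differentiated once more by \(\p_i\)) by \(\p_{ij}\psi\), and \((\ref{high-order equs})_3\) (differentiated by \(\p_i\)) by \(\frac{1}{\T}\p_{ij}\zeta\), then sum and integrate over \(\Omega_\e\). The weights are chosen so that the hyperbolic couplings \(\rho\,\mathrm{div}\,\p_{ij}\psi\), \(\T\nabla\p_{ij}\phi\), \(\rho\nabla\p_{ij}\zeta\) and \(\rho\T\,\mathrm{div}\,\p_{ij}\psi\) cancel after integration by parts, up to terms carrying \(\nabla(\rho,\T)\). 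This produces \(\frac{d}{d\tau}\int\frac{\rho}{2}|\nabla^2(\psi,\zeta)|^2+\ldots\) together with the dissipation \(\mu|\nabla^3\psi|^2+(\mu+\lambda)|\nabla^2\mathrm{div}\,\psi|^2+\kappa|\nabla^3\zeta|^2\). The remainders fall into three classes:
\begin{itemize}
\item the commutators \(\mathcal Q_i\), bounded through \eqref{Q};
\item the forcing \(\nabla^2\mathcal R_1\) and \(\nabla\mathcal R_{2},\nabla\mathcal R_3\), bounded through \eqref{R_1}--\eqref{R_3}, after moving one derivative off \(\mathcal R_{2},\mathcal R_3\) onto the test function so that only \(\nabla^3(\psi,\zeta)\) appears;
\item coefficient-derivative terms such as \(\p_\tau(\T/\rho)|\nabla^2\phi|^2\).
\end{itemize}
For the coefficient terms, \(\|\nabla(\rho,\T)\|_{L^\infty}\) and \(\|\p_\tau(\rho,\T)\|_{L^\infty}\) are bounded by \(C\bar\eta(\e^{-2}+\tau)^{-1/2}\) using the a priori assumptions and Lemma \ref{zero mode est}.

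\textbf{Step 2 (dissipation of \(\nabla^2\phi\)).} Pair the gradient of \((\ref{the perturbation equs1})_2\) with \(\nabla^2\phi\), and use \((\ref{the perturbation equs1})_1\) to trade \(\rho\,\p_\tau\nabla\psi\cdot\nabla^2\phi\) for a time derivative plus \(\rho^2|\nabla^2\mathrm{div}\,\psi|\)-type terms. This gives
\[
\frac{d}{d\tau}\int\Big(\frac{2\mu+\lambda}{2\rho}|\nabla^2\phi|^2+\rho\nabla\psi\cdot\nabla^2\phi\Big)+\int\T|\nabla^2\phi|^2\le C\|\nabla^3(\psi,\zeta)\|^2+\cdots,
\]
which mirrors Step 2 of Lemma \ref{non zero est}. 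Multiplying by the small constant \(\tilde C_1\) and adding the result to Step 1 yields the first part of \(E_3\) and \(K_3\); \(\tilde C_1\) small also keeps \(E_3\) positive.

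\textbf{Step 3 (closing the lower-order terms).} Three kinds of lower-order terms remain.
\begin{itemize}
\item Terms of the form \(\bar\eta(\e^{-2}+\tau)^{-1}\|\nabla(\phi,\psi,\zeta)\|^2\) and \(\bar\eta(\e^{-2}+\tau)^{-2}\|(\phi,\psi,\zeta)\|^2\) are left on the right-hand side as stated.
\item Terms involving \(\|\nabla^2(\phi,\psi,\zeta)\|\) without time weight are split into zero and non-zero modes. The zero-mode part satisfies \(\|\p_{y_1}^2\mathring v\|^2_{L^2(\Omega_\e)}=\e^{-2}\|\p_{y_1}^2\mathring v\|^2_{L^2(\R)}\) by \eqref{relation1}, and this is controlled by \(\e^{-2}(K_2+G_2)\) plus \(\e^{-2}\) times lower \(E_i\) via \eqref{tilde E_k norm} and the relations expressing \(\mathring\phi,\mathring\psi,\mathring\zeta\) through \(\p_{y_1}V\). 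The non-zero part is controlled by \(\|(\nabla\phi_{\neq},\nabla^2\psi_{\neq},\nabla^2\zeta_{\neq})\|^2\).
\item These last two quantities are absorbed by adding \(\tilde C_2\e^{-2}\times\)\eqref{E_2 est} and \(\tilde C_2\times\)\eqref{non zero inequ}, with \(\tilde C_2\) large. Their right-hand sides contribute exactly the \(\bar\eta\e^{-2}\sum_i(\e^{-2}+\tau)^{-(3-i)}E_i\), \(\bar\eta\e^{-2}(\ldots G_0,G_1)\) and \(\bar\delta\e^{-1}(\e^{-2}+\tau)^{-5/2}\) terms. That last term also dominates the squared error parts \(\bar\delta\e^{-1}(\e^{-2}+\tau)^{-5/4-k/2}\) of \(\mathcal R_2\) after Cauchy--Schwarz. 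The non-zero-mode term \(C_1(\e^{-2}+\tau)^{-1/2}\|v_{\neq}\|^2\) from \eqref{non zero inequ} is harmless: it is exponentially small by Lemma \ref{non-zero est for well} and can be bounded by the \(\|(\phi,\psi,\zeta)\|^2\) term.
\end{itemize}

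\textbf{Main obstacle.} The hardest part is the \(\nabla^2\phi\) hyperbolic term together with the \(\p_j\rho\,\p_\tau\psi\) piece of \(\mathcal Q_2\). The latter must be handled by substituting the momentum equation for \(\p_\tau\psi\), which reintroduces \(\nabla^2\psi\) and \(\nabla\phi\) with the small factor \(\bar\eta(\e^{-2}+\tau)^{-1/2}\). The delicate point is keeping the coefficients of \(\|\nabla^2\phi\|^2\) and \(\|\nabla^3(\psi,\zeta)\|^2\) on the right below \(\tfrac12\) of \(K_3\). The order of choices should be: first take \(\bar\eta\) small relative to \(\tilde C_1\), then take \(\tilde C_2\) large so that the lower-order dissipation is absorbed without disturbing the top-order balance.
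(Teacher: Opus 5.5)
Your proposal is correct in substance. Your Steps~2--3 coincide with the paper: the $\nabla^2\phi$ cross estimate with the $(2\mu+\lambda)/\rho$ cancellation, then adding $\tilde C_2\e^{-2}\times$\eqref{E_2 est} and $\tilde C_2\times$\eqref{non zero inequ}. Your Step~1, however, closes the second-order $(\psi,\zeta)$ estimate differently.

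The paper does not symmetrize. It tests the once-differentiated momentum and energy equations with $-\Delta\p_j\psi$ and $-\Delta\p_j\zeta$ (weight $\rho$ only), and bounds the coupling $\T\nabla\p_j\phi\cdot\Delta\p_j\psi$ by Cauchy--Schwarz. This leaves an $O(1)$ multiple of $\norm{\nabla^2\phi}_{L^2}^2$, which must be absorbed by $\tilde C_1\int\T|\nabla^2\phi|^2$. So in the paper $\tilde C_1$ is effectively \emph{large}, despite the word ``small''; positivity of $E_3$ comes from $\tilde C_2\gg\tilde C_1$. This is affordable because the paper's $\nabla^2\phi$ estimate has only $\norm{\nabla^2(\psi,\zeta)}^2$ on its right-hand side, with no third-order terms.

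Your symmetrizer $(\T/\rho,\,1,\,1/\T)$ removes the top-order coupling instead. So $\tilde C_1$ can genuinely be small, and your cross estimate may even carry $\norm{\nabla^3(\psi,\zeta)}^2$ on the right. The price is twofold:
\begin{itemize}
\item You prove the inequality for an equivalent functional, not the literal $E_3$: it has an extra $\int\frac{\T}{2\rho}|\nabla^2\phi|^2$ and weight $\rho/\T$ on $|\nabla^2\zeta|^2$. This is harmless for Lemma~\ref{decay for high}.
\item $\p_\tau\T$ now enters the coefficients, and $\p_\tau\T\ni\kappa\Delta\zeta/\rho$. Since $\Delta\zeta_{\neq}$ is not in $L^\infty$ under the $H^3$ a priori bound, your claimed $L^\infty$ bound on $\p_\tau\T$ is unavailable. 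Treat $\int\Delta\zeta_{\neq}|\nabla^2\phi|^2$ by H\"older in $L^3$ plus the exponential smallness of the non-zero modes. Alternatively, use background weights $\tilde\T/\tilde\rho$ and $1/\tilde\T$, and put the mismatch into small perturbative remainders. The paper's weights $\rho$ and $1/\rho$ only generate $\p_\tau\rho$, which involves just $\dv\psi$.
\end{itemize}

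One more small correction concerns the term $C_1\tilde C_2(\e^{-2}+\tau)^{-1/2}\norm{v_{\neq}}_{L^2}^2$. It cannot be bounded by $\bar\eta(\e^{-2}+\tau)^{-2}\norm{(\phi,\psi,\zeta)}_{L^2}^2$. Use the a priori non-zero-mode decay with $N\ge 4$ to place it in the source term $C\bar\delta\e^{-1}(\e^{-2}+\tau)^{-5/2}$; your ``exponentially small'' remark already points this way.
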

	\begin{proof}
		\subsubsection*{Step 1. Estimates on \(\|\nabla^2 \phi\|_{L^2(\Omega_\e)}\)}
		
		Multiplying \((\ref{high-order equs})_2\) by \(\nabla \p_j \phi\) and integrating with respect to \(y\), one has
		\begin{equation}\label{nabla^2 phi equ1}
			\begin{aligned}
				&\frac{d}{d \tau} \int_{\Omega_\e} \rho \p_j \psi_i \p_j \p_i \phi dy +\int_{\Omega_\e} \T |\p_j \p_i \phi|^2 dy \\
				=& \int_{\Omega_\e} \p_\tau \rho \p_j \psi_i \p_j \p_i \phi dy
				+ \int_{\Omega_\e} \p_i (\rho \p_j \psi_i) \p_j (\e \mathbf{u} \cdot \nabla \phi + \rho \dv \psi - \mathcal{R}_1) dy \\
				&- \int_{\Omega_\e} [\e \rho \mathbf{u} \cdot \nabla \p_j \psi +\rho \nabla \p_j \zeta - \p_j \mathcal{R}_2 - \mathcal{Q}_2] \cdot \nabla \p_j \phi dy + (2\mu+\lambda) \int_{\Omega_\e} \p_j \p_i \phi \p_j \p_i \dv \psi dy.
			\end{aligned}
		\end{equation}
		On the other hand, multiplying \((\ref{high-order equs})_1\) by \(\frac{1}{\rho}\p_j \p_i \phi\) and integrating the result on \(\Omega_\e\) yield that
		\begin{equation}\label{nabla^2 phi equ2}
			\begin{aligned}
				\frac{d}{d \tau} \int_{\Omega_\e} \frac{|\p_{ij} \phi|^2}{2\rho} dy = -\int_{\Omega_\e}\p_{ij} \phi \p_{ijk} \psi_k dy + \int_{\Omega_\e} \left(\p_\tau \left(\frac{1}{2\rho}\right) + \dv \left(\frac{\e \mathbf{u}}{2\rho}\right)\right) |\p_{ij} \phi|^2 dy + \int_{\Omega_\e} (\p_{ijk} \mathcal{R}_1 + \mathcal{Q}_1) \frac{\p_{ij} \phi}{\rho} dy.
			\end{aligned}
		\end{equation}
		Taking \(\sum_{i,j=1}^{3} (\ref{nabla^2 phi equ1}) + (2\mu + \lambda)(\ref{nabla^2 phi equ2})\) and using the Cauchy inequality, \eqref{R_1} and \eqref{Q}, we get
		\begin{align}\label{high1}
			&\frac{d}{d \tau} \left[\int_{\Omega_\e} \frac{2\mu+\lambda}{2\rho} |\nabla^2 \phi|^2 + \rho \nabla \phi \cdot \nabla^2 \phi dy\right] + \int_{\Omega_\e} \T|\nabla^2 \phi|^2 dy \notag\\
			\le& \frac{1}{120} \int_{\Omega_\e} \T |\nabla^2 \phi|^2 dy + C \bar{\delta} \e^{-2}(\e^{-2} + \tau)^{-\frac{7}{2}}  
			+ C \bar{\eta} (\e^{-2} + \tau)^{-1} \norm{\nabla (\phi, \psi, \zeta)}_{L^2(\Omega_\e)}^2 \notag\\
			&+ C \bar{\eta} (\e^{-2} + \tau)^{-2} \norm{(\phi, \psi, \zeta)}_{L^2(\Omega_\e)}^2 
			+ \frac{\tilde{C}}{120} \norm{\nabla^2 (\psi, \zeta)}_{L^2(\Omega_\e)}^2,
		\end{align}
		where \(\tilde{C} > 0\) is a positive constant.
		
		\subsubsection*{Step 2. Estimates on \(\|\nabla^2 (\psi, \zeta)\|_{L^2(\Omega_\e)}\)}
		
		Next, we will estimate \(\|\nabla^2 (\psi, \zeta)\|_{L^2(\Omega_\e)}\). Multiplying \((\ref{high-order equs})_2\) by \(-\Delta \p_i \psi\), \((\ref{high-order equs})_3\) by \(-\Delta \p_i \zeta\), summing the result equations up and using \eqref{high-order equs}-\eqref{Q}, we find
		\begin{align}\label{high2}
			&\frac{d}{d \tau} \int_{\Omega_\e} \frac{\rho}{2} |\nabla^2 (\psi, \zeta)|^2 dy + \int_{\Omega_\e} \left(\mu |\nabla^3 \psi|^2 + (\mu+\lambda)|\nabla^2 \dv \psi|^2 + \kappa |\nabla^3 \zeta|^2\right) dy \notag\\
			\le& \int_{\Omega_\e} |\e \rho \mathbf{u} \cdot \dv \p_j \psi + \T \nabla \p_j \phi + \rho \nabla \p_j \zeta + \p_j \mathcal{R}_2 + \mathcal{Q}_2| |\Delta \p_j \psi| + \frac{|\p_\tau \rho|}{2} |\nabla^2 \psi|^2 + |\nabla \rho| |\p_\tau \nabla \psi| |\nabla^2 \psi| dy \notag\\
			&+ \int_{\Omega_\e} |\e \rho \mathbf{u} \cdot \dv \p_j \zeta + \rho \T \dv \p_j \psi + \p_j \mathcal{R}_3 + \mathcal{Q}_3| |\Delta \p_i \zeta| + \frac{|\p_\tau \rho|}{2} |\nabla^2 \zeta|^2 + |\nabla \rho| |\p_\tau \nabla \zeta| |\nabla^2 \zeta| dy \notag\\
			\le& \frac{\min\{\mu, \kappa, \mu+\lambda\}}{120} \norm{\nabla^3 (\psi, \zeta)}_{L^2(\Omega_\e)}^2 + \frac{\tilde{C}_1}{120} \int_{\Omega_\e} \T |\nabla^2 \phi|^2 dy + C \bar{\delta} \e^{-2} (\e^{-2} + \tau)^{-\frac{7}{2}}  \notag\\
			&+ C \bar{\eta} (\e^{-2} + \tau)^{-1} \norm{\nabla (\phi, \psi, \zeta)}_{L^2(\Omega_\e)}^2 
			+ C \bar{\eta} (\e^{-2} + \tau)^{-2} \norm{(\phi, \psi, \zeta)}_{L^2(\Omega_\e)}^2 
			+ \frac{\tilde{C}_1}{120} \norm{\nabla^2 (\psi, \zeta)}_{L^2(\Omega_\e)}^2,
		\end{align}
		where \(\tilde{C}_1 >0\) is a positive constant.
		Combining \eqref{high1} and \eqref{high2}, one obtains 
		\begin{equation}\label{nabla^2 phi est}
			\begin{aligned}
				&\frac{d}{d \tau} \left[\int_{\Omega_\e} \frac{2\mu+\lambda}{2\rho} \tilde{C}_1|\nabla^2 \phi|^2 + \tilde{C}_1\rho \nabla \phi \cdot \nabla^2 \phi +\frac{\rho}{2} |\nabla^2 (\psi, \zeta)|^2 dy\right] \\
				&+\frac{1}{2} \int_{\Omega_\e} \left(\tilde{C}_1\T|\nabla^2 \phi|^2 + \mu |\nabla^3 \psi|^2 + (\mu+\lambda)|\nabla^2 \dv \psi|^2 + \kappa |\nabla^3 \zeta|^2\right) dy \\
				\le& C \bar{\delta} \e^{-2} (\e^{-2} + \tau)^{-\frac{7}{2}} + C \bar{\eta} (\e^{-2} + \tau)^{-1} \norm{\nabla (\phi, \psi, \zeta)}_{L^2(\Omega_\e)}^2 
				+ C \bar{\eta} (\e^{-2} + \tau)^{-2} \norm{(\phi, \psi, \zeta)}_{L^2(\Omega_\e)}^2 \\
				&+ \frac{\tilde{C}_2}{120} \norm{\nabla^2 (\psi, \zeta)}_{L^2(\Omega_\e)}^2,
			\end{aligned}
		\end{equation}
		where \(\tilde{C}_2>0\) is a positive constant.
		According to (\ref{nabla^2 phi est}), (\ref{E_2 est}) and (\ref{non zero inequ}), we arrive at Lemma \ref{high order lemma1}. 
	\end{proof}
	
	To close the a priori estimate, we need to estimate the third order derivatives. Applying \(\p_k\), \(k = 1,2,3\) to \((\ref{high-order equs})_1\) and \(\p_i\), \(i=1,2,3\) to \((\ref{high-order equs})_2\) and \((\ref{high-order equs})_2\), one has
	\begin{equation}\label{highest-order equs}
		\left\{
		\begin{aligned}
			&\p_\tau \p_{ijk} \phi + \e \mathbf{u} \cdot \nabla \p_{ijk} \phi + \rho \dv \p_{ijk} \psi = \p_{ijk} \mathcal{R}_1 + \p_k \mathcal{Q}_1 + \mathcal{Q}_1^*, \\
			&\rho \p_\tau \p_{ij} \psi + \e \rho \mathbf{u} \cdot \nabla \p_{ij} \psi + \T \nabla \p_{ij} \phi + \rho \nabla \p_{ij} \zeta = \mu \Delta \p_{ij} \psi + (\mu+ \lambda)\nabla \dv \p_{ij} \psi + \p_{ij}\mathcal{R}_2 +  \p_i \mathcal{Q}_2 + \mathcal{Q}_2^*, \\
			&\rho \p_\tau \p_{ij} \zeta + \e \rho \mathbf{u} \cdot \nabla \p_{ij} \zeta + \rho \T \dv \p_{ij} \psi = \kappa \Delta \p_{ij} \zeta + \p_{ij} \mathcal{R}_3 + \p_i \mathcal{Q}_3 + \mathcal{Q}_3^*,
		\end{aligned}
		\right.
	\end{equation}
	where
	\begin{align*}
		&\mathcal{Q}_1^* := -\p_k (\e \mathbf{u}) \cdot \nabla \p_{ij} \phi - \p_k \rho \dv \p_{ij} \psi, \\
		&\mathcal{Q}_2^* := -\p_i \rho \p_\tau \p_j \psi - \p_i (\e \rho \mathbf{u}) \cdot \nabla \p_j \psi + \p_i \T \nabla \p_j \psi + \p_i \rho \nabla \p_j \zeta, \\
		&\mathcal{Q}_3^* := -\p_i \rho \p_\tau \p_j \zeta - \p_i (\e \rho \mathbf{u}) \cdot \nabla \p_j \zeta - \p_i (\rho \T) \dv \p_j \psi.
	\end{align*}
	A direct calculation gives that
	\begin{equation}\label{Q*}
		\begin{aligned}
			\norm{\mathcal{Q}_1^*}_{L^2(\Omega_\e)} \le& C (\bar{\delta} + \chi) (\e^{-2} + \tau)^{-\frac{1}{2}} \norm{\nabla^3 (\phi, \psi)}_{L^2(\Omega_\e)}, \\
			\norm{\mathcal{Q}_i^*}_{L^2(\Omega_\e)} \le& C (\bar{\delta} + \chi) (\e^{-2} + \tau)^{-\frac{1}{2}} \norm{\nabla^2 (\phi, \psi, \zeta)}_{H^1(\Omega_\e)}  + C (\bar{\delta} + \chi) (\e^{-2} + \tau)^{-1} \norm{\nabla (\phi, \psi, \zeta)}_{L^2(\Omega_\e)} \\
			&+ C (\bar{\delta} + \chi) (\e^{-2} + \tau)^{-\frac{3}{2}} \norm{(\phi, \psi, \zeta)}_{L^2(\Omega_\e)} + C (\bar{\delta} + \chi) \e^{-1} (\e^{-2} + \tau)^{-\frac{9}{4}},
		\end{aligned}
	\end{equation}
	for \(i = 2,3\).
	
	We summarize the estimates for the highest-order derivatives in the following result.
	\begin{Lem}\label{high order lemma2}
		Under the same assumptions as Proposition \ref{a priori estimates}, it holds that
		\begin{equation}
			\begin{aligned}
				\frac{d}{d \tau} E_4 + K_4 
				\le& C \bar{\delta} \e^{-1}(\e^{-2} + \tau)^{-\frac{5}{2}}
				+ C \bar{\eta} (\e^{-2} + \tau)^{-1} \norm{\nabla^2 (\phi, \psi, \zeta)}_{L^2(\Omega_\e)}^2
				+ C \bar{\eta} (\e^{-2} + \tau)^{-1} \norm{\nabla (\phi, \psi, \zeta)}_{L^2(\Omega_\e)}^2\\ 
				&+ C \bar{\eta} (\e^{-2} + \tau)^{-2} \norm{(\phi, \psi, \zeta)}_{L^2(\Omega_\e)}^2
				+ C \bar{\eta} \e^{-2} \sum_{i=0}^{2} (\e^{-2} + \tau)^{-(3-i)} E_i \\
				&+ C \bar{\eta} \e^{-2} [(\e^{-2} + \tau) ^{-2} G_0 + (\e^{-2} + \tau) ^{-1} G_1],
			\end{aligned}
		\end{equation}
		where
		\begin{align*}
			E_4 =& \int_{\Omega_\e} \frac{2\mu+\lambda}{2\rho} \tilde{C}_4|\nabla^3 \phi|^2 + \tilde{C}_4\rho \nabla^2 \phi \cdot \nabla^3 \phi +\frac{\rho}{2} |\nabla^3 (\psi, \zeta)|^2 dy + \tilde{C}_5 E_3, \\
			K_4 =& \frac{1}{2} \int_{\Omega_\e} \left(\tilde{C}_4\T|\nabla^3 \phi|^2 + \mu |\nabla^4 \psi|^2 + (\mu+\lambda)|\nabla^3 \dv \psi|^2 + \kappa |\nabla^4 \zeta|^2\right) dy + \tilde{C}_5 K_3.
		\end{align*}
		Here \(\tilde{C}_4\) is a suitably small constant used to ensure that \(E_3\) is positive, and \(\tilde{C}_5\) is a positive constant determined later.
	\end{Lem}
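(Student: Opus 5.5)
The proof will mirror that of Lemma \ref{high order lemma1}, one derivative higher, using the third-order system \eqref{highest-order equs} in place of \eqref{high-order equs}. It has three steps: a hyperbolic estimate for $\nabla^3\phi$, a parabolic estimate for $\nabla^3(\psi,\zeta)$, and closure with $\tilde C_5 E_3$. The forcing terms $\p_{ij}\mathcal{R}_\ell$, $\p_i\mathcal{Q}_\ell$ and $\mathcal{Q}_\ell^*$ will be controlled by \eqref{R_1}--\eqref{R_3} with $k=2$, by \eqref{Q} with $k=1$, and by \eqref{Q*}.

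\textbf{Step 1: $\nabla^3\phi$.} Multiply $(\ref{highest-order equs})_2$ by $\nabla\p_{ij}\phi$ and integrate over $\Omega_\e$. Move the time derivative off $\p_{ij}\psi$, and use $(\ref{highest-order equs})_1$ to replace $\p_\tau\nabla\p_{ij}\phi$; after an integration by parts this produces only terms of the form $\nabla^3\psi\cdot\nabla^3\psi$ plus forcing. The result is an identity for $\frac{d}{d\tau}\int\rho\,\nabla^2\psi\cdot\nabla^3\phi$ with the good term $\int\T|\nabla^3\phi|^2$ on the left.

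Next, multiply $(\ref{highest-order equs})_1$ by $\frac{2\mu+\lambda}{\rho}\p_{ijk}\phi$. The resulting cross term $-(2\mu+\lambda)\int\p_{ijk}\phi\,\p_{ijk}\dv\psi$ cancels the viscous cross term from the first identity. The convective term $\e\mathbf u\cdot\nabla\p_{ijk}\phi$ is integrated by parts into $\dv(\e\mathbf u/\rho)|\nabla^3\phi|^2$, which is bounded by $(\bar\delta+\chi)(\e^{-2}+\tau)^{-1/2}\|\nabla^3\phi\|^2$. Every remaining term is bounded by Cauchy's inequality, giving a small multiple of $\int\T|\nabla^3\phi|^2$, a small multiple of $\|\nabla^4(\psi,\zeta)\|^2$, and the lower-order quantities on the right of the lemma. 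The inhomogeneous part of \eqref{Q*}, after Cauchy, contributes $\bar\delta\e^{-2}(\e^{-2}+\tau)^{-9/2}$, which is dominated by $\bar\delta\e^{-1}(\e^{-2}+\tau)^{-5/2}$ since $(\e^{-2}+\tau)^{-1}\le\e^2$.

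\textbf{Step 2: $\nabla^3(\psi,\zeta)$.} Multiply $(\ref{highest-order equs})_2$ by $-\Delta\p_{ij}\psi$ and $(\ref{highest-order equs})_3$ by $-\Delta\p_{ij}\zeta$, then integrate by parts. This yields $\frac{d}{d\tau}\int\frac\rho2|\nabla^3(\psi,\zeta)|^2$ together with the dissipation $\mu\|\nabla^4\psi\|^2+(\mu+\lambda)\|\nabla^3\dv\psi\|^2+\kappa\|\nabla^4\zeta\|^2$. The coupling terms $\T\nabla\p_{ij}\phi$ and $\rho\T\dv\p_{ij}\psi$ are bounded by $\frac{\tilde C_4}{120}\int\T|\nabla^3\phi|^2$ plus a small multiple of the fourth-order dissipation. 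The commutator terms containing $\p_\tau\nabla^2\psi$ and $\p_\tau\nabla^2\zeta$ inside $\mathcal{Q}^*$ are handled by substituting the equations for $\p_\tau$, which converts them into spatial derivatives of order at most four, each carrying a small coefficient $|\nabla\rho|\lesssim(\bar\delta+\chi)(\e^{-2}+\tau)^{-1/2}$. Adding $\tilde C_4$ times Step 1, with $\tilde C_4$ small, makes $E_4$ positive and gives an inequality whose right-hand side contains only a small multiple of $\|\nabla^3(\psi,\zeta)\|^2$ beyond the admissible terms.

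\textbf{Step 3: closure.} Add $\tilde C_5$ times Lemma \ref{high order lemma1}. Since $K_3$ contains $\|\nabla^3(\psi,\zeta)\|^2$, the leftover $\|\nabla^3(\psi,\zeta)\|^2$ from Step 2 is absorbed once $\tilde C_5$ is large, and the right-hand side of Lemma \ref{high order lemma1} reproduces exactly the terms in the statement.

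The main obstacle is the top-order nonlinear term $\p_{ijk}\mathcal R_1$, and in particular the contributions $\nabla^3\psi\cdot\nabla\tilde\rho$ and $\nabla\phi\cdot\nabla^3\psi$. For the second, I plan to use the a priori assumption on $\|\nabla^2\phi\|_{H^1}$ together with Gagliardo--Nirenberg in $\Omega_\e$, as in Lemma \ref{Z est}, to obtain a factor $\chi$. I also need the non-zero-mode contributions in $\mathcal{I}^{(3)}$ to be bounded by the exponentially small bounds of Lemma \ref{non-zero est for well}, so that they can be absorbed into the $\bar\delta\e^{-1}(\e^{-2}+\tau)^{-5/2}$ term.
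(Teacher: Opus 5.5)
Your three steps follow the paper's proof essentially step for step. The multipliers are the same ($\nabla\p_{ij}\phi$ and $\frac{2\mu+\lambda}{\rho}\p_{ijk}\phi$, with the viscous cross-term cancellation; then $-\Delta\p_{ij}\psi$ and $-\Delta\p_{ij}\zeta$), and the closure with $\tilde C_5 E_3$ is the same. However, the constant bookkeeping in your Step 2 fails as you state it.

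The coupling term $-\int_{\Omega_\e}\T\,\nabla\p_{ij}\phi\cdot\Delta\p_{ij}\psi\,dy$ has an $O(1)$ coefficient. Up to terms carrying $\nabla\T$, it equals $-\int_{\Omega_\e}\T\,\nabla\p_{ij}\phi\cdot\nabla\dv\p_{ij}\psi\,dy$, so it pairs $\nabla^3\phi$ with a fourth derivative of $\psi$. Cauchy's inequality gives at best $a\norm{\nabla^4\psi}_{L^2(\Omega_\e)}^2+\frac{C}{a}\norm{\nabla^3\phi}_{L^2(\Omega_\e)}^2$. You therefore cannot have both a small multiple of the fourth-order dissipation and the coefficient $\tilde C_4/120$ in front of $\int\T|\nabla^3\phi|^2$ with $\tilde C_4$ small. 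Since $\tilde C_4$ times Step 1 is the only source of $\nabla^3\phi$-dissipation in $K_4$ ($K_3$ contains only $\nabla^2\phi$), a small $\tilde C_4$ leaves an uncontrolled $C\norm{\nabla^3\phi}_{L^2(\Omega_\e)}^2$.

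The repair is to take $\tilde C_4$ large, fixed by the Cauchy constant in Step 2. This is effectively how the paper's proof uses it: $\tilde C_4$ is introduced in Step 2 as the constant in front of $\int\T|\nabla^3\phi|^2$, and Step 1 is then multiplied by it, even though the statement calls $\tilde C_4$ small.

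With $\tilde C_4$ large, Step 1 contributes $C\tilde C_4\norm{\nabla^3(\psi,\zeta)}_{L^2(\Omega_\e)}^2$, coming from $\int\p_k(\rho\p_{ij}\psi_k)\rho\dv\p_{ij}\psi$ and $\int\rho\nabla\p_{ij}\zeta\cdot\nabla\p_{ij}\phi$. This is not small, but it is absorbed by the $\mu|\nabla^3\psi|^2+\kappa|\nabla^3\zeta|^2$ part of $\tilde C_5K_3$ once $\tilde C_5\gg\tilde C_4$.

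Positivity of $E_4$ likewise comes from $\tilde C_5E_3$, not from smallness of $\tilde C_4$. The $\frac{\rho}{2}|\nabla^2\psi|^2$ term in $E_3$ dominates the cross term $\tilde C_4\rho\nabla^2\psi\cdot\nabla^3\phi$; the paper's $\nabla^2\phi\cdot\nabla^3\phi$ is a typo for this.

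Alternatively, you can keep $\tilde C_4$ small by symmetrizing. Substitute $\dv\p_{ij}\psi=-\rho^{-1}(\p_\tau\p_{ij}\phi+\e\mathbf u\cdot\nabla\p_{ij}\phi-\p_{ij}\mathcal R_1-\mathcal Q_1)$ from $(\ref{high-order equs})_1$. This turns the coupling into $\frac{d}{d\tau}\int_{\Omega_\e}\frac{\T}{2\rho}|\nabla^3\phi|^2dy$ plus terms with small coefficients.

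Two minor points:
\begin{itemize}
\item The product $\nabla\phi\cdot\nabla^3\psi$ comes from $\p_k\mathcal Q_1$ and $\mathcal Q_1^*$, already covered by \eqref{Q} and \eqref{Q*}. It does not come from $\p_{ijk}\mathcal R_1$, which is linear in the perturbation.
\item The $\mathcal I^{(3)}$ terms never enter this lemma. It estimates the full perturbation on $\Omega_\e$, and the zero/non-zero mode information enters only through the right-hand side of Lemma \ref{high order lemma1}.
\end{itemize}
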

	
	\begin{proof}
		\subsubsection*{Step 1. Estimates on \(\|\nabla^3 \phi\|_{L^2(\Omega_\e)}\)}
		
		As in the proof of the previous lemma, we first estimate  \(\|\nabla^3 \phi\|_{L^2(\Omega_\e)}\). Multiplying \((\ref{highest-order equs})_2\) by \(\nabla \p_{ij} \phi\) yields that
		\begin{equation}\label{nabla^3 phi equ1}
			\begin{aligned}
				&\frac{d}{d \tau} \int_{\Omega_\e} \rho \p_{ij} \psi_k \p_{ijk} \phi dy + \int_{\Omega_\e} \T |\p_{ijk} \phi|^2 dy \\
				=&\int_{\Omega_\e} \p_\tau \rho \p_{ij} \psi_k \p_{ijk} \phi dy + \int_{\Omega_\e} \p_k (\rho \p_{ij} \psi_k) (\e \mathbf{u} \cdot \nabla \p_{ij} \phi + \rho \dv \p_{ij} \psi - \p_{ij} \mathcal{R}_1 - \mathcal{Q}_1) dy \\
				&- \int_{\Omega_\e} (\e \rho \mathbf{u} \cdot \nabla \p_{ij} \psi + \rho \nabla \p_{ij} \zeta - \p_{ij} \mathcal{R}_2 - \p_i \mathcal{Q}_2 - \mathcal{Q}_2^*) \cdot \nabla \p_{ij} \phi dy + (2\mu + \lambda) \int_{\Omega_\e} \p_{ijk} \phi \p_{ijk} \dv \psi dy.
			\end{aligned}
		\end{equation}
		On the other hand, multiplying \((\ref{highest-order equs})_1\) by \(\frac{\p_{ijk} \phi}{\rho}\) yields that
		\begin{equation}\label{nabla^3 phi equ2}
			\begin{aligned}
				\frac{d}{d \tau} \int_{\Omega_\e} \frac{|\p_{ijk} \phi|^2}{2\rho} dy =& - \int_{\Omega_\e} \p_{ijk} \phi \p_{ijk} \dv \psi dy + \int_{\Omega_\e} \left[\p_\tau \left(\frac{1}{2\rho}\right) + \dv \left(\frac{\e \mathbf{u}}{2\rho}\right)\right] |\p_{ijk} \phi|^2 dy \\
				& + \int_{\Omega_\e} (\p_{ijk} \mathcal{R}_1 + \p_k \mathcal{Q}_1 + \mathcal{Q}_1^*) \frac{\p_{ijk} \phi}{\rho} dy.
			\end{aligned}
		\end{equation}
		Taking \(\sum_{i,j,k=1}^{3} (\ref{nabla^3 phi equ1}) + (2\mu + \lambda)(\ref{nabla^3 phi equ2})\) and using (\ref{R_1}) - (\ref{Q}) and (\ref{Q*}), we find
		\begin{equation*}
			\begin{aligned}
				&\frac{d}{d \tau} \left[\int_{\Omega_\e} \frac{2\mu+\lambda}{2\rho} |\nabla^3 \phi|^2 + \rho \nabla^2 \phi \cdot \nabla^3 \phi dy\right] + \int_{\Omega_\e} \T|\nabla^3 \phi|^2 dy \\
				\le& \frac{1}{120} \int_{\Omega_\e} \T |\nabla^3 \phi|^2 dy + C \bar{\delta} \e^{-2} (\e^{-2} + \tau)^{-\frac{9}{2}} + \frac{\tilde{C}_3}{120} \norm{\nabla^3 (\psi, \zeta)}_{L^2(\Omega_\e)}^2 + C \bar{\eta} (\e^{-2} + \tau)^{-1} \norm{\nabla^2 (\phi, \psi, \zeta)}_{L^2(\Omega_\e)}^2 \\
				&+ C \bar{\eta} (\e^{-2} + \tau)^{-1} \norm{\nabla (\phi, \psi, \zeta)}_{L^2(\Omega_\e)}^2 + C \bar{\eta} (\e^{-2} + \tau)^{-3} \norm{(\phi, \psi, \zeta)}_{L^2(\Omega_\e)}^2,
			\end{aligned}
		\end{equation*}
		where \(\tilde{C}_3\) is a positive constant.
		
		\subsubsection*{Step 2. Estimates on \(\|\nabla^3 (\psi, \zeta)\|_{L^2(\Omega_\e)}\)}
		
		Next, we will estimate \(\|\nabla^3 (\psi, \zeta)\|_{L^2(\Omega_\e)}\). Multiplying  \((\ref{highest-order equs})_2\) by \(-\Delta \p_{ij} \psi\), \((\ref{highest-order equs})_3\) by \(-\Delta \p_{ij} \zeta\) and adding them together, one has
		\begin{align*}
			&\frac{d}{d \tau} \int_{\Omega_\e}\frac{\rho}{2} |\nabla^3 (\psi, \zeta)|^2 dy + \int_{\Omega_\e} \left(\mu |\nabla^4 \psi|^2 + (\mu+\lambda)|\nabla^3 \dv \psi|^2 + \kappa |\nabla^4 \zeta|^2\right) dy \\
			\le& \int_{\Omega_\e} |\e \rho \mathbf{u} \cdot \nabla \p_{ij} \psi + \T \nabla \p_{ij} \psi + \rho \nabla \p_{ij} \zeta + \p_{ij} \mathcal{R}_2 + \p_i \mathcal{Q}_2 + \mathcal{Q}_2^*| |\Delta \p_{ij} \psi| + \frac{|\p_\tau \rho|}{2} |\nabla^3 \psi|^2 + |\nabla \rho| |\p_\tau \nabla^2 \psi| |\nabla^3 \psi| dy \\
			&+ \int_{\Omega_\e} |\e \rho \mathbf{u} \cdot \nabla \p_{ij} \zeta + \rho \T \dv \p_{ij}\psi + \p_{ij} \mathcal{R}_3 + \p_i \mathcal{Q}_3 + \mathcal{Q}_3^*| |\Delta \p_{ij} \zeta| 
			+ \frac{|\p_\tau \rho|}{2} |\nabla^3 \zeta|^2 + |\nabla \rho| |\p_\tau \nabla^2 \zeta| |\nabla^3 \zeta| dy \\
			\le& \frac{\min\{\mu, \kappa, \mu+ \lambda\}}{120} \norm{\nabla^4 (\psi, \zeta)}_{L^2(\Omega_\e)}^2 + \frac{\tilde{C}_4}{120} \int_{\Omega_\e} \T |\nabla^3 \phi|^2 dy + \frac{\tilde{C}_4}{120} \norm{\nabla^3 (\psi, \zeta)}_{L^2(\Omega_\e)}^2
			+ C \bar{\delta} \e^{-2} (\e^{-2} + \tau)^{-\frac{9}{2}} \\
			&+ C \bar{\eta} (\e^{-2} + \tau)^{-1} \norm{\nabla^2 (\phi, \psi, \zeta)}_{L^2(\Omega_\e)}^2 
			+ C \bar{\eta} (\e^{-2} + \tau)^{-1} \norm{\nabla (\phi, \psi, \zeta)}_{L^2(\Omega_\e)}^2
			+ C \bar{\eta} (\e^{-2} + \tau)^{-2} \norm{(\phi, \psi, \zeta)}_{L^2(\Omega_\e)}^2,
		\end{align*}
		where \(\tilde{C}_4\) is a positive constant. Combining the above two inequalities leads to
		\begin{equation}
			\begin{aligned}
				&\frac{d}{d \tau} \left[\int_{\Omega_\e} \frac{2\mu+\lambda}{2\rho} \tilde{C}_4|\nabla^3 \phi|^2 + \tilde{C}_4\rho \nabla^2 \phi \cdot \nabla^3 \phi +\frac{\rho}{2} |\nabla^3 (\psi, \zeta)|^2 dy\right] \\
				&+\frac{1}{2} \int_{\Omega_\e} \left(\tilde{C}_4\T|\nabla^3 \phi|^2 + \mu |\nabla^4 \psi|^2 + (\mu+\lambda)|\nabla^3 \dv \psi|^2 + \kappa |\nabla^4 \zeta|^2\right) dy \\
				\le& C \bar{\delta} \e^{-2} (\e^{-2} + \tau)^{-\frac{9}{2}} + C \bar{\eta} (\e^{-2} + \tau)^{-1} \norm{\nabla^2 (\phi, \psi, \zeta)}_{L^2(\Omega_\e)}^2 
				+ C \bar{\eta} (\e^{-2} + \tau)^{-1} \norm{\nabla (\phi, \psi, \zeta)}_{L^2(\Omega_\e)}^2 \\
				&+ C \bar{\eta} (\e^{-2} + \tau)^{-2} \norm{(\phi, \psi, \zeta)}_{L^2(\Omega_\e)}^2 + + \frac{\tilde{C}_5}{120} \norm{\nabla^3 (\psi, \zeta)}_{L^2(\Omega_\e)}^2,
			\end{aligned}
		\end{equation}
		where \(\tilde{C}_5\) is a positive constant. Combining the above inequality and Lemma \ref{high order lemma1}, we obtain Lemma \ref{high order lemma2}.
	\end{proof}
	
	\subsection*{Decay rate for the high-order derivatives}
	
	\begin{Lem}\label{decay for high}
		Under the same assumptions as Proposition \ref{a priori estimates}, it holds that
		\begin{equation}
			\begin{aligned}
				\norm{\nabla^2 (\phi, \psi, \zeta)}_{H^1(\Omega_\e)}^2 \le C\bar{\delta} \e^{-1} (\e^{-2} + \tau)^{-\frac{3}{2}}.
			\end{aligned}
		\end{equation}
	\end{Lem}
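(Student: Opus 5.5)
The estimate will come from integrating the differential inequality of Lemma \ref{high order lemma2} against a time weight, in the same way Lemma \ref{zero mode est} was obtained from Lemma \ref{main lemma}.

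First, I check that $E_4$ is equivalent to the quantity to be bounded. Take $\tilde C_4$ small so that the cross term $\tilde C_4\rho\nabla^2\phi\cdot\nabla^3\phi$ is absorbed by the two squares. Then
\[
E_4 \ge c\norm{\nabla^2(\phi,\psi,\zeta)}_{H^1(\Omega_\e)}^2 .
\]
The right side of Lemma \ref{high order lemma2} involves lower-order quantities, and each of these is already known to decay.

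\begin{itemize}
\item $\norm{(\phi,\psi,\zeta)}_{L^2(\Omega_\e)}^2$ and $\norm{\nabla(\phi,\psi,\zeta)}_{L^2(\Omega_\e)}^2$: split into zero and non-zero modes via \eqref{relation1}. The zero-mode part gives $\e^{-2}\norm{\p_{y_1}^k(\mathring\phi,\mathring\psi,\mathring\zeta)}_{L^2(\R)}^2$, which by Lemma \ref{zero mode est} is at most $C\bar\delta\e^{-1}(\e^{-2}+\tau)^{-\frac{2k+1}{2}}$ (with $\sqrt\eta$ absorbed). The non-zero part is exponentially small by Lemma \ref{non-zero est for well}.
\item $E_i$ and $G_i$: the time-integrated bounds from the proof of Lemma \ref{zero mode est}, for instance \eqref{lab1}, show that $\e^{-2}\sum_i(\e^{-2}+\tau)^{-(3-i)}E_i$ decays like $\bar\delta\e^{-1}(\e^{-2}+\tau)^{-5/2}$ up to constants. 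The same bounds control the $G_0,G_1$ terms after integration in time.
\item The term $C\bar\eta(\e^{-2}+\tau)^{-1}\norm{\nabla^2(\phi,\psi,\zeta)}_{L^2}^2$: its factor $\norm{\nabla^2(\phi,\psi,\zeta)}_{L^2}^2$ is itself bounded by $E_4$, so this term is absorbed as $C\bar\eta(\e^{-2}+\tau)^{-1}E_4$.
\end{itemize}

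Second, I integrate. Since $K_4$ dominates the $\nabla^3\phi$ and $\nabla^4(\psi,\zeta)$ dissipation but not $\norm{\nabla^2(\phi,\psi,\zeta)}^2$ directly, I do not rely on any spectral-gap bound $K_4\gtrsim E_4$. Instead I multiply the inequality by $(\e^{-2}+\tau)^{3/2}$ and integrate over $[0,\tau]$. Differentiating the weight produces
\[
\tfrac32\int_0^\tau(\e^{-2}+s)^{1/2}E_4\,ds .
\]
The $\nabla^3$ part of $E_4$ is controlled by $K_4$. The $\nabla^2$ part is controlled by $\norm{\nabla^2\phi}^2$ and $\norm{\nabla^3(\psi,\zeta)}^2$, which lie in $K_3\subset K_4$ through the $\tilde C_5K_3$ term, together with $\norm{\nabla^2(\psi,\zeta)}^2$. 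For this last piece I split into zero and non-zero modes: the zero-mode part is $\e^{-2}\norm{\p_{y_1}^2\mathring{(\cdot)}}^2\lesssim \e^{-2}K_1$, whose weighted integral is bounded by the zero-mode estimates; the non-zero part is exponentially small. The forcing $\bar\delta\e^{-1}(\e^{-2}+s)^{-5/2}$ times the weight $(\e^{-2}+s)^{3/2}$ integrates to $O(\bar\delta\e^{-1}\cdot\e^{2})$, which is harmless. The initial term is $\e^{-3}E_4(0)\lesssim \eta$ by \eqref{initial data} and \eqref{relation2}. Gronwall with the integrable factor $\bar\eta(\e^{-2}+\tau)^{-1}$, whose weighted effect stays bounded because $\bar\eta$ is small, then gives $(\e^{-2}+\tau)^{3/2}E_4\le C\bar\delta\e^{-1}$.

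The main obstacle is the bookkeeping of $\e$ powers. The terms carrying $\e^{-2}$, namely the factor $\tilde C_2\e^{-2}E_2$ inside $E_3$ and the $\e^{-2}G_i$ terms, must come out as $\e^{-1}$ rather than $\e^{-3}$. This relies on the zero-mode bounds carrying a factor $\e$, as in Lemma \ref{zero mode est}, together with $(\e^{-2}+\tau)^{-1}\le\e^2$. If that accounting falls short, I would use the crude inequality $(\e^{-2}+\tau)^{-1/2}\le\e$ on the lowest-order terms to gain the missing powers.
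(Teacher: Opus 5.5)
You are right to integrate the inequality of Lemma~\ref{high order lemma2} against a time weight, but the weight $(\e^{-2}+\tau)^{3/2}$ is exactly critical, and the integration step does not give a $\tau$-uniform bound. The forcing term gives
\[
\int_0^\tau \bar\delta\e^{-1}(\e^{-2}+s)^{-\frac52}(\e^{-2}+s)^{\frac32}\,ds=\bar\delta\e^{-1}\ln\left(1+\e^2\tau\right),
\]
not $O(\bar\delta\e^{-1}\e^{2})$. The same borderline integrand $\e^{-1}(\e^{-2}+s)^{-1}$ also comes out of the weight-derivative term $\frac32\int_0^\tau(\e^{-2}+s)^{1/2}E_4\,ds$:
\begin{itemize}
\item Through $\tilde C_5E_3$, the functional $E_4$ contains $\tilde C_5\tilde C_2\e^{-2}E_2$, which your list of pieces omits.
\item With the available decay $E_2\le C\e(\e^{-2}+s)^{-3/2}$, this piece contributes exactly that integrand.
\item It cannot be absorbed by $K_4$. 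The only related term there is $\e^{-2}(K_2+G_2)$, which is one $y_1$-derivative higher (or, for $G_2$, spatially localized), and there is no Poincar\'e inequality in $y_1$.
\end{itemize}
Finally, $\bar\eta(\e^{-2}+\tau)^{-1}$ is not integrable on $[0,\infty)$, so Gr\"onwall yields the unbounded factor $(1+\e^2\tau)^{C\bar\eta}$, not a bounded one. At best your argument gives $E_4\lesssim\e^{-1}(\e^{-2}+\tau)^{-3/2}$ times a logarithmic (or small-power) growth factor, which is not the lemma.

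The missing idea is to overshoot the weight, as the paper does.
\begin{itemize}
\item Using the decay already established for the lower-order quantities, first reduce Lemma~\ref{high order lemma1} to $\frac{d}{d\tau}E_3+K_3\le C\bar\delta\e^{-1}(\e^{-2}+\tau)^{-5/2}$.
\item Multiply by $(\e^{-2}+\tau)^2$ and let the right-hand side grow. The forcing then integrates to $C\bar\delta\e^{-1}(\e^{-2}+\tau)^{1/2}$.
\item The weight-derivative term $2\int_0^\tau(\e^{-2}+s)E_3\,ds$ is bounded by the same quantity. For this, use the decay of $E_2$ and of the first-order terms, bounds like \eqref{lab1}, and absorb the genuinely second-order pieces into $\int_0^\tau(\e^{-2}+s)^2K_3\,ds$, since $\e^{-2}+s\le\e^2(\e^{-2}+s)^2$.
\item The initial term $\e^{-4}E_3(0)$ is of order $\eta\e^{-2}\le\eta\e^{-1}(\e^{-2}+\tau)^{1/2}$.
\item Dividing by $(\e^{-2}+\tau)^2$ gives the rate $(\e^{-2}+\tau)^{-3/2}$ with no logarithm.
\item Repeat the argument for $E_4$. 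Use the new bound on $E_3$ for the term $C\bar\eta(\e^{-2}+\tau)^{-1}\norm{\nabla^2(\phi,\psi,\zeta)}_{L^2}^2$ instead of Gr\"onwall.
\end{itemize}

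There is also an index slip in your treatment of $\norm{\nabla^2(\psi,\zeta)}^2$. Its zero-mode part $\e^{-2}\norm{\p_{y_1}^2\mathring{\psi}}_{L^2(\R)}^2$ corresponds to $\e^{-2}K_2$, not $\e^{-2}K_1$. The $K_2$ term lies in $K_3\subset K_4$ and is absorbable; with $K_1$ the weighted integral would again diverge logarithmically.
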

	
	\begin{proof}
		According to  Lemma \ref{main lemma}, Lemma \ref{non zero est} and Lemma \ref{high order lemma1}, it can be seen that
		\begin{align}\label{d E_3 est}
			&\frac{d}{d \tau} E_3 + K_3 \le C \bar{\delta} \e^{-1} (\e^{-2} + \tau)^{-\frac{5}{2}}.
		\end{align}
		Multiplying (\ref{d E_3 est}) by \((\e^{-2} + \tau)^2\), integrating on \([0,\tau]\) and using the expression for \(E_3\) and (\ref{lab1}), one has
		\begin{align*}
			(\e^{-2} + \tau)^2 E_3 + \int_{0}^{\tau}(\e^{-2} + s)^2 K_3 ds &\le \e^{-4}E_3(0)+ C\int_{0}^{\tau}(\e^{-2} + s) E_3 ds + C \bar{\delta} \e^{-1} (\e^{-2} + \tau)^{\frac{1}{2}} \\
			&\le \e^{-4}E_3(0) + C\bar{\delta} \e^{-1} (\e^{-2} + \tau)^{\frac{1}{2}},
		\end{align*}
		which leads to 
		\begin{align}\label{E_3 est}
			E_3 \le C\bar{\delta} \e^{-1} (\e^{-2} + \tau)^{-\frac{3}{2}}, \qquad \int_{0}^\tau (\e^{-2} + s)^2 K_3 ds \le C\bar{\delta} \e^{-1} (\e^{-2} + \tau)^{\frac{1}{2}},
		\end{align}
		By a same argument and using (\ref{E_3 est}), we find 
		\begin{align}\label{E_4 est}
			E_4 \le C\bar{\delta} \e^{-1} (\e^{-2} + \tau)^{-\frac{3}{2}},
		\end{align}
		then we arrive at Lemma \ref{decay for high}. 
	\end{proof}
	\subsection*{Proof of Proposition \ref{a priori estimates}}
	Combining Lemma \ref{zero mode est}, Lemma \ref{non-zero est for well} and Lemma \ref{decay for high}, we have finished the proof of Proposition \ref{a priori estimates}.
	
	\section{Low Mach number limit for moderately ill-prepared initial data} \label{section4}
	
	%In this section, we rewrite (\ref{nondimensional NS Eqs}) as 
	%\begin{equation} \label{nondimensional NS Eqs for ill data}
	%\left\{
	%\begin{aligned}
	%&\partial_t \rho^{\e} + \text{div}(\rho^{\e} \mathbf{u}^{\e})=0, \\
	%& \rho^{\e} (\partial_t \mathbf{u}^{\e} + \mathbf{u}^\e \cdot \nabla \mathbf{u}^\e) + \frac{1}{\e^2} (\rho^\e \nabla \T^\e + \T^\e \nabla \rho^\e) = \mu \Delta \mathbf{u}^\e + (\mu + \lambda) \nabla \dv \mathbf{u}^\e,  \\
	%&\rho^{\e} \p_t \T^\e + \rho^\e \mathbf{u}^\e \cdot \nabla \T^\e + \rho^\e \T^\e \dv \mathbf{u}^\e = \kappa \Delta \T^\e, \\
	%&(\rho^\e , \mathbf{u}^\e, \T^\e)(x,0) = (\rho^\e_0, \mathbf{u}^\e_0, \T^\e_0).
	%\end{aligned}
	%\right.
	%\end{equation}

	\subsection{Estimates for the non-zero modes}
	In this subsection, We establish estimates for the non-zero mode variables. 
	%The key lemma can be summarized as follows:
	%\begin{Lem}
	%	Under the assumptions of Theorem \ref{Uniform estimates Thm for ill-prepared initial data} and (\ref{non-zero initial data for ill}), it holds that
	%	\begin{align}\label{non-zero mode est for ill}
		%		\sup_{0 \le t \le T}\norm{(\rho_{\neq}, \mathbf{u}_{\neq}, \T_{\neq})(t)}_{H^s}^2
		%		\le e^{-C \e^{-4}},
		%	\end{align}
	%	where \(C\) is a positive constant.
	%\end{Lem}
	%\begin{proof}
	For convenience, we omit the superscript \(\varepsilon\) on the variables. First, we establish the equivalence of the norms of the variables under the transformation in (\ref{transformation1 for ill}).
	According to (\ref{transformation1 for ill}), for \(1\le q \le \infty\), we can see that
	\begin{align*}
		&\|p_{\neq}\|_{L^q(\Omega)} = \|\mathbf{D}_{\neq} [e^{\e \underline{\mathring{p}}} (e^{\e \underline{p}_{\neq}}-1)]\|_{L^q(\Omega)} \le C \|\e \underline{p}_{\neq}\|_{L^q(\Omega)},
	\end{align*}
	and
	\begin{align*}
		&\|\e \underline{p}_{\neq}\|_{L^q(\Omega)} = \|\mathbf{D}_{\neq} (\ln{p})\|_{L^q(\Omega)} = \norm{\ln{\left( 1 + \frac{p_{\neq}}{\mathbf{D}_0 p}\right)} - \mathbf{D}_{0} \left[\ln{\left( 1 + \frac{p_{\neq}}{\mathbf{D}_0 p}\right)}\right]}_{L^q(\Omega)} \le C \|p_{\neq}\|_{L^q(\Omega)},
	\end{align*}
	which means that \(\|p_{\neq}\|_{L^q(\Omega)}\) can be equivalent to \(\|\e \underline{p}_{\neq}\|_{L^q(\Omega)}\). By a similar argument, we also find that \(\|\T_{\neq}\|_{L^q(\Omega)}\) is equivalent to \(\|\underline{\T}_{\neq}\|_{L^q(\Omega)}\), and \(\|\rho_{\neq}^\e\|_{L^q(\Omega)}\) is equivalent to \(\|\e \underline{p}_{\neq} - \underline{\T}_{\neq}\|_{L^q(\Omega)}\).
	
	Then, we will prove (\ref{non-zero mode est for ill}). 
	%Applying \(\mathbf{D}_{\neq}\) to (\ref{nondimensional NS Eqs for ill data}), we obtain the non-zero mode system as follows:
	%\begin{equation}\label{non-zero system}
	%	\left\{
	%	\begin{aligned}
		%	&\p_t \rho_{\neq} + \mathring{\mathbf{u}} \cdot \nabla \rho_{\neq} + \mathring{\rho} \dv \mathbf{u}_{\neq} = f_{1\neq}, \\
		%	&\mathring{\rho} \p_t \mathbf{u}_{\neq} + \mathbf{D}_0 (\rho \mathbf{u}) \cdot \nabla \mathbf{u}_{\neq} +\frac{1}{\e^2}\left(\mathbf{D}_{\neq} (\rho \nabla \T) + \mathbf{D}_{\neq} (\T \nabla \rho)\right) = \mu \Delta \mathbf{u}_{\neq} + (\lambda + \mu) \nabla \dv \mathbf{u}_{\neq} + \mathbf{f}_{2\neq}, \\
		%	&\mathring{\rho} \p_t \T_{\neq} + \mathbf{D}_0 (\rho \mathbf{u}) \cdot \nabla \T_{\neq} + \mathbf{D}_0 (\rho \T) \dv \mathbf{u}_{\neq} = \kappa \Delta \T_{\neq} + f_{3\neq},
		%	\end{aligned}
	%	\right.
	%\end{equation}
	%where
	%\begin{align*}
	%	&f_{1\neq} := \mathring{\mathbf{u}} \cdot \nabla \rho_{\neq} - \mathbf{D}_{\neq} (\mathbf{u} \cdot \nabla \rho) + \mathring{\rho} \dv \mathbf{u}_{\neq} - \mathbf{D}_{\neq} (\rho \dv \mathbf{u}), \\
	%	&\mathbf{f}_{2\neq} :=\mathring{\rho} \p_t \mathbf{u}_{\neq} - \mathbf{D}_{\neq} (\rho \p_t \mathbf{u}) + \mathbf{D}_0 (\rho \mathbf{u}) \cdot \nabla \mathbf{u}_{\neq} - \mathbf{D}_{\neq} (\rho \mathbf{u} \cdot \nabla \mathbf{u}), \\
	%	&f_{3\neq} := \mathring{\rho} \p_t \T_{\neq} - \mathbf{D}_{\neq} (\rho \p_t \T) + \mathbf{D}_0 (\rho \mathbf{u}) \cdot \nabla \T_{\neq} - \mathbf{D}_{\neq} (\rho \mathbf{u} \cdot \nabla \T) + \mathbf{D}_0 (\rho \T) \dv \mathbf{u}_{\neq} - \mathbf{D}_{\neq} (\rho \mathbf{u} \dv \mathbf{u}).
	%\end{align*}
	For \(s \ge 4\) and \(\p^\alpha = \p^{\alpha_1}_{x_1} \p^{\alpha_2}_{x_2} \p^{\alpha_3}_{x_3}\), we apply \(\p^\alpha, (|\alpha| = s)\) to (\ref{non-zero system}) to obtain that
	\begin{equation}\label{p^alpha non-zero system}
		\left\{
		\begin{aligned}
			&\partial_t\partial^\alpha\rho_{\neq}
			+\mathring{\mathbf u}\cdot\nabla\partial^\alpha\rho_{\neq}
			+\mathring{\rho}\operatorname{div}\partial^\alpha\mathbf u_{\neq}
			=F_{1,\alpha},\\
			&\mathring{\rho}\partial_t\partial^\alpha\mathbf u_{\neq}
			+\mathbf D_0(\rho\mathbf u)\cdot
			\nabla\partial^\alpha\mathbf u_{\neq}
			+\frac{1}{\varepsilon^2}\mathbf D_{\neq}
			\left(
			\rho\nabla\partial^\alpha\theta
			+\theta\nabla\partial^\alpha\rho
			\right)
			=\mu\Delta\partial^\alpha\mathbf u_{\neq}
			+(\lambda+\mu)\nabla\operatorname{div}
			\partial^\alpha\mathbf u_{\neq}
			+\mathbf F_{2,\alpha},
			\\
			&\mathring{\rho}\partial_t\partial^\alpha\theta_{\neq}
			+\mathbf D_0(\rho\mathbf u)\cdot
			\nabla\partial^\alpha\theta_{\neq}
			+\mathbf D_0(\rho\theta)\operatorname{div}
			\partial^\alpha\mathbf u_{\neq}
			=\kappa\Delta\partial^\alpha\theta_{\neq}
			+ \e^2 \nabla \mathring{\mathbf{u}} : \mathbb{S}(\p^\alpha \mathbf{u}_{\neq})
			+F_{3,\alpha}.
		\end{aligned}
		\right.
	\end{equation}
	where
	\begin{align*}
		&F_{1,\alpha}
		=\partial^\alpha f_{1\neq}
		-[\partial^\alpha,\mathring{\mathbf u}]
		\cdot\nabla\rho_{\neq}
		-[\partial^\alpha,\mathring{\rho}]
		\operatorname{div}\mathbf u_{\neq}, \\
		&\mathbf F_{2,\alpha}
		=\partial^\alpha\mathbf f_{2\neq}
		-[\partial^\alpha,\mathring{\rho}]
		\partial_t\mathbf u_{\neq}
		-[\partial^\alpha,\mathbf D_0(\rho\mathbf u)]
		\cdot\nabla\mathbf u_{\neq}-\frac{1}{\varepsilon^2}\mathbf D_{\neq}
		\left(
		[\partial^\alpha,\rho]\nabla\theta
		+[\partial^\alpha,\theta]\nabla\rho
		\right), \\
		&F_{3,\alpha}
		=\partial^\alpha f_{3\neq}
		-[\partial^\alpha,\mathring{\rho}]
		\partial_t\theta_{\neq}
		-[\partial^\alpha,\mathbf D_0(\rho\mathbf u)]
		\cdot\nabla\theta_{\neq}
		-[\partial^\alpha,\mathbf D_0(\rho\theta)]
		\operatorname{div}\mathbf u_{\neq} + \e^2 [\p^\alpha, \nabla \mathring{\mathbf{u}}] :\mathbb{S}(\mathbf{u}_{\neq}).
	\end{align*}
	Multiplying \((\ref{p^alpha non-zero system})_1\) by \(\partial^\alpha \rho_{\neq}\), \((\ref{p^alpha non-zero system})_2\) by \(\partial^\alpha \mathbf{u}{\neq}\), \((\ref{p^alpha non-zero system})_3\) by \(\partial^\alpha  \theta{\neq}\) and adding the result equations together, we find
	\begin{equation}\label{equ1 in ill}
		\begin{aligned}
			&\frac{1}{2}\partial_t
			\left[
			|\partial^\alpha\rho_{\neq}|^2
			+\mathring{\rho}\left(
			|\partial^\alpha\mathbf u_{\neq}|^2
			+|\partial^\alpha\theta_{\neq}|^2
			\right)
			\right]
			+\mu |\nabla \p^\alpha\mathbf{u}_{\neq}|^2 + (\mu+\lambda) |\dv \p^\alpha\mathbf{u}_{\neq}|^2 +\kappa |\nabla \p^\alpha\T_{\neq}|^2 \\
			&\qquad+\frac12\mathring{\mathbf u}\cdot\nabla
			|\partial^\alpha\rho_{\neq}|^2
			+\frac12\mathbf D_0(\rho\mathbf u)\cdot\nabla
			\left(
			|\partial^\alpha\mathbf u_{\neq}|^2
			+|\partial^\alpha\theta_{\neq}|^2
			\right)
			\\
			&\qquad
			+\left(
			\mathring{\rho}\,\partial^\alpha\rho_{\neq}
			+\mathbf D_0(\rho\theta)\,\partial^\alpha\theta_{\neq}
			\right)
			\operatorname{div}\partial^\alpha\mathbf u_{\neq}
			+\frac{1}{\varepsilon^2}
			\mathbf D_{\neq}\left(
			\rho\nabla\partial^\alpha\theta
			+\theta\nabla\partial^\alpha\rho
			\right)\cdot\partial^\alpha\mathbf u_{\neq}
			\\
			=& \frac12\partial_t\mathring{\rho}
			\left(
			|\partial^\alpha\mathbf u_{\neq}|^2
			+|\partial^\alpha\theta_{\neq}|^2
			\right)
			+ \e^2 \nabla \mathring{\mathbf{u}} : \mathbb{S}(\p^\alpha \mathbf{u}_{\neq}) \p^\alpha \T_{\neq}
			+F_{1,\alpha}\partial^\alpha\rho_{\neq}
			+\mathbf F_{2,\alpha}\cdot\partial^\alpha\mathbf u_{\neq}
			+F_{3,\alpha}\partial^\alpha\theta_{\neq} + \dv(...).
		\end{aligned}
	\end{equation}
	By integration by parts, one can verify that
	\begin{align}\label{est1 in ill}
		&\frac{1}{2} \int_{\Omega} \mathring{\mathbf u}\cdot\nabla
		|\partial^\alpha\rho_{\neq}|^2
		+\mathbf D_0(\rho\mathbf u)\cdot\nabla
		\left(
		|\partial^\alpha\mathbf u_{\neq}|^2
		+|\partial^\alpha\theta_{\neq}|^2
		\right) dx \le C \norm{(\p^\alpha \rho_{\neq}, \p^\alpha \mathbf{u}_{\neq}, \p^\alpha \T_{\neq})}_{L^2(\Omega)}^2.
	\end{align}
	Using the Cauchy inequality, we have
	\begin{align}
		&\int_{\Omega} \left(
		\mathring{\rho} \partial^\alpha\rho_{\neq}
		+\mathbf D_0(\rho\theta)\partial^\alpha\theta_{\neq} \right)
		\operatorname{div}\partial^\alpha\mathbf u_{\neq} dx \le \frac{\min{\{\mu, \kappa, \mu+\kappa\}}}{120} \norm{\nabla \p^\alpha \mathbf{u}_{\neq}}_{L^2(\Omega)}^2 + C\norm{(\p^\alpha \rho_{\neq}, \p^\alpha \T_{\neq})}_{L^2(\Omega)}^2, \\
		&\int_{\Omega} \e^2 \nabla \mathring{\mathbf{u}} : \mathbb{S}(\p^\alpha \mathbf{u}_{\neq}) \p^\alpha \T_{\neq} dx \le \frac{\min{\{\mu, \kappa, \mu+\kappa\}}}{120} \norm{\nabla \p^\alpha \mathbf{u}_{\neq}}_{L^2(\Omega)}^2 + C \norm{ \p^\alpha \T_{\neq}}_{L^2(\Omega)}^2.
	\end{align}
	Next, as for the term \(\frac{1}{\varepsilon^2} \int_{\Omega}
	\mathbf D_{\neq}\left(
	\rho\nabla\partial^\alpha\theta
	+\theta\nabla\partial^\alpha\rho
	\right)\cdot\partial^\alpha\mathbf u_{\neq} dx\), we have
	\begin{equation}
		\begin{aligned}
			&\frac{1}{\varepsilon^2} \int_{\Omega}
			\mathbf D_{\neq}\left(
			\rho\nabla\partial^\alpha\theta
			\right)\cdot\partial^\alpha\mathbf u_{\neq} dx = \frac{1}{\varepsilon^2} \int_{\Omega} 
			(\mathring{\rho} \nabla \p^\alpha \T_{\neq} + \rho_{\neq} \nabla \p^\alpha \T) \cdot \p^\alpha \mathbf{u}_{\neq} dx \\
			\le& \frac{1}{\e^2} \norm{\mathring{\rho}}_{L^\infty(\mathbb{R})} \norm{\nabla \p^\alpha \T_{\neq}}_{L^2(\Omega)} \norm{\p^\alpha \mathbf{u}_{\neq}}_{L^2(\Omega)} +  \frac{1}{\e^2} \norm{\rho_{\neq}}_{L^\infty(\Omega)} \norm{\nabla \p^\alpha \T}_{L^2(\Omega)} \norm{\p^\alpha \mathbf{u}_{\neq}}_{L^2(\Omega)} \\
			\le& \frac{\min{\{\mu, \kappa, \mu+\kappa\}}}{120} \norm{\nabla \p^\alpha \T_{\neq}}_{L^2(\Omega)}^2 + \frac{C}{\e^4} \norm{(\p^\alpha \rho_{\neq}, \p^\alpha \mathbf{u}_{\neq})}_{L^2(\Omega)}^2,
		\end{aligned}
	\end{equation}
	and
	\begin{equation}
		\begin{aligned}
			&\frac{1}{\varepsilon^2} \int_{\Omega}
			\mathbf D_{\neq}\left(
			\theta\nabla\partial^\alpha\rho
			\right)\cdot\partial^\alpha\mathbf u_{\neq} dx = \frac{1}{\varepsilon^2} \int_{\Omega}
			(\mathring{\T} \nabla \p^\alpha \rho_{\neq} + \T_{\neq} \nabla \p^\alpha \rho) \cdot \p^\alpha \mathbf{u}_{\neq} dx \\
			\le& \frac{1}{\varepsilon^2} \norm{\mathring{\T}}_{L^\infty(\mathbb{R})} \norm{\p^\alpha \rho_{\neq}}_{L^2(\Omega)} \norm{\nabla \p^\alpha \mathbf{u}_{\neq}}_{L^2(\Omega)} + \frac{1}{\varepsilon^2} \norm{\nabla \mathring{\T}}_{L^\infty(\mathbb{R})} \norm{\p^\alpha \rho_{\neq}}_{L^2(\Omega)} \norm{\p^\alpha \mathbf{u}_{\neq}}_{L^2(\Omega)} \\
			&+ \frac{1}{\varepsilon^2} \norm{\T_{\neq}}_{L^\infty(\Omega)} \norm{\p^\alpha \rho}_{L^2(\Omega)} \norm{\nabla \p^\alpha \mathbf{u}_{\neq}}_{L^2(\Omega)} + \frac{1}{\varepsilon^2} \norm{\nabla \T_{\neq}}_{L^\infty(\Omega)} \norm{\p^\alpha \rho}_{L^2(\Omega)} \norm{\p^\alpha \mathbf{u}_{\neq}}_{L^2(\Omega)} \\
			\le& \frac{\min{\{\mu, \kappa, \mu+\kappa\}}}{120} \norm{\nabla \p^\alpha \mathbf{u}_{\neq}}_{L^2(\Omega)}^2 + \frac{C}{\e^4} \norm{(\p^\alpha \rho_{\neq}, \p^\alpha \mathbf{u}_{\neq}, \p^\alpha \T_{\neq})}_{L^2(\Omega)}^2.
		\end{aligned}
	\end{equation}
	Using \(\mathbf{D}_0 (\ref{nondimensional NS Eqs})_1\) and the Gagliardo-Nirenberg inequality, one has
	\begin{equation}
		\begin{aligned}
			&\int_{\Omega} \partial_t\mathring{\rho} \left(|\partial^\alpha\mathbf u_{\neq}|^2 +|\partial^\alpha\theta_{\neq}|^2\right) dy \le \int_{\Omega} |\mathbf{D}_0 \dv (\rho \mathbf{u})| \left(|\partial^\alpha\mathbf u_{\neq}|^2 +|\partial^\alpha\theta_{\neq}|^2\right) dy \le C \norm{(\p^\alpha \mathbf{u}_{\neq}, \p^\alpha \T_{\neq})}_{L^2(\Omega)}^2.
		\end{aligned}
	\end{equation}
	For the term\(\int_{\Omega} F_{1,\alpha}\partial^\alpha\rho_{\neq} dx\), the key point is to derive an estimate for \(\int_{\Omega} \p^\alpha f_{1\neq} \p^\alpha\rho_{\neq} dx\). We take \(\int_{\Omega} (\mathring{\mathbf{u}} \cdot \nabla \rho_{\neq} - \mathbf{D}_{\neq} (\mathbf{u} \cdot \nabla \rho)) \p^\alpha\rho_{\neq} dx\) as an example, which is the part of the term \(\int_{\Omega} \p^\alpha f_{1\neq} \p^\alpha\rho_{\neq} dx\),
	\begin{align*}
		&\int_{\Omega} (\mathring{\mathbf{u}} \cdot \nabla \rho_{\neq} - \mathbf{D}_{\neq} (\mathbf{u} \cdot \nabla \rho)) \p^\alpha\rho_{\neq} dx = -\int_{\Omega} \p^\alpha (\mathbf{u}_{\neq} \cdot \nabla \mathring{\rho} + \mathbf{u}_{\neq} \cdot \nabla \rho_{\neq}) \p^\alpha \rho_{\neq} dx \\
		=& -\int_{\Omega} [\p^\alpha, \mathbf{u}_{\neq}] \cdot \nabla \mathring{\rho} \, \p^\alpha \rho_{\neq} dx - \int_{\Omega}\mathbf{u}_{\neq} \cdot \nabla \p^\alpha \mathring{\rho}\, \p^\alpha \rho_{\neq} dx - \int_{\Omega} [\p^\alpha, \mathbf{u}_{\neq}] \cdot \rho_{\neq} \, \p^\alpha \rho_{\neq}dx - \int_{\Omega}\mathbf{u}_{\neq} \cdot \nabla \p^\alpha \rho_{\neq} \, \p^\alpha \rho_{\neq} dx \\
		\le& \norm{\p^\alpha \mathring{\rho}}_{L^\infty(\mathbb{R})} \norm{(\p^\alpha \rho_{\neq}, \p^\alpha \mathbf{u}_{\neq})}_{L^2(\Omega)}^2 + \norm{\mathbf{u}_{\neq}}_{L^\infty(\Omega)} \norm{\p^\alpha \nabla \mathring{\rho}}_{L^2(\Omega)} \norm{\p^\alpha \rho_{\neq}}_{L^2(\Omega)} + C \norm{(\p^\alpha \rho_{\neq}, \p^\alpha \mathbf{u}_{\neq})}_{L^2(\Omega)}^2 \\
		\le& C \norm{(\p^\alpha \rho_{\neq}, \p^\alpha \mathbf{u}_{\neq})}_{L^2(\Omega)}^2.
	\end{align*}
	And the other terms can be treated in the same way.
	Then we have
	\begin{align}
		\int_{\Omega} F_{1,\alpha}\partial^\alpha\rho_{\neq} dx \le C \norm{(\p^\alpha \rho_{\neq}, \p^\alpha \mathbf{u}_{\neq})}_{L^2(\Omega)}^2.
	\end{align}
	For the term \(\int_{\Omega} \mathbf F_{2,\alpha}\cdot\partial^\alpha\mathbf u_{\neq} dx\), Using \(\eqref{p^alpha non-zero system}_2\), one obtains
	\begin{align*}
		&\int_{\Omega} [\p^\alpha, \mathring{\rho}] \p_t \mathbf{u}_{\neq} \cdot \p^\alpha \mathbf{u}_{\neq} dx \\
		\le&\frac{1}{\e^2} \norm{\p^\alpha \mathring{\rho}}_{L^\infty(\mathbb{\R})} \norm{\e^2 \p^{\alpha-1} \p_t \mathbf{u}_{\neq}}_{L^2(\Omega)} \norm{\p^\alpha \mathbf{u}_{\neq}}_{L^2(\Omega)} \\
		\le&\frac{1}{\e^2} \norm{\p^\alpha \mathring{\rho}}_{L^\infty(\mathbb{\R})} \norm{ \p^{\alpha-1} (\e^2\mathring{\rho}\p_t \mathbf{u}_{\neq}\frac{1}{\mathring{\rho}}) }_{L^2(\Omega)} \norm{\p^\alpha \mathbf{u}_{\neq}}_{L^2(\Omega)} \\
		\le&  \frac{\min{\{\mu, \kappa, \mu+\kappa\}}}{120} \norm{\nabla \p^\alpha \mathbf{u}_{\neq}}_{L^2(\Omega)}^2 + \frac{C}{\e^4} \norm{(\p^\alpha \rho_{\neq}, \p^\alpha \mathbf{u}_{\neq}, \p^\alpha \T_{\neq})}_{L^2(\Omega)}^2,
	\end{align*}
	\begin{align*}
		&\int_{\Omega}\left([\partial^\alpha,\mathbf D_0(\rho\mathbf u)]
		\cdot\nabla\mathbf u_{\neq}\right) \cdot \mathbf{u}_{\neq} dx
		\le \frac{1}{\e} \norm{\e \p^\alpha \mathbf{D}_0 (\rho \mathbf{u})}_{L^\infty(\R)} \norm{\p^\alpha \mathbf{u}_{\neq}}^2_{L^2(\Omega)}
		\le  \frac{C}{\e^4} \norm{\p^\alpha \mathbf{u}_{\neq}}_{L^2(\Omega)}^2,
	\end{align*}
	and
	\begin{align*}
		&\int_{\Omega} \frac{1}{\varepsilon^2}\mathbf D_{\neq}
		\left(
		[\partial^\alpha,\rho]\nabla\theta
		+[\partial^\alpha,\theta]\nabla\rho
		\right) \cdot \p^\alpha \mathbf{u}_{\neq} dx \le \frac{C}{\e^4} \norm{(\p^\alpha \rho_{\neq}, \p^\alpha \mathbf{u}_{\neq}, \p^\alpha \T_{\neq})}_{L^2(\Omega)}^2.
	\end{align*}
	Moreover, we have
	\begin{align*}
		&\int_{\Omega} \p^\alpha \mathbf f_{2\neq} \cdot \p^\alpha \mathbf{u}_{\neq} dx = \int_{\Omega} \p^\alpha \left(\mathring{\rho} \p_t \mathbf{u}_{\neq} - \mathbf{D}_{\neq} (\rho \p_t \mathbf{u})\right) \cdot \p^\alpha \mathbf{u}_{\neq} dx + \int_{\Omega} \p^\alpha \left(\mathbf{D}_0 (\rho \mathbf{u}) \cdot \nabla \mathbf{u}_{\neq} - \mathbf{D}_{\neq} (\rho \mathbf{u} \cdot \nabla \mathbf{u})\right) \cdot \p^\alpha \mathbf{u}_{\neq} dx \\
		=& -\int_{\Omega} \p^\alpha \left(\rho_{\neq} \p_t \mathbf{\mathring{u}}\right) \cdot \p^\alpha \mathbf{u}_{\neq} dx - \int_{\Omega} \p^\alpha \left(\rho_{\neq} \p_t \mathbf{u}_{\neq}\right) \cdot \p^\alpha \mathbf{u}_{\neq} dx - \int_{\Omega} \p^\alpha \left(\mathbf{D}_{\neq} (\rho \mathbf{u}) \cdot \nabla \mathring{\mathbf{u}}\right) \cdot \p^\alpha \mathbf{u}_{\neq} dx\\
		&- \int_{\Omega} \p^\alpha \left(\mathbf{D}_{\neq} (\rho \mathbf{u}) \cdot \nabla \mathbf{u}_{\neq}\right) \cdot \p^\alpha \mathbf{u}_{\neq} dx \\
		\le&-\frac{1}{2} \p_t \left(\rho_{\neq} |\p^\alpha \mathbf{u}_{\neq}|^2\right)
		+ \frac{\min{\{\mu, \kappa, \mu+\kappa\}}}{120} \norm{\nabla \p^\alpha \mathbf{u}_{\neq}}_{L^2(\Omega)}^2 + \frac{C}{\e^4} \norm{(\p^\alpha \rho_{\neq}, \p^\alpha \mathbf{u}_{\neq}, \p^\alpha \T_{\neq})}_{L^2(\Omega)}^2.
	\end{align*}
	Combining the above inequalities, we obtain
	\begin{align}
		&\int_{\Omega} \mathbf F_{2,\alpha}\cdot\partial^\alpha\mathbf u_{\neq} dx \le -\frac{1}{2} \p_t \left(\rho_{\neq} |\p^\alpha \mathbf{u}_{\neq}|^2\right)
		+ \frac{\min{\{\mu, \kappa, \mu+\kappa\}}}{120} \norm{\nabla \p^\alpha \mathbf{u}_{\neq}}_{L^2(\Omega)}^2 + \frac{C}{\e^4} \norm{(\p^\alpha \rho_{\neq}, \p^\alpha \mathbf{u}_{\neq}, \p^\alpha \T_{\neq})}_{L^2(\Omega)}^2.
	\end{align}
	By a similar calculation, we obtain
	\begin{align}\label{F_3 alpha est}
		&\int_{\Omega} F_{3,\alpha}\partial^\alpha\theta_{\neq} dx 
		\le -\frac{1}{2} \p_t \left(\rho_{\neq} |\p^\alpha \T_{\neq}|^2\right)
		+ \frac{\min{\{\mu, \kappa, \mu+\kappa\}}}{120} \norm{\nabla \p^\alpha \T_{\neq}}_{L^2(\Omega)}^2 + \frac{C}{\e^4} \norm{(\p^\alpha \rho_{\neq}, \p^\alpha \mathbf{u}_{\neq}, \p^\alpha \T_{\neq})}_{L^2(\Omega)}^2.
	\end{align}
	Finally, integrating (\ref{equ1 in ill}) with respect to \(x\) and combining with (\ref{est1 in ill}) - (\ref{F_3 alpha est}), we obtain
	\begin{align}\label{non-zero inequ}
		\frac{d}{dt} \norm{(\rho_{\neq}, \mathbf{u}_{\neq}, \T_{\neq})}_{H^s}^2 + \norm{(\nabla \mathbf{u}_{\neq}, \nabla \T_{\neq})}_{H^s(\Omega)}^2 \le \frac{C}{\e^4} \norm{( \rho_{\neq}, \mathbf{u}_{\neq}, \T_{\neq})}_{H^s(\Omega)}^2.
	\end{align}
	Let \(c_0 = 2C\). According to (\ref{non-zero inequ}), it yields that
	\begin{align*}
		\norm{(\rho_{\neq}, \mathbf{u}_{\neq}, \T_{\neq})}_{H^s}^2 \le& \norm{(\rho_{\neq}, \mathbf{u}_{\neq}, \T_{\neq})(0)}_{H^s}^2 e^{C\e^{-4}}
		\le e^{-C \e^{-4}}.
	\end{align*}
	This completes the energy estimates for the non-zero modes.
	%\end{proof}
	
	\subsection{Low Mach number limit}
	
	In this subsection, we will prove Theorem \ref{Convergence Thm for ill-prepared initial data} with a modified compactness argument, which was introduced by Métivier and Schochet in \cite{Métivier2001}.
	
	According to Theorem \ref{Uniform estimates Thm for ill-prepared initial data}, we find
	\begin{align}\label{zero mode est for ill}
		\norm{\mathbf{D}_0(\underline{p}^\e, \mathbf{u}^\e, \underline{\T}^\e - \underline{\Theta}^\e)}_{H^{s,\e}(\mathbb{R})}^2 \le 
		\norm{(\underline{p}^\e, \mathbf{u}^\e, \underline{\T}^\e - \underline{\Theta})}_{H^{s,\e}(\Omega)}^2.
	\end{align}
	Then extracting a subsequence, it holds that
	\begin{align*}
		&\mathbf{D}_0 (\underline{p}^\e, \underline{\mathbf{u}}^\e) \rightharpoonup (\bar{p}, \bar{\mathbf{u}}) \quad\text{as} \quad \e \to 0 \quad weakly-* \quad \text{in} \quad L^\infty (0, T_0; H^s(\mathbb{R})), \\
		&\mathbf{D}_0 (\underline{\T}^\e - \underline{\Theta}) \rightharpoonup \bar{\T} - \underline{\Theta} \quad\text{as} \quad \e \to 0 \quad weakly-* \quad \text{in} \quad L^\infty (0, T_0; H^{s+1}(\mathbb{R})).
	\end{align*}
	Applying \(\mathbf{D}_0\) to \((\ref{nondimensional NS Eqs})_3\), we have
	\begin{align}\label{equ of zero T}
		\mathring{\rho} \p_t \mathring{\T} + \mathring{\rho} \mathring{\mathbf{u}} \cdot \nabla \mathring{\T} + \mathring{\rho} \mathring{\T} \dv \mathring{\mathbf{u}} = \kappa \Delta \mathring{\T} + \e^2 \nabla \mathring{\mathbf{u}} : \mathbb{S}(\mathring{\mathbf{u}}) + f,
	\end{align}
	where
	\begin{align*}
		f =& - \mathbf{D}_0 (\rho_{\neq} \p_t \T_{\neq}) - \mathring{\rho} \mathbf{D}_0 (\mathbf{u}_{\neq} \cdot \nabla \T_{\neq}) - \mathring{\mathbf{u}} \cdot \mathbf{D}_0 (\mathbf{u}_{\neq} \cdot \nabla \T_{\neq}) - \mathbf{D}_0 (\rho_{\neq} \mathbf{u}_{\neq}) \cdot \nabla \mathring{\T} - \mathbf{D}_0 (\rho_{\neq} \mathbf{u}_{\neq} \cdot \nabla \T_{\neq}) \\
		&- \mathring{\rho} \mathbf{D}_0 (\T_{\neq} \dv \mathbf{u}_{\neq}) - \mathring{\T} \mathbf{D}_0 (\rho_{\neq} \dv \mathbf{u}_{\neq}) - \mathbf{D}_0(\rho_{\neq} \T_{\neq}) \dv \mathring{\mathbf{u}} - \mathbf{D}_0 (\rho_{\neq} \T_{\neq} \dv \mathbf{u}_{\neq}) \\
		&- \e^2 \nabla \mathring{\mathbf{u}} : \mathbb{S}(\mathring{\mathbf{u}}) + \mathbf{D}_0 [\e^2 \nabla \mathbf{u} : \mathbb{S}(\mathbf{u})].
	\end{align*}
	It follows from (\ref{transformation1 for ill}), (\ref{non-zero mode est for ill}), (\ref{zero mode est for ill}) and (\ref{equ of zero T}) that
	\begin{align*}
		\p_t \mathring{\T}^\e \in L^\infty(0, T_0; H^{s-2}(\mathbb{R})).
	\end{align*}
	Using (\ref{transformation1 for ill}), we observe that \(\p_t \underline{\mathring{\T}}^\e \in L^\infty(0, T_0; H^{s-2}(\mathbb{R}))\),
	which, together with Aubin–Lions lemma, yields that the functions \(\underline{\mathring{\T}}^\e\) converge to \(\bar{\T}\) strongly in \(C([0,T_0]; H^{s'+1}_{loc}(\mathbb{R}))\) for all \(s' < s\).
	Since \(\mathbf{D}_0 \begin{pmatrix}
		\p_{x_2} p \\
		\p_{x_3} p
	\end{pmatrix} = 0\),
	by a similar argument, we obtain \((\mathring{u}_2^\e, \mathring{u}_3^\e)\) converge to \((\bar{u}_2, \bar{u}_3)\) strongly in \(C([0,T_0]; H^{s'}_{loc}(\mathbb{R}))\) for all \(s' < s\).
	
	Then, we will show \(\mathring{\underline{p}}^\e\) and \(\p_{x_1} (2\mathring{u}_1 - \kappa e^{\mathring{\T}^\e} \underline{\mathring{\T}}^{\e}_{x_1})\) converge strongly to 0 as \(\e \to 0\). Applying \(\mathbf{D}_0\) to \((\ref{NS system in scaling})_1\) and \((\ref{NS system in scaling})_2\), we have
	\begin{align}
		&\e \p_t \mathring{\underline{p}}^\e + \p_{x_1} (2\mathring{u}_1^\e - \kappa e^{-\e \mathring{\underline{p}}^\e + \mathring{\underline{\T}}^\e} \p_{x_1} \mathring{\underline{\T}}^\e) = \e f_1^\e + g_1^\e, \label{equ1}\\
		&\e e^{-\underline{\mathring{\T}}^\e} \p_t \mathring{u}_1^\e + \p_{x_1} \underline{\mathring{p}}^\e = \e f_2^\e, \label{equ2}
	\end{align}
	where
	\begin{align*}
		&g_1^\e = -\kappa \p_{x_1} \left([\mathbf{D}_0(e^{-\e \underline{p}^\e + \underline{\T}^\e}) - e^{-\e \mathring{\underline{p}}^\e + \mathring{\underline{\T}}^\e}] \p_{x_1} \mathring{\underline{\T}}^\e \right)  - \kappa \p_{x_1} \mathbf{D}_0 \left(\mathbf{D}_{\neq} (e^{-\e \underline{p}^\e + \underline{\T}^\e}) \p_{x_1} \underline{\T}_{\neq}^\e \right).
	\end{align*}
	A direct calculation shows that
	\begin{align*}
		\mathbf{D}_0(e^{-\e \underline{p}^\e + \underline{\T}^\e}) - e^{-\e \mathring{\underline{p}}^\e + \mathring{\underline{\T}}^\e} = \mathbf{D}_0 \left(e^{-\e \underline{\mathring{p}}^\e+\underline{\mathring{\T}}^\e} (e^{-\e \underline{p}_{\neq}^\e + \underline{\T}_{\neq}^\e}-1)\right),
	\end{align*}
	then we have
	\begin{align*}
		\norm{g_1^\e}_{H^{s-1}(\Omega)}^2 \le e^{-C \e^{-4}}.
	\end{align*}
	Moreover, it follows from (\ref{zero mode est for ill}) and (\ref{non-zero mode est for ill}) that \(f_1^\e\) and \(f_2^\e\) are uniformly bounded in \(C([0,T_0]; H^{s-1}(\mathbb{R}))\).
	Passing the weak limit in (\ref{equ1}) and (\ref{equ2}) leads to \(\p_{x_1} \bar{p} = 0\) and \(\p_{x_1}(2\bar{u}_1 - \kappa e^{\bar{\T}}\p_{x_1}\bar{\T}) =0\) in the distribution sense.
	
	On the other hand, by taking the limit of \(\mathbf{D}_0(\ref{NS system in scaling})_3\), we can get that \(\bar{\T}\) satisfies
	\begin{align}\label{bar T equ}
		\bar{\T}_t = \frac{\kappa e^{\bar{\T}}}{2} \bar{\T}_{x_1 x_1},
	\end{align}
	with the initial data
	\begin{align}\label{bar T initial data}
		\bar{\T}(x_1, 0) = (\mathbf{D}_0 \underline{\T})_0 (x_1).
	\end{align}
	From the maximum principle and energy method, one can show the existence and uniqueness of smooth solution of (\ref{bar T equ}) and (\ref{bar T initial data}). Next, we will get the spatial decay of \(\bar{\T}\), as in \cite{Alazard2005LowMN}, we set
	\begin{align*}
		H = x_1^{1+\sigma} (\bar{\T} - \underline{\T}_+),
	\end{align*}
	which satisfies
	\begin{align*}
		H_t = \frac{\kappa e^{\bar{\T}}}{2} H_{x_1 x_1} - \frac{\kappa (1+\sigma) e^{\bar{\T}}}{x_1} H_{x_1} + \frac{\kappa (1+\sigma) (2+\sigma) e^{\bar{\T}}}{2x_1^2} H.
	\end{align*}
	By the energy estimates and (\ref{zero mode est for ill}), it holds that
	\begin{align*}
		\norm{H}_{L^\infty(0,T_1; H^1[1,\infty)} \le C [\norm{H(0)}_{H^1[1,\infty)} + \norm{\bar{\T} - \underline{\Theta}}_{H^2} + 1] \le C[1+ \Lambda(C_0)],
	\end{align*}
	then using Sobolev inequality, one gets that
	\begin{align}\label{bar T decay}
		|\bar{\T}(x_1, t) - \underline{\T}_+| \le Cx_1^{-1-\sigma}, \qquad \text{as}~ x_1 \in [1, +\infty). 
	\end{align}
	The following is to obtain the convergence of \(\mathring{u}_1^\e\) and \(\underline{\mathring{p}}^\e\).
	\begin{Prop}\label{Prop for ill}
		Under the assumptions of Theorem \ref{Convergence Thm for ill-prepared initial data}, suppose that the family of solutions satisfies (\ref{uniform estimates for ill data}) and (\ref{non-zero mode est for ill}) on \([0,T_1]\), and that the limiting temperature satisfies (\ref{bar T decay}). Then it holds that
		\begin{equation}
			\begin{aligned}
				&\underline{\mathring{p}}^\e \to 0 \qquad \text{strongly in}~ L^2(0,T_1; H^{s'}_{loc}(\mathbb{R})) \quad \text{as}~ \e \to 0, \\
				&(2\mathring{u}_1^\e - \kappa e^{-\e \mathring{\underline{p}}^\e + \mathring{\underline{\T}}^\e} \p_{x_1} \mathring{\underline{\T}}^\e)_{x_1} \to 0 \qquad \text{strongly in}~ L^2(0,T_1; H^{s'-1}_{loc}(\mathbb{R})) \quad \text{as}~ \e \to 0,
			\end{aligned}
		\end{equation}
		where \(s'<s\).
	\end{Prop}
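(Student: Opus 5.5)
The plan is to adapt the Métivier--Schochet local energy decay argument to the one-dimensional zero-mode acoustic system \eqref{equ1}--\eqref{equ2}. Set $w^\e := 2\mathring{u}_1^\e - \kappa e^{-\e \mathring{\underline{p}}^\e + \mathring{\underline{\T}}^\e}\p_{x_1}\mathring{\underline{\T}}^\e$. Since $\mathring{\underline{\T}}^\e\to\bar\T$ strongly, it suffices to prove that $\mathring{\underline{p}}^\e$ and $\p_{x_1}\mathring{u}_1^\e-\frac{\kappa}{2}\p_{x_1}(e^{\bar\T}\bar\T_{x_1})$, i.e.\ $\p_{x_1}w^\e$, tend to $0$ in $L^2(0,T_1;L^2_{loc})$. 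The stated $H^{s'}_{loc}$ convergence then follows by interpolation with the uniform $H^s$ bound \eqref{zero mode est for ill}.

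\textbf{Step 1: wave equation with slowly varying coefficients.} Write \eqref{equ1}--\eqref{equ2} as
\[
\e\p_t U^\e + \mathcal{L}(x_1,t)U^\e = \e F^\e + G^\e,\qquad U^\e=(\mathring{\underline{p}}^\e,\mathring{u}_1^\e),
\]
where $\mathcal{L}$ has the principal part
\[
\begin{pmatrix}0 & 2\p_{x_1}\\ e^{\mathring{\underline{\T}}^\e}\p_{x_1} & 0\end{pmatrix},
\]
and the $\kappa$-terms and $g_1^\e$ are moved to the right-hand side. The uniform bounds give $F^\e$ bounded in $C([0,T_1];H^{s-1})$, and $\|G^\e\|_{H^{s-1}}\le e^{-C\e^{-4}}$ by \eqref{non-zero mode est for ill}. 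Following Métivier--Schochet, the coefficient $a^\e=2e^{\mathring{\underline{\T}}^\e}$ converges strongly to $\bar a = 2e^{\bar\T}$. Replacing $a^\e$ by $\bar a$ costs a remainder $(a^\e-\bar a)\times$(bounded), which is $o(1)$ in $L^2_{loc}$. We thus obtain a limiting acoustic operator $\bar{\mathcal L}(x_1,t)$ that depends on $t$ only through $\bar\T$, with $\p_t\bar\T$ bounded, so it is slowly varying on the fast time scale $t/\e$.

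\textbf{Step 2: local energy decay.} The key lemma to invoke is the Métivier--Schochet auxiliary result. If the sound speed $c(x_1)^2=2e^{\bar\T}$ tends to constants at $\pm\infty$ fast enough, which is exactly the decay \eqref{bar T decay} $|\bar\T-\underline\T_\pm|\le Cx_1^{-1-\sigma}$ (the $-\infty$ side being analogous), then $\bar{\mathcal L}$ has no point spectrum in $L^2$ apart from its kernel. Consequently, solutions of $\e\p_t V+\bar{\mathcal L}V=\e F$ with bounded data satisfy $\chi V\to 0$ in $L^2(0,T;L^2)$ for every cutoff $\chi\in C_c^\infty$, for the component orthogonal to the kernel. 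In one dimension this can be checked directly: the absence of $L^2$ eigenfunctions for $-\p_{x_1}(c^2\p_{x_1})$-type operators with asymptotically constant coefficients follows from ODE asymptotics. The kernel consists of $\mathring{\underline p}=\mathrm{const}$ and $\p_{x_1}u=0$. Within $H^s$ the constant must vanish, and the kernel component of $u$ is exactly the part already absorbed into $w^\e$'s limit. Applying this to $U^\e$ gives $\chi\mathring{\underline p}^\e\to0$ and $\chi\,\p_{x_1}w^\e\to0$.

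\textbf{Main obstacle.} The expected difficulty is justifying the application of the Métivier--Schochet lemma when the coefficients depend on both $\e$ and $t$, rather than being fixed. My first attempt would be to freeze time on short intervals of length $\e^{1/2}$ and use the uniform Lipschitz-in-time bound on $\bar\T$ and the strong convergence $\mathring{\underline\T}^\e\to\bar\T$ in $C([0,T_1];H^{s'+1}_{loc})$. This reduces the problem to the frozen case, with errors $O(\e^{1/2})$. As a fallback, I would use the Métivier--Schochet compactness formulation directly: take weak limits of the microlocal defect measures of $U^\e$ and show they are supported on eigenvalues of $\bar{\mathcal L}$, which are absent by the decay \eqref{bar T decay}.
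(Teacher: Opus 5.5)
Your overall strategy matches the paper's: Métivier--Schochet local energy decay for the one-dimensional zero-mode acoustic system, with \eqref{bar T decay} ruling out $L^2$ eigenfunctions. However, Step~1 has a genuine gap.

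\textbf{The $\kappa$-term.} With unknowns $U^\e=(\mathring{\underline p}^\e,\mathring u_1^\e)$, the term you move to the right-hand side of \eqref{equ1} is $\kappa\p_{x_1}\bigl(e^{-\e\mathring{\underline p}^\e+\mathring{\underline\T}^\e}\p_{x_1}\mathring{\underline\T}^\e\bigr)$. This term is of order one. It is neither of the form $\e F^\e$ with $F^\e$ bounded nor exponentially small like $g_1^\e$.
\begin{itemize}
\item The decay statement in Step~2, which requires forcing $\e F$, therefore does not apply.
\item If it did apply, it would give $\chi\mathring u_1^\e\to0$. That is false, since the limit of $\mathring u_1^\e$ is $\frac{\kappa}{2}e^{\bar\T}\p_{x_1}\bar\T$, which is generally nonzero.
\item Your remark about a ``kernel component absorbed into $w^\e$'' does not repair this. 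On $L^2(\mathbb{R})$ the acoustic operator has trivial kernel; the nonvanishing part of $\mathring u_1^\e$ is driven by the $O(1)$ heat-flux forcing.
\end{itemize}

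\textbf{The missing idea.} This forcing is slowly varying. Both $\p_t\mathring{\underline\T}^\e$ (the temperature equation has no $1/\e$ term) and $\e\p_t\mathring{\underline p}^\e$ are uniformly bounded, so the forcing becomes $O(\e)$ after one time differentiation. The paper uses this as follows.
\begin{itemize}
\item It applies $\frac{\e}{2}\p_t$ to \eqref{equ1} and substitutes \eqref{equ2}, giving the scalar wave equation \eqref{p_tt}. There the $\kappa$-contribution is $\frac{\kappa\e}{2}\p_{x_1}\p_t(\cdots)=O(\e)$, so $\mathcal R^\e\to0$ in $L^2(0,T_1;L^2)$.
\item It then applies $\e\p_t$ once more to obtain $\e\p_t\mathring{\underline p}^\e\to0$ locally.
\item Finally it reads off $\p_{x_1}w^\e=-\e\p_t\mathring{\underline p}^\e+\e f_1^\e+g_1^\e$.
\end{itemize}
Equivalently, you can take $(\mathring{\underline p}^\e,w^\e)$ as unknowns. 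Then \eqref{equ1} becomes $\e\p_t\mathring{\underline p}^\e+\p_{x_1}w^\e=\e f_1^\e+g_1^\e$. Equation \eqref{equ2} picks up only the $O(\e)$ term $-\frac{\kappa\e}{2}e^{-\mathring{\underline\T}^\e}\p_t\bigl(e^{-\e\mathring{\underline p}^\e+\mathring{\underline\T}^\e}\p_{x_1}\mathring{\underline\T}^\e\bigr)$. Local decay for this system then yields $\chi\mathring{\underline p}^\e\to0$ and $\chi w^\e\to0$ directly.

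\textbf{Step~2 as planned.} Your primary plan does not close as stated.
\begin{itemize}
\item Freezing coefficients on time intervals of length $h$ costs an error of order $h^2/\e$: an $O(h)$ change in the coefficient acts for time $h$ through the $\e^{-1}$-singular operator. For $h=\e^{1/2}$ this is $O(1)$, not $O(\e^{1/2})$.
\item Replacing $e^{\mathring{\underline\T}^\e}$ by $e^{\bar\T}$ leaves a remainder that is $o(1)$ only on compact sets and is not $O(\e)$. So it is not admissible in your frozen decay statement either.
\end{itemize}
The Métivier--Schochet compactness argument you list as a fallback avoids both problems. It keeps the $\e$-dependent, slowly varying coefficient and needs only its local convergence together with the absence of $L^2$ eigenfunctions of the limit operator. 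This is what the paper invokes.

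\textbf{One-sided decay.} The hypotheses give decay of $\bar\T$ only at $+\infty$, so you should not claim the $-\infty$ side. In one dimension, one-sided decay already suffices. On that side every nontrivial solution of the eigenvalue ODE is asymptotically a plane wave and hence not in $L^2$.
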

	\begin{proof}
		Applying \(\frac{\e}{2} \p_t\) to (\ref{equ1}) and using (\ref{equ2}), we find
		\begin{align}\label{p_tt}
			\frac{\varepsilon^2}{2}
			\partial_t^2\mathring{\underline p}^{\varepsilon}
			-
			\partial_{x_1}\!\left(
			e^{\mathring{\underline\theta}^{\varepsilon}}
			\partial_{x_1}\mathring{\underline p}^{\varepsilon}
			\right)
			=\mathcal{R}^\e,
		\end{align}
		where
		\begin{align*}
			\mathcal{R}^\e=& \frac{\varepsilon^2}{2}\partial_t f_1^\varepsilon +\frac{\varepsilon}{2}\partial_t g_1^\varepsilon -\varepsilon\partial_{x_1}\!\left( e^{\mathring{\underline\theta}^{\varepsilon}}f_2^\varepsilon \right)
			+\frac{\kappa\varepsilon}{2} \partial_{x_1}\partial_t \left( e^{-\varepsilon\mathring{\underline p}^{\varepsilon} +\mathring{\underline\theta}^{\varepsilon}} \partial_{x_1}\mathring{\underline\theta}^{\varepsilon}\right).
		\end{align*}
		According to (\ref{uniform estimates for ill data}) and (\ref{equ of zero T}), we can check
		\begin{align}\label{1equ}
			&\|(\varepsilon\partial_t)f_1^\e\|_{L^2(0,T_1;L^2(\R))} +\|f_2^\e\|_{L^2(0,T_1;H^1(\R))}
			+\left\|\partial_t\left(e^{-\varepsilon\mathring{\underline p}^{\varepsilon} +\mathring{\underline\theta}^{\varepsilon}} \partial_{x_1}\mathring{\underline\theta}^{\varepsilon}\right) \right\|_{L^2(0,T_1;H^1(\R))} \le C.
		\end{align}
		It follows from (\ref{uniform estimates for ill data}) and (\ref{non-zero mode est for ill}) that
		\begin{align}\label{2equ}
			\|g_1^\e\|_{L^2 (0,T_1; L^2(\R))} + \|(\e \p_t)g_1^\e\|_{L^2 (0,T_1; L^2(\R))} \to 0 \qquad \text{as}~\e \to 0.
		\end{align}
		Combining (\ref{1equ}) and (\ref{2equ}), one gets 
		\begin{align*}
			\mathcal{R}^\e \to 0 \qquad \text{strongly}~\text{in}~ L^2 (0,T_1; L^2(\R)).
		\end{align*}
		Following the arguments in \cite{Métivier2001} and using (\ref{bar T decay}), we can obtain that 
		\begin{align*}
			\mathring{\underline p}^{\varepsilon}\longrightarrow 0\quad\text{strongly in } L^2(0,T_1;H^{s'}_{loc}(\mathbb R)), \qquad s'<s.
		\end{align*}
		On the other hand, we will establish the convergence of \((2\mathring{u}_1^\e - \kappa e^{-\e \mathring{\underline{p}}^\e + \mathring{\underline{\T}}^\e} \p_{x_1} \mathring{\underline{\T}}^\e)_{x_1}\). Applying \(\e \p_t\) to the equation (\ref{p_tt}), one obtains
		\begin{align*}
			\e^2 \p_t \left(\frac{1}{2}\p_t (\e \p_t \mathring{\underline p}^{\varepsilon}) \right) - \partial_{x_1} \left( e^{\mathring{\underline\theta}^{\varepsilon}} \partial_{x_1}(\e \p_t\mathring{\underline p}^{\varepsilon}) \right) = \e \p_t \mathcal{R}^\e + \e \p_{x_1} \left( e^{\mathring{\underline\theta}^{\varepsilon}} \p_t \mathring{\underline\theta}^{\varepsilon} \p_{x_1}\mathring{\underline p}^{\varepsilon} \right) := \tilde{\mathcal{R}}^\e,
		\end{align*}
		then a similar argument deduces that \(\e \p_t \mathring{\underline p}^{\varepsilon} \to 0\) strongly in \(L^2 (0,T_1; L^2_{loc}(\R))\). Moreover, we rewrite (\ref{equ1}) as follows:
		\begin{align*}
			(2\mathring{u}_1^\e - \kappa e^{-\e \mathring{\underline{p}}^\e + \mathring{\underline{\T}}^\e} \p_{x_1} \mathring{\underline{\T}}^\e)_{x_1} = -\e \p_t \mathring{\underline{p}}^\e + \e f_1^\e + g_1^\e.
		\end{align*}
		Similarly, in a same way, we can show the convergence of \((2\mathring{u}_1^\e - \kappa e^{-\e \mathring{\underline{p}}^\e + \mathring{\underline{\T}}^\e} \p_{x_1} \mathring{\underline{\T}}^\e)_{x_1} \to 0\) strongly in \(L^2(0,T_1; H^{s'-1}_{loc}(\R))\) for \(s' <s\), which completes the proof.
	\end{proof}
	Then, passing the limit in the equations (\ref{NS system in scaling}) for \((\mathring{\underline{p}}^\e, \mathring{\mathbf{u}}^\e, \mathring{\underline{\T}}^\e)\), one proves that the limit \((0, \bar{\mathbf{u}}, \bar{\underline{\T}})\) solves (\ref{limit system in scaling}) in the sense of distribution.
	Using the arguments in \cite{Métivier2001}, we can obtain that \((\bar{u}_1, \bar{u}_2, \bar{u}_3, \bar{\underline{\T}})\) satisfies the initial condition
	\begin{align}\label{initial data for limit sys with ill}
		(\bar{u}_1, \bar{u}_2, \bar{u}_3, \bar{\underline{\T}})|_{t=0} = (\underline{\mathring{w}}_0, \mathring{u}_{20}, \mathring{u}_{30}, \underline{\mathring{\T}}_0),
	\end{align}
	where \(\underline{\mathring{w}}_0\) is determined by \(\underline{\mathring{w}}_0 = \frac{1}{2} \kappa e^{\underline{\mathring{\T}}_0} \p_{x_1} \underline{\mathring{\T}}_0\). Moreover one can get the uniqueness of solutions to the zero-mode limit system with initial data (\ref{initial data for limit sys with ill}) by the energy method and then the above conclusions hold for the whole sequence \((\underline{\mathring{p}}^\e, \mathring{u}_1^\e, \mathring{u}_2^\e, \mathring{u}_3^\e, \underline{\mathring{\T}}^\e)\).
	
	On the other hand, it follows from (\ref{non-zero mode est for ill}) that
	\begin{align*}
		(\rho_{\neq}^\e, \mathbf{u}_{\neq}^\e, \T_{\neq}^\e) \to 0 \qquad \text{strongly in}~ L^\infty(0,T_1; H^{s} (\Omega)) \quad \text{as}~ \e \to 0,
	\end{align*}
	then we have
	\begin{align*}
		(\underline{p}_{\neq}^\e, \mathbf{u}_{\neq}^\e, \underline{\T}_{\neq}^\e) \to 0 \qquad \text{strongly in}~ L^2(0,T_1; H^{s} (\Omega)) \quad \text{as}~ \e \to 0.
	\end{align*}
	Consequently, we obtain
	\begin{align}
		(\underline{p}^\e, \mathbf{u}^\e, \underline{\T}^\e) = (\mathring{\underline{p}}^\e , \mathring{\mathbf{u}}^\e, \mathring{\underline{\T}}^\e) + (\underline{p}_{\neq}, \underline{\mathbf{u}}_{\neq}, \underline{\T}_{\neq}) \to (\bar{p}, \bar{\mathbf{u}}, \bar{\underline{\T}}) \qquad \text{strongly in}~ L^2(0,T_1; H^{s'}_{loc} (\Omega)) \quad \text{as}~ \e \to 0.
	\end{align}
	Finally, we have completed the proof of Theorem \ref{Convergence Thm for ill-prepared initial data}.
	
	\medskip
	\noindent {\bf Acknowledgment:} The research of Qiangchang Ju and Fanrui Meng are supported by the National Natural Science Foundation of China (No. 12131007).

	\medskip
	\noindent{\bf Data availability:} The manuscript contains no associated data.

	\medskip
	\noindent{\bf Conflict of Interest:} The authors declare that they have no conflict of interest.

\end{document}